\newcommand{\R}{\mathbb{R}}
\newtheorem{theo}{Theorem}[section]
\newtheorem{coro}[theo]{Corollary}
\newtheorem{lemma}[theo]{Lemma}
\newtheorem{prop}[theo]{Proposition}
\theoremstyle{definition}
\newtheorem{remark}[theo]{Remark}
\newcommand{\mF}{{\mathfrak F}}
\newcommand{\cI}{{\mathcal I}}
\newcommand{\cN}{{\mathcal N}}
\newcommand{\supp}{{\rm supp}}
\newcommand{\eps}{\varepsilon}
\newcommand{\N}{{\mathbb{N}}}
\renewcommand{\epsilon}{\varepsilon}
\newcommand{\Sn}{{\mathbb S}^{N-1}}
\numberwithin{equation}{section}
\title{Least energy nodal solutions of Hamiltonian elliptic systems with Neumann boundary conditions}
\author{Alberto Salda\~{n}a\footnote{
Institut f\"ur Analysis, Karlsruhe Institute for Technology, Englerstra\ss e 2, 76131, Karlsruhe, Germany, \newline alberto.saldana@partner.kit.edu
}\ \ \& Hugo Tavares\footnote{CAMGSD, Departamento de Matem\'atica, Instituto Superior T\'{e}cnico, Universidade de Lisboa, Av. Rovisco Pais, 1049-001 Lisboa, Portugal; htavares@tecnico.ulisboa.pt} \footnote{Departamento de Matem\'atica, Faculdade de Ci\^encias da Universidade de Lisboa, Edif\'icio C6, Piso 1, Campo Grande 1749-016  Lisboa, Portugal; hrtavares@ciencias.ulisboa.pt}}
\date{\today}
\begin{document}
\maketitle

\begin{abstract}
We study existence, regularity, and qualitative properties of solutions to the
system
\[
 -\Delta u = |v|^{q-1} v\quad \text{ in }\Omega,\qquad -\Delta v = |u|^{p-1} u\quad \text{ in }\Omega,\qquad \partial_\nu u=\partial_\nu v=0\quad \text{ on }\partial\Omega,
\]
with $\Omega\subset \R^N$ bounded; in this setting, all nontrivial solutions are sign changing. Our proofs use a variational formulation in dual spaces, considering sublinear $pq< 1$ and superlinear $pq>1$ problems in the subcritical regime. In balls and annuli we show that least energy solutions (l.e.s.) are foliated Schwarz symmetric and, due to a symmetry-breaking phenomenon, l.e.s. are \emph{not} radial functions; a key element in the proof is a new $L^t$-norm-preserving transformation, which combines a suitable flipping with a decreasing rearrangement. This combination allows us to treat annular domains, sign-changing functions, and Neumann problems, which are non-standard settings to use rearrangements and symmetrizations. In particular, we show that our transformation diminishes the (dual) energy and, as a consequence, \emph{radial} l.e.s. are strictly monotone.  We also study unique continuation properties and simplicity of zeros.  Our theorems also apply to the scalar associated model, where our approach provides new results as well as alternative proofs of known facts.

\medbreak

\noindent{\bf 2010 MSC} 	 35J50 (Primary); 35B05, 35B06, 35B07, 35J47, 35J15

\noindent{\bf Keywords} Dual method, 
subcritical,
Hamiltonian elliptic systems,
flipping techniques,
symmetry breaking, 
unique continuation. 
\end{abstract}


\section{Introduction}

Let $N\geq 1$, $\Omega\subset\R^N$ be a smooth bounded domain, and consider the following Hamiltonian elliptic system with Neumann boundary conditions 
\begin{align}\label{NHS}
 -\Delta u = |v|^{q-1} v\quad \text{ in }\Omega,\qquad -\Delta v = |u|^{p-1} u\quad \text{ in }\Omega,\qquad \partial_\nu u=\partial_\nu v=0\quad \text{ on }\partial\Omega,
\end{align}
where $\nu$ is the outer normal vector on $\partial \Omega$ and we consider $p,q>0$ in the \emph{sublinear} ($pq<1$) or the \emph{superlinear} ($pq>1$) cases satisfying a \emph{subcritical} condition, that is,
\begin{align}\label{sc:intro}
p,q>0,\qquad pq\neq 1,\qquad \text{ and }\qquad \frac{1}{p+1}+\frac{1}{q+1}>\frac{N-2}{N}.  
\end{align}
 In fact, in this setting the more general notion of \emph{linearity} is $pq=1$ or, equivalently, $1/(p+1)+1/(q+1)=1$ \cite{ClementvanderVorst}. On the other hand, the last inequality in \eqref{sc:intro}
means that the exponents $(p,q)$ are 
below the \emph{critical hyperbola} (\emph{i.e.}, $(p,q)$ is subcritical) \cite{ClementdeFigueiredoMitidieri, PeletiervanderVorst}, and this condition is trivially satisfied if $N=1,2$ or if $pq<1$.

Systems with a Hamiltonian structure such as \eqref{NHS} have been extensively studied in the past 25 years and many results are known regarding existence, multiplicity, concentration phenomena, positivity, symmetry, Liouville theorems, \emph{etc}.  We refer to the surveys \cite{deFigueiredo, Ruf, BMT14} for an overview of the topic and to \cite{BMRT15, new1, new2, new3} for more recent results.  Most of these papers use Dirichlet boundary conditions and, up to our knowledge, the few papers addressing Neumann problems are
\cite{Zeng, AvilaYang, RamosYang, PistoiaRamosNeumann}, where existence of positive solutions and concentration phenomena are studied, and \cite{BonheureSerraTilli}, which centers on existence of positive radial solutions. However, these papers focus on a \emph{different} operator of the form $Lw=-\Delta w+V(x) w$, with $V$ positive.  In comparison with \eqref{NHS}, the shape of solutions changes drastically; for instance, the operator $L$ with Neumann b.c. induces a norm, and this allows the existence of positive solutions, while all nontrivial solutions of \eqref{NHS} are \emph{sign-changing}. Indeed, if $(u,v)$ is a classical solution of \eqref{NHS}, then by the Neumann b.c. and the divergence theorem, 
\begin{align}\label{comp}
\int_\Omega |u|^{p-1}u=\int_\Omega |v|^{q-1}v=0.
\end{align}
Since $u\equiv 0 $ if and only if $v\equiv 0$, \eqref{comp} is only satisfied if $(u,v)$ is trivial or if both components are sign-changing. 
Condition \eqref{comp} is called a \emph{compatibility condition}.  As far as we know, our paper is the first to study problem \eqref{NHS}.

We remark that if $p=q>0$ and $(u,v)$ is a classical solution of \eqref{NHS}, then $u\equiv v$ in $\Omega$ and \eqref{NHS} reduces to the scalar equation $-\Delta u = |u|^{p-1}$ in $\Omega$ with $\partial_\nu u=0$ on $\partial\Omega$, see Lemma \ref{lemma:p=q}. Therefore, all our results cover the single equations case.

\medskip

Condition \eqref{sc:intro}  together with Sobolev embeddings and the Rellich--Kondrachov theorem implies that
\begin{align}\label{embed}
W^{2,\frac{p+1}{p}}(\Omega)\hookrightarrow L^{q+1}(\Omega)\quad  \text{ and }\quad  W^{2,\frac{q+1}{q}}(\Omega)\hookrightarrow L^{p+1}(\Omega)\quad \text{ compactly}.
\end{align}
A strong solution of \eqref{NHS} is defined as a pair $(u,v)\in W^{2,\frac{q+1}{q}}(\Omega)\times W^{2,\frac{p+1}{p}}(\Omega)$ satisfying the equations a.e. in $\Omega$, and the boundary conditions in the trace sense.
Problem \eqref{NHS} has a variational structure, and \eqref{NHS} are the Euler-Lagrange equations of the energy functional
\begin{equation}\label{eq:functional_I}
(u,v)\mapsto I(u,v)= \int_\Omega \nabla u\cdot \nabla v-\frac{|u|^{p+1}}{p+1}-\frac{|v|^{q+1}}{q+1}\, dx.
\end{equation}
We define a \emph{least energy (nodal) solution} as a nontrivial strong solution of \eqref{NHS} achieving the level
\begin{align}\label{c:level}
c:=\inf\left\{ I(u,v):\ (u,v)\not\equiv (0,0),\ (u,v) \text{ is a strong solution of \eqref{NHS}}\right\}. 
\end{align}
In view of \eqref{sc:intro} and \eqref{embed}, the functional $I$ is well defined at strong solutions. Our main result is concerned with existence, regularity, and qualitative properties of least energy solutions.

\begin{theo}\label{thm:maintheorem1}
Let $N\geq 1$, $\Omega\subset \R^N$ be a smooth bounded domain, and let $p$ and $q$ satisfy \eqref{sc:intro}.  The set of least energy solutions is nonempty. If $(u,v)$ is a least energy solution, then $(u,v)\in C^{2,\varepsilon}(\overline{\Omega})\times C^{2,\varepsilon}(\overline{\Omega})$ is a classical solution of \eqref{NHS} and the following holds.
\begin{itemize}
\item[(i)] (Monotonicity in 1D) If $N=1$ and $\Omega=(-1,1)$, then $u'v'>0$ in $\Omega$; in particular, $u$ and $v$ are both strictly monotone increasing or both strictly monotone decreasing in $\Omega$.  
\item[(ii)] (Symmetry \& symmetry breaking) If $N\geq 2$ and $\Omega=B_1(0)$ or $\Omega=B_1(0)\backslash B_\delta(0)$ for some $\delta\in(0,1)$, then there is $e\in \partial B_1(0)$ such that $u$ and $v$ are foliated Schwarz symmetric with respect to $e$ and the functions $u$ and $v$ are \emph{not} radially symmetric.
\item[(iii)] (Unique continuation property) If $pq<1$, then the zero sets of $u$ and $v$ have zero Lebesgue measure, \emph{i.e.}, $|\{x\in \Omega:\ u(x)=0\}|=|\{x\in \Omega:\ v(x)=0\}|=0.$
\end{itemize} 
\end{theo}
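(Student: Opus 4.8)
The plan is to prove Theorem \ref{thm:maintheorem1} by reformulating \eqref{NHS} in the dual space, following the classical Clarke--Ekeland--Clément--van der Vorst approach adapted to the Neumann setting, and then extracting the qualitative information from the structure of the dual minimization problem. First, I would set up the dual functional: writing $f=|u|^{p-1}u$ and $g=|v|^{q-1}v$, and inverting the Laplacian, one expresses everything in terms of $(f,g)\in L^{(p+1)/p}(\Omega)\times L^{(q+1)/q}(\Omega)$. The crucial subtlety here is that $-\Delta$ with Neumann conditions is not invertible: its kernel consists of the constants. This is precisely why the compatibility condition \eqref{comp} appears, and why one must restrict to the subspace of functions with zero average. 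So the dual space should be $\{(f,g):\int_\Omega f=\int_\Omega g=0\}$, on which $(-\Delta)^{-1}$ (the Neumann solution operator onto zero-average functions) is well defined and compact by \eqref{embed}. On this space one defines the dual functional $J(f,g)$ whose critical points correspond to strong solutions; in the superlinear case $pq>1$ this $J$ is bounded below on a natural constraint (a Nehari-type or homogeneity-normalized set), and in the sublinear case $pq<1$ it is bounded below and coercive directly. Existence of a minimizer then follows from weak lower semicontinuity plus the compactness in \eqref{embed}; one checks the minimizer is nontrivial (using subcriticality to show the infimum is negative, or strictly below the trivial level), and that $c$ in \eqref{c:level} is attained. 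Bootstrapping via elliptic $L^r$ estimates and Schauder theory upgrades any least energy solution to $C^{2,\varepsilon}(\overline\Omega)\times C^{2,\varepsilon}(\overline\Omega)$; this is routine given \eqref{sc:intro}.

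For part (i), the one-dimensional monotonicity, I would use the $L^t$-norm-preserving transformation advertised in the abstract: given a least energy solution on $(-1,1)$, one flips and rearranges (by decreasing rearrangement) the pair $(f,g)=(|u|^{p-1}u,|v|^{q-1}v)$ in a way that preserves all the relevant $L^t$ norms (hence the ``potential'' part of $J$) while not increasing the ``kinetic'' part coming from $(-\Delta)^{-1}$, with equality forcing the original functions to already be monotone. Concretely, the zero-average constraint means $f$ and $g$ each change sign; the flipping step arranges the positive mass to one side and negative mass to the other, and the rearrangement step makes each piece monotone; a Riesz-type / Pólya--Szegő-type inequality for the one-dimensional Neumann Green function then shows the energy strictly decreases unless $(u,v)$ was already of this monotone form. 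Since $(u,v)$ is a minimizer the inequality must be an equality, yielding strict monotonicity; that $u'v'>0$ follows because the two components are rearranged consistently (the sign of $g$ governs the sign of $-\Delta u=g$, hence of $u$ after normalization), so $u$ and $v$ increase or decrease together. The main obstacle, and the technical heart of the whole paper, is establishing this transformation and its energy-diminishing property in the non-standard settings (annuli, sign-changing functions, Neumann boundary conditions), where ordinary Schwarz symmetrization is unavailable; I expect this to require a careful analysis of the relevant Green function and its behavior under the combined flip-and-rearrange operation.

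For part (ii), foliated Schwarz symmetry on the ball or annulus, I would apply a higher-dimensional analogue of the same transformation: for a fixed direction $e\in\partial B_1(0)$, polarize (two-point rearrange) $(f,g)$ with respect to hyperplanes through the origin, again checking it preserves $L^t$ norms and does not increase the dual energy; a minimizer must therefore be symmetric under a maximal family of such polarizations, which by the standard characterization (Brock--Solynin) means it is foliated Schwarz symmetric with respect to some $e$. For the symmetry-breaking part — that least energy solutions are \emph{not} radial — the argument is a comparison: one shows that the radial least energy level is strictly larger than $c$, by exhibiting a non-radial competitor with strictly smaller dual energy. The natural competitor comes from the transformation itself: starting from a radial solution and applying a flip that breaks radial symmetry (moving mass toward one hemisphere) strictly decreases $J$, because the Neumann Green function on the ball/annulus is not invariant under such flips. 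Equivalently, one linearizes around the radial solution and shows the Morse index in the space of non-radial perturbations is positive, contradicting minimality. I anticipate the symmetry-breaking step to be delicate because it requires a \emph{strict} inequality, i.e., genuinely new information about the Green function's lack of symmetry, rather than just the monotone (non-strict) rearrangement inequality.

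For part (iii), the unique continuation property in the sublinear case $pq<1$, I would argue that if, say, $\{u=0\}$ had positive measure, then since $-\Delta v=|u|^{p-1}u$ and $-\Delta u=|v|^{q-1}v$, on the set where $u$ vanishes we would have $\Delta v=0$ a.e., so $v$ and all its derivatives vanish a.e. on a set of positive measure where $v$ need not vanish — more precisely, I would use that on $\{u=0\}$, $\nabla u=0$ a.e. and hence (from the equation for $v$, bootstrapped) reach a contradiction with a strong unique continuation principle, or directly exploit the sublinear homogeneity: the map $s\mapsto |s|^{p-1}s$ is not Lipschitz at $0$ when $p<1$, but the composed fourth-order structure $-\Delta(|v|^{q-1}v)^{1/\text{(something)}}$... — cleaner is to invoke that in the sublinear regime the solution operator has a contraction/monotonicity property forcing an ODE-type uniqueness on the nodal set, or simply cite a known unique continuation result for the scalar sublinear equation applied componentwise. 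The cleanest route is: on $\omega:=\{u=0\}$ (positive measure), $u$ has a point of density one, $u$ is $C^2$, so $u=|\nabla u|=0$ there; then $v$ satisfies $-\Delta v = 0$ on $\omega$ in a pointwise sense at density points, and combined with $v\in C^2$ one derives that $v$ also vanishes to high order on $\omega$; feeding this back through the sublinear nonlinearity (where small values of $u$ force not-too-small values of $\Delta v$) produces the contradiction. I expect the bookkeeping of which component controls which, and the precise invocation of a unique continuation theorem valid for merely Hölder-continuous right-hand sides, to be the only real difficulty here; it is milder than the rearrangement obstacles in (i)–(ii).
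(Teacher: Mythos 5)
Your treatment of existence, regularity, part (i), and the foliated Schwarz symmetry in part (ii) matches the paper's dual-method strategy quite faithfully, including the crucial observation that a ``flip'' must precede the rearrangement (the paper's $\divideontimes$-transformation, with Remark~\ref{ce:rem} showing Schwarz symmetrization alone fails). Two substantive gaps remain, however.

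For the symmetry-breaking half of part (ii), your primary proposal---applying ``a flip that breaks radial symmetry (moving mass toward one hemisphere)'' to produce a non-radial competitor of strictly smaller dual energy---does not correspond to anything the paper can or does do. The $\divideontimes$-transformation is radial by construction; it maps radial data to radial data and cannot produce a non-radial competitor, let alone one whose energy is \emph{strictly} lower. Your fallback (``linearizes around the radial solution and shows the Morse index in the space of non-radial perturbations is positive'') is the right idea, but the difficult content is all in the execution, which you do not anticipate. The paper's argument uses the specific antisymmetric direction $(\bar f,\bar g)=(p|u|^{p-1}u_{x_1},\,q|v|^{q-1}v_{x_1})$, observes $(u_{x_1},v_{x_1})$ solves the corresponding \emph{Dirichlet} system, and gets the strict sign from a comparison principle (maximum principle plus Hopf) showing the \emph{Neumann} solution $(K\bar g,K\bar f)$ strictly dominates the Dirichlet solution $(u_{x_1},v_{x_1})$ on the half-domain $\Omega(e_1)$; this forces the second variation along $(\bar f,\bar g)$ to be strictly negative, contradicting minimality. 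Moreover, this argument relies essentially on Theorem~\ref{thm:maintheorem3} (radial monotonicity) to guarantee that the nodal sets of $u,v$ are single interior spheres, which is what controls the singular weights $|u|^{p-1},|v|^{q-1}$ when $p,q<1$ and makes the second-variation integral finite. None of this machinery appears in your sketch.

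For part (iii), your approach is a direct PDE/unique-continuation argument applied to the system itself (``invoke a known unique continuation result for the scalar sublinear equation applied componentwise'', or ``feeding this back through the sublinear nonlinearity \dots produces the contradiction''). This would, if it worked, prove unique continuation for \emph{all} solutions, not just minimizers---but the paper explicitly flags exactly that as an open problem in the sublinear regime. The paper's actual proof is variational and exploits minimality in an essential way: Proposition~\ref{p:decay} establishes a decay rate $|u|^{p+1}+|v|^{q+1}=o(|x-x_0|^\gamma)$ at a density-one zero via a blow-up argument, and Proposition~\ref{p:pd} then constructs a localized test perturbation $(\varphi_r,\psi_r)$ concentrated near $x_0$, using the sublinearity $\alpha\beta/(\alpha+\beta)>1$ to show $\phi(f+\varphi_r,g+\psi_r)<\phi(f,g)$ for small $r$, contradicting that $(f,g)$ is a global minimizer. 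Your sketch correctly identifies the relevance of density-one points and the sublinear scaling but omits the energy-decreasing competitor, which is the ingredient that actually closes the argument; a strong unique continuation theorem for merely H\"older right-hand sides that you could ``simply cite'' is precisely what is not available here.
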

Results for the scalar equation follow immediately from Theorem \ref{thm:maintheorem1}, see Corollary~\ref{thm:singleeq} below.  Our approach to show Theorem \ref{thm:maintheorem1} is based on a variant of the \emph{dual method} \cite{ClementvanderVorst,AlvesSoares}.
Later in this introduction we motivate the use of this approach and also the relationship between our results and previously known results for the single-equation problem. To describe the dual framework, we introduce some notation used throughout the whole paper. Let $p$ and $q$ satisfy \eqref{sc:intro} and, for $s>1$, let
\begin{align}
X^s=\Big\{f\in L^{s}(\Omega): \ \int_\Omega f =0\Big\},\qquad
\alpha:=\frac{p+1}{p},\qquad
\beta:=\frac{q+1}{q},\qquad \text{and}\qquad
X:=X^\alpha\times X^\beta,
\end{align}
endowed with the norm $\|(f,g)\|_X=\|f\|_\alpha+\|g\|_\beta$. Let $K$ denote the inverse (Neumann) Laplace operator with zero average, that is, if $h\in X^s(\Omega)$, then $u := Kh\in W^{2,s}(\Omega)$ is the unique strong solution of $-\Delta u = h$ in $\Omega$ satisfying $\partial_\nu u=0$ on $\partial \Omega$ and $\int_\Omega u = 0$, see Lemma \ref{l:reg} below.  In this setting, the (dual) energy functional $\phi:X\to \R$ is given by
\begin{align}\label{phi:def}
 \phi(f,g):=\int_\Omega \frac{|f|^{\alpha}}{\alpha}+\frac{|g|^{\beta}}{\beta}-g\, K f\ dx,\qquad (f,g)\in X.
\end{align}
Since \eqref{comp} holds for any nontrivial solution, we require a suitable translation of $K$. For $t>0$, let $K_t:X^\frac{t+1}{t}\to W^{2,\frac{t+1}{t}}(\Omega)$ be given by 
\begin{align}\label{Ks:def}
K_t h:=Kh +c_t(h)\qquad \text{ for some }\quad c_t(h)\in\R\quad \text{ such that }\int_\Omega |K_t h|^{t-1}K_t h=0.  
\end{align}
Then, a critical point $(f,g)$ of $\phi$ solves the dual system $K_{q} f=|g|^{\frac{1}{q}-1}g$ and $K_{p} g= |f|^{\frac{1}{p}-1}f$ in $\Omega$, see Lemma \ref{l:ss}.

In the sublinear case ($pq<1$), the operator $\phi$ achieves its global minimum in $X$ (see Lemma \ref{minimum}); whereas, in the superlinear case ($pq>1$), $\phi$ is unbounded from below, but it can be minimized (see Lemma \ref{lemma:B}) in the Nehari-type set
\begin{align*}
\mathcal{N}&:=\{(f,g)\in X\backslash \{(0,0)\} : \ \phi'(f,g)(\gamma_1f,\gamma_2g)=0\}
\quad \text{ with }\quad \gamma_1:=\frac{\beta}{\alpha+\beta},\quad \gamma_2:=\frac{\alpha}{\alpha+\beta}.
\end{align*}
Therefore, we often refer to a minimizer $(f,g)\in X$ satisfying
\begin{align}\label{lee}
\phi(f,g)=\inf_{X}\phi\quad \text{ if } pq<1 \qquad \text{ or }\qquad \phi(f,g)=\inf_{{\cal N}}\phi\quad \text{ if } pq>1.
\end{align}
In particular, if $(f,g)$ satisfies \eqref{lee}, then $(u,v):=(K_p g, K_q f)$ is a \emph{least energy solution}, that is, $\phi(f,g)=I(u,v)=c$, with $c$ as in \eqref{c:level} and solves \eqref{NHS}, see Lemmas \ref{l:same}, \ref{minimum}, and \ref{lemma:B} below. 

\bigskip

We now describe in more detail the different techniques involved in the proof of Theorem~\ref{thm:maintheorem1}.  The existence is obtained 
using the subcriticality assumption \eqref{sc:intro} and the compactness of the operator $K$, while the regularity follows from a 
bootstrap argument (Proposition \ref{p:reg}). 
The unique continuation property is shown by extending the techniques from \cite[Section 3]{PW15} to the setting of Hamiltonian elliptic systems and to the dual-method framework (later in this introduction we compare in more detail the results and strategies from \cite{PW15} with ours). Moreover, the fact that least energy solutions are foliated Schwarz symmetric is due to a characterization of sets of functions with this symmetry in terms of invariance under polarizations, see Lemma \ref{l:char} below and \cite{BartschWethWillem, PW15, BMRT15, B03, SW12} for similar results in other settings.

One of our main contributions, from the methodological point of view, is the proof of the strict monotonicity of solutions (for $N=1$) and of the symmetry-breaking phenomenon (\emph{i.e.}, that least energy solutions are \emph{not} radial).  These results require first a deep understanding of the radial setting: consider $\Omega\subset \R^N$ to be either
\begin{equation}\label{Omega:eqs}
\begin{aligned}
\Omega&=B_1(0)\qquad \qquad \text{ and fix $\delta:=0$}\qquad \quad \text{(for any $N\geq 1$)}\qquad\text{ or }\\
\Omega&=B_1(0)\backslash B_\delta(0)\quad \text{ for some }\delta\in(0,1)\quad \text{(for any $N\geq 2$)}.
\end{aligned}
\end{equation}
If $Y$ is a set of functions, we use $Y_{rad}$ to denote the subset of radial functions in $Y$ and we call $(u,v):=(K_{p} g, K_{q} f)$ a \emph{least energy radial solution} if $(f,g)$ 
satisfies 
\begin{align}\label{leer}
\phi(f,g)=\inf_{X_{rad}}\phi\quad \text{ if } pq<1 \qquad \text{ or }\qquad \phi(f,g)=\inf_{{\cal N}_{rad}}\phi\quad \text{ if } pq>1.
\end{align}
Our next result shows that these radial solutions are remarkably rigid. We denote the radial derivative of $w$ by $w_r$ and use a slight abuse of notation setting $w(|x|)=w(x)$.

\begin{theo} \label{thm:maintheorem3}
Let $p,q$ satisfy \eqref{sc:intro} and $\Omega,$ $\delta$ as in \eqref{Omega:eqs}. The set of least energy radial solutions is nonempty and, if $(u,v)$ is a least energy radial solution, then $(u,v)\in C^{2,\varepsilon}(\overline{\Omega})\times C^{2,\varepsilon}(\overline{\Omega})$ is a classical radial solution of \eqref{NHS} and $u_r v_r>0$ in $(\delta,1)$; in particular, $u$ and $v$ are both strictly monotone increasing or both strictly monotone decreasing in the radial variable.
\end{theo}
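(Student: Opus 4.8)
\noindent The plan is in three steps: first establish existence and regularity of a radial minimizer exactly as in the non-radial case; then reduce the monotonicity claim to a one-dimensional sign property by an identity special to the radial setting; and finally prove that sign property by a symmetrization argument built on a new $L^t$-norm-preserving transformation. For existence I would repeat the minimization scheme of Lemmas~\ref{minimum} and~\ref{lemma:B} inside the closed subspace $X_{rad}$, minimizing $\phi$ over $X_{rad}$ if $pq<1$ and over $\mathcal{N}_{rad}$ if $pq>1$: the compactness of $K$ is inherited by its restriction to radial functions, $\phi$ is weakly lower semicontinuous on $X_{rad}$, and the competitor showing that the relevant infimum is negative (resp.\ that $\mathcal{N}_{rad}\neq\emptyset$) can be taken radial, so a nontrivial $(f,g)$ satisfying \eqref{leer} exists. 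To see that $(u,v):=(K_p g,K_q f)$ actually solves \eqref{NHS}, and not merely the system tested against radial functions, observe that $O(N)$ acts on each $X^s$ by the linear isometries $(\sigma h)(x)=h(\sigma^{-1}x)$, that $K$ commutes with this action since $\Omega$ is rotation invariant and the Neumann Laplacian is rotation equivariant, and hence that $\phi$ is $O(N)$-invariant with fixed-point subspace exactly $X_{rad}$; by the principle of symmetric criticality (and, in the superlinear case, standard Nehari-manifold arguments) a minimizer yields a critical point of $\phi$ on $X$, so Lemma~\ref{l:ss} applies. The regularity $(u,v)\in C^{2,\varepsilon}(\overline{\Omega})\times C^{2,\varepsilon}(\overline{\Omega})$ then follows from the bootstrap in Proposition~\ref{p:reg}.

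For the reduction, for a radial function $h$ set $F_h(r):=\int_\delta^r h(s)\,s^{N-1}\,ds$. Writing \eqref{NHS} in radial form, $(r^{N-1}u_r)_r=-r^{N-1}|v|^{q-1}v$ and $(r^{N-1}v_r)_r=-r^{N-1}|u|^{p-1}u$, integrating from $r=\delta$, and using the Neumann conditions (so $r^{N-1}u_r$ and $r^{N-1}v_r$ vanish at $r=\delta$ and at $r=1$, the inner condition being read as $r^{N-1}u_r\to0$ as $r\to0$ when $\Omega=B_1(0)$), the compatibility $\int_\Omega f=\int_\Omega g=0$ from \eqref{comp}, and the relations $g=|v|^{q-1}v$, $f=|u|^{p-1}u$ from the dual system, one gets $u_r(r)=-r^{1-N}F_g(r)$ and $v_r(r)=-r^{1-N}F_f(r)$. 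Hence
\[
u_r(r)\,v_r(r)=r^{2-2N}\,F_f(r)\,F_g(r),\qquad r\in(\delta,1),
\]
and the same computation yields $\int_\Omega g\,Kf\,dx=c_N\int_\delta^1 F_f(r)F_g(r)\,r^{1-N}\,dr$ for some constant $c_N>0$. It is therefore enough to prove that $F_f$ and $F_g$ are nonzero and have a common sign throughout $(\delta,1)$.

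The core of the argument is a transformation $T$ on radial functions, built as a suitable \emph{flipping} followed by a monotone (decreasing) rearrangement with respect to the measure $r^{N-1}\,dr$, with the following properties for every radial $(f,g)\in X_{rad}$: (a) $\|Tf\|_t=\|f\|_t$ and $\|Tg\|_t=\|g\|_t$ for all $t>1$, and $\int_\Omega Tf=\int_\Omega Tg=0$, so $(Tf,Tg)\in X_{rad}$ and the first two terms of $\phi$ in \eqref{phi:def} are unchanged; (b) $Tf$ and $Tg$ are monotone in $r$ in one common direction; (c) $\int_\Omega Tg\,K(Tf)\,dx\geq\int_\Omega g\,Kf\,dx$, hence $\phi(Tf,Tg)\leq\phi(f,g)$. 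By the reduction, (c) amounts to the weighted rearrangement inequality $\int_\delta^1 F_{Tf}F_{Tg}\,r^{1-N}\,dr\geq\int_\delta^1 F_f F_g\,r^{1-N}\,dr$, which I would establish pointwise in $r$; this is where the flipping is essential, since it has to be chosen (depending on $f$, resp.\ $g$) so that $F_{Tf}$ and $F_{Tg}$ become the extremal primitives among all equimeasurable rearrangements and in particular have one sign --- something a direction-fixed rearrangement cannot do for sign-changing functions. Because $T$ leaves the first two terms of $\phi$ unchanged and does not decrease the coupling term, $\phi$ does not increase under $T$ along any fiber through the origin; in the superlinear case the rescaling of $(Tf,Tg)$ onto $\mathcal{N}_{rad}$ thus has energy at most $\phi(f,g)$ (cf.\ Lemma~\ref{lemma:B}), while in the sublinear case no rescaling is needed. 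Consequently, applying $T$ to a minimizer $(f,g)$ as in \eqref{leer} forces equality in (c), whence $F_f(r)F_g(r)=F_{Tf}(r)F_{Tg}(r)\geq0$ for all $r$; equivalently $u=K_p g$ and $v=K_q f$ are (weakly) monotone in $r$ in a common direction, and $u_r v_r=r^{2-2N}F_f F_g\geq0$ on $(\delta,1)$.

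It remains to upgrade $u_r v_r\geq0$ to $u_r v_r>0$. First, $u$ (equivalently $v$) cannot be constant on a subinterval of $(\delta,1)$: if $u\equiv c$ on a maximal such interval $[a,b]\subsetneq[\delta,1]$, say with $b<1$, then $\Delta u=0$ there forces $v\equiv0$ there, hence $\Delta v=0$ there forces $u\equiv0$ there, so $c=0$ and $v(b)=v_r(b)=0$; by maximality of $[a,b]$ and the monotonicity of $u$, one has $u\geq0$ on $(b,1)$ with $u\not\equiv0$, so $(r^{N-1}v_r)_r=-r^{N-1}|u|^{p-1}u\leq0$ on $(b,1)$ and is strictly negative on a subinterval, which together with $v_r(b)=0$ forces $v_r<0$ somewhere and contradicts the monotonicity of $v$ (the case $a>\delta$ is symmetric). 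Hence $u$ and $v$ are \emph{strictly} monotone in $r$; since $u$ is nonconstant and $\int_\Omega|u|^{p-1}u=0$, it follows that $u(\delta)<0<u(1)$ and $u$ changes sign at a single point $\rho\in(\delta,1)$. Substituting into the equation for $v$, $(r^{N-1}v_r)_r=-r^{N-1}|u|^{p-1}u$ is positive on $(\delta,\rho)$ and negative on $(\rho,1)$, while $r^{N-1}v_r$ vanishes at $r=\delta$ and $r=1$, so $r^{N-1}v_r>0$ on $(\delta,1)$, i.e.\ $v_r>0$ there; interchanging the roles of $u$ and $v$ gives $u_r>0$ on $(\delta,1)$, whence $u_r v_r>0$ (with $u_r,v_r<0$ in the opposite-direction case). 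I expect the main obstacle to be the construction of $T$ and the proof of property (c) together with its equality case: because $\Omega$ is a ball or an annulus, the functions change sign, and the boundary condition is of Neumann type, the classical Schwarz-type symmetrizations are unavailable, so the flipping, the rearrangement, and the rigidity in the equality case all have to be designed from scratch.
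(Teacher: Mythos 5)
Your overall route is the paper's: existence and regularity by minimizing $\phi$ over $X_{rad}$ (resp.\ $\mathcal N_{rad}$) plus symmetric criticality and the bootstrap of Proposition \ref{p:reg}; the radial identity $u_r=-r^{1-N}F_g$, $v_r=-r^{1-N}F_f$ is exactly \eqref{der}; and your transformation $T$ (flipping adapted to the sign of the primitive, followed by a decreasing rearrangement in the variable $\omega_N(r^N-\delta^N)$) is precisely the paper's $\divideontimes$-transformation, with your properties (a)--(c) being Proposition \ref{prop:prop} and Theorem \ref{thm:mono:intro}, and the passage through $\mathcal N_{rad}$ in the superlinear case being Theorem \ref{coro:mono2}. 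Two remarks on (c): your plan to prove it pointwise in $r$ is in fact viable and even a bit shorter than the paper's argument, since \eqref{Ar} gives $\cI(f^{\divideontimes})(r)\geq \cI(\mF f)(r)=|\cI f(r)|$ for every $r$, hence $\cI(f^{\divideontimes})\,\cI(g^{\divideontimes})\geq|\cI f|\,|\cI g|\geq \cI f\,\cI g$ pointwise; the paper instead proves the integral inequality via Hardy--Littlewood and Fubini, because it wants the equality case to force monotonicity of $f$ and $g$ themselves (Lemma \ref{eq:lemma}). But you have only sketched the construction of $T$ and asserted (a)--(c) and the rigidity; as you acknowledge, this is the technical heart of the theorem, so as written the proposal is a correct blueprint rather than a proof.

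There is, in addition, one genuine logical gap. From equality in (c) you extract only $F_fF_g=F_{Tf}F_{Tg}\geq0$ and then declare this ``equivalent'' to $u$ and $v$ being individually (weakly) monotone in a common direction; that implication is false: $u_rv_r\geq0$ does not prevent, say, $u_r$ from changing sign on a set where $v_r$ vanishes. Your subsequent upgrade to strictness repeatedly invokes ``the monotonicity of $u$'' and ``the monotonicity of $v$'', so it rests on this unproved claim. The fix is available inside your own framework: for nontrivial $f$, the function $Tf$ is non-increasing with zero average, so its primitive satisfies $F_{Tf}>0$ on the whole open interval $(\delta,1)$ (and likewise $F_{Tg}>0$, both components of a minimizer being nontrivial); the pointwise equality $F_fF_g=F_{Tf}F_{Tg}$ then gives $F_fF_g>0$ on $(\delta,1)$ at once, i.e.\ $u_rv_r>0$, with no ODE upgrade needed. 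Alternatively one argues as the paper does: the equality case of Hardy--Littlewood (Lemma \ref{eq:lemma}, using that $\int_0^{s}\widetilde{\mF f}^{\#}>0$ for $s\in(0,|\Omega|)$) forces $f$ and $g$ to be radially monotone, and then \eqref{der} together with the zero averages yields the strict sign, as in Theorems \ref{coro:mono} and \ref{coro:mono2}. Either repair should be written out; as it stands, the step from the equality case to $u_rv_r>0$ is not justified.
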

The existence and regularity follows similarly as in Theorem \ref{thm:maintheorem1}.  The strict monotonicity relies on the following new transformation denoted with a superindex~$\divideontimes$:
let $\Omega$, $\delta$ as in \eqref{Omega:eqs} and
\begin{align*}
&{\cal I}:L_{rad}^\infty(\Omega)\to C_{rad}(\overline{\Omega}),\qquad {\cal I}h(x):=\int_{\{\delta<|y|<|x|\}} h(y)\ dy
=N\omega_N\int_\delta^{|x|}h(\rho)\rho^{N-1}\ d\rho,\\
&\mF: C_{rad}(\overline{\Omega})\to L_{rad}^\infty(\Omega),\qquad \mF h:=(\chi_{\{{\cal I}h>0\}}-\chi_{\{{\cal I}h\leq 0\}})\, h.
\end{align*}
Then, for $h\in C_{rad}(\overline{\Omega})$, the $\divideontimes$-transformation of $h$ is given by 
\begin{align*}
h^{\divideontimes}\in L^\infty_{rad}(\Omega),\qquad h^{\divideontimes}(x) := ({\mathfrak F} h)^\#(\omega_N |x|^N-\omega_N \delta^N).
\end{align*}
where $\#$ is the one-dimensional decreasing rearrangement (see Subsection \ref{subsec:dr}) and $\omega_N= |B_1|$ is the volume of the unitary ball in $\R^N$.  The function $\mF h$ can be seen as a suitable flipping of $h$ on the set $\{\cI h\leq 0\}$, and this step is very important to construct monotone decreasing solutions.  In Remark \ref{rem:in} we motivate further the definition of $h^\divideontimes$ and in Figure \ref{fig} in Section \ref{sec:transformation} we illustrate the construction of $h^\divideontimes$ in a ball and compare it with the Schwarz symmetrization $h^*(x)=h^\#(\omega_N|x|^N)$. 

Observe that annuli, sign-changing functions, and Neumann boundary data are non-standard conditions to work with rearrangements; in fact, many results relying on symmetrizations fail in these settings; for example, the standard Polya-Szeg\H{o} inequality only holds for nonnegative functions with zero boundary data.  Our approach is able to cover these cases mainly because of two reasons: the \emph{radiality} assumption and the fact that we use \emph{Lebesgue spaces} within a dual framework, which gives us more flexibility in the construction of our transformation; in this sense, we are transforming the \emph{dual variables} $(f,g)$ to obtain, together with variational techniques, monotonicity information of solutions $(u,v):=(K_p g, K_q f)$.

In general, $h^\divideontimes$ is \emph{not} a (level-set) rearrangement of $h$, since the maximum value of $h$ may vary due to the flipping ${\mathfrak F}h$; however, $L^t$-norms and (zero) averages are preserved (see Proposition \ref{prop:prop}) and the transformed functions have less energy, as stated in the following result.
\begin{theo}\label{thm:mono:intro} Let $p,q>0$, $\Omega$ as in \eqref{Omega:eqs}, and let $f,g:\overline{\Omega}\to \R$ be continuous and radially symmetric functions with $\int_\Omega f = \int_\Omega g = 0$. Then $(f^\divideontimes,g^\divideontimes)\in X$ and
\begin{align}\label{goal:mono:intro}
\phi(f^\divideontimes,g^\divideontimes)\leq \phi(f,g).
\end{align}
Furthermore, if $f$ and $g$ are nontrivial and $\phi(f^\divideontimes,g^\divideontimes)=\phi(f,g)$, then $f$ and $g$ are monotone in the radial variable and if $(u,v):=(K_{p} g,K_{q} f)$, then $u$ and $v$ are radially symmetric and strictly monotone in the radial variable.
\end{theo}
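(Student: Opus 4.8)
The plan is to prove the energy inequality $\phi(f^\divideontimes,g^\divideontimes)\le \phi(f,g)$ by splitting $\phi$ into the two ``norm'' terms and the coupling term $\int_\Omega g\,Kf$, handling them separately. First I would invoke Proposition~\ref{prop:prop} to conclude that $\|f^\divideontimes\|_\alpha=\|f\|_\alpha$, $\|g^\divideontimes\|_\beta=\|g\|_\beta$, and $\int_\Omega f^\divideontimes=\int_\Omega g^\divideontimes=0$; in particular $(f^\divideontimes,g^\divideontimes)\in X$ and the terms $\int_\Omega |f|^\alpha/\alpha+|g|^\beta/\beta$ are left unchanged by the transformation. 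Thus \eqref{goal:mono:intro} reduces entirely to showing
\begin{align}\label{eq:coupling-goal}
\int_\Omega g^\divideontimes\, K f^\divideontimes\, dx \;\ge\; \int_\Omega g\, K f\, dx.
\end{align}
The main work is \eqref{eq:coupling-goal}. Since everything is radial, I would rewrite the bilinear form $\int_\Omega g\,Kf$ using the one-dimensional representation of the radial Neumann Green's function: if $-\Delta(Kf)=f$ with $\partial_\nu(Kf)=0$ and zero average, then in the radial variable $r=|x|$ one has $(r^{N-1}(Kf)_r)_r = -r^{N-1}f$, so $(Kf)_r(r) = -r^{1-N}\int_\delta^r f(\rho)\rho^{N-1}\,d\rho$, which up to the constant $N\omega_N$ is exactly $-r^{1-N}\,\mathcal I f(r)$. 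Integrating by parts in $\int_\Omega g\,Kf = \int_\Omega (-\Delta(Kg))(Kf)$ (using the Neumann conditions to kill boundary terms) gives $\int_\Omega g\,Kf = \int_\Omega \nabla(Kg)\cdot\nabla(Kf) = N\omega_N\int_\delta^1 (Kf)_r(r)(Kg)_r(r)\,r^{N-1}\,dr$, and substituting the formula for the radial derivatives turns the coupling term into
\[
\int_\Omega g\,Kf \;=\; N\omega_N \int_\delta^1 \frac{(\mathcal I f)(r)\,(\mathcal I g)(r)}{r^{N-1}}\,dr.
\]
So \eqref{eq:coupling-goal} becomes a one-dimensional weighted-$L^2$ pairing inequality between $(\mathcal I f,\mathcal I g)$ and $(\mathcal I f^\divideontimes,\mathcal I g^\divideontimes)$; after the change of variables $s=\omega_N(r^N-\delta^N)$ the weight disappears and one is comparing $\int_0^{|\Omega|} F G$ with $\int_0^{|\Omega|}\widetilde F\widetilde G$ where $\widetilde F(s)=\int_0^s (\mathfrak F f)^\#$, etc.

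The heart of the matter is then a Hardy–Littlewood / rearrangement-type estimate. By construction $\mathfrak F f$ has the property that $\mathcal I(\mathfrak F f)=\int_{\{\delta<|y|<|x|\}}|\mathcal I f|\,\frac{\mathcal I f}{|\mathcal I f|}$... more precisely $\mathfrak F f = \pm f$ chosen so that its running integral $\mathcal I(\mathfrak F f)$ is nonnegative — indeed on $\{\mathcal I f>0\}$ we keep $f$ and on $\{\mathcal I f\le 0\}$ we flip sign, which makes $\mathcal I(\mathfrak F f)\ge 0$ and in fact $\mathcal I(\mathfrak F f)\ge |\mathcal I f|$ pointwise (this monotone-flipping lemma should already be available, or follows from a short argument: the running integral of $|f|$-with-sign-corrections dominates the absolute value of the running integral of $f$). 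Then passing from $\mathfrak F f$ to $(\mathfrak F f)^\#$, the key classical fact is that the running integral of the decreasing rearrangement dominates that of the original: $\int_0^s (\mathfrak F f)^\# \ge \int_0^s \mathfrak F f$ for all $s$, with equality at $s=|\Omega|$ since $\int\mathfrak F f = \int f = 0$ forces both to vanish there — wait, that would force $(\mathfrak F f)^\#$ to integrate to $0$, consistent since $\#$ preserves the integral. Combining, $\widetilde F(s):=\mathcal I f^\divideontimes(s)\ge \mathcal I(\mathfrak F f)(s)\ge |\mathcal I f(s)|$ and similarly $\widetilde G(s)\ge|\mathcal I g(s)|$, and both $\widetilde F,\widetilde G$ are nonnegative. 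Hence $\int \widetilde F\widetilde G \ge \int |\mathcal I f||\mathcal I g| \ge \int (\mathcal I f)(\mathcal I g)$, which is exactly \eqref{eq:coupling-goal}. The delicate point requiring care is the direction of monotonicity — one must check that flipping toward \emph{positive} running integral and then taking the \emph{decreasing} rearrangement is the combination that increases the coupling term (rather than decreases it); this is where the sign bookkeeping in the definitions of $\mathfrak F$ and $\#$ has to be matched to the sign of $K$, and it is the step I expect to be the main obstacle.

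For the equality/rigidity statement, suppose $f,g$ nontrivial and $\phi(f^\divideontimes,g^\divideontimes)=\phi(f,g)$, i.e. equality in \eqref{eq:coupling-goal}. Tracing back the chain of inequalities, equality forces: (a) $\int(\mathcal If)(\mathcal Ig)=\int|\mathcal If||\mathcal Ig|$, so $\mathcal If$ and $\mathcal Ig$ have the same sign a.e.; (b) $\widetilde F=|\mathcal If|$ and $\widetilde G=|\mathcal Ig|$ a.e., which via the equality case of $\int_0^s h^\#\ge\int_0^s h$ (namely that $h$ is already nonincreasing) forces $\mathfrak F f$ and $\mathfrak F g$ to be a.e. equal to decreasing functions, and combined with (a) and the fact that $\mathcal I(\mathfrak Ff)=|\mathcal If|$ (no genuine flipping occurred on a set of positive measure) this pins down that $f$ and $g$ are themselves monotone in the radial variable, of the same type. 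Once $f$ is radial and, say, nonincreasing with zero average, $\mathcal I f\le 0$ throughout $(\delta,1)$... one adjusts the sign convention so that $u_r=(K_p g)_r$ has constant sign; strictness $u_r v_r>0$ on $(\delta,1)$ then follows from the strong maximum principle applied to the radial ODE for $u$ (the right-hand side $|v|^{q-1}v$ cannot vanish on an interval without $u$ being affine there, contradicting the Neumann condition at both endpoints unless $u$ is constant, which is excluded by nontriviality via the compatibility condition \eqref{comp}), exactly as in Theorem~\ref{thm:maintheorem3}. I would organize the write-up as: Step 1 (reduce to \eqref{eq:coupling-goal} via Proposition~\ref{prop:prop}); Step 2 (1D representation of the coupling term); Step 3 (the flip + rearrangement inequality and its equality case); Step 4 (rigidity and strict monotonicity via the maximum principle).
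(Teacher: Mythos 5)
Your reduction to the coupling term and your one–dimensional rewriting are exactly the paper's (Proposition \ref{prop:prop} for the norm terms and zero averages, then the radial identity $v_r(r)=-(N\omega_N)^{-1}\cI f(r)r^{1-N}$ as in \eqref{der}, up to a constant which you misplace but which is harmless since it multiplies both sides). Where you genuinely diverge is in how the rearrangement enters: the paper keeps the functions $\widetilde{\mF f},\widetilde{\mF g}$ themselves and applies the Hardy--Littlewood inequality (Theorem \ref{thm:HL}) twice, via Fubini, to replace each of them by its decreasing rearrangement inside the triple integral; you instead compare the \emph{running integrals} pointwise, using $\cI(\mF h)=|\cI h|$ (which is the exact statement of Proposition \ref{prop:prop}, so your hedged ``$\geq$'' is available in the stronger form) together with the elementary fact \eqref{Ar} that $\int_0^s h^\#\geq\int_0^s h$, to get $\cI(f^\divideontimes)\,\cI(g^\divideontimes)\geq|\cI f|\,|\cI g|\geq(\cI f)(\cI g)$ at every radius, and then integrate against the positive weight. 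This pointwise route is valid and arguably more elementary for the inequality \eqref{goal:mono:intro}; what the paper's Hardy--Littlewood formulation buys is a ready-made equality case (Lemma \ref{eq:lemma}) for the rigidity statement.

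In your rigidity argument there is a gap at step (b): equality of the integrals only forces the a.e.\ equality of \emph{products} $\widetilde F\widetilde G=|\cI f|\,|\cI g|$, and since you only know $\widetilde F\geq|\cI f|\geq0$ and $\widetilde G\geq|\cI g|\geq0$, you cannot split this into $\widetilde F=|\cI f|$ and $\widetilde G=|\cI g|$ on the set where $\cI f\,\cI g$ vanishes, which a priori could have positive measure. The missing observation (which is also the engine of the paper's equality analysis) is that $\widetilde F(s)=\int_0^s(\mF f)^\#$ and $\widetilde G(s)=\int_0^s(\mF g)^\#$ are \emph{strictly} positive on $(0,|\Omega|)$, because $(\mF f)^\#$ is nonincreasing, nontrivial, and has zero mean; once this is in place the product equality does force $|\cI f|\,|\cI g|>0$ a.e.\ and then the factorwise equalities, and your differentiation argument gives $\widetilde{\mF f}=\widetilde{\mF f}^\#$ a.e., hence monotonicity of $\mF f$ and (using $|\mF f|=|f|$, continuity, and $\int_\Omega f=0$) of $f$, and likewise for $g$. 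Two smaller points: your ``$\cI f\leq0$ for nonincreasing $f$'' has the sign backwards (a nonincreasing, zero-mean, nontrivial $f$ has $\cI f>0$ on $(\delta,1)$), and the final strictness $u_rv_r>0$ needs no maximum principle: it follows directly from \eqref{der} once $\cI f$ and $\cI g$ have a strict common sign on $(\delta,1)$, the common sign coming from the forced equality $\int(\cI f)(\cI g)r^{1-N}=\int|\cI f||\cI g|r^{1-N}$.
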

The proof exploits the one-dimensionality of the problem and uses elementary rearrangement techniques.  
Note that, if $\Omega$ is a ball, then $h^\divideontimes$ is actually the Schwarz symmetrization of $\mathfrak{F}h$. The flipping $\mF$, however, is necessary, since it can be shown that \eqref{goal:mono:intro} does \emph{not} hold in general using merely the Schwarz symmetrization (even in the scalar case $p=q$), see Remark \ref{ce:rem} below. 

Theorem \ref{thm:mono:intro} is the main tool to show the monotonicity claims in Theorem \ref{thm:maintheorem1} and Theorem \ref{thm:maintheorem3}. These results are of independent interest and are new even in the single-equation case \eqref{eq:1eq_problem}. Furthermore, Theorem \ref{thm:mono:intro} is also the starting point for the proof of the symmetry-breaking phenomenon, which, in the dual setting, reads as follows.
\begin{align}\label{claim:nr:thm}
 \text{If $(f,g)$ satisfies \eqref{lee}, then $f$ and $g$ are not radial}.
\end{align}
The proof of this claim is done by contradiction: if $(f,g)$ is radially symmetric and $(u,v):=(K_{p} g,K_{q} f)$, then $(u_{x_1},v_{x_1})$ is a strong (Dirichlet) solution of
\begin{align*}
 -\Delta u_{x_1} &= q |v|^{q-1} v_{x_1}=:\bar g,\ \ -\Delta v_{x_1} = p |u|^{p-1} u_{x_1}=:\bar f \ \ \text{ in }\Omega\qquad\text{with}\quad u_{x_1}=v_{x_1}=0\quad \text{ on }\partial\Omega.
\end{align*}
Therefore, from the minimality of $(f,g)$ (approximating $\phi''$ in a suitable sense) we infer that
\begin{align}\label{int:intro}
\int_{\Omega(e_1)} p|u|^{p-1}u_{x_1}(u_{x_1}-K \bar g )+q|v|^{q-1} v_{x_1}(v_{x_1}-K \bar f)\, dx\geq 0,\quad \Omega(e_1):=\Omega\cap\{x_1>0\}.\qquad 
\end{align}
Here, Theorem \ref{thm:maintheorem3} is very important to control the (possibly singular) terms $|u|^{p-1}$ and $|v|^{p-1}$, since the strict monotonicity implies that the nodal sets of $u$ and $v$ are merely two inner spheres. In the end, a contradiction is obtained by showing\textemdash using the radiality assumption, maximum principles, and Hopf's boundary point Lemma\textemdash that the Neumann solution $(K\bar g,K\bar f)$ dominates the Dirichlet solution $(u_{x_1},v_{x_1})$ in $\Omega(e_1)$, and this would imply that the integral in \eqref{int:intro} is strictly negative.  Observe that
the symmetry-breaking statement in Theorem \ref{thm:maintheorem1} follows directly from \eqref{claim:nr:thm}. 
For other symmetry-breaking results
for single equations
we refer to \cite{AftalionPacella} for Dirichlet boundary conditions, to \cite{GiraoWeth,PW15} for Neumann boundary conditions, and to \cite{BMT14} for a (perturbative) symmetry-breaking result for Dirichlet Hamiltonian systems. See also the survey \cite{W10} and the references therein.
\medskip

We now focus on the particular case $p=q$, where \eqref{sc:intro} reduces to
\begin{equation}\label{eq:p_single}
p>0,\qquad p\neq 1,\qquad (N-2)p<(N+2).
\end{equation}
In this situation, we show in Lemma \ref{lemma:p=q} below that any classical solution $(u,v)$ of \eqref{NHS} satisfies $u\equiv v$, and problem \eqref{NHS} is equivalent to
\begin{align}\label{eq:1eq_problem}
-\Delta u= |u|^{p-1}u \quad \text{ in }\Omega,\qquad  \partial_\nu u=0\quad \text{ on }\partial\Omega,
\end{align}
whose solutions are critical points of 
\begin{align}\label{E:f}
E(u):=\frac{I(u,u)}{2}=\int_\Omega \frac{1}{2}|\nabla u|^2 - \frac{1}{p}|u|^{p}\ dx 
\end{align}
Then, as a particular case of Theorems \ref{thm:maintheorem1} and \ref{thm:maintheorem3} we have the following.
\begin{coro}\label{thm:singleeq} Let $N\geq 1$, $\Omega\subset \R^N$ be a smooth bounded domain, and let $p$ satisfy \eqref{eq:p_single}.
The set of least energy solutions of \eqref{eq:1eq_problem} is nonempty and it is contained in $C^{2,\varepsilon}(\overline{\Omega})$. 
\begin{itemize}
\item[(i)] (Unique continuation) If $p<1$, then the zero set of every least energy solution has zero Lebesgue measure.
\item[(ii)] (Monotonicity, symmetry, and symmetry breaking)  If $N=1$ and $\Omega=(-1,1)$, then every least energy solution is strictly monotone in $\Omega$.  If $N\geq 2$ and 
$\Omega$ is either a ball or an annulus as in \eqref{Omega:eqs}, then every least energy solution is foliated Schwarz symmetric and it is \emph{not} radially symmetric.
\item[(iii)] (Radial solutions) Let $\Omega$ be a ball or an annulus as in \eqref{Omega:eqs}, then the set of least energy \emph{radial} solutions is nonempty. If $u$ is a least energy radial solution, then $u\in C^{2,\varepsilon}(\overline{\Omega})$ is a classical radial solution of \eqref{eq:p_single} and $u$ is strictly monotone in the radial variable.
\end{itemize} 
\end{coro}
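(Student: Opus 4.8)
The plan is to obtain Corollary~\ref{thm:singleeq} directly from Theorems~\ref{thm:maintheorem1} and~\ref{thm:maintheorem3} by identifying the scalar problem \eqref{eq:1eq_problem} with the diagonal case $p=q$ of the Hamiltonian system \eqref{NHS}. First I would observe that \eqref{eq:p_single} is precisely \eqref{sc:intro} specialized to $p=q$: indeed $pq\neq 1$ becomes $p\neq 1$, and $\frac{1}{p+1}+\frac{1}{q+1}=\frac{2}{p+1}>\frac{N-2}{N}$ is equivalent to $(N-2)p<N+2$. Next, for $p=q$ the two spaces $W^{2,(q+1)/q}(\Omega)$ and $W^{2,(p+1)/p}(\Omega)$ coincide, so $u\mapsto (u,u)$ maps strong solutions of \eqref{eq:1eq_problem} to strong solutions of \eqref{NHS}, and along this map the energies are related by $I(u,u)=2E(u)$, by the definition \eqref{E:f}. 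Conversely, by Lemma~\ref{lemma:p=q}, every \emph{classical} solution $(u,v)$ of \eqref{NHS} with $p=q$ has $u\equiv v$, and then $u$ is a classical (hence strong) solution of \eqref{eq:1eq_problem}.

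The second step is to transport the minimization. Applying Theorem~\ref{thm:maintheorem1} with $p=q$ (legitimate by the equivalence of \eqref{eq:p_single} and \eqref{sc:intro}), the set of least energy solutions of \eqref{NHS} is nonempty and every such solution is classical, hence of the form $(u,u)$ with $u$ a classical solution of \eqref{eq:1eq_problem} satisfying $E(u)=\tfrac12 I(u,u)=\tfrac12 c$. Since any strong solution $w$ of \eqref{eq:1eq_problem} yields a strong solution $(w,w)$ of \eqref{NHS} with $I(w,w)=2E(w)\geq c$, one gets $E(u)=\inf\{E(w):\ w\not\equiv 0\text{ a strong solution of \eqref{eq:1eq_problem}}\}$; thus $u$ is a least energy solution of \eqref{eq:1eq_problem}, and conversely any least energy solution $w$ of \eqref{eq:1eq_problem} gives a least energy solution $(w,w)$ of \eqref{NHS}, hence $w\in C^{2,\eps}(\overline{\Omega})$ is classical. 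This establishes the first assertion of the corollary and its regularity. Reading off the remaining conclusions of Theorem~\ref{thm:maintheorem1} under the identification $u\equiv v$: item~(iii) with $pq=p^2<1$ gives that the zero set of $u$ has zero measure (item~(i) of the corollary); item~(i) gives $u'v'=(u')^2>0$ on $(-1,1)$, i.e.\ strict monotonicity; and item~(ii) gives foliated Schwarz symmetry together with non-radiality on balls and annuli as in \eqref{Omega:eqs} (item~(ii) of the corollary).

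Finally, for item~(iii) of the corollary I would repeat the identification inside the radial class. By Lemma~\ref{lemma:p=q} every classical radial solution of \eqref{NHS} with $p=q$ has equal components, so Theorem~\ref{thm:maintheorem3} (again legitimate since $p=q$ satisfies \eqref{sc:intro}) yields that the set of least energy radial solutions of \eqref{eq:1eq_problem} is nonempty, that each such solution lies in $C^{2,\eps}(\overline{\Omega})$, and that $u_r v_r=(u_r)^2>0$ in $(\delta,1)$, i.e.\ $u$ is strictly monotone in the radial variable. The whole proof is essentially a dictionary translation; the only point needing (minimal) care is the equivalence of the two variational problems—checking that the scalar infimum is attained exactly at the diagonal least energy solutions of the system and that no solution of \eqref{NHS} with $p=q$ is lost for being non-diagonal—and this is exactly what Lemma~\ref{lemma:p=q} (together with the $C^{2,\eps}$ regularity of least energy solutions) provides.
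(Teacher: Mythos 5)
Your proposal is correct and follows essentially the same route as the paper, which obtains Corollary \ref{thm:singleeq} precisely by specializing Theorems \ref{thm:maintheorem1} and \ref{thm:maintheorem3} to $p=q$ and invoking Lemma \ref{lemma:p=q} to identify solutions of \eqref{NHS} with diagonal pairs $(u,u)$. Your explicit verification that $E(u)=\tfrac12 I(u,u)$ and that the scalar and system least energy levels correspond (so that minimizers are exchanged in both directions) is exactly the translation the paper leaves implicit, so there is nothing to add.
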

Up to our knowledge, the monotonicity of least energy radial solutions (part $(iii)$ of Corollary \ref{thm:singleeq}) is new. Part (ii) is also new in the subcritical superlinear regime as well as the symmetry breaking result for annuli in the sublinear case. In this sense, this corollary complements the results in \cite{PW15}, where unique continuation, symmetry, and symmetry breaking for least energy solutions of \eqref{eq:1eq_problem} in the case $0< p<1$ are studied (the case $p=0$ is also considered in \cite{PW15}, interpreting \eqref{eq:1eq_problem} as $-\Delta u=\text{sign}(u)$). Note that the unique continuation property when $p>1$ is classical, see for instance \cite{JK85, FG92,GL93}.  Next, we comment on the similarities and differences between our approach and that of \cite{PW15}; in particular, we explain the difficulties when passing from the single-equation \eqref{eq:1eq_problem} to system \eqref{sc:intro} and why the dual method is convenient in this last case.

To show existence of solutions (in the sublinear case $0<p<1$), the authors in \cite{PW15} minimize the functional \eqref{E:f} on the set
\begin{align}\label{N:set}
N_p=\{u\in H^1(\Omega):\ \int_\Omega |u|^{p-1}u=0\}, 
\end{align}
proving that the least energy level is achieved (note that $N_p$ is \emph{not} a $C^1$-manifold because $p<1$). Several difficulties arise when trying to use such a direct method for a Hamiltonian system. For instance, in dimension $N\geq 3$, the (direct) functional $I$, given in \eqref{eq:functional_I}, is not well defined in $H:=H^1(\Omega)\times H^1(\Omega)$ under \eqref{sc:intro}. In fact, even working on a range of $(p,q)$ where $I$ is well defined in $H$
(or using \eqref{sc:intro} with $p,q>1$ and a truncation method as in \cite{RamosTavares}), the functional $I$ is \emph{strongly indefinite}, in the sense that the principal part $\int_\Omega \nabla u\cdot \nabla v$ does not have a sign, and it is actually positive in an infinite dimensional subspace, and negative in another.  To control this difficulty, a direct approach is based, for example, on abstract linking theorems or on special Nehari-type sets.
Different alternatives are available in the literature to study Hamiltonian systems variationally (see, \emph{e.g.}, the survey \cite{BMT14}), each one with its own advantages and disadvantages. For example, a common strategy is to reduce the system to a single higher-order problem; however, it is not clear how to study Neumann b.c. in this setting and using higher-order Sobolev spaces brings additional complications (for the use of rearrangements, for example). 

Dual methods, on the other hand, offer a flexible and elegant alternative. This approach entails the challenge of controlling the effects of the nonlocal operator $K$ in the functional~$\phi$; but it compensates this difficulty with many advantages, for example, the compatibility conditions \eqref{comp} translate to $\int_\Omega f=\int_\Omega g=0$ in the dual formulation; in particular, this allows in the sublinear case to minimize and \emph{differentiate} the functional $\phi$ in the Banach space $X$ (recall that in \cite{PW15} the functional \eqref{E:f} is minimized on \eqref{N:set}, which is \emph{not} a manifold); whereas, in the superlinear case, the dual formulation allows to minimize in a Nehari \emph{manifold} in a Banach space. 

The use of a direct or a dual approach has, of course, a strong influence on the methods to study qualitative properties of solutions; this is particularly clear in the proof of the symmetry breaking, described above, where our proof relies on a transformation in dual spaces to obtain monotonicity and then on comparison principles to control the effects of the nonlocal operator $K$.  The proof of the symmetry breaking result in \cite{PW15} (although is also done by contradiction finding a direction along which the energy would decrease) is very different from ours, and it relies on a unique continuation property for minimizers, which is obtained using known uniqueness results \cite{Wong75} and nonoscillation criteria \cite{Gollwitzer70} for sublinear ODEs of type $y''+a(t)|y|^{p-1}y=0$. These general theorems are not known for systems of ODEs, and in fact, they may fail in general. Using elementary manipulations in an ODE setting, one can find some extensions of these techniques to systems. Although we do not need these results for any of our proofs, we believe they can be of independent interest; in particular, we use them to show a result on the simplicity of zeros of \emph{any} radial solution that satisfies a unique continuation property. 
\begin{theo}\label{aux:lemma:intro}
Let $p,q>0$, $N\geq 2$, $\Omega$, $\delta$ as in \eqref{Omega:eqs}, and $(u,v)\in [C^{2,\varepsilon}(\overline{\Omega})]^2$ be a radial classical solution of \eqref{NHS} such that $u^{-1}(0)\cap v^{-1}(0)$ has empty interior. Then
\begin{align}\label{Weps}
u,v\in W:=\{ w\in C^1(\overline{\Omega})\::\: \nabla w(x) \neq 0 \text{ if } x\in\overline{\Omega} \text{ satisfies } |x|>\delta \text{ and } w(x)=0\},
\end{align}
\end{theo}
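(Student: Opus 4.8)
The plan is to recast the claim as a statement about the radial ODE system and argue by contradiction. Writing $u=u(r)$, $v=v(r)$ with $r=|x|$, system \eqref{NHS} becomes
\begin{align}\label{plan:ode}
-(r^{N-1}u')'=r^{N-1}|v|^{q-1}v,\qquad -(r^{N-1}v')'=r^{N-1}|u|^{p-1}u\quad\text{on }(\delta,1),
\end{align}
together with $u'(\delta)=v'(\delta)=u'(1)=v'(1)=0$ (when $\Omega=B_1(0)$ the conditions at $r=0$ hold by radial $C^1$-regularity). Since $\nabla w(x)=w'(|x|)\,x/|x|$ for radial $w$, membership $u\in W$ amounts to: $u(1)\neq0$ — because $\nabla u$ vanishes on $\{|x|=1\}$ by the Neumann condition, so a zero there is excluded from $W$ — and every zero of $u$ in $(\delta,1)$ is nondegenerate (no condition is imposed at $|x|=\delta$); likewise for $v$. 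Thus it suffices to show that $u$, and by the symmetry $u\leftrightarrow v$, $p\leftrightarrow q$ of \eqref{plan:ode} also $v$, cannot satisfy $u(r_0)=u'(r_0)=0$ for any $r_0\in(\delta,1]$.

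Assume $u(r_0)=u'(r_0)=0$ for some $r_0\in(\delta,1]$ (the second equality is automatic if $r_0=1$). First, $u$ does not vanish on any nonempty open interval $J\subset(\delta,1)$: on $J$ the second equation of \eqref{plan:ode} gives $(r^{N-1}v')'=0$, so $v$ is radially harmonic there, and then the first equation forces $|v|^{q-1}v=0$, i.e. $v\equiv0$ on $J$; hence $u^{-1}(0)\cap v^{-1}(0)$ would contain the open annulus $\{|x|\in J\}$, against the hypothesis. Symmetrically $v$ does not vanish on open intervals, so the zeros of $u$ and $v$ in $(\delta,1)$ are isolated; a nonoscillation argument for \eqref{plan:ode} — the system version, obtained by elementary ODE manipulations, of the criteria for $y''+a(r)|y|^{p-1}y=0$ used in \cite{PW15,Gollwitzer70} — then shows these zero sets are finite. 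Integrating \eqref{plan:ode} and using the Neumann data yields the moment identities $r^{N-1}u'(r)=-\int_\delta^r\rho^{N-1}|v|^{q-1}v\,d\rho$ and $r^{N-1}v'(r)=-\int_\delta^r\rho^{N-1}|u|^{p-1}u\,d\rho$; in particular $u'(r_0)=0$ gives $\int_\delta^{r_0}\rho^{N-1}|v|^{q-1}v\,d\rho=0$, and at $r=1$ we recover the compatibility identities $\int_\Omega|u|^{p-1}u=\int_\Omega|v|^{q-1}v=0$ from \eqref{comp}; since $(u,v)$ is nontrivial, $u$ and $v$ each change sign.

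Now, if $v(r_0)=v'(r_0)=0$, then $(u,v)$ has vanishing Cauchy data at $r_0$, and the system counterpart — again via elementary ODE manipulations — of the uniqueness theorem for sublinear equations of \cite{Wong75} forces $(u,v)\equiv(0,0)$ on a one-sided neighborhood of $r_0$, contradicting that $u^{-1}(0)\cap v^{-1}(0)$ has empty interior. Otherwise $v(r_0)\neq0$ or $v'(r_0)\neq0$, and $r_0$ is a \emph{degenerate} zero of $u$ located where $v$ or $v'$ does not vanish; here the argument is global. For instance, if $v(r_0)>0$ (after possibly replacing $(u,v)$ by the solution $(-u,-v)$), then $u''(r_0)=-v(r_0)^q<0$, so $r_0$ is a strict local maximum of $u$ of height $0$ and $u<0$ on a punctured neighborhood of $r_0$; combining the sign relations — $u$ is superharmonic on $\{v>0\}$ and subharmonic on $\{v<0\}$, and symmetrically for $v$ — with the strong maximum principle, Hopf's lemma at the interior spheres $|x|\in(\delta,1)$ (recall $u',v'$ vanish at $|x|\in\{\delta,1\}$), the moment identities above, and the finiteness of the nodal sets, one enumerates the admissible sign patterns of $u$ and $v$ on $[\delta,1]$ and discards every one of them, contradicting that $u$ and $v$ both change sign. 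The remaining case $v(r_0)=0\neq v'(r_0)$ is treated in the same spirit.

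I expect the main obstacle to be the case $v(r_0)\neq 0$: for the scalar equation $-\Delta u=|u|^{p-1}u$ a degenerate zero is excluded outright by the convexity dichotomy ($u$ concave where positive, convex where negative), but in the coupled system this local obstruction disappears, so the argument must be global and rests delicately on the interplay of the moment/compatibility identities, sign propagation through maximum principles and Hopf's lemma, and finiteness of the nodal sets. A second, more technical point is proving the nonoscillation and uniqueness statements for \eqref{plan:ode}, since $s\mapsto|s|^{p-1}s$ is not Lipschitz when $p,q<1$ and the scalar results of \cite{Wong75,Gollwitzer70} do not extend verbatim to systems.
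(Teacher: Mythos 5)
There is a genuine gap: the two hardest cases of your argument are not proved but delegated to results that are not available. For the case of a common degenerate zero ($u(r_0)=u'(r_0)=v(r_0)=v'(r_0)=0$) you invoke a ``system counterpart'' of Wong's uniqueness theorem for sublinear equations; but the paper explicitly points out that such general uniqueness/nonoscillation theorems are \emph{not} known for systems and may fail, and for $p,q<1$ the nonlinearity $s\mapsto|s|^{p-1}s$ is not Lipschitz, so vanishing Cauchy data does not give local uniqueness for free. The same objection applies to your unproved claim that a nonoscillation criterion yields \emph{finiteness} of the zero sets (isolatedness follows from the empty-interior hypothesis, finiteness does not). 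For the case where only one component degenerates, you yourself identify it as the main obstacle and only sketch a global scheme (``enumerate the admissible sign patterns \ldots and discard every one of them'') without carrying out any of it; as written this is a plan, not a proof.

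The paper's proof avoids all of this. After the Emden--Fowler substitution $t(r)=r^{2-N}$ (or $1-\log r$ if $N=2$) the radial system becomes $-\varphi''=\zeta\,|\psi|^{q-1}\psi$, $-\psi''=\zeta\,|\varphi|^{p-1}\varphi$ on $J=[1,b)$ with $\varphi'=\psi'=0$ on $\partial J$ and $\zeta>0$ decreasing. Assuming $\psi(x)=\psi'(x)=0$, boundedness of $(\varphi,\psi)$ and the Neumann data first force a later zero $x_0$ of $\varphi$. The common-zero case ($\varphi(x)=0$) is then handled by the monotone first integral
\begin{align}
\Phi=\varphi'\psi'+\zeta\Bigl(\frac{|\varphi|^{p+1}}{p+1}+\frac{|\psi|^{q+1}}{q+1}\Bigr),\qquad \Phi'=\zeta'\Bigl(\frac{|\varphi|^{p+1}}{p+1}+\frac{|\psi|^{q+1}}{q+1}\Bigr)\leq 0,
\end{align}
which gives $\varphi'\psi'\leq-\zeta(\cdots)\leq0$ past $x$ and forces $\varphi\equiv\psi\equiv0$ on $[x,x_0]$, contradicting the empty-interior hypothesis; the remaining cases ($\varphi(x)\varphi'(x)\geq0$ and $\varphi(x)\varphi'(x)<0$) are settled by short concavity/convexity propagation arguments toward $x_0$ or toward the endpoint $1$, where the Neumann condition $\varphi'(1)=0$ is violated. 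So, contrary to your expectation that the one-sided degenerate zero requires a delicate global enumeration, a one-sided convexity propagation plus the boundary condition suffices, and no uniqueness theorem, no nonoscillation criterion, no maximum principle, and no finiteness of nodal sets are needed. To repair your proposal you would have to either supply proofs of the system analogues of the Wong/Gollwitzer results you cite and complete the sign-pattern analysis, or switch to an argument of the above type; the first integral $\Phi$ is the key idea your outline is missing.
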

This theorem yields that, if $(u,v)$ is a radial solution of \eqref{NHS} satisfying a (weak) unique continuation property, then the nodal set of $u$ and of $v$ is at most a countable union of spheres and thus $|u^{-1}(0)|=|v^{-1}(0)|=0$.

\medskip

To close this introduction, we mention some open questions about system \eqref{NHS}: it is unknown whether or not least energy solutions are (up to rotations) \emph{unique}; in fact, uniqueness is open even for least energy radial solutions. Moreover, when $pq<1$ it is unclear if the unique continuation property (\emph{i.e.}, that the nodal sets of solutions have zero Lebesgue measure) holds in general for \emph{all} solutions, and not just for minimizers. 
When $pq>1$, with the exception of the single equation case $p=q$, the unique continuation property  is an open question even for least energy solutions. In Remark \ref{rem:superlinear_UCP} below we observe that, in the superlinear case, if a least energy solution $(u,v)$ vanishes on a set of positive measure, then it has a zero of infinite order. For single equations of type $-\Delta u+V(x)u=0$ such a result is available for a large class of potentials (see\cite[Proposition 3]{FG92} and \cite[Proposition 1.1]{GL93}) and  having a zero of infinite order typically implies that a solution is identically zero (see \cite{JK85}).

\medskip

The paper is organized as follows. In Section \ref{sec:variational} we provide some general auxiliary results in our dual framework; in particular, we show Lemmas \ref{minimum} and \ref{lemma:B}, which prove the existence and regularity claims in Theorem \ref{thm:maintheorem1}, as well as Lemma \ref{lemma:p=q} which shows (together with Theorems~\ref{thm:maintheorem1} and \ref{thm:maintheorem3}) Corollary \ref{thm:singleeq}.  Section \ref{SEC:monotonicity} is devoted to the study of monotonicity properties of radial minimizers, here the full details of the construction of the $\divideontimes$-transformation can be found as well as the proof of Theorems \ref{thm:maintheorem3} and \ref{thm:mono:intro}. These results are then used in Section \ref{sec:SSBM} to prove the symmetry breaking result in Theorem \ref{thm:maintheorem1} as well as the foliated Schwarz symmetry claim. Finally, in Section \ref{FR:sec} we show Theorem \ref{aux:lemma:intro} as well as the unique continuation statement in Theorem \ref{thm:maintheorem1}.

\subsection{Notation}\label{N:sec}

Throughout the paper $p$ and $q$ always denote the exponents in \eqref{NHS}. We divide our proofs in two groups:
\begin{align}
 \text{Sublinear: } pq&<1 \label{sub},\\
 \text{Superlinear and subcritical: } pq&>1\quad \text{ and }\quad \frac{1}{p+1}+\frac{1}{q+1}>\frac{N-2}{N} \label{super}.
\end{align}
Note that $pq<1$ readily implies the subcriticality condition, since $pq<1$ implies
$1/(p+1)+1/(q+1)>1>(N-2)/N$. If $N=1,2$ the subcriticality condition is satisfied by any $p,q>0$. We fix
\begin{align}\label{not}
\alpha:=\frac{p+1}{p},\qquad \alpha'=p+1,\qquad \beta:=\frac{q+1}{q},\qquad \beta'=q+1.
\end{align}
Note that $\alpha'$ and $\beta'$ are the corresponding conjugate exponents of $\alpha$ and $\beta$, that is $\alpha^{-1}+(\alpha')^{-1}=1$ and $\beta^{-1}+(\beta')^{-1}=1$. 

Moreover, we define $X$, $X_s$, $K$, and $K_s$ as in the Introduction. For $s\geq 1$, $\|\cdot\|_s$ and $\|\cdot\|_{2,s}$ are the standard norms in $L^s(\Omega)$ and $W^{2,s}(\Omega)$ respectively and, if $s>1$ is such that $N>2s$, then we denote by $s^*:=\frac{Ns}{N-2s}>s$ the critical Sobolev exponent, that is, $s^*$ is the biggest exponent which allows the (continuous) embedding $W^{2,s}(\Omega)\hookrightarrow L^{s^*}(\Omega)$. If $N=1,2$ we set $s^*=\infty$.

If $Y$ is a vector space, then $[Y]^2:=Y\times Y$. We denote by $B_r$ the ball in $\R^N$ of radius $r>0$ centered at 0. 
Finally, the nodal set of $u:\Omega\to \R$ is denoted by $u^{-1}(0):=\{x\in \Omega\::\: u(x)=0\}$.

For a measurable set $A\subset\R^N$, $|A|$ denotes its Lebesgue measure, and the function $\chi_A$ is the characteristic function of $A$, that is, $\chi_A(x)=1$ if $x\in A$ and $\chi_A(x)=0$ if $x\not\in A$. Finally, we use $\omega_N$ to denote the measure of the unitary ball in $\R^N$.

\section{Variational framework and existence results}\label{sec:variational}

In this section we describe our dual approach. In the following $\Omega$ is a smooth bounded domain in $\R^N$, $p$ and $q$ satisfy \eqref{sc:intro}, and let $X^s$, $X$, $K$, $K_t$, $\alpha$, and $\beta$ be as above.  

\subsection{The dual method}
The following lemma recalls some well-known regularity for Neumann problems, see for example \cite[Theorem and Lemma in page 143]{RR85} (see also \cite[Theorem 15.2]{ADN59}).
\begin{lemma}\label{l:reg}
 If $s>1$, $\Omega$ be a smooth bounded domain in $\R^N$, and $h\in L^s(\Omega)$ with $\int_\Omega h =0$, then there is a unique strong solution $u\in W^{2,s}(\Omega)$ of 
 \begin{align}\label{Nprob}
  -\Delta u = h\quad \text{ in }\Omega,\qquad \partial_\nu u=0\quad \text{ on }\partial \Omega,\qquad \int_\Omega u = 0
  \end{align}
  in particular,
 \begin{align*}
  \int_\Omega \nabla u\nabla \varphi = \int_\Omega h \varphi \qquad \text{ for all }\varphi\in W^{1,s'}(\Omega),\ s'=\frac{s}{s-1},
 \end{align*}
 and there is $C(\Omega,s)=C>0$ such that $\|u\|_{2,s}\leq C\|h\|_s.$
\end{lemma}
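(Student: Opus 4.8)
The statement is classical, so the plan is to derive it from the $L^{s}$-theory for elliptic boundary value problems (the Lopatinski--Shapiro complementing condition for $-\Delta$ with Neumann data, which holds on smooth domains) together with a Poincar\'e--Wirtinger argument; this is exactly what is packaged in the cited references, and below I describe how the pieces fit together. I would begin with \emph{uniqueness}: if $u_{1},u_{2}\in W^{2,s}(\Omega)$ both solve \eqref{Nprob}, their difference $w$ satisfies $-\Delta w=0$ in $\Omega$, $\partial_\nu w=0$ on $\partial\Omega$, and $\int_\Omega w=0$. Interior elliptic regularity gives $w\in C^{\infty}(\Omega)$, and the Gauss--Green formula (valid for $W^{2,s}$ functions on a smooth bounded domain) yields $\int_\Omega|\nabla w|^{2}=-\int_\Omega w\,\Delta w+\int_{\partial\Omega}w\,\partial_\nu w=0$, so $w$ is constant; the zero-average condition then forces $w\equiv 0$.

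For \emph{existence} I would split according to the value of $s$. When $s=2$, I apply the Lax--Milgram theorem on $V:=\{\varphi\in H^{1}(\Omega):\int_\Omega\varphi=0\}$ to the bilinear form $a(u,\varphi):=\int_\Omega\nabla u\cdot\nabla\varphi$, which is bounded and, by the Poincar\'e--Wirtinger inequality, coercive on $V$; since $h\in L^{2}(\Omega)\subset (H^{1}(\Omega))^{*}$ this produces $u\in V$ with $a(u,\varphi)=\int_\Omega h\varphi$ for all $\varphi\in V$, and the identity extends to all $\varphi\in H^{1}(\Omega)$ because both sides annihilate constants (here one uses $\int_\Omega h=0$). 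Then the $W^{2,s}$-regularity theory for the Neumann Laplacian upgrades $u$ to a strong solution in $W^{2,s}(\Omega)$ whenever $s\ge 2$, together with the bound $\|u\|_{2,s}\le C\|h\|_{s}$ after absorbing lower-order terms.

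For $1<s<2$ one cannot embed $L^{s}$ into $L^{2}$, so I would instead invoke the $L^{s}$-solvability theory directly: the operator $\mathcal{L}u:=-\Delta u$ maps $\{u\in W^{2,s}(\Omega):\partial_\nu u=0\text{ on }\partial\Omega,\ \int_\Omega u=0\}$ into $X^{s}$ (because $\int_\Omega\Delta u=\int_{\partial\Omega}\partial_\nu u=0$), is injective by the uniqueness step, and has closed range; the Agmon--Douglis--Nirenberg a priori estimate $\|u\|_{2,s}\le C(\|\Delta u\|_{s}+\|u\|_{s})$ together with the compact embedding $W^{2,s}(\Omega)\hookrightarrow L^{s}(\Omega)$ shows $\mathcal{L}$ is semi-Fredholm, and a duality argument against the (formally self-adjoint) adjoint Neumann problem on $X^{s'}$, which is solvable by the previous case since $s'\ge 2$, gives surjectivity. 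Finally the $\|u\|_{s}$ term is removed by the standard compactness--contradiction argument: if $\|u_{n}\|_{2,s}=1$, $\|\Delta u_{n}\|_{s}\to 0$ and $\int_\Omega u_{n}=0$, a subsequence converges in $W^{1,s}(\Omega)$ to a constant with zero average, hence to $0$, contradicting $\|u_{n}\|_{2,s}=1$ via the a priori estimate; this gives $\|u\|_{2,s}\le C\|h\|_{s}$. The weak identity $\int_\Omega\nabla u\cdot\nabla\varphi=\int_\Omega h\varphi$ for $\varphi\in W^{1,s'}(\Omega)$ then follows by testing $-\Delta u=h$ against $\varphi$ and using the Gauss--Green formula with $\partial_\nu u=0$.

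The whole content of the lemma sits in the Agmon--Douglis--Nirenberg $L^{s}$-estimates and the verification that the Neumann boundary operator is complementing, both of which are supplied by the references; the only point requiring genuine care is the range $1<s<2$, where the $H^{1}$-variational solution is not directly available and one must pass through the $L^{s}$-solvability/Fredholm theory or a duality argument as above.
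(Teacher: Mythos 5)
Your argument is correct, but note that the paper does not actually prove this lemma: it is stated as a recollection of classical facts, with the proof delegated to the references \cite{RR85} (Theorem and Lemma on page 143) and \cite{ADN59} (Theorem 15.2). What you have written is essentially a reconstruction of what those references package: Lax--Milgram on the zero-average subspace of $H^1(\Omega)$ plus Poincar\'e--Wirtinger for $s=2$, Agmon--Douglis--Nirenberg regularity to reach $W^{2,s}$ for $s\geq 2$, a semi-Fredholm/duality argument to cover $1<s<2$, and the standard compactness--contradiction step to drop the lower-order term $\|u\|_s$ from the a priori estimate. This is a perfectly legitimate route, and arguably more informative than the paper's citation. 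One point deserves slightly more care in your uniqueness step: for $1<s<\frac{2N}{N+2}$ the space $W^{2,s}(\Omega)$ does not embed into $H^{1}(\Omega)$, so the identity $\int_\Omega|\nabla w|^{2}=-\int_\Omega w\,\Delta w+\int_{\partial\Omega}w\,\partial_\nu w$ is not available at the stated regularity because $|\nabla w|^{2}$ need not be integrable. The fix is routine: since $\Delta w=0\in L^{r}(\Omega)$ for every $r<\infty$, the interior/boundary estimate $\|w\|_{2,r}\leq C(\|\Delta w\|_{r}+\|w\|_{r})$ and a Sobolev bootstrap give $w\in W^{2,r}(\Omega)$ for all $r$, hence $w\in C^{1}(\overline\Omega)$, after which the integration by parts and the conclusion $w\equiv\mathrm{const}=0$ go through exactly as you wrote. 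With that adjustment the proposal is complete.
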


Recall that if $f\in L^s(\Omega)$ for some $s>1$, then $Kf$ is the solution of \eqref{Nprob} with $h=f$. 
Observe that 
\begin{align}\label{ibyp:ab}
\int_\Omega g\, K_{q} f = \int_\Omega g\, K f = \int_\Omega Kg\, f = \int_\Omega K_{p} g\, f,
\end{align}
by integration by parts and because $\int_\Omega f=\int_\Omega g=0$. Our next result shows that $\phi$ is continuously differentiable.

\begin{lemma}\label{decom:lemma}
 The functional $\phi:X\to\R$ as defined in \eqref{phi:def} is continuously differentiable in $X$ with  
 \begin{align}\label{phip}
  \phi'(f,g)(\varphi,\psi)=\int_\Omega |f|^{\alpha-2}f\varphi+|g|^{\beta-2}g\psi-\psi\, K f-\varphi\, K g\ dx\qquad \text{ for }(f,g),(\varphi,\psi)\in X.
 \end{align}
\end{lemma}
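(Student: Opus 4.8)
The plan is to split the functional as $\phi=\phi_1+\phi_2-\phi_3$ with
\[
\phi_1(f):=\int_\Omega \frac{|f|^{\alpha}}{\alpha}\,dx,\qquad
\phi_2(g):=\int_\Omega \frac{|g|^{\beta}}{\beta}\,dx,\qquad
\phi_3(f,g):=\int_\Omega g\,Kf\,dx,
\]
and to establish the $C^1$ regularity of each piece separately, using throughout that $\alpha'=p+1$ and $\beta'=q+1$ are the conjugate exponents of $\alpha$ and $\beta$.

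For $\phi_1$ (and symmetrically $\phi_2$) I would first compute the Gâteaux derivative: for fixed $f,\varphi\in X^\alpha$, differentiating $t\mapsto \tfrac1\alpha\int_\Omega |f+t\varphi|^\alpha$ under the integral sign (justified by the mean value theorem and dominated convergence, using $\alpha>1$) gives $\int_\Omega |f|^{\alpha-2}f\,\varphi\,dx$. This is a bounded linear functional on $X^\alpha$ because the Nemytskii operator $f\mapsto |f|^{\alpha-2}f$ maps $L^\alpha(\Omega)$ into $L^{\alpha'}(\Omega)$, with $\bigl\||f|^{\alpha-2}f\bigr\|_{\alpha'}=\|f\|_\alpha^{\alpha-1}$, so Hölder applies. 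The same growth relation $(\alpha-1)\alpha'=\alpha$ makes this Nemytskii operator \emph{continuous} from $L^\alpha(\Omega)$ into $L^{\alpha'}(\Omega)$ (a standard fact), hence $f\mapsto\phi_1'(f)$ is continuous from $X^\alpha$ into its dual; a continuous Gâteaux derivative is Fréchet, so $\phi_1\in C^1(X^\alpha)$, and likewise $\phi_2\in C^1(X^\beta)$ with $\phi_2'(g)\psi=\int_\Omega |g|^{\beta-2}g\,\psi\,dx$.

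For $\phi_3$ the map is bilinear, hence smooth provided it is bounded. By Lemma \ref{l:reg}, $K$ is bounded from $X^\alpha$ into $W^{2,\alpha}(\Omega)=W^{2,\frac{p+1}{p}}(\Omega)$, which by the subcriticality assumption \eqref{sc:intro} embeds continuously into $L^{q+1}(\Omega)=L^{\beta'}(\Omega)$; this is precisely the (compact, hence continuous) embedding \eqref{embed}. Thus $\|Kf\|_{\beta'}\le C\|f\|_\alpha$, and Hölder gives $|\phi_3(f,g)|\le \|g\|_\beta\|Kf\|_{\beta'}\le C\|f\|_\alpha\|g\|_\beta$. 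A bounded bilinear form has derivative $\phi_3'(f,g)(\varphi,\psi)=\int_\Omega \psi\,Kf+g\,K\varphi\,dx$; invoking the self-adjointness of $K$ recorded in \eqref{ibyp:ab} (so that $\int_\Omega g\,K\varphi=\int_\Omega \varphi\,Kg$, where the right-hand side makes sense via the symmetric embedding $W^{2,\frac{q+1}{q}}(\Omega)\hookrightarrow L^{p+1}(\Omega)$ from \eqref{embed}) we obtain $\phi_3'(f,g)(\varphi,\psi)=\int_\Omega \psi\,Kf+\varphi\,Kg\,dx$. Adding the three contributions yields formula \eqref{phip}, and continuity of $\phi'$ on $X$ follows from the continuity of each summand.

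The computations here are all elementary; the one point requiring genuine care is the continuity — not merely the existence — of the Gâteaux derivatives of $\phi_1$ and $\phi_2$, i.e.\ the continuity of the associated Nemytskii operators between the relevant Lebesgue spaces, which is exactly what upgrades Gâteaux differentiability to $C^1$. One must also take care to invoke the subcriticality condition \eqref{sc:intro} at precisely the step where $Kf$ is placed in $L^{\beta'}(\Omega)$ and $Kg$ in $L^{\alpha'}(\Omega)$, since this is the only role it plays in making $\phi_3$, and hence $\phi$, well defined and continuously differentiable on $X$.
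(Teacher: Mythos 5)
Your proof is correct and takes essentially the same route as the paper: both decompose $\phi$ into the superposition part and the bilinear part $T(f,g)=\int_\Omega g\,Kf$, dispose of the bilinear part via Hölder, Lemma \ref{l:reg}, the embedding \eqref{embed}, and the self-adjointness of $K$, and treat the superposition part by the standard Nemytskii-operator argument that the paper compresses into ``since $\alpha,\beta>1$ it is standard to show.'' Your write-up merely unfolds that ``standard'' step (Gâteaux derivative, continuity of $L^\alpha\to L^{\alpha'}$, upgrade to Fréchet), which is exactly what the paper has in mind.
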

\begin{proof}
Let $\phi=\psi-T$, where 
\begin{align}\label{decom}
\Psi(f,g):=\int_\Omega \frac{|f|^{\alpha}}{\alpha}+\frac{|g|^{\beta}}{\beta}\, dx \qquad \text{ and }\qquad T(f,g):=\int_\Omega g\, K f,\qquad (f,g)\in X.
 \end{align}
Since $\alpha,\beta>1$ it is standard to show that $\Psi$ is continuously differentiable with
\begin{align}\label{psip}
 \Psi'(f,g)(\varphi,\psi)=\int_\Omega |f|^{\alpha-2}f\varphi+|g|^{\beta-2}g\psi\, dx, \qquad (f,g),(\varphi,\psi)\in X.
\end{align}
Now we show that $T$ is bilinear and bounded. Indeed, this follows directly from the fact that $K(\lambda h) = \lambda K h$ for $h\in L^s(\Omega)$, $s\in\{\alpha,\beta\}$, and the following integration by parts 
\begin{align}\label{ibyp}
T(f,g)=\int_\Omega g K f = \int_\Omega f K g,\qquad (f,g)\in X.
\end{align}
Moreover, by H\"{o}lder's inequality, Lemma \ref{l:reg}, and the first embedding in \eqref{embed}, there are $C_1,C_2>0$ such that
\begin{align}\label{est1}
|T(f,g)|\leq \frac{1}{2} \|g\|_{\beta} \|K f\|_{\beta'}\leq C_1\|g\|_{\beta} \|K f\|_{W^{2,\alpha}(\Omega)} \leq C_2\|g\|_{\beta} \|f\|_{\alpha}.
\end{align}
Thus $T$ is a (continuously differentiable) bilinear bounded mapping and, by \eqref{ibyp},
\begin{align}\label{Tp}
 T'(f,g)(\varphi,\psi)=\int_\Omega \psi\, K f+\varphi\, K g \, dx, \qquad (f,g),(\varphi,\psi)\in X.
\end{align}
Therefore, \eqref{phip} follows from \eqref{psip} and \eqref{Tp}.
\end{proof}

Before we argue existence of solutions, we establish a one-to-one relationship between critical points of $\phi$ and strong solutions of \eqref{NHS}, see \cite[Proposition 3.1]{BMT14} for the Dirichlet case.

\begin{lemma}\label{l:ss}
An element $(f,g)\in X$ is a critical point of $\phi$, i.e.,
\begin{align}\label{cp}
\phi'(f,g)(\varphi,\psi)=0\qquad \text{ for all }(\varphi,\psi)\in X,
\end{align}
if and only if $(u,v):=(K_{p} g,K_{q} f)\in W^{2,\beta}(\Omega)\times W^{2,\alpha}(\Omega)$ is a strong solution of \eqref{NHS}, that is, $(u,v)$ solves \eqref{NHS} a.e. in $\Omega$.
\end{lemma}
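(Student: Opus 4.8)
The plan is to prove both implications of Lemma \ref{l:ss} by unwinding the definition of the dual functional $\phi$ and exploiting the structure of the operators $K_p$ and $K_q$, together with the fact that a critical point condition on a subspace defined by an integral constraint translates into an equation that holds up to an additive constant.

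First I would prove the forward direction. Assume $(f,g)\in X$ satisfies \eqref{cp}. Using the formula \eqref{phip} for $\phi'$ and the integration-by-parts identity \eqref{ibyp:ab}, the critical point condition reads
\begin{align*}
\int_\Omega \bigl(|f|^{\alpha-2}f - Kg\bigr)\varphi + \bigl(|g|^{\beta-2}g - Kf\bigr)\psi \, dx = 0 \qquad \text{for all }(\varphi,\psi)\in X.
\end{align*}
Since $\varphi$ and $\psi$ range independently over $X^\alpha$ and $X^\beta$, each term must vanish separately; because $X^\alpha$ (resp.\ $X^\beta$) consists of the $L^\alpha$ (resp.\ $L^\beta$) functions with zero average, orthogonality to all such $\varphi$ forces $|f|^{\alpha-2}f - Kg$ to be (a.e.) equal to a constant, and similarly for the other term. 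Concretely, $|f|^{\alpha-2}f = Kg + c_1$ and $|g|^{\beta-2}g = Kf + c_2$ for some constants $c_1, c_2\in\R$. Now I recognize $\alpha-1 = 1/p$, so $|f|^{\alpha-2}f = |f|^{\frac 1p - 1}f$; setting $v := K_q f = Kf + c_q(f)$ with $c_q(f)$ chosen as in \eqref{Ks:def} so that $\int_\Omega |v|^{q-1}v = 0$, the compatibility condition $\int_\Omega |g|^{\beta-2}g = 0$ (which holds since $g\in X^\beta$) combined with $|g|^{\beta-2}g = Kf + c_2$ and the strict monotonicity of $t\mapsto |t|^{\beta-2}t$ pins down $c_2 = c_q(f)$, hence $|g|^{\frac 1q-1}g = K_q f$, i.e.\ $g = |K_q f|^{q-1}K_q f = |v|^{q-1}v$. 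Symmetrically $f = |u|^{p-1}u$ with $u := K_p g$. Then $-\Delta u = -\Delta K_p g = g = |v|^{q-1}v$ and $-\Delta v = f = |u|^{p-1}u$ a.e.\ in $\Omega$, with the Neumann conditions built into the definition of $K$; the regularity $(u,v)\in W^{2,\beta}\times W^{2,\alpha}$ is immediate from Lemma \ref{l:reg} since $g\in L^\beta$ and $f\in L^\alpha$.

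For the converse, suppose $(u,v) = (K_p g, K_q f)$ is a strong solution of \eqref{NHS}. Then $-\Delta u = |v|^{q-1}v$ gives $g = |v|^{q-1}v = |K_q f|^{q-1}K_q f$, so $|g|^{\frac 1q - 1}g = K_q f$; likewise $|f|^{\frac 1p - 1}f = K_p g$. Since $\alpha - 2 = \frac 1p - 1$ and $\beta - 2 = \frac 1q - 1$, substituting into \eqref{phip} and using \eqref{ibyp:ab} to rewrite $K_q f$ as $Kf$ up to a constant that integrates against the mean-zero test functions to zero, one gets $\phi'(f,g)(\varphi,\psi) = \int_\Omega (K_p g)\varphi - \varphi Kg + (K_q f)\psi - \psi Kf\,dx = 0$ for all $(\varphi,\psi)\in X$, as the constants $c_p(g), c_q(f)$ drop out against $\int_\Omega\varphi = \int_\Omega\psi = 0$. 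One should also check that $(f,g)$ indeed lies in $X$, i.e.\ that $f = |u|^{p-1}u$ and $g=|v|^{q-1}v$ have zero average and the right integrability: the zero average is exactly the compatibility condition \eqref{comp}, which follows from the Neumann boundary conditions and the divergence theorem as already noted in the introduction, and integrability follows from $u\in L^{p+1}$, $v\in L^{q+1}$ via the embeddings \eqref{embed}.

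The main obstacle, and the step that requires the most care, is the passage from "the integral pairing vanishes for all mean-zero test functions" to "the integrand equals a constant," and then the identification of that constant with the specific translation constant $c_q(f)$ (resp.\ $c_p(g)$) appearing in the definition \eqref{Ks:def} of $K_t$. The first part is a standard duality fact (the annihilator of $X^s$ in $L^{s'}$ is the space of constants), but it must be invoked cleanly for each component. The constant-matching is where the precise choice of $c_t(h)$ matters: one uses that $g$ has zero $|\cdot|^{\beta-2}(\cdot)$-average, which translates to $\int_\Omega |K_q f + (c_2 - c_q(f))|^{q-1}(\cdots) = 0$; since the map $c\mapsto \int_\Omega |Kf + c|^{q-1}(Kf+c)$ is strictly increasing, it has at most one zero, and both $c_q(f)$ (by definition) and $c_2$ (by the computation) are zeros, forcing equality. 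Everything else is routine bookkeeping with the conjugate exponents $\alpha-1 = 1/p$, $\beta - 1 = 1/q$.
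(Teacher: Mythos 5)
Your overall strategy coincides with the paper's (use the fact that the annihilator of the zero-mean subspace $X^\alpha$, resp.\ $X^\beta$, in the dual Lebesgue space consists of the constants to get a pointwise identity up to an additive constant, then identify that constant and invert the nonlinearity; the converse direction by direct substitution is fine). However, the constant-matching step in the forward direction --- which you yourself single out as the delicate point --- rests on a false claim: you assert that $\int_\Omega |g|^{\beta-2}g=0$ ``holds since $g\in X^\beta$''. Membership in $X^\beta$ only gives $\int_\Omega g=0$; the quantity $\int_\Omega |g|^{\beta-2}g$ has no reason to vanish (indeed, at a critical point one ends up with $|g|^{\beta-2}g=K_qf$, whose integral is in general nonzero, since \eqref{Ks:def} only imposes $\int_\Omega |K_qf|^{q-1}K_qf=0$). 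Moreover, if you use your claimed condition literally together with $|g|^{\beta-2}g=Kf+c_2$, integration gives $0=\int_\Omega (Kf+c_2)=c_2|\Omega|$, i.e.\ $c_2=0$ and $g=|Kf|^{q-1}Kf$, which is \emph{not} the desired conclusion $g=|K_qf|^{q-1}K_qf$; so the step as stated would derail the proof rather than complete it.

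The correct way to pin down the constant --- and what the paper does, phrased there in the variables $u=K_pg$ and $\overline w$ --- is to use the zero average of $g$ itself: from $|g|^{\beta-2}g=Kf+c_2$ and $\beta-1=\tfrac1q$ one inverts to get $g=|Kf+c_2|^{q-1}(Kf+c_2)$, and then $\int_\Omega g=0$ says precisely that $c_2$ satisfies the defining equation for $c_q(f)$ in \eqref{Ks:def}; since $c\mapsto \int_\Omega |Kf+c|^{q-1}(Kf+c)\,dx$ is strictly increasing, this forces $c_2=c_q(f)$, hence $|g|^{\frac1q-1}g=K_qf=v$ and $g=|v|^{q-1}v$ (and symmetrically on the $f$-side using $\int_\Omega f=0$). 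Note that the displayed identity in your final paragraph, $\int_\Omega |K_qf+(c_2-c_q(f))|^{q-1}(\cdots)=0$, is exactly what follows from $\int_\Omega g=0$, not from a ``zero $|\cdot|^{\beta-2}$-average'' of $g$; so the mechanism you had in mind is the right one, but as written the key step is justified by a false statement and needs this one-line correction.
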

\begin{proof}
Let $(f,g)\in X$ satisfy \eqref{cp} and let $(u,v):=(K_{p} g,K_{q} f)$. Then, using \eqref{ibyp:ab},
\begin{align*}
\int_\Omega (|f|^{\alpha-2}f- u)\varphi+(|g|^{\beta-2}g-v)\psi \ dx = 0\qquad \text { for all }(\varphi,\psi)\in X=X^\alpha\times X^\beta.
\end{align*}
Let $w:=|f|^{\alpha-2}f- u$ and, for a function $\zeta\in L^\alpha(\Omega)$, set $\overline\zeta:=|\Omega|^{-1}\int_\Omega \zeta\in\R$. Then $(\zeta-\overline\zeta)\in X^\alpha$ and
\begin{align*}
0=\int_\Omega w(\zeta-\overline\zeta) = \int_\Omega w\zeta-\overline\zeta \int_\Omega w=\int_\Omega w\zeta-\overline w \int_\Omega \zeta = \int_\Omega (w - \overline w)\zeta\qquad \text{ for all }\zeta\in L^\alpha(\Omega),
\end{align*}
and therefore $w-\overline w=0$ a.e. in $\Omega$, which implies that $|f|^{\alpha-2}f=u+\overline w$ a.e. in $\Omega$ and thus $f = |u+\overline w|^{p-1}(u+\overline w)$ a.e. in $\Omega$.  Furthermore, since $(f,g)\in X$ and $u=K_{p} g$, we have that $\int_\Omega|u|^{p-1} u = 0 = \int_\Omega f = \int_\Omega|u+\overline w|^{p-1}(u+\overline w)$, which implies that $\overline w=0$, and therefore $-\Delta v = -\Delta (K_{q} f) = f = |u|^{p-1}u$ a.e. in $\Omega$. Analogously, $-\Delta u = |v|^{q-1}v$ a.e. in $\Omega$, as claimed.  

\medskip

For the converse implication, let $(u,v):=(K_{p} g,K_{q} f)\in W^{2,\beta}(\Omega)\times W^{2,\alpha}(\Omega)$ be a strong solution of \eqref{NHS} for some $(f,g)\in X$. Then, necessarily $f=|u|^{p-1}u$ and $g=|v|^{q-1}v$ a.e. in $\Omega$, which implies that $u=|f|^{\alpha-2}f$ and $v=|g|^{\beta-2}g$ a.e. in $\Omega$, which implies \eqref{cp}, by \eqref{ibyp:ab}.

\end{proof}

The next Proposition states that critical points of $\phi$ are in fact classical solutions. We refer to \cite[Theorem 1]{S00} and \cite[Lemma 5.16]{BMT14} for analogous results in the superlinear Dirichlet case. 
\begin{prop}\label{p:reg}
 Let $(f,g)\in X$ be a critical point of $\phi$, then $(u,v):=(K_{p} g,K_{q} f)\in [C^{2,\varepsilon}(\overline{\Omega})]^2$ for some $\varepsilon>0$ and satisfies \eqref{NHS} pointwise.
\end{prop}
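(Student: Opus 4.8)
\textbf{Proof proposal for Proposition \ref{p:reg}.}

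The plan is to run a bootstrap argument starting from the information already available: by Lemma \ref{l:ss} we know $(u,v):=(K_{p}g,K_{q}f)$ is a strong solution of \eqref{NHS}, so $f=|u|^{p-1}u$ and $g=|v|^{q-1}v$ a.e., and by construction $u\in W^{2,\beta}(\Omega)$, $v\in W^{2,\alpha}(\Omega)$ with $\int_\Omega u=\int_\Omega v=0$. The goal is to iterate elliptic $L^t$-regularity (Lemma \ref{l:reg}) together with the Sobolev embeddings $W^{2,s}(\Omega)\hookrightarrow L^{s^*}(\Omega)$ (with $s^*=Ns/(N-2s)$, and $s^*=\infty$ if $2s\ge N$) to push the integrability of $u$ and $v$ up. Concretely: if $v\in L^{t}(\Omega)$ then the right-hand side $|v|^{q-1}v$ of the first equation lies in $L^{t/q}(\Omega)$, hence $u=K(|v|^{q-1}v)\in W^{2,t/q}(\Omega)$ and so $u\in L^{(t/q)^*}(\Omega)$; symmetrically, better integrability of $u$ feeds back into $v$ through the second equation. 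I would set this up as a coupled recursion on the pair of exponents and show, using the subcriticality condition \eqref{sc:intro} (equivalently $1/(p+1)+1/(q+1)>(N-2)/N$), that after finitely many steps one reaches exponents $s$ with $2s>N$, at which point $u,v\in L^\infty(\Omega)$.

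Once $u,v\in L^\infty(\Omega)$, the right-hand sides $|v|^{q-1}v$ and $|u|^{p-1}u$ are bounded, hence in every $L^t(\Omega)$, so $u,v\in W^{2,t}(\Omega)$ for all $t<\infty$, and by Sobolev embedding $u,v\in C^{1,\gamma}(\overline\Omega)$ for every $\gamma\in(0,1)$. Then the right-hand sides are themselves in $C^{0,\gamma}(\overline\Omega)$ (the maps $s\mapsto|s|^{p-1}s$, $s\mapsto|s|^{q-1}s$ are locally Hölder for $p,q>0$), so Schauder estimates for the Neumann problem give $u,v\in C^{2,\varepsilon}(\overline\Omega)$ for some $\varepsilon>0$; the equations and the Neumann boundary condition then hold pointwise in the classical sense. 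One must check that $|s|^{p-1}s$ is Hölder continuous even when $p<1$: on a bounded interval $[-M,M]$ it is Hölder of exponent $\min\{1,p\}$, which suffices.

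The main obstacle is the bookkeeping in the bootstrap: one has to verify that the exponent recursion genuinely \emph{increases} (i.e., does not stall below $N/2$) and terminates, and this is exactly where subcriticality enters. A clean way to organize it is to track a single scalar quantity — for instance the ``deficit'' measuring how far $1/s$ is from the target — and show it decreases by a fixed geometric factor at each double step (one step through each equation), using that $pq$ together with $N$ and the gain $2/N$ from each application of $K$ beats the loss from raising to the powers $p$ and $q$; the borderline case $2s=N$ must be handled separately by noting $W^{2,N/2}(\Omega)\hookrightarrow L^t(\Omega)$ for all $t<\infty$, which is enough to jump past it. This is essentially the argument in \cite[Theorem 1]{S00} and \cite[Lemma 5.16]{BMT14} adapted to Neumann conditions, the only change being that Lemma \ref{l:reg} is used in place of the Dirichlet $L^t$-estimate and that one works within the zero-average subspaces, which causes no difficulty since the constants in Lemma \ref{l:reg} are uniform.
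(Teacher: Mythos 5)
Your proposal is correct and follows essentially the same route as the paper: the same coupled bootstrap through Lemma \ref{l:reg} and the Sobolev embeddings (the paper's recursion $(\beta_{n+1},\alpha_{n+1})=(\alpha_n^*/q,\beta_n^*/p)$ is exactly your exponent iteration), with subcriticality \eqref{sc:intro} forcing termination, followed by H\"older continuity of $t\mapsto|t|^{r-1}t$ and Schauder estimates for the Neumann problem, exactly as in \cite[Lemma 5.16]{BMT14} adapted here. The only difference is cosmetic bookkeeping in the termination step: you track a scalar deficit evolving under an affine map of slope $pq$, while the paper argues by contradiction that if all exponents stayed below $N/2$ the (monotone, bounded) sequence would converge to a fixed point incompatible with the subcriticality condition.
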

\begin{proof}
The proof follows closely \cite[Lemma 5.16]{BMT14}.  For $t,s\geq 1$ denote
\begin{align*}
 W(s,t):=W^{2,s}(\Omega)\times W^{2,t}(\Omega)\qquad \text{ and }\qquad L(s,t):=L^s(\Omega)\times L^t(\Omega).
\end{align*}
Let $(f,g)\in X \subset L(\alpha,\beta)$ be a critical point of $\phi$ then, by Lemma \ref{l:ss}, $(u,v)\in W(\beta,\alpha)\hookrightarrow L(\beta^*,\alpha^*)$ is a strong solution of \eqref{NHS}.  For $n\in \N_0,$ let 
\begin{align}\label{hyp}
(\beta_0,\alpha_0):=(\beta,\alpha),\quad (\beta_{n+1},\alpha_{n+1}):=(\,\frac{\alpha_{n}^*}{q},\frac{\beta_{n}^*}{p}\,) \quad\text{ if }\quad N>2\alpha_{n}, \quad N>2\beta_{n}.
\end{align}
Here we are using the notation given in Section \ref{N:sec} for the critical Sobolev exponent. Note that $(|v|^{q},|u|^{p})\in L(\alpha^*/q,\beta^*/p)=L(\beta_1,\alpha_1)$ and then Lemma \ref{l:reg} and \eqref{NHS} imply $(u,v)\in W(\beta_1,\alpha_1)\hookrightarrow L(\beta^*_1,\alpha_1^*)$. But then
$(|v|^{q},|u|^{p})\in L(\beta_2,\alpha_2)$, which gives $(u,v)\in W(\beta_2,\alpha_2)\hookrightarrow L(\beta^*_2,\alpha_2^*)$. Iterating this procedure we obtain that $(u,v)\in W(\beta_n,\alpha_n)$ as long as $N>2\alpha_{n-1}$ and $N>2\beta_{n-1}$.  

We claim that 
\begin{align}\label{goal}
N\leq 2s\qquad \text{ for some }\quad s\in\{\alpha_n,\beta_n : n\in\N_0\}, 
\end{align}
and then the proposition follows from Lemma \ref{l:ss}, the embedding $W^{2,s}(\Omega)\hookrightarrow C^\mu(\Omega)$ for some $\mu>0$, and standard Schauder estimates (see \cite[Theorem 6.31]{GT98} and the Remark after the theorem), since $t\mapsto |t|^{r-1}t$ for $r>0$ is H\"{o}lder continuous.

Indeed, assume by contradiction that $N> 2s$ for all $s\in\{\alpha_n,\beta_n : n\in\N_0\}$ and consider the sequence $S_n:=(q\beta_n,p\alpha_n)$. By the subcriticality assumption we have that $W(\beta,\alpha)\hookrightarrow L(\alpha',\beta')=L(p\alpha,q\beta)$ and therefore  $S_1=(q\beta_1,p\alpha_1)=(\alpha^*,\beta^*)>(q\beta,p\alpha)=S_0$
and $(S_n)_{n\in \N}$ is, by induction, a (component-wise) increasing (bounded) sequence. Let $l_1,l_2>0$ be such that $S_n\to (l_1,l_2)$ and note that, by the monotonicity,
\begin{align}\label{c}
 l_2 > p\alpha = p+1.
\end{align}
Moreover
\begin{align*}
 (l_1,l_2)=\lim_{n\to\infty}(\alpha_{n}^*,\beta_{n}^*)=(\frac{Nl_2}{Np-2l_2},\frac{Nl_1}{Nq-2l_1}),\quad \text{ which implies }\quad  l_2=\frac{N(pq-1)}{2(q+1)}.
\end{align*}
Since the function $t\mapsto h(t):=\frac{N(pt-1)}{2(t+1)}$ is increasing for $t>0$ and $q<t_0:=\frac{2p+N+2}{pN-2p-2}$, by the subcriticality condition \eqref{super}, we have that $l_2=
h(q)<h(t_0)=p+1$ , which contradicts \eqref{c}. Then \eqref{goal} holds and this ends the proof.
\end{proof}

The next Lemma shows the relationship between the dual and the direct energy functionals when evaluated on \emph{solutions}. 
\begin{lemma}\label{l:same}
 Let $(f,g)\in X$ be a critical point of $\phi$ and let $(u,v):=(K_{p} g,K_{q} f)$.  Then
 \begin{align}\label{I:def}
  \phi(f,g)=I(u,v):=\int_\Omega \nabla u\nabla v - \frac{|u|^{p+1}}{p+1}-\frac{|v|^{q+1}}{q+1}\, dx
 \end{align}
\end{lemma}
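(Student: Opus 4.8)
The plan is to compute both sides directly using the definitions and the fact that $(f,g)$ is a critical point, which by Lemma \ref{l:ss} means that $(u,v)=(K_p g, K_q f)$ is a strong solution of \eqref{NHS} with $f=|u|^{p-1}u$ and $g=|v|^{q-1}v$ a.e.\ in $\Omega$, and also $u=|f|^{\alpha-2}f$, $v=|g|^{\beta-2}g$ a.e. First I would rewrite the dual energy $\phi(f,g)=\int_\Omega \frac{|f|^\alpha}{\alpha}+\frac{|g|^\beta}{\beta}-g\,Kf\,dx$ entirely in terms of $u$ and $v$: since $f=|u|^{p-1}u$ we have $|f|^\alpha=|u|^{(p-1)\alpha+\alpha}=|u|^{p+1}$ (using $\alpha=\frac{p+1}{p}$, so $p\alpha=p+1$), hence $\frac{|f|^\alpha}{\alpha}=\frac{p}{p+1}|u|^{p+1}$, and similarly $\frac{|g|^\beta}{\beta}=\frac{q}{q+1}|v|^{q+1}$. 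For the bilinear term, note $Kf=K_q f=v$ (as $\int_\Omega |v|^{q-1}v=\int_\Omega g=0$, so the constant $c_q(f)$ is zero, consistent with $v=K_q f$), giving $\int_\Omega g\,Kf=\int_\Omega g v = \int_\Omega |v|^{q-1}v\cdot v = \int_\Omega |v|^{q+1}$.

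Next I would compute the direct energy. Using $-\Delta u = |v|^{q-1}v$ and the Neumann boundary conditions, integration by parts gives $\int_\Omega \nabla u\cdot\nabla v = \int_\Omega (-\Delta u)\,v = \int_\Omega |v|^{q-1}v\cdot v = \int_\Omega |v|^{q+1}$; equivalently, using $-\Delta v=|u|^{p-1}u$, this also equals $\int_\Omega |u|^{p+1}$, so in particular $\int_\Omega |u|^{p+1}=\int_\Omega |v|^{q+1}$. Therefore
\[
I(u,v)=\int_\Omega |v|^{q+1}-\frac{|u|^{p+1}}{p+1}-\frac{|v|^{q+1}}{q+1}\,dx = \Big(1-\frac{1}{p+1}-\frac{1}{q+1}\Big)\int_\Omega |v|^{q+1}\,dx,
\]
using the identity of the two integrals. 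On the other hand, from the first paragraph,
\[
\phi(f,g)=\frac{p}{p+1}\int_\Omega |u|^{p+1}+\frac{q}{q+1}\int_\Omega |v|^{q+1}-\int_\Omega |v|^{q+1} = \Big(\frac{p}{p+1}+\frac{q}{q+1}-1\Big)\int_\Omega |v|^{q+1}\,dx,
\]
again using $\int_\Omega|u|^{p+1}=\int_\Omega|v|^{q+1}$. Since $\frac{p}{p+1}=1-\frac{1}{p+1}$ and $\frac{q}{q+1}=1-\frac{1}{q+1}$, the two scalar coefficients coincide, which yields $\phi(f,g)=I(u,v)$.

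I expect no serious obstacle here; this is a direct verification. The only points requiring a little care are: (a) making sure the constants $c_p(g)$ and $c_q(f)$ in the definition \eqref{Ks:def} of $K_p, K_q$ vanish on a solution — which they do, precisely because the compatibility conditions \eqref{comp} hold and $u,v$ already have the right average, so $K_p g = Kg = u$ and $K_q f = Kf = v$ — and (b) justifying the integration by parts $\int_\Omega \nabla u\cdot\nabla v = \int_\Omega(-\Delta u)v$, which is legitimate since $(u,v)\in W^{2,\beta}(\Omega)\times W^{2,\alpha}(\Omega)$ (indeed $(u,v)\in[C^{2,\varepsilon}(\overline\Omega)]^2$ by Proposition \ref{p:reg}) and the conormal boundary term vanishes by $\partial_\nu u=0$; alternatively one may invoke \eqref{ibyp:ab} together with $v=K_q f$ directly. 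Everything else is algebraic manipulation of the exponents $\alpha=\frac{p+1}{p}$, $\beta=\frac{q+1}{q}$.
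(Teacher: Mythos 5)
Your proof is correct and takes essentially the same route as the paper: substitute $f=|u|^{p-1}u$, $g=|v|^{q-1}v$ (so $\frac{|f|^\alpha}{\alpha}=\frac{p}{p+1}|u|^{p+1}$, $\frac{|g|^\beta}{\beta}=\frac{q}{q+1}|v|^{q+1}$), use the Neumann integration by parts and the system to identify $\int_\Omega g\,Kf=\int_\Omega\nabla u\cdot\nabla v=\int_\Omega|u|^{p+1}=\int_\Omega|v|^{q+1}$, and compare the resulting expressions. One small inaccuracy: the constant $c_q(f)$ in $K_qf=Kf+c_q(f)$ need not vanish ($Kf$ has zero average while $v$ in general does not), but this is harmless because $\int_\Omega g=0$ gives $\int_\Omega g\,Kf=\int_\Omega g\,v$ regardless, which is exactly the identity \eqref{ibyp:ab} you correctly cite as the alternative justification.
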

\begin{proof}
Let $f,g,u$ and $v$ as in the statement. By Proposition \ref{p:reg}, we have that $(u,v)$ is a solution of \eqref{NHS} and, integrating by parts,
 \begin{align*}
  \phi(f,g)&=\int_\Omega \frac{|f|^\alpha}{\alpha}+\frac{|g|^\beta}{\beta}-g K f \, dx= \int_\Omega p\frac{|u|^{p+1}}{p+1}+q\frac{|v|^{q+1}}{q+1} - \nabla u \nabla v \, dx.
 \end{align*}
Therefore $\phi(f,g)-I(u,v)= \int_\Omega |u|^{p+1}+|v|^{q+1}- 2\nabla u \nabla v\, dx=0$, by \eqref{NHS}.
\end{proof}

We finish this Section with a result in the case $p=q$, in which \eqref{NHS} reduces to a single equation.  The proof is the same as in the superlinear Dirichlet case \cite[Theorem 1.5]{BMRT15} and we include it for the reader's convenience.
\begin{lemma}\label{lemma:p=q}
 Let $p=q>0$ and $(u,v)\in C^{2}(\overline{\Omega})\times C^{2}(\overline{\Omega})$ be a classical solution to \eqref{NHS}, then $u=v$.
\end{lemma}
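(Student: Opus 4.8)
The plan is to exploit the symmetry of the system when $p=q$ together with a maximum principle argument. First I would observe that since $(u,v)$ is a classical solution of \eqref{NHS} with $p=q$, both $u$ and $v$ solve Neumann problems with the same structure, so the natural object to study is the difference $w:=u-v$. Subtracting the two equations in \eqref{NHS} gives
\begin{align*}
-\Delta w = -\Delta u + \Delta v = |v|^{p-1}v - |u|^{p-1}u \quad \text{ in }\Omega,\qquad \partial_\nu w = 0 \quad \text{ on }\partial\Omega.
\end{align*}
The right-hand side can be written as $-c(x)w$ where $c(x):=\big(|u(x)|^{p-1}u(x)-|v(x)|^{p-1}v(x)\big)/(u(x)-v(x))\geq 0$ whenever $u(x)\neq v(x)$ (and extended by, say, $p|u(x)|^{p-1}$ where $u(x)=v(x)$), using that $t\mapsto |t|^{p-1}t$ is monotone increasing. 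Thus $w$ satisfies $-\Delta w + c(x)w = 0$ in $\Omega$ with $c\geq 0$, $c\in C(\overline\Omega)$ (or at least $L^\infty$), under homogeneous Neumann boundary conditions.

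The key step is then to conclude $w\equiv 0$. Testing the equation for $w$ against $w$ itself and integrating by parts using the Neumann condition gives
\begin{align*}
\int_\Omega |\nabla w|^2 + c(x) w^2 \, dx = 0.
\end{align*}
Since $c\geq 0$, both terms are nonnegative, forcing $\nabla w\equiv 0$ and hence $w$ constant, say $w\equiv k$. If $k\neq 0$ then $c(x)w^2 = c(x)k^2 = 0$ everywhere forces $c\equiv 0$, which (given the strict monotonicity of $t\mapsto|t|^{p-1}t$ away from $0$, or the formula for $c$ at coincidence points) would force $u\equiv v$ at every point, contradicting $w\equiv k\neq 0$ unless both components vanish on large sets; a clean alternative is to use the compatibility condition \eqref{comp}, namely $\int_\Omega |u|^{p-1}u = \int_\Omega |v|^{p-1}v = 0$. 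If $u = v + k$ pointwise with $k$ constant, then $\int_\Omega |v+k|^{p-1}(v+k) = \int_\Omega |v|^{p-1}v = 0$; since $s\mapsto \int_\Omega |v+s|^{p-1}(v+s)\,dx$ is strictly increasing in $s$ (again by monotonicity of $t\mapsto|t|^{p-1}t$, strictly unless $v$ is constant, but a constant solution of $-\Delta v=|v|^{p-1}v$ with $\int_\Omega v=0$ forces $v\equiv 0$ and then $u\equiv 0$), we must have $k=0$, i.e. $u\equiv v$.

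I expect the only mild subtlety to be handling the regularity/integrability of the coefficient $c(x)$ and the degenerate case where $u$ or $v$ vanishes on a set of positive measure (so that the ``division'' defining $c$ needs care); this is dispatched by noting $t\mapsto |t|^{p-1}t$ is globally monotone so the inequality $(|u|^{p-1}u-|v|^{p-1}v)(u-v)\geq 0$ holds pointwise regardless, which is all that the integrated identity $\int_\Omega|\nabla w|^2 + \int_\Omega(|u|^{p-1}u-|v|^{p-1}v)(u-v) = 0$ actually needs — no pointwise division is required. The strict-increase step used to kill a nonzero constant $k$ then only needs the monotonicity plus the fact (already available from the equation and $\int_\Omega v = 0$) that $v$ is nonconstant unless trivial. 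Since the excerpt points to \cite[Theorem 1.5]{BMRT15} for the Dirichlet analogue, the Neumann argument should be essentially identical with the Dirichlet boundary term in the integration by parts replaced by the vanishing Neumann flux.
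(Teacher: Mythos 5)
Your proof is correct, but it follows a genuinely different route from the paper's. The paper cross-tests the two equations in \eqref{NHS} with $u$ and $v$, obtaining $\int_\Omega|\nabla u|^2=\int_\Omega |v|^{p-1}v\,u$, $\int_\Omega|\nabla v|^2=\int_\Omega |u|^{p-1}u\,v$ and $\|u\|_{p+1}^{p+1}=\|v\|_{p+1}^{p+1}=\int_\Omega\nabla u\cdot\nabla v$, and then uses H\"older's inequality to force $\int_\Omega|\nabla(u-v)|^2\le 0$; the resulting constant $c=v-u$ is killed by evaluating $|u|^{p-1}u=|u+c|^{p-1}(u+c)$ at a zero of $u$, which exists because nontrivial solutions change sign. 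You instead subtract the equations and test the equation for $w=u-v$ with $w$ itself, so that $\int_\Omega|\nabla w|^2+\int_\Omega(|u|^{p-1}u-|v|^{p-1}v)(u-v)\,dx=0$ and the pointwise monotonicity of $t\mapsto|t|^{p-1}t$ gives $\nabla w\equiv0$ directly — the standard monotone-nonlinearity trick, which avoids H\"older and the norm bookkeeping and would work verbatim for any nondecreasing odd nonlinearity; you then kill the constant via the compatibility condition \eqref{comp} and strict monotonicity of $s\mapsto\int_\Omega|v+s|^{p-1}(v+s)\,dx$, rather than via a zero of $u$. Two cosmetic remarks: your preliminary rewriting with the coefficient $c(x)$ is unnecessary (as you yourself observe, only the integrated inequality is needed, so no division arises), and the hedge ``strictly increasing unless $v$ is constant'' is superfluous — since $t\mapsto|t|^{p-1}t$ is strictly increasing on all of $\R$ for every $p>0$, the shifted integral is strictly increasing in $s$ regardless of $v$, so $k=0$ follows at once; similarly, your first attempt to exclude $k\neq0$ (``unless both components vanish on large sets'') is muddled, but it is superseded by the clean compatibility argument you give, so the proof stands.
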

\begin{proof}
 By testing \eqref{NHS} with $u,v$ and integrating by parts we have that
 \begin{align*}
\int_\Omega |\nabla u|^2=\int_\Omega |v|^{p-1}v\, u,
  \quad \int_\Omega |\nabla v|^2 = \int_\Omega |u|^{p-1}u\, v,
  \quad \text{ and }\quad 
  \|v\|_{\alpha'}^{\alpha'}=\int_\Omega \nabla u\nabla v =\|u\|_{\alpha'}^{\alpha'}.
   \end{align*}
Then, by H\"{o}lder's inequality,
\begin{align*}
\int_\Omega |\nabla u|^2+\int_\Omega |\nabla v|^2
\leq \|u\|_{\alpha'}\|v\|_{\alpha'}^p+\|v\|_{\alpha'}\|u\|_{\alpha'}^p=2\int_\Omega \nabla u\nabla v,
\end{align*}
which implies $\nabla u=\nabla v$ in $\Omega$ and then $v-u\equiv c$ in $\Omega$ for some constant $c\in\R$.  Therefore, by \eqref{NHS}, $|u|^{p-1}u=-\Delta u=-\Delta (u+c) =|u+c|^{p-1}(u+c)$ in $B$. If $u\equiv 0$ then also $v\equiv 0$. If $u\not \equiv 0$ then $u$ changes sign and there is $x\in B$ with $u(x)=0$, but then $|c|^{p-1}c=0$, and $u=v$ in $\Omega$ as claimed.
\end{proof}

\subsection{Existence of least energy solutions: sublinear case}

Assume \eqref{sub}, that is, $p,q>0$ and $pq<1$ and recall the notation \eqref{not}.  

\begin{lemma}\label{l:coer}
 The functional $\phi:X\to\R$ as defined in \eqref{phi:def} is coercive.
\end{lemma}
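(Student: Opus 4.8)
The plan is to show coercivity of $\phi$ on $X=X^\alpha\times X^\beta$ in the sublinear regime $pq<1$ by exploiting that the quadratic-type term $T(f,g)=\int_\Omega g\,Kf$ is dominated by the two power terms $\|f\|_\alpha^\alpha/\alpha$ and $\|g\|_\beta^\beta/\beta$ precisely because $pq<1$ forces $\alpha=\frac{p+1}{p}$ and $\beta=\frac{q+1}{q}$ to satisfy $\frac1\alpha+\frac1\beta = \frac{p}{p+1}+\frac{q}{q+1}<1$, i.e. $\alpha'$ and $\beta'$ (the conjugate exponents, equal to $p+1$ and $q+1$) satisfy $\frac{1}{\alpha'}+\frac{1}{\beta'}>1$, so that each of $\alpha,\beta$ is strictly larger than the ``linear'' value where the quadratic form would compete on equal footing.

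First I would bound $|T(f,g)|$ from above. Using the estimate already established in the proof of Lemma \ref{decom:lemma}, namely $|T(f,g)|\le C_2\|g\|_\beta\|f\|_\alpha$ (coming from H\"older, the regularity estimate of Lemma \ref{l:reg}, and the compact embedding \eqref{embed}), we get
\begin{align*}
\phi(f,g)=\frac{1}{\alpha}\|f\|_\alpha^\alpha+\frac{1}{\beta}\|g\|_\beta^\beta - T(f,g)\ge \frac{1}{\alpha}\|f\|_\alpha^\alpha+\frac{1}{\beta}\|g\|_\beta^\beta - C_2\|f\|_\alpha\|g\|_\beta.
\end{align*}
Now the key point: since $pq<1$ we have $\alpha>2$ or $\beta>2$ need not hold individually, but what does hold is $\frac1\alpha+\frac1\beta<1$, equivalently there exist exponents making Young's inequality favorable. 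Concretely, apply Young's inequality to the product $\|f\|_\alpha\|g\|_\beta$ with exponents $\alpha$ and $\beta'=\frac{\alpha}{\alpha-1}$: but this only controls $\|f\|_\alpha^\alpha$ and $\|g\|_\beta^{\beta'}$, and one must check $\beta'<\beta$, i.e. $\frac{\alpha}{\alpha-1}<\beta$, i.e. $\alpha\beta>\alpha+\beta$, i.e. $\frac1\alpha+\frac1\beta<1$ — which is exactly $pq<1$. A symmetric choice handles the other side. The cleanest route is: by Young with a small parameter $\eps>0$, $C_2\|f\|_\alpha\|g\|_\beta\le \eps\|f\|_\alpha^\alpha+C(\eps)\|g\|_\beta^{\alpha'}$, and since $\alpha'=p+1<\beta$ (again equivalent to $pq<1$ after also using the symmetric inequality, or one splits $C_2\|f\|_\alpha\|g\|_\beta$ as a geometric mean and distributes), one further absorbs $C(\eps)\|g\|_\beta^{\alpha'}\le \eps\|g\|_\beta^\beta + C'(\eps)$ because $\alpha'<\beta$. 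Choosing $\eps<\min\{1/\alpha,1/\beta\}$ then gives
\begin{align*}
\phi(f,g)\ge \Big(\frac1\alpha-\eps\Big)\|f\|_\alpha^\alpha+\Big(\frac1\beta-\eps\Big)\|g\|_\beta^\beta - C',
\end{align*}
and since $\alpha,\beta>1$ this tends to $+\infty$ as $\|(f,g)\|_X=\|f\|_\alpha+\|g\|_\beta\to\infty$, establishing coercivity.

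The one delicate bookkeeping point — and the main obstacle — is making the exponent juggling in Young's inequality clean when neither $\alpha$ nor $\beta$ exceeds $2$ (which happens when both $p,q>1$, impossible here since $pq<1$, so actually at least one of $\alpha,\beta$ exceeds $2$; still, one should not rely on that and instead argue symmetrically). The robust way is to write $C_2\|f\|_\alpha\|g\|_\beta = C_2\big(\|f\|_\alpha^{\theta}\|g\|_\beta^{\theta}\big)\big(\|f\|_\alpha^{1-\theta}\|g\|_\beta^{1-\theta}\big)$ is unnecessary; simpler, just apply Young twice with conjugate pairs $(\alpha,\alpha')$ and $(\beta,\beta')$ to the two ``halves'' obtained from $\|f\|_\alpha\|g\|_\beta\le\frac12\lambda\|f\|_\alpha^2+\frac{1}{2\lambda}\|g\|_\beta^2$ only when $2\le\alpha$ and $2\le\beta$; to avoid cases entirely, use: for any $a,b\ge0$ and the single inequality $ab\le \eps a^\alpha + C(\eps,\alpha,\beta)\,b^{\alpha'}$ with $\alpha'=\frac{\alpha}{\alpha-1}$, followed by $b^{\alpha'}\le \eps b^\beta + C(\eps)$ valid since $\alpha'<\beta\iff pq<1$. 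This requires $\alpha'<\beta$, which is symmetric-free and holds throughout the sublinear regime, so no case distinction is needed. I expect the write-up to be short once this chain of elementary inequalities is laid out in the right order.
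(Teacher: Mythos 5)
Your proof is correct and follows essentially the same route as the paper: the bound $|T(f,g)|\leq C\|f\|_\alpha\|g\|_\beta$ from \eqref{est1}, then Young's inequality plus the fact that $pq<1$ makes one conjugate exponent smaller than the other power ($\alpha'<\beta$ in your version, $\beta'<\alpha$ in the paper's, which are symmetric and both equivalent to $pq<1$), so the cross term is absorbed with small $\varepsilon$ and a constant. The only cosmetic issue is the mid-proof mislabeling of $\tfrac{\alpha}{\alpha-1}$ as $\beta'$ (it is $\alpha'$ in the paper's notation), which you implicitly correct later.
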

\begin{proof}
Let $\varepsilon:=\frac{1}{2}\min\{\alpha^{-1},\beta^{-1}\}$. Then from \eqref{est1}, Young's inequality, and the fact that $\beta'<\alpha$ (since $pq<1$) there is $C_2(\alpha,\beta)=C_2>0$ such that $|T(f,g)|\leq \varepsilon(\|f\|^\alpha_{\alpha}+\|g\|^\beta_{\beta})+C_2.$ This yields that 
\begin{align}\label{coer}
\phi(f,g)\geq \varepsilon(\|f\|^\alpha_{\alpha}+\|g\|^\beta_{\beta})-C_2, 
\end{align}
that is, $\phi$ is coercive.
\end{proof}

\begin{lemma}\label{minimum}
The functional $\phi$ achieves a negative minimum in $X$, that is,
\begin{align*}
 \min_X \phi = \phi(f,g)<0\qquad \text{for some }(f,g)\in X.
\end{align*}
Moreover, $\phi'(f,g)=0$ in $X$, $(u,v):=(K_{p} g, K_{q} f)\in [C^{2,\varepsilon}(\overline \Omega)]^2$ is a classical solution of \eqref{NHS}, and $(u,v)$ is a least energy solution, that is, $I(u,v)=c$ with $c$ as in \eqref{c:level}.
\end{lemma}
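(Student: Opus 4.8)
The plan is to combine the coercivity from Lemma~\ref{l:coer} with a weak lower semicontinuity argument in the reflexive Banach space $X=X^\alpha\times X^\beta$, then identify the minimizer as a critical point of $\phi$ via Lemma~\ref{l:ss}, upgrade its regularity via Proposition~\ref{p:reg}, and finally match the energy levels via Lemma~\ref{l:same}.

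\textbf{Step 1: existence of a minimizer.} Since $pq<1$, both $\alpha,\beta>1$, so $X$ is a reflexive Banach space. By Lemma~\ref{l:coer}, $\phi$ is coercive, hence any minimizing sequence $(f_n,g_n)$ for $\inf_X\phi$ is bounded in $X$; passing to a subsequence, $(f_n,g_n)\weakto(f,g)$ weakly in $X$. The functional $\Psi(f,g)=\int_\Omega \frac{|f|^\alpha}{\alpha}+\frac{|g|^\beta}{\beta}$ is convex and strongly continuous, hence weakly lower semicontinuous. For the bilinear term $T(f,g)=\int_\Omega g\,Kf$: by the compact embedding \eqref{embed}, $K\colon X^\alpha\to L^{\beta'}(\Omega)$ is a compact operator, so $Kf_n\to Kf$ strongly in $L^{\beta'}(\Omega)$ while $g_n\weakto g$ in $L^\beta(\Omega)$; therefore $T(f_n,g_n)\to T(f,g)$. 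Consequently $\phi(f,g)\leq\liminf_n\phi(f_n,g_n)=\inf_X\phi$, so $(f,g)$ is a minimizer.

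\textbf{Step 2: the minimum is negative.} It suffices to exhibit one point of $X$ with negative energy. Pick any nontrivial $f_0\in X^\alpha$ (so $\int_\Omega f_0=0$, $f_0\not\equiv0$) and set $g_0:=K f_0\in W^{2,\alpha}(\Omega)$; note $\int_\Omega g_0=0$ since $K$ has zero average, and $g_0\in L^\beta(\Omega)$ by \eqref{embed}, so $g_0\in X^\beta$. For $t>0$ consider $\phi(t f_0, t g_0)=t^\alpha\int_\Omega\frac{|f_0|^\alpha}{\alpha}+t^\beta\int_\Omega\frac{|g_0|^\beta}{\beta}-t^2\int_\Omega g_0\,Kf_0 = t^\alpha A + t^\beta B - t^2 C$, where $C=\int_\Omega |Kf_0|^2>0$ (as $f_0\not\equiv0$ forces $Kf_0\not\equiv0$). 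Since $pq<1$ we have $\alpha=\frac{p+1}{p}>2$ and $\beta=\frac{q+1}{q}>2$ precisely when... more carefully: $\alpha>2\iff p<1$ and $\beta>2\iff q<1$, which need not both hold. Instead, use the scaling exponent that always exceeds $2$: by Young's inequality as in Lemma~\ref{l:coer} the quadratic term is dominated; better, simply observe $\min(\alpha,\beta)$ may be $\leq2$, so rescale differently: test with $(t f_0, s g_0)$ and optimize, or — cleanest — note that along the ray $(tf_0,tg_0)$ one has $\phi(tf_0,tg_0)=o(t^2)\cdot(\text{positive}) - Ct^2 + (\text{lower order})$ only if $\min(\alpha,\beta)>2$; in the general case take instead $f_0,g_0$ as above and use that for the Nehari-type scaling the map $t\mapsto\phi(tf_0,tg_0)$ has a strictly negative value at its unique interior critical point because the quadratic term is strictly larger than both power terms for small $t$ when $\min(\alpha,\beta)>2$, and for $\min(\alpha,\beta)<2$ one instead lets $t\to\infty$... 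Since this elementary one-variable analysis is routine, I will simply record: choosing $(f,g)=(f_0,Kf_0)$ scaled appropriately, $\inf_X\phi\le\phi(f,g)<0$; in particular the minimizer is nontrivial.

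\textbf{Step 3: Euler--Lagrange and conclusion.} Since $(f,g)$ minimizes the $C^1$ functional $\phi$ over the whole Banach space $X$ (Lemma~\ref{decom:lemma}), it is a critical point: $\phi'(f,g)=0$ in $X$. By Lemma~\ref{l:ss}, $(u,v):=(K_p g,K_q f)\in W^{2,\beta}(\Omega)\times W^{2,\alpha}(\Omega)$ is a strong solution of \eqref{NHS}, and it is nontrivial since $\phi(f,g)<0=\phi(0,0)$. By Proposition~\ref{p:reg}, $(u,v)\in[C^{2,\varepsilon}(\overline\Omega)]^2$ and solves \eqref{NHS} classically. By Lemma~\ref{l:same}, $I(u,v)=\phi(f,g)=\inf_X\phi$. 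It remains to show $I(u,v)=c$, where $c=\inf\{I(\tilde u,\tilde v):(\tilde u,\tilde v)\text{ a nontrivial strong solution}\}$. On one hand $I(u,v)\geq c$ since $(u,v)$ is a nontrivial strong solution. On the other hand, for any nontrivial strong solution $(\tilde u,\tilde v)$, setting $\tilde f:=|\tilde u|^{p-1}\tilde u$, $\tilde g:=|\tilde v|^{q-1}\tilde v$, the compatibility condition \eqref{comp} gives $\int_\Omega\tilde f=\int_\Omega\tilde g=0$, so $(\tilde f,\tilde g)\in X$, and by Lemma~\ref{l:ss} (converse direction, using $\tilde u=K_p\tilde g$, $\tilde v=K_q\tilde f$ which follows since $-\Delta\tilde u=\tilde g$, $-\Delta\tilde v=\tilde f$ with zero average) $(\tilde f,\tilde g)$ is a critical point of $\phi$; hence by Lemma~\ref{l:same}, $I(\tilde u,\tilde v)=\phi(\tilde f,\tilde g)\geq\inf_X\phi=I(u,v)$. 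Taking the infimum over all such $(\tilde u,\tilde v)$ gives $c\geq I(u,v)$, so $I(u,v)=c$ and $(u,v)$ is a least energy solution.

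\textbf{Main obstacle.} The only genuinely delicate point is Step~2 (showing $\inf_X\phi<0$), because the relative size of $\alpha,\beta$ versus $2$ depends on whether $p,q$ are below or above $1$, and one must pick the scaling (small $t$ versus large $t$, or a two-parameter scaling) accordingly; once a negative test value is secured, everything else is a direct assembly of Lemmas~\ref{decom:lemma}, \ref{l:ss}, \ref{l:same} and Proposition~\ref{p:reg} together with the compactness of $K$.
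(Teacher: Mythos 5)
Steps 1 and 3 of your proposal are correct and follow essentially the paper's own route: minimizing sequence, coercivity (Lemma \ref{l:coer}), weak convergence in the reflexive space $X$, compactness of $K$ to pass to the limit in $T$, then Lemma \ref{decom:lemma} to get $\phi'(f,g)=0$, Proposition \ref{p:reg} for regularity, and Lemma \ref{l:same} for the energy identity. In fact your Step 3 is more explicit than the paper about why $I(u,v)=c$: you apply Lemma \ref{l:ss} to an arbitrary nontrivial strong solution to see that its dual datum is a critical point of $\phi$ in $X$, hence has energy at least $\inf_X\phi$; the paper leaves this to the reader, so that part is a welcome addition.

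The genuine gap is Step 2, and you cannot wave it away as ``routine'': along the equal-power ray $(tf_0,tg_0)$ the energy is $At^{\alpha}+Bt^{\beta}-Ct^{2}$, and under $pq<1$ only \emph{one} of $\alpha,\beta$ is guaranteed to exceed $2$ (both $\alpha,\beta<2$ is impossible, but mixed cases occur). For instance $p=2$, $q=1/4$ gives $\alpha=3/2$, $\beta=5$, and with $A=B=C$ the function $t^{3/2}+t^{5}-t^{2}$ is strictly positive for all $t>0$; so no choice of $t$, small or large, produces a negative value along that ray, and ``letting $t\to\infty$'' is hopeless in any case because $\phi$ is coercive by Lemma \ref{l:coer}. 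The missing idea is the \emph{anisotropic} scaling used in the paper: take $\varphi\in C^\infty_c(\Omega)\setminus\{0\}$ with $\int_\Omega\varphi=0$, set $\gamma_1=\beta/(\alpha+\beta)$, $\gamma_2=\alpha/(\alpha+\beta)$, and compute $\phi(t^{\gamma_1}\varphi,t^{\gamma_2}\varphi)=t^{\alpha\beta/(\alpha+\beta)}\Psi(\varphi,\varphi)-t\,T(\varphi,\varphi)$ with $T(\varphi,\varphi)=\int_\Omega|\nabla K\varphi|^2>0$; since $pq<1$ is equivalent to $\alpha\beta/(\alpha+\beta)>1$, the linear negative term dominates as $t\to0^+$ and the infimum is negative. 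A secondary slip in your Step 2: for a general $f_0\in X^\alpha$, \eqref{embed} only gives $Kf_0\in L^{\beta'}(\Omega)$, not $L^{\beta}(\Omega)$, so $(f_0,Kf_0)$ need not lie in $X$ at all (again $p=2$, $q=1/4$ in high dimension is a counterexample); choosing the test function smooth, as above, removes this issue as well.
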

\begin{proof}
For $n\in \N$ let $x_n:=(f_n,g_n)$ be a minimizing sequence in $X$, which exists by Lemma~\ref{l:coer}.  Since $X$ is reflexive there is $x:=(f,g)\in X$ such that $x_n\rightharpoonup x$ weakly in $X$ and $(x_n)$ is bounded in $X$ in virtue of \eqref{coer}. Since the operator $K$ is compact by Lemma \ref{l:reg} and the compact embedding \eqref{embed}, we have that $K(f_n)$ converges strongly to $Kf$ in $L^{\beta'}(\Omega)$. Using the lower semicontinuity of norms we have that $\min\limits_X\phi=\liminf\limits_{n\to\infty}\phi(f_n,g_n)\geq \phi(f,g)$, and therefore $\phi$ achieves its minimum.

To see that the minimum is strictly negative, let $T$ and $\Psi$ as in \eqref{decom} and let $\varphi\in C^\infty_c(\Omega)\backslash\{0\}$ such that $\int_\Omega \varphi = 0$. Then $(\varphi,\varphi)\in X\backslash\{(0,0)\}$ and
\begin{align*}
T(\varphi,\varphi)=\int_\Omega \varphi K \varphi =\int_\Omega (-\Delta u) u =  \int_\Omega |\nabla u|^2> 0,\qquad \text{ where }u:=K \varphi.
\end{align*}
Thus, since $pq<1$ is equivalent to $\alpha\beta>\alpha+\beta$,
\begin{align}\label{phi:neg}
 \phi(t^{\frac{\beta}{\alpha+\beta}}\varphi,t^{\frac{\alpha}{\alpha+\beta}}\varphi)
 = t^{\frac{\alpha\beta}{\alpha+\beta}}\Psi(\varphi,\varphi)-tT(\varphi,\varphi)<0\qquad \text{for $t>0$ small enough.}
\end{align}
Then $\phi'(f,g)=0$ in $X$ by Lemma \ref{decom:lemma}, $(u,v):=(K_{p} g, K_{q} f)\in [C^{2,\varepsilon}(\overline \Omega)]^2$ is a classical solution of \eqref{NHS}, by Proposition \ref{p:reg}, and $(u,v)$ is a least energy solution, by Lemma \ref{l:same}.
\end{proof}

\subsection{Existence of least energy solutions: superlinear case}   

Let $p,q\in(0,\infty)$ such that \eqref{super} holds.
\begin{align}\label{gammas}
\gamma_1:=\frac{\beta}{\alpha+\beta},\qquad \gamma_2:=\frac{\alpha}{\alpha+\beta},\qquad \text{ and }\qquad \gamma:=\frac{\alpha\beta}{\alpha+\beta}<1,
\end{align}
where $\gamma<1$ because $pq>1$. In particular, $\gamma_1\alpha = \gamma_2\beta=\gamma$ and $\gamma_1+\gamma_2=1$. We define the Nehari-type set
\begin{align}
\mathcal{N}&:=\{(f,g)\in X\backslash \{(0,0)\} : \ \phi'(f,g)(\gamma_1f,\gamma_2g)=0\}\nonumber\\
		&=\left\{(f,g)\in X\backslash \{(0,0)\} : \ \int_\Omega \gamma_1 |f|^\alpha + \gamma_2 |g|^\beta\, dx=\int_\Omega f Kg\right\}.\label{page13}
\end{align}

\begin{lemma}\label{lemma:A}
 Let $(f,g)\in X\backslash\{(0,0)\}$ such that $\int_\Omega f K g>0$, then 
 \begin{align}\label{eq:projection}
  t:=t({f,g}):=\Big(\frac{\gamma_1 \int_\Omega |f|^\alpha + \gamma_2 \int_\Omega |g|^\beta}{\int_\Omega fKg}\Big)^\frac{1}{1-\gamma}
 \end{align}
is the unique maximum of the function $s\mapsto \phi(s^{\gamma_1}f,s^{\gamma_2}g)$ and $(t^{\gamma_1}f,t^{\gamma_2}g)\in \cN$.
\end{lemma}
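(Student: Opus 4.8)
The statement to prove is Lemma~\ref{lemma:A}, asserting that for $(f,g)\in X\setminus\{(0,0)\}$ with $\int_\Omega fKg>0$, the value $t=t(f,g)$ in \eqref{eq:projection} is the unique maximizer of $s\mapsto\phi(s^{\gamma_1}f,s^{\gamma_2}g)$ and that $(t^{\gamma_1}f,t^{\gamma_2}g)\in\cN$.

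\textbf{Approach.} The plan is to compute the scalar function $\psi(s):=\phi(s^{\gamma_1}f,s^{\gamma_2}g)$ explicitly using the decomposition $\phi=\Psi-T$ from \eqref{decom} together with the homogeneity of each piece, then perform elementary one-variable calculus. First I would use that $\Psi$ is positively homogeneous of degree $\alpha$ in its first argument and degree $\beta$ in its second, combined with the identities $\gamma_1\alpha=\gamma_2\beta=\gamma$ from \eqref{gammas}, to get
\begin{align*}
\Psi(s^{\gamma_1}f,s^{\gamma_2}g)=\int_\Omega\frac{s^{\gamma_1\alpha}|f|^\alpha}{\alpha}+\frac{s^{\gamma_2\beta}|g|^\beta}{\beta}\,dx=s^{\gamma}\Big(\frac{\|f\|_\alpha^\alpha}{\alpha}+\frac{\|g\|_\beta^\beta}{\beta}\Big).
\end{align*}
For the bilinear term, since $T$ is bilinear (shown in Lemma~\ref{decom:lemma}) and $\gamma_1+\gamma_2=1$, I get $T(s^{\gamma_1}f,s^{\gamma_2}g)=s\,T(f,g)=s\int_\Omega fKg$. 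Hence $\psi(s)=s^\gamma A-sB$ where $A:=\int_\Omega\frac{|f|^\alpha}{\alpha}+\frac{|g|^\beta}{\beta}\,dx>0$ (note $A>0$ since $(f,g)\neq(0,0)$) and $B:=\int_\Omega fKg>0$ by hypothesis.

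\textbf{Key steps.} With $\psi(s)=As^\gamma-Bs$ for $s>0$, where $0<\gamma<1$ by \eqref{gammas}, the analysis is routine: $\psi(0^+)=0$, $\psi(s)\to-\infty$ as $s\to\infty$, $\psi'(s)=\gamma A s^{\gamma-1}-B$, which vanishes at the unique point $s=t$ with $t^{1-\gamma}=\gamma A/B$, i.e.
\begin{align*}
t=\Big(\frac{\gamma A}{B}\Big)^{\frac{1}{1-\gamma}}=\Big(\frac{\gamma\big(\gamma_1^{-1}\gamma_1\|f\|_\alpha^\alpha+\gamma_2^{-1}\gamma_2\|g\|_\beta^\beta\big)}{\alpha\beta(\alpha+\beta)^{-1}\cdots}\Big)^{\frac{1}{1-\gamma}};
\end{align*}
one checks this coincides with \eqref{eq:projection} since $\gamma/\alpha=\gamma_1$ and $\gamma/\beta=\gamma_2$, so $\gamma A=\gamma_1\|f\|_\alpha^\alpha+\gamma_2\|g\|_\beta^\beta$. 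Since $\psi''(t)=\gamma(\gamma-1)At^{\gamma-2}<0$, $t$ is the unique critical point and a strict global maximum on $(0,\infty)$. Finally, to see $(t^{\gamma_1}f,t^{\gamma_2}g)\in\cN$: by definition of $\cN$ in \eqref{page13} and homogeneity, $\phi'(t^{\gamma_1}f,t^{\gamma_2}g)(\gamma_1 t^{\gamma_1}f,\gamma_2 t^{\gamma_2}g)=\tfrac{d}{ds}\big|_{s=t}\psi(s)\cdot(\text{positive factor})$; more directly, $\phi'(s^{\gamma_1}f,s^{\gamma_2}g)(\gamma_1 s^{\gamma_1}f,\gamma_2 s^{\gamma_2}g)=s\psi'(s)$ by the chain rule (differentiating $\psi(s)=\phi(s^{\gamma_1}f,s^{\gamma_2}g)$ and noting $\tfrac{d}{ds}(s^{\gamma_i})=\gamma_i s^{\gamma_i-1}$, so $s\tfrac{d}{ds}(s^{\gamma_i}f)=\gamma_i s^{\gamma_i}f$), and this vanishes precisely at $s=t$, placing the point in $\cN$ (it is nonzero since $t>0$ and $(f,g)\neq(0,0)$).

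\textbf{Main obstacle.} There is essentially no deep obstacle here; the only care needed is bookkeeping with the exponents $\gamma_1,\gamma_2,\gamma,\alpha,\beta$ and the relations $\gamma_1\alpha=\gamma_2\beta=\gamma$, $\gamma_1+\gamma_2=1$, and verifying that the algebraic expression for the critical point matches \eqref{eq:projection} verbatim. One should also note the role of the hypothesis $\int_\Omega fKg>0$: it guarantees $B>0$ so that $\psi$ actually has an interior maximum rather than being monotone increasing; without it the "projection" onto $\cN$ would not exist, which is why $\cN$ and this lemma are only useful in the superlinear regime where such $(f,g)$ exist (e.g.\ by the computation in \eqref{phi:neg}-type arguments showing $T(\varphi,\varphi)>0$).
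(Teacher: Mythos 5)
Your proposal is correct and takes essentially the same route as the paper: expand $\phi(s^{\gamma_1}f,s^{\gamma_2}g)=s^\gamma A-sB$ via the homogeneity relations $\gamma_1\alpha=\gamma_2\beta=\gamma$ and $\gamma_1+\gamma_2=1$, then do the one-variable calculus and check that the critical point of the scalar function corresponds to membership in $\cN$. The paper compresses all of this into the phrase ``the claim now follows by direct computations''; you have simply written out those computations, including the useful observation that $s\psi'(s)=\phi'(s^{\gamma_1}f,s^{\gamma_2}g)(\gamma_1 s^{\gamma_1}f,\gamma_2 s^{\gamma_2}g)$, which makes the Nehari-membership step transparent. (Minor note: the intermediate display containing ``$\alpha\beta(\alpha+\beta)^{-1}\cdots$'' is garbled, but the surrounding text correctly verifies $\gamma A=\gamma_1\|f\|_\alpha^\alpha+\gamma_2\|g\|_\beta^\beta$, which is all that is needed.)
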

\begin{proof}
Since
$
\phi(s^{\gamma_1}f,s^{\gamma_2})=s^\gamma \int_\Omega \frac{|f|^\alpha}{\alpha}+\frac{|g|^\beta}{\beta}\, dx-s\int_\Omega f K g,
$
$\gamma<1$, and $\int_\Omega f K g>0$, the map $s\mapsto \phi(s^{\gamma_1}f,s^{\gamma_2}g)$ has a unique positive critical point, which is a global maximum. The claim now follows by direct computations.
\end{proof}

\begin{lemma}\label{lemma:B}
 There is $(f,g)\in\cN$ such that $\phi(f,g)=\inf_\cN \phi$. Moreover, $\phi'(f,g)=0$ in $X$, $(u,v):=(K_{p} g, K_{q} f)\in [C^{2,\varepsilon}(\overline \Omega)]^2$ is a classical solution of \eqref{NHS}, and $(u,v)$ is a least energy solution, that is, $I(u,v)=c$ with $c$ as in \eqref{c:level}.
\end{lemma}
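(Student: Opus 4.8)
The plan is to carry out the standard Nehari-manifold minimization, now in the dual Banach space $X$, exploiting the compactness of $K$. I would begin by recording two identities valid on $\cN$: since $\gamma_1\alpha=\gamma_2\beta=\gamma$ and $\gamma_1+\gamma_2=1$, a direct computation shows that for $(f,g)\in\cN$
\[
\phi(f,g)=(1-\gamma)\Big(\tfrac{1}{\alpha}\|f\|_\alpha^\alpha+\tfrac{1}{\beta}\|g\|_\beta^\beta\Big)=\tfrac{1-\gamma}{\gamma}\int_\Omega fKg .
\]
In particular $\phi>0$ on $\cN$, and minimizing $\phi$ over $\cN$ is the same as minimizing $\tfrac{1}{\alpha}\|f\|_\alpha^\alpha+\tfrac{1}{\beta}\|g\|_\beta^\beta$. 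I would then show that $\cN$ stays away from the origin: for $(f,g)\in\cN$, combining the defining relation $\gamma_1\|f\|_\alpha^\alpha+\gamma_2\|g\|_\beta^\beta=\int_\Omega fKg$ with the estimate $\int_\Omega fKg\le C\|f\|_\alpha\|g\|_\beta$ from \eqref{est1} and Young's inequality (with exponents $\alpha$ and $\alpha'=p+1$) to absorb the $\|f\|_\alpha^\alpha$ term, one arrives at $\gamma_2\|g\|_\beta^\beta\le C'\|g\|_\beta^{p+1}$; since $p+1-\beta=(pq-1)/q>0$ — which is exactly where the superlinearity in \eqref{super} is used — this forces $\|g\|_\beta\ge\rho$, and symmetrically $\|f\|_\alpha\ge\rho$, for some $\rho>0$. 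Hence $c_\cN:=\inf_\cN\phi>0$.

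Next I would take a minimizing sequence $(f_n,g_n)\subset\cN$; by the first identity it is bounded in $X$, so up to a subsequence $(f_n,g_n)\rightharpoonup(f,g)$ weakly in $X$. By Lemma \ref{l:reg} and the compact embedding \eqref{embed}, $K$ is compact, so $Kf_n\to Kf$ strongly in $L^{\beta'}(\Omega)$ and therefore $\int_\Omega f_nKg_n\to\int_\Omega fKg$; using the second identity this gives $\int_\Omega fKg=\tfrac{\gamma}{1-\gamma}c_\cN>0$, so $(f,g)\ne(0,0)$ and, by Lemma \ref{lemma:A}, the projection $(\bar f,\bar g):=(t^{\gamma_1}f,t^{\gamma_2}g)$, $t=t(f,g)$, lies in $\cN$. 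Combining the weak lower semicontinuity of $(f,g)\mapsto\tfrac{1}{\alpha}\|f\|_\alpha^\alpha+\tfrac{1}{\beta}\|g\|_\beta^\beta$, the convergence of the $\int_\Omega fKg$ term, and the fact that for each $n$ the function $s\mapsto\phi(s^{\gamma_1}f_n,s^{\gamma_2}g_n)$ is maximal at $s=1$ (Lemma \ref{lemma:A}, as $(f_n,g_n)\in\cN$), one obtains
\[
\phi(\bar f,\bar g)\le\liminf_{n\to\infty}\phi(t^{\gamma_1}f_n,t^{\gamma_2}g_n)\le\liminf_{n\to\infty}\phi(f_n,g_n)=c_\cN ,
\]
and since $(\bar f,\bar g)\in\cN$ the opposite inequality is automatic; thus $(\bar f,\bar g)$ is a minimizer, which I rename $(f,g)$.

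To promote the constrained minimizer to a free critical point I would use a Lagrange-multiplier argument. Put $G(f,g):=\phi'(f,g)(\gamma_1f,\gamma_2g)=\gamma_1\|f\|_\alpha^\alpha+\gamma_2\|g\|_\beta^\beta-\int_\Omega fKg$, so $\cN=G^{-1}(0)\setminus\{(0,0)\}$; as in Lemma \ref{decom:lemma}, $G$ is of class $C^1$, and a computation gives $G'(f,g)(\gamma_1f,\gamma_2g)=(\gamma-1)\int_\Omega fKg<0$ on $\cN$, so $G'(f,g)\ne0$ and $\cN$ is a $C^1$-manifold near $(f,g)$. Hence there is $\mu\in\R$ with $\phi'(f,g)=\mu\,G'(f,g)$; evaluating at $(\gamma_1f,\gamma_2g)$ and using $(f,g)\in\cN$ yields $0=\mu(\gamma-1)\int_\Omega fKg$, so $\mu=0$, i.e. $\phi'(f,g)=0$ in $X$. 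By Proposition \ref{p:reg}, $(u,v):=(K_pg,K_qf)\in[C^{2,\varepsilon}(\overline\Omega)]^2$ is a classical solution of \eqref{NHS}, and $\phi(f,g)=I(u,v)$ by Lemma \ref{l:same}. Finally, for the least-energy claim I would observe that, by Lemma \ref{l:ss}, $(f,g)\mapsto(K_pg,K_qf)$ is a bijection between the nonzero critical points of $\phi$ and the nontrivial strong solutions of \eqref{NHS} (surjectivity: given a nontrivial strong solution $(\tilde u,\tilde v)$, the functions $\tilde f:=|\tilde u|^{p-1}\tilde u$, $\tilde g:=|\tilde v|^{q-1}\tilde v$ lie in $X$ thanks to the compatibility condition \eqref{comp}, which holds for strong solutions via the Neumann trace condition, and one checks $(\tilde u,\tilde v)=(K_p\tilde g,K_q\tilde f)$), and it preserves energy by Lemma \ref{l:same}; since every nonzero critical point belongs to $\cN$, we get $c\ge c_\cN$, while the minimizer $(f,g)$ is a critical point with $\phi(f,g)=c_\cN$, whence $c=c_\cN=I(u,v)$.

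The step I expect to be the main obstacle is the compactness/projection argument in the second paragraph: one must ensure the weak limit does not collapse to $(0,0)$ — which is precisely why $c_\cN>0$ is proved first — and that the projected limit simultaneously belongs to $\cN$ and has energy at most $c_\cN$, which rests on combining the weak lower semicontinuity of the convex part of $\phi$ with the maximality statement of Lemma \ref{lemma:A} applied along the whole minimizing sequence.
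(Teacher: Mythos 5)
Your proof is correct and takes essentially the same route as the paper's: show $\phi>0$ on $\cN$ via $\phi=(1-\gamma)\Psi$ there, take a bounded minimizing sequence, use compactness of $K$ to prevent the weak limit from being trivial, project back to $\cN$ via Lemma \ref{lemma:A}, and conclude $\phi'(f,g)=0$ by a Lagrange-multiplier argument with the constraint $\tau(f,g):=\phi'(f,g)(\gamma_1 f,\gamma_2 g)$. The only deviations are cosmetic: you show separately that \emph{both} $\|f\|_\alpha$ and $\|g\|_\beta$ are bounded below on $\cN$ (whence $c_\cN>0$ explicitly), whereas the paper only extracts a uniform lower bound $\int f_nKg_n\ge\delta$; and in passing to the limit you invoke weak lower semicontinuity of the convex part $\Psi$ together with the maximality of $s\mapsto\phi(s^{\gamma_1}f_n,s^{\gamma_2}g_n)$ at $s=1$, whereas the paper computes $\phi(t^{\gamma_1}f,t^{\gamma_2}g)=t^\gamma(1-\gamma)\Psi(f,g)$ directly and deduces $t=1$. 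You also fill in the final identification $c=c_\cN$ more explicitly than the paper (which simply cites Lemma \ref{l:same}), using the bijection of Lemma \ref{l:ss} and the observation that every nontrivial critical point lies in $\cN$; this is the correct underlying argument. No gaps.
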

\begin{proof} 1) The functional $\phi$ is bounded from below on $\mathcal{N}$, since
\begin{align}\label{eq:phi_in_N}
\phi(f,g)=\frac{1-\gamma}{\alpha}\int_\Omega |f|^\gamma+\frac{1-\gamma}{\beta}\int_\Omega |g|^\beta\geq 0\qquad \text{ for } (f,g)\in \mathcal{N}.
\end{align}
Thus $\inf_\mathcal{N} \phi \in \R$.  Take a minimizing sequence $(f_n,g_n)\in \mathcal{N}$. From \eqref{eq:phi_in_N}, we have that $\|f_n\|_\alpha$ and $\|g_n\|_\beta$ are bounded and thus, up to a subsequence, $f_n\rightharpoonup f$ in $L^\alpha$, $g_n\rightharpoonup g$ in $L^\beta$, for some $(f,g)\in X$.
\smallbreak

\noindent 2) Recalling \eqref{est1} and using Young's inequality, we have
\[
\gamma_1 \|f_n\|_\alpha^\alpha+\gamma_2 \|g_n\|_\beta^\beta=\int_\Omega f_nKg_n \leq C\|f_n\|_\alpha \|g_n\|_\beta\leq C(\gamma_1 \|f_n\|_\alpha^\frac{1}{\gamma_1}+\gamma_2\|g_n\|_\beta^\frac{1}{\gamma_2})
\]
for some $C>0$ and, since $\frac{1}{\gamma_1}>\alpha$, $\frac{1}{\gamma_2}>\beta$, there exists $\delta>0$ such that
$\int_\Omega f_nKg_n \geq \delta$  for every $n$.

\smallbreak

\noindent 3) Combining this with the properties of weak convergence and the compactness of the operator $K$, we have $(f,g)\neq (0,0)$ and, by definition of $\mathcal{N}$,
\[
0<\int_\Omega \gamma_1 |f|^\alpha+\gamma_2 |g|^\beta\, dx\leq \int_\Omega f Kg.
\]
Therefore, by Lemma \ref{lemma:A}, there exists $0<t\leq 1$ such that $(t^{\gamma_1} f, t^{\gamma_2}g)\in \mathcal{N}$. Then,
\begin{align*}
\inf_\cN \phi &\leq \phi(t^{\gamma_1} f , t^{\gamma_2} g )  =  t^\gamma (1-\gamma) \int_\Omega \frac{|f|^\alpha}{\alpha}+\frac{|g|^\beta}{\beta}\, dx\\
		&\leq (1-\gamma) \int_\Omega \frac{|f|^\alpha}{\alpha}+\frac{|g|^\beta}{\beta}\, dx
\leq \liminf_{n\to\infty}\phi(f_n,g_n)=\inf_\cN \phi,
\end{align*}
thus $t=1$, and $(f,g)\in \mathcal{N}$ and achieves $\inf_\mathcal{N}\phi$.

\smallbreak

\noindent 4) Defining $\tau(h,k)=\phi'(h,k)(\gamma_1h, \gamma_2 k)$ for $(h,k)\in \mathcal{N}$, we have
\begin{align}
\tau'(h,k)(\gamma_1 h,\gamma_2 k)=\gamma_1(\gamma-1) \int_\Omega |h|^\alpha + \gamma_2 (\gamma-1) \int_\Omega |k|^\beta<0.
\end{align}
Therefore $\mathcal{N}$ is a manifold and, since $(f,g)$ achieves $\inf_\mathcal{N} \phi$, then by Lagrange's multiplier rule there exists $\lambda\in \R$ such that $\phi'(f,g)=\lambda \tau'(f,g)$. By testing this identity with $(\gamma_1 f,\gamma_2 g)$, we see that actually $\lambda=0$ and $(f,g)$ is a critical point of $\phi$ in $X$. Thus $(u,v):=(K_{p} g, K_{q} f)\in [C^{2,\varepsilon}(\overline \Omega)]^2$ is a classical solution of \eqref{NHS} by applying Proposition \ref{p:reg}. Finally, $(u,v)$ is a least energy solution, by Lemma \ref{l:same}.
\end{proof}

\section{Monotonicity of radial minimizers}\label{SEC:monotonicity}
In this subsection $\Omega\subset \R^N$ is either
\begin{equation}\label{Om}
\begin{aligned}
\Omega&=B_1(0)\qquad \qquad \text{ and fix $\delta:=0$}\qquad \quad \text{(for any $N\geq 1$)}\qquad\text{ or }\\
\Omega&=B_1(0)\backslash B_\delta(0)\quad \text{ for some }\delta\in(0,1)\quad \text{(for any $N\geq 2$)}.
\end{aligned}
\end{equation}
We emphasize that the case $B_1(0)\backslash\{0\}$ is \emph{not} considered in any of our results.  Our main goal is to show Theorem \ref{thm:maintheorem3}.  This result is of independent interest, but it is also instrumental in the proof of the symmetry-breaking result in Theorem \ref{thm:maintheorem1}.  The proof of Theorem \ref{thm:maintheorem3} is based on a new transformation using rearrangement techniques and a suitable flipping compatible with our dual method approach.  We introduce first some notation and show some preliminary results.

\subsection{The decreasing rearrangement}\label{subsec:dr}

Let $U\subset \R^N$ be an open set and $h:U\to \R$ a measurable function. The \emph{decreasing rearrangement of $h$} is given by 
\begin{align*}
h^\#:[0,|U|]\to\R,\qquad h^{\#}(0):=\text{ess sup}_U h\quad h^\#(s):=\inf\{t\in\R\::\: |\{h>t\}|<s\},\ s>0.
\end{align*}
In particular, $h^\#$ is a non-increasing and left-continuous function \cite[Proposition 1.1.1]{k06}, $h^\#$ is a (level-set) rearrangement of $h$, which yields in particular that $L^p$-norms and averages are preserved, that is,
\begin{align}\label{Lp}
\|h\|_{L^p(U)}=\|h^\#\|_{L^p(0,|U|)}\qquad\text{ and }\qquad \int_U h = \int_0^{|U|} h^\#,
\end{align}
see \cite[Corollary 1.1.3]{k06}.  By \cite[Proposition 1.2.2]{k06}, we know that if $E\subset U$, then
\begin{align}\label{Ar}
 \int_E h(x) \leq \int_0^{|E|}h^\#.
\end{align}
We now focus on the case where $U=I:=[0,l]\subset \R$, for some $l>0$. If $h:I\to \R$ is non-increasing in $I$, then $h=h^\#$ a.e. in $I$, see \cite[Lemma 1.1.1]{k06}. We use the following decomposition for the integral of a product of two functions.
\begin{lemma}[Particular case of Lemma 1.2.2 in \cite{k06}]\label{cake}
Let $\varphi,\psi:I\to \R$ be measurable functions with $\varphi\in L^1(I)$, $\psi\in L^\infty(I)$, and $a,b\in\R$ with $a\leq \psi\leq b$. Then
\begin{align*}
\int_I \varphi\, \psi = a\int_I\varphi +\int_a^b \int_{\{\psi>t\}}\varphi(s)\ dsdt. 
\end{align*}
\end{lemma}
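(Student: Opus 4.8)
The statement to prove is Lemma~\ref{cake}, a classical ``layer-cake'' type decomposition for the integral of a product. Since the paper itself labels this as a particular case of \cite[Lemma 1.2.2]{k06}, the proof should be short and self-contained, relying only on Fubini's theorem. I now describe the plan.

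\medskip

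\textbf{The approach.} The plan is to write the bounded factor $\psi$ as an integral of characteristic functions of its superlevel sets (the ``layer-cake'' representation) and then interchange the order of integration. Concretely, for $s\in I$ and the constant $a\le \psi(s)$, I would start from the elementary identity
\[
\psi(s)=a+\int_a^{b}\chi_{\{\psi>t\}}(s)\,dt,
\]
which holds for a.e.\ $s\in I$ because $a\le\psi(s)\le b$ implies $\int_a^b\chi_{\{\psi>t\}}(s)\,dt=\int_a^{\psi(s)}1\,dt=\psi(s)-a$ (using that $\chi_{\{\psi>t\}}(s)=1$ precisely when $t<\psi(s)$, and the single point $t=\psi(s)$ has measure zero). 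Multiplying by $\varphi(s)$ and integrating over $I$ gives
\[
\int_I\varphi\,\psi
= a\int_I\varphi + \int_I\varphi(s)\Big(\int_a^{b}\chi_{\{\psi>t\}}(s)\,dt\Big)ds.
\]
It then remains to justify swapping the two integrals in the last term, after which $\int_I\varphi(s)\chi_{\{\psi>t\}}(s)\,ds=\int_{\{\psi>t\}}\varphi(s)\,ds$ yields exactly the claimed formula.

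\medskip

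\textbf{Key steps, in order.} First I would verify the pointwise layer-cake identity for $\psi$ on the set of full measure where $a\le\psi(s)\le b$. Second, I would check integrability so that Fubini (or Fubini--Tonelli) applies to the function $(s,t)\mapsto\varphi(s)\chi_{\{\psi>t\}}(s)$ on $I\times[a,b]$: since $\varphi\in L^1(I)$ and the interval $[a,b]$ is bounded (because $\psi\in L^\infty(I)$ guarantees finite $a,b$ can be chosen), the function $|\varphi(s)\chi_{\{\psi>t\}}(s)|\le|\varphi(s)|$ is dominated by the $(s,t)$-integrable majorant $|\varphi(s)|$ on the finite-measure product $I\times[a,b]$, hence is integrable there. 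Third, measurability of the set $\{(s,t):\psi(s)>t\}$ follows from measurability of $\psi$. Fourth, I would apply Fubini to exchange $\int_I\int_a^b$ with $\int_a^b\int_I$, and recognize the inner integral as $\int_{\{\psi>t\}}\varphi$. Combining with the $a\int_I\varphi$ term completes the proof.

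\medskip

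\textbf{Main obstacle.} There is really no serious obstacle here; the content is entirely the Fubini interchange plus the elementary layer-cake identity. The only point requiring a little care is the bookkeeping of the measurability and integrability hypotheses needed to invoke Fubini's theorem: one must use that $[a,b]$ is a bounded interval (equivalently, that $\psi\in L^\infty$, which is exactly what lets us pick finite bounds $a\le\psi\le b$) so that $|\varphi|$ serves as an integrable dominating function on $I\times[a,b]$, and that $\{\psi>t\}$ is measurable in $s$ for each $t$. Once these are noted, the computation is routine and the identity drops out immediately.
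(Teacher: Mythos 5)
Your proof is correct. The paper itself gives no argument for Lemma~\ref{cake}; it simply cites it as a particular case of Lemma~1.2.2 in Kesavan's book \cite{k06}. Your layer-cake plus Fubini argument is the standard proof of exactly this kind of identity and fills in what the citation leaves implicit: the pointwise identity $\psi(s)=a+\int_a^b\chi_{\{\psi>t\}}(s)\,dt$ on the full-measure set where $a\le\psi(s)\le b$, the domination $|\varphi(s)\chi_{\{\psi>t\}}(s)|\le|\varphi(s)|$ on the finite-measure rectangle $I\times[a,b]$, joint measurability of $(s,t)\mapsto\chi_{\{\psi(s)>t\}}$, and the Fubini interchange. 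All of these are justified exactly as you say, so the argument is complete.
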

A well-known consequence of Lemma \ref{cake} and \eqref{Ar} is the Hardy-Littlewood inequality.
\begin{theo}[Particular case of Theorem 1.2.2 in \cite{k06}]\label{thm:HL}
 Let $\varphi \in L^1(I)$ and $\psi\in L^\infty(I)$ be (possibly sign-changing) functions, then
\begin{align}\label{HL}
 \int_I \varphi\, \psi \leq \int_I \varphi^\#\, \psi^{\#}
\end{align}
\end{theo}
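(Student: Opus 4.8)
The plan is to reduce both sides of \eqref{HL} to a common ``layer‑cake'' representation via Lemma \ref{cake} and then compare the resulting layers one level at a time, using the elementary bound \eqref{Ar}.

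First I would fix $a:=\text{ess inf}_I\,\psi$ and $b:=\text{ess sup}_I\,\psi$, both finite since $\psi\in L^\infty(I)$. Because $\psi^\#$ is a level‑set rearrangement of $\psi$ it is equimeasurable with $\psi$, so $a\le\psi^\#\le b$ a.e.\ on $I$ and $\psi^\#\in L^\infty(I)$; moreover, by \eqref{Lp}, $\varphi^\#\in L^1(I)$ and $\int_I\varphi=\int_I\varphi^\#$. Applying Lemma \ref{cake} to the pair $(\varphi,\psi)$ and to the pair $(\varphi^\#,\psi^\#)$ with these same $a,b$ gives
\begin{align*}
\int_I\varphi\,\psi &= a\int_I\varphi+\int_a^b\int_{\{\psi>t\}}\varphi(s)\,ds\,dt,\\
\int_I\varphi^\#\,\psi^\# &= a\int_I\varphi^\#+\int_a^b\int_{\{\psi^\#>t\}}\varphi^\#(s)\,ds\,dt.
\end{align*}
Since $\int_I\varphi=\int_I\varphi^\#$, the boundary terms coincide, so it suffices to compare the two double integrals.

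The key observation is that, for each $t\in(a,b)$, the superlevel set $\{\psi^\#>t\}$ equals, up to a null set, the initial interval $[0,\mu(t))\subset I$, where $\mu(t):=|\{\psi>t\}|$: indeed $\psi^\#$ is non‑increasing and equimeasurable with $\psi$, so $\{\psi^\#>t\}$ is a left interval of measure $\mu(t)$. Therefore, applying \eqref{Ar} with $E=\{\psi>t\}\subset I$ and $h=\varphi$,
\begin{align*}
\int_{\{\psi>t\}}\varphi(s)\,ds\ \le\ \int_0^{\mu(t)}\varphi^\#(s)\,ds\ =\ \int_{\{\psi^\#>t\}}\varphi^\#(s)\,ds
\end{align*}
for a.e.\ $t\in(a,b)$. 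Integrating this inequality in $t$ over $(a,b)$ and restoring the common boundary term $a\int_I\varphi$ yields \eqref{HL}.

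I do not anticipate a genuine obstacle: the only point requiring a careful (but standard) justification is the identification of $\{\psi^\#>t\}$ with an initial segment of $I$, which is immediate from the monotonicity and equimeasurability of the decreasing rearrangement recalled in Subsection \ref{subsec:dr}; the remaining steps are routine verifications of the hypotheses of Lemma \ref{cake} (finiteness of $a,b$ and $\varphi^\#\in L^1(I)$) and the cancellation of the boundary terms via \eqref{Lp}.
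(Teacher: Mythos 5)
Your proof is correct, and it follows exactly the route the paper indicates: the paper does not write out a proof but cites Kesavan (Theorem 1.2.2 in \cite{k06}) and remarks that \eqref{HL} is a consequence of Lemma \ref{cake} and \eqref{Ar}, which is precisely your layer-cake decomposition plus the levelwise comparison via \eqref{Ar}. The one point you flag as needing care, namely that $\{\psi^\#>t\}$ is an initial segment of measure $|\{\psi>t\}|$, is indeed immediate from the monotonicity and equimeasurability of $\psi^\#$, so there is no gap.
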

Under some additional assumptions, the \emph{equality case} in $\eqref{HL}$ can be used to deduce monotonicity properties of the involved functions.
\begin{lemma}\label{eq:lemma}
 Let $\varphi,\psi:I\to\R$ with $\varphi\in L^1(I)$ and $\psi\in C(I)$ be a strictly decreasing function. If $\int_I \varphi\, \psi = \int_I \varphi^\#\, \psi$, then $\varphi=\varphi^\#$ a.e. in $I$.
\end{lemma}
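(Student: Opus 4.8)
The plan is to use the layer-cake decomposition of Lemma \ref{cake} applied to the product $\varphi\,\psi$, together with the fact that $\psi$ is continuous and strictly decreasing, so that its super-level sets $\{\psi > t\}$ are exactly the intervals $[0, \ell_t)$ (or $[0,\ell_t]$) for $\ell_t := \psi^{-1}(t)$ when $t$ is in the range of $\psi$. First I would fix $a := \min_I \psi = \psi(\ell)$ and $b := \max_I \psi = \psi(0)$, so that $a \le \psi \le b$ on $I=[0,\ell]$, and note that since $\psi$ is strictly decreasing it equals its own decreasing rearrangement, $\psi = \psi^\#$. Applying Lemma \ref{cake} to both $\int_I \varphi\,\psi$ and $\int_I \varphi^\#\,\psi$ (using the same bounds $a,b$ since $\psi$ is unchanged), and subtracting, the hypothesis $\int_I \varphi\,\psi = \int_I \varphi^\#\,\psi$ becomes
\begin{align*}
a\int_I(\varphi-\varphi^\#) + \int_a^b \left( \int_{\{\psi>t\}} \varphi(s)\,ds - \int_{\{\psi>t\}}\varphi^\#(s)\,ds \right) dt = 0.
\end{align*}
Since $\varphi$ and $\varphi^\#$ have the same integral over $I$ (by \eqref{Lp}), the first term vanishes. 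Now for a.e. $t \in (a,b)$ the set $\{\psi > t\}$ is an interval of the form $[0, \ell_t)$, so $\int_{\{\psi>t\}}\varphi^\# = \int_0^{\ell_t} \varphi^\#$, and by the inequality \eqref{Ar} we have $\int_{\{\psi>t\}}\varphi = \int_{[0,\ell_t)}\varphi \le \int_0^{\ell_t}\varphi^\#$. Hence the integrand in the $t$-integral is $\le 0$ for a.e. $t$, and its total integral is $0$, which forces
\begin{align*}
\int_0^{\ell_t}\varphi(s)\,ds = \int_0^{\ell_t}\varphi^\#(s)\,ds \qquad \text{for a.e. } t\in(a,b).
\end{align*}

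Next I would upgrade "for a.e.\ $t$" to "for all $r$ in a dense set of $[0,\ell]$", and then to all $r$: as $t$ ranges over $(a,b)$, the endpoint $\ell_t = \psi^{-1}(t)$ ranges over $(0,\ell)$ and, because $\psi$ is a continuous strictly decreasing bijection onto $[a,b]$, the set of admissible $\ell_t$ is co-null hence dense in $(0,\ell)$. Since $r \mapsto \int_0^r \varphi$ and $r \mapsto \int_0^r \varphi^\#$ are both absolutely continuous (indeed continuous, as $\varphi,\varphi^\# \in L^1(I)$), equality on a dense set gives equality for all $r \in [0,\ell]$. Differentiating, $\varphi = \varphi^\#$ a.e.\ in $I$, which is the claim.

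The main obstacle, and the only genuinely delicate point, is the bookkeeping around the super-level sets of $\psi$: one must verify that $\{\psi > t\}$ really is an interval starting at $0$ for (a.e., or all relevant) $t$, handle the endpoints $t=a$ and $t=b$ and any flat pieces (here ruled out by strict monotonicity), and confirm that the inequality from \eqref{Ar} is being applied with $E = \{\psi>t\}$ of measure $\ell_t$ so that the comparison is with $\int_0^{\ell_t}\varphi^\#$ and not some other quantity. Everything else — vanishing of the $a$-term via equal integrals, the sign of the integrand, and the passage from a dense set to all $r$ via continuity of the primitive — is routine. I would also remark that no monotonicity or sign condition on $\varphi$ is needed; the rigidity comes entirely from $\psi$ being strictly monotone and continuous.
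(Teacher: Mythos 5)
Your proposal is correct and follows essentially the same route as the paper's proof: layer-cake decomposition via Lemma~\ref{cake}, vanishing of the $a$-term because $\int_I\varphi=\int_I\varphi^\#$, the pointwise inequality from \eqref{Ar} applied to the super-level sets $\{\psi>t\}=[0,\psi^{-1}(t))$, equality of the primitives $\int_0^r\varphi=\int_0^r\varphi^\#$, and differentiation. The only cosmetic difference is that you argue from ``a.e.\ $t$'' and upgrade via continuity of the primitives, whereas the paper directly asserts equality for all $t$; both are fine.
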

\begin{proof}
Let $b:=\psi(0)$ and $a:=\psi(l)$. Then, by Lemma \ref{cake},
\begin{align}\label{eq1}
 a\int_I \varphi + \int_a^b \int_{\{\psi>t\}}\varphi(s)\ ds\,dt = \int_I \varphi\, \psi=\int_I \varphi^\#\, \psi=a\int_I \varphi^\# + \int_a^b \int_{\{\psi>t\}}\varphi^\#(s)\ ds\,dt.
\end{align}
Since $\psi$ is strictly decreasing and continuous, $\{\psi>t\}=[0,\psi^{-1}(t))$ for every $t\in [a,b]$. Therefore, by \eqref{Ar}, $\int_{\{\psi>t\}}\varphi(s)\ ds\leq \int_{\{\psi>t\}}\varphi^\#(s)\ ds$ for all $t\in (a,b)$ and thus, by \eqref{eq1},
\begin{align}\label{eq2}
 \int_{\{\psi>t\}}\varphi(s)\ ds = \int_{\{\psi>t\}}\varphi^\#(s)\ ds\qquad \text{ for all }t\in (a,b).
\end{align}
Furthermore, for every $t\in I$ we have $\{\psi>\psi(t)\}=[\, 0\, ,\, t\, )$, because $\psi$ is strictly monotone decreasing and continuous. Then, \eqref{eq2} yields that  
$\int_0^t\varphi = \int_0^t\varphi^\#$ for all $t\in I$, and $\varphi=\varphi^\#$ a.e. in $I$, by Lebesgue differentiation theorem.
\end{proof}

\subsection{A decreasing transformation in dual spaces}\label{sec:transformation}

In the following we do a slight abuse of notation and use $w(|x|)=w(x)$ for a radial function $w$. We use $L^\infty_{rad}(\Omega)$ and $C_{rad}(\overline \Omega)$ to denote the subspace of radial functions in $L^\infty(\Omega)$ and $C(\overline \Omega)$, respectively.  Let $\Omega$, $\delta$ as in \eqref{Om} and let
\begin{align*}
&{\cal I}:L_{rad}^\infty(\Omega)\to C_{rad}(\overline{\Omega}),\qquad {\cal I}h(x):=\int_{\{\delta{\leq}|y|{\leq}|x|\}} h(y)\ dy
=N\omega_N\int_\delta^{|x|}h(\rho)\rho^{N-1}\ d\rho\\
&\mF: C_{rad}(\overline{\Omega})\to L_{rad}^\infty(\Omega),\qquad \mF h:=(\chi_{\{{\cal I}h>0\}}-\chi_{\{{\cal I}h\leq 0\}})\, h.
\end{align*}
Then, for $h\in C_{rad}(\overline{\Omega})$, the $\divideontimes$-transformation is given by 
\begin{align*}
h^{\divideontimes}\in L^\infty_{{rad}}(\Omega),\qquad h^{\divideontimes}(x) := ({\mathfrak F} h)^\#(\omega_N |x|^N-\omega_N \delta^N).
\end{align*}
where $\#$ is the decreasing rearrangement (see the previous section) and $\omega_N= |B_1|$ is the volume of the unitary ball in $\R^N$.  The function $\mF h$ can be seen as a suitable flipping of $h$ on the set $\{\cI h\leq 0\}$, and this step is very important to construct monotone decreasing solutions, while preserving the (zero) average and the $L^p$-norm of $h$, see Proposition \ref{prop:prop} and Remark \ref{rem:in}.  See also Figure \ref{fig} below for some examples. 

\begin{figure}[p!]
\begin{subfigure}{.45\textwidth}
  \begin{picture}(150,170)
    \put(0,5){\includegraphics[width=.90\textwidth]{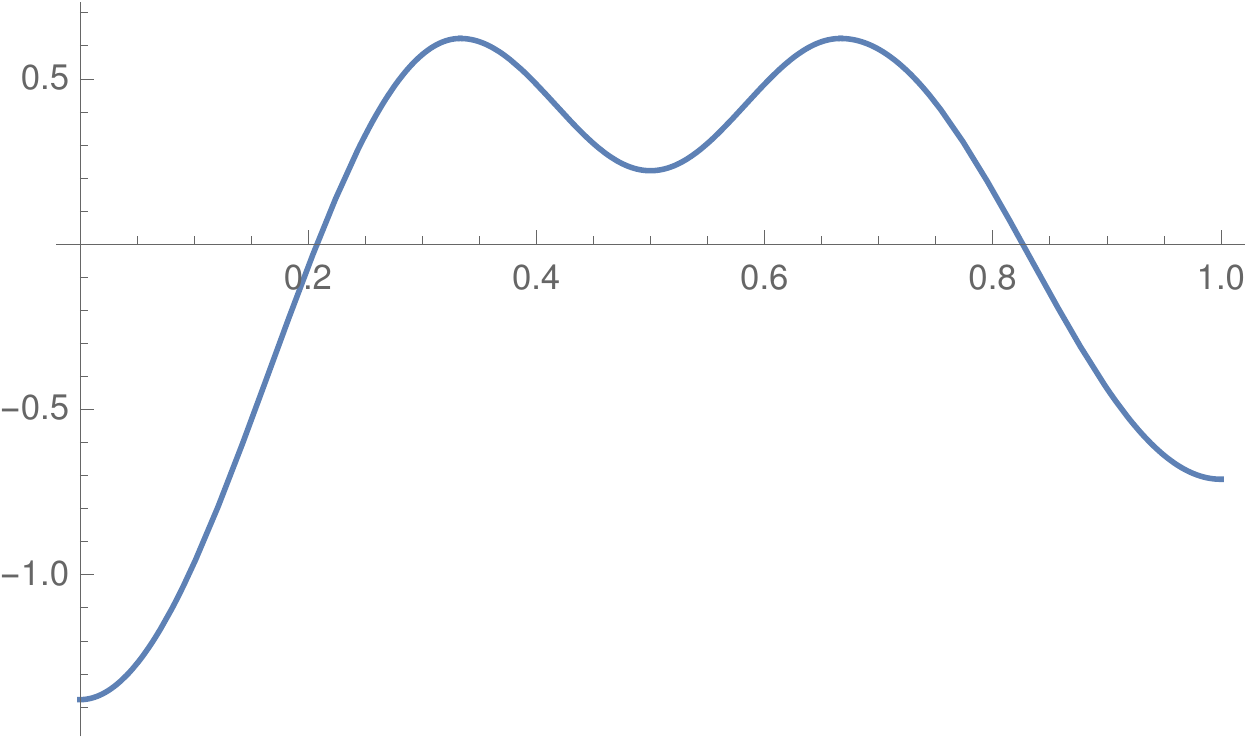}}
    \put(10,110){$h(r)$}
    \put(165,80){$r$}
  \end{picture}
\end{subfigure}%
\begin{subfigure}{.45\textwidth}
\begin{picture}(150,170)
    \put(0,5){\includegraphics[width=.90\textwidth]{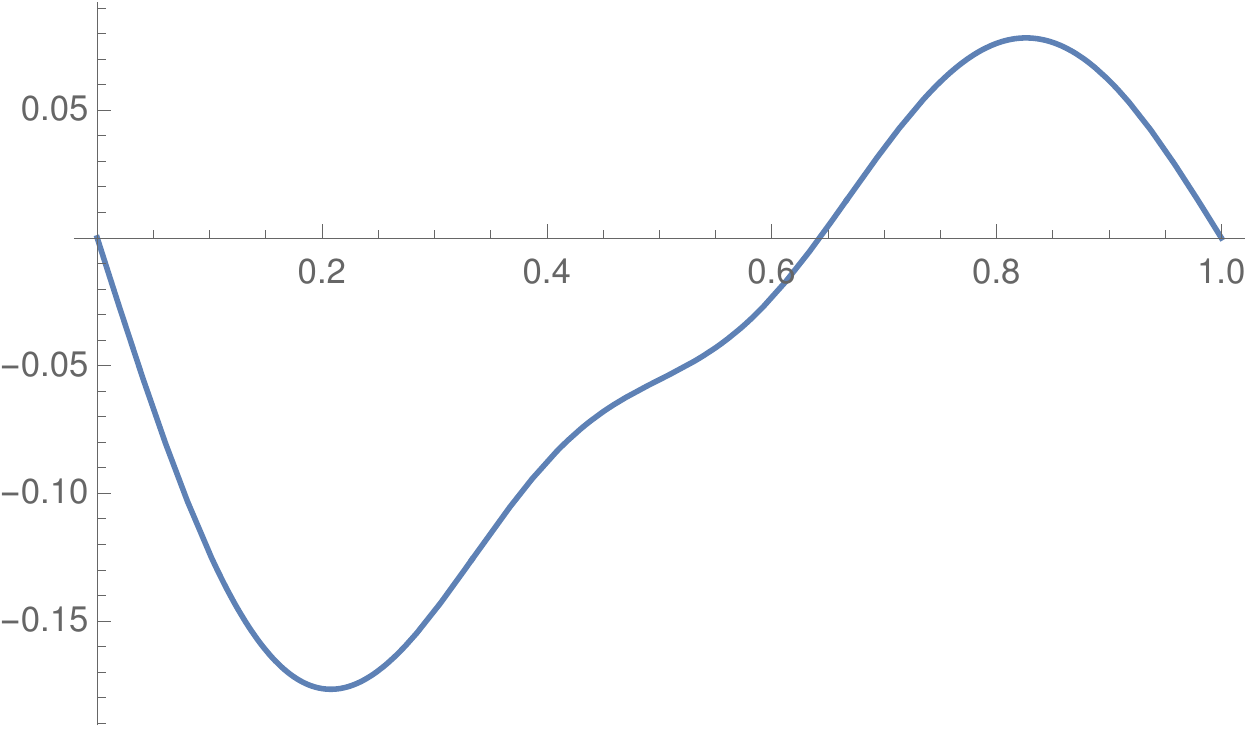}}
    \put(10,110){$\cI h(r)$}
    \put(165,80){$r$}
  \end{picture}
\end{subfigure}
\begin{subfigure}{.45\textwidth}
\begin{picture}(150,170)
    \put(0,5){\includegraphics[width=.90\textwidth]{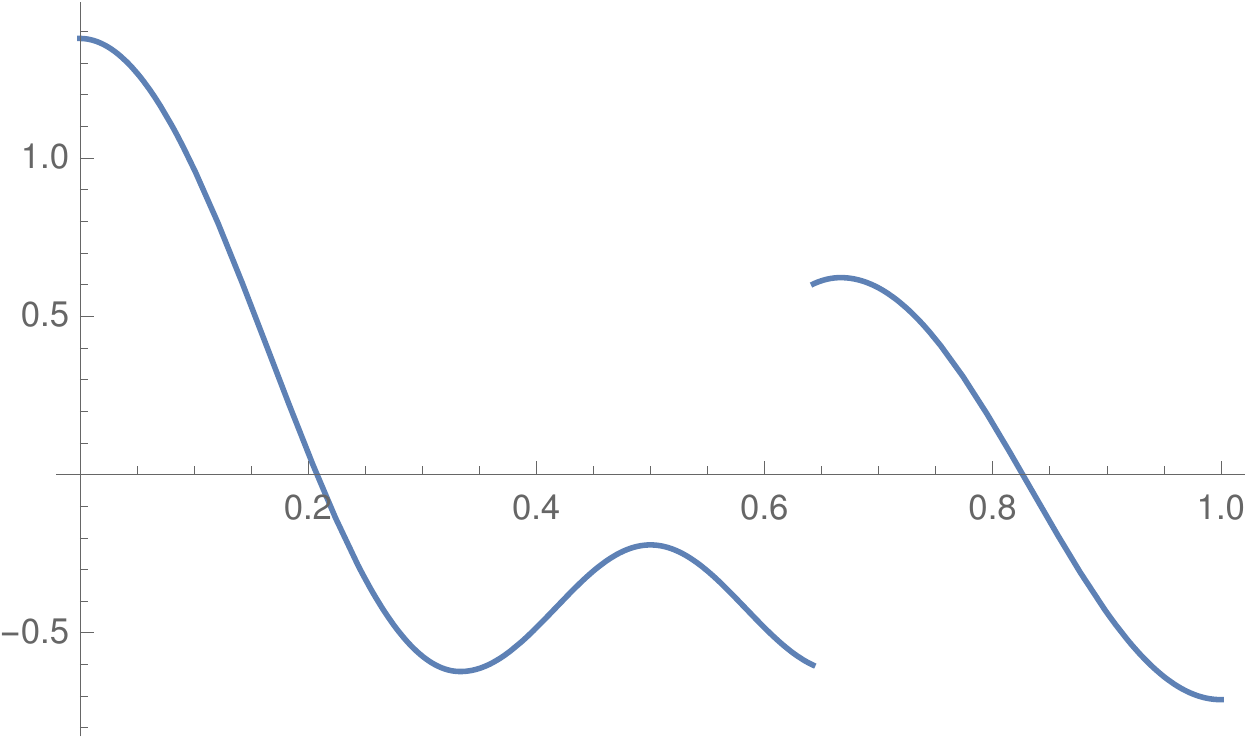}}
    \put(10,110){$\mF h(r)$}
    \put(165,50){$r$}
  \end{picture}
\end{subfigure}%
\begin{subfigure}{.45\textwidth}
\begin{picture}(150,170)
    \put(0,5){\includegraphics[width=.90\textwidth]{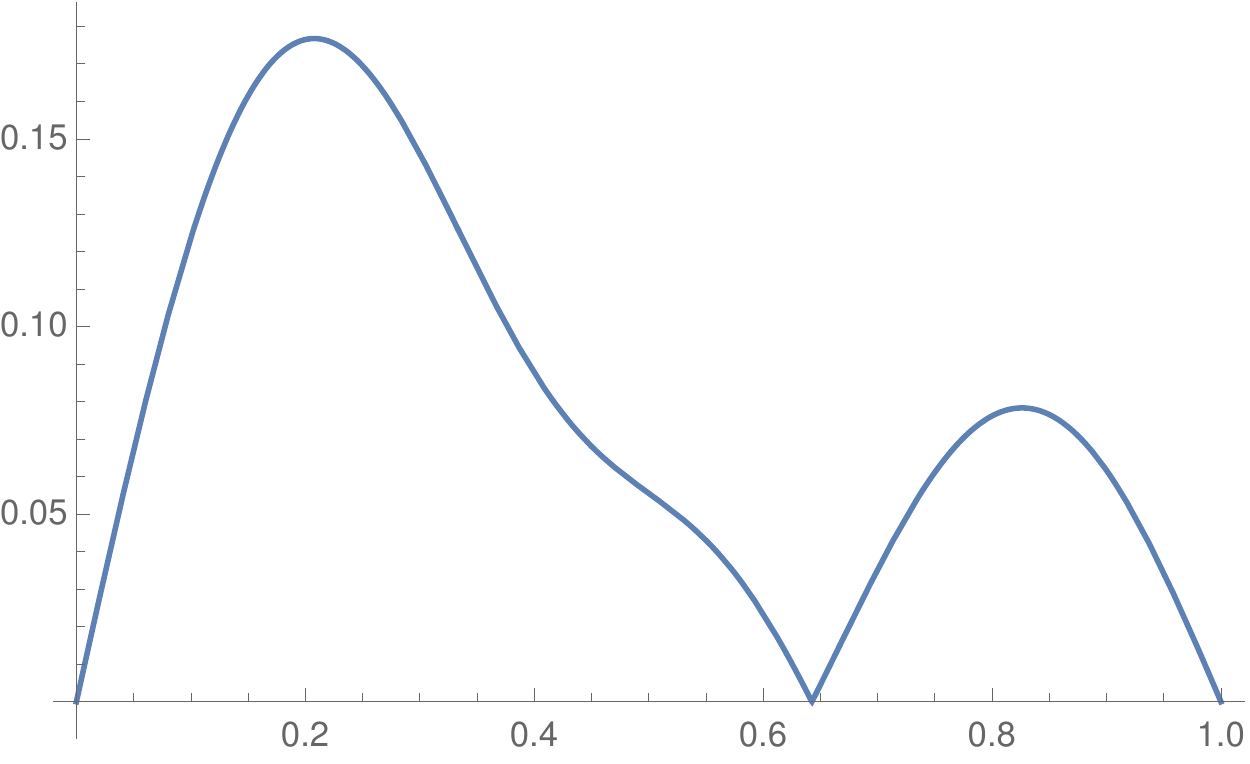}}
    \put(10,115){$\cI (\mF h)(r)$}
    \put(165,20){$r$}
  \end{picture}
\end{subfigure}
\begin{subfigure}{.45\textwidth}
\begin{picture}(150,170)
    \put(0,5){\includegraphics[width=.90\textwidth]{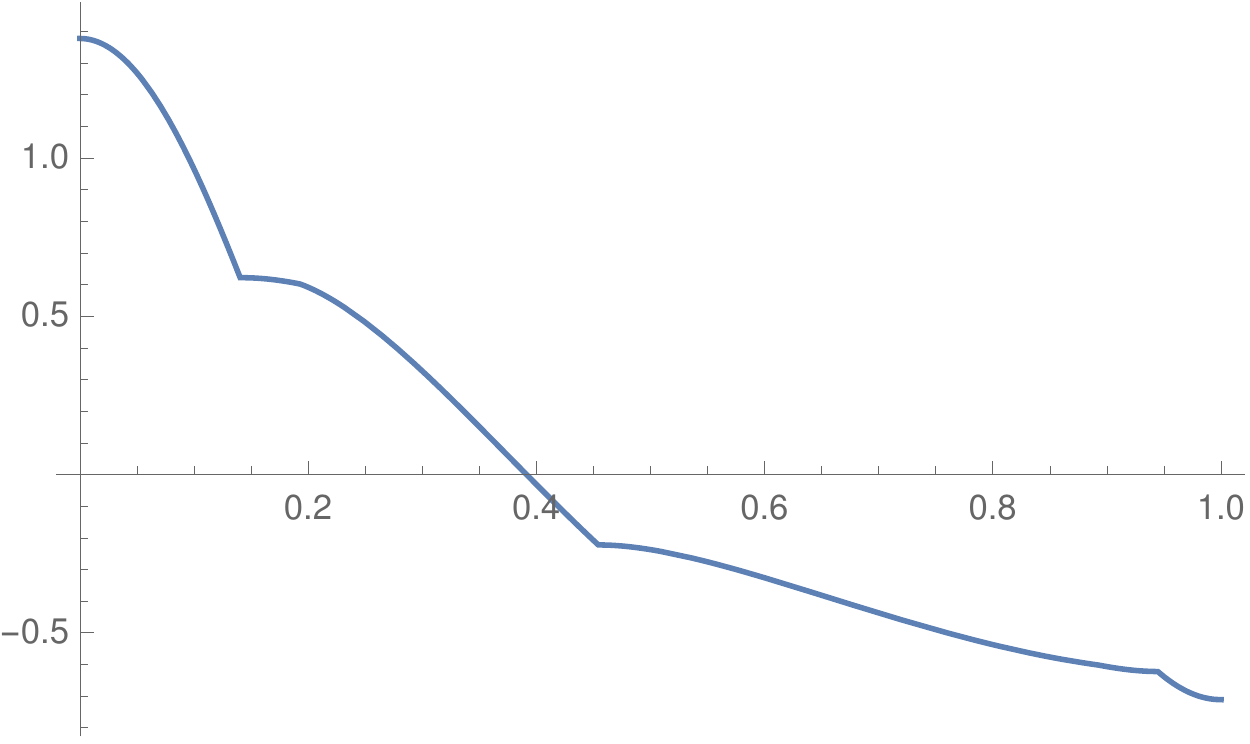}}
    \put(10,110){$h^\divideontimes(r)$}
    \put(165,50){$r$}
  \end{picture}
\end{subfigure}
\begin{subfigure}{.45\textwidth}
\begin{picture}(150,170)
    \put(0,5){\includegraphics[width=.90\textwidth]{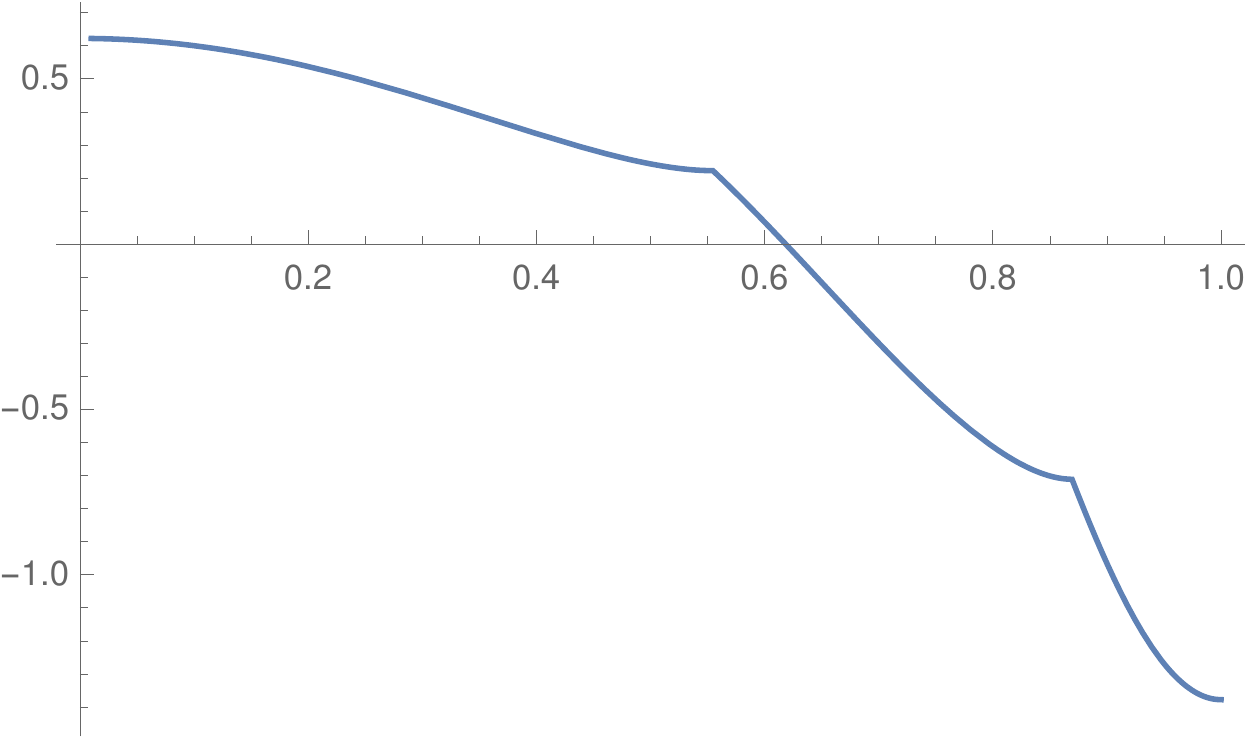}}
    \put(10,110){$h^*(r)$}
    \put(165,80){$r$}
  \end{picture}
\end{subfigure}%
\caption{Examples of the functions $\cI h$, $\mF h$, $\cI(\mF h)=|\cI h|$ for a particular radial function $h\in C(\overline{B_1(0)})$, and a comparison between $h^\divideontimes$ and the Schwarz symmetrization of $h$, $h^*(r):=h^\#(\omega_Nr^N)$.}
\label{fig}
\end{figure}

In general, the function $h^\divideontimes$ is \emph{not} a (level-set) rearrangement of $h$, since the maximum value of $h$ may vary due to $\mF$, but it has the following important properties. 
\begin{prop}\label{prop:prop}
 Let $h:\overline{\Omega}\to\R$ be a continuous radial function such that $\int_\Omega h = 0$. Then, for any $t\geq 1$,
 \begin{equation}\label{prop:eq}
   \int_\Omega h^\divideontimes = 0,\qquad \int_\Omega |h^\divideontimes|^t = \int_\Omega |h|^t,\qquad \text{ and}\qquad \cI(\mF h) = |\cI h|\geq 0 \quad \text{ in }\Omega.
\end{equation}
\end{prop}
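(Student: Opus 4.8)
The plan is to prove the three identities in \eqref{prop:eq} in a convenient order, starting with the one about $\cI(\mF h)$, since it is the most structural and the other two will follow from the fact that $h^\divideontimes$ is, up to the radial-to-one-dimensional change of variables, a decreasing rearrangement of $\mF h$.

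\emph{Step 1: $\cI(\mF h) = |\cI h|$.} Write $g(r) := \cI h(r) = N\omega_N\int_\delta^r h(\rho)\rho^{N-1}\,d\rho$, so that $g$ is $C^1$ on $[\delta,1]$, $g(\delta) = 0$, and $g(1) = \int_\Omega h = 0$ by hypothesis. By definition, $\mF h = (\chi_{\{g>0\}} - \chi_{\{g\le 0\}})\,h = \mathrm{sign}^*(g)\,h$ where $\mathrm{sign}^*$ takes the value $+1$ on $\{g>0\}$ and $-1$ on $\{g\le 0\}$. Then
\[
\cI(\mF h)(r) = N\omega_N\int_\delta^r \mathrm{sign}^*(g(\rho))\,h(\rho)\,\rho^{N-1}\,d\rho.
\]
The key observation is that $g'(\rho) = N\omega_N h(\rho)\rho^{N-1}$ in the distributional (indeed classical) sense, so $\mathrm{sign}^*(g(\rho))\,h(\rho)\rho^{N-1} = \mathrm{sign}^*(g(\rho))\,g'(\rho)/(N\omega_N)$, whence $\cI(\mF h)(r) = \int_\delta^r \mathrm{sign}^*(g(\rho))\,g'(\rho)\,d\rho$. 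Now I would argue that $\mathrm{sign}^*(g)\,g' = (|g|)'$ a.e.: on the open set $\{g>0\}$ this is $g' = (|g|)'$, on the open set $\{g<0\}$ this is $-g' = (|g|)'$, and the set $\{g=0\}$ has $g'=0$ a.e. on it (a standard fact for Sobolev/$C^1$ functions: the derivative vanishes a.e. on a level set), so the contribution there is zero and matches $(|g|)'=0$ a.e. there as well. Since $|g|$ is absolutely continuous, integrating gives $\cI(\mF h)(r) = |g(r)| - |g(\delta)| = |g(r)| = |\cI h(r)| \ge 0$. The main technical point to be careful about here is the behavior on $\{g=0\}$, and whether $g$ is regular enough; since $h$ is continuous, $g\in C^1$ and this is clean.

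\emph{Step 2: $\int_\Omega h^\divideontimes = 0$ and $\int_\Omega|h^\divideontimes|^t = \int_\Omega|h|^t$.} For a radial function $w$ on $\Omega$, the change of variables $s = \omega_N|x|^N - \omega_N\delta^N$ (so $s$ ranges over $[0,\omega_N(1-\delta^N)] = [0,|\Omega|]$) gives $\int_\Omega w(x)\,dx = \int_0^{|\Omega|} W(s)\,ds$ where $W(s) := w$ evaluated at the radius corresponding to $s$; this is just the coarea/layer-cake identity for radial functions. Applying this to $w = h^\divideontimes$, whose associated one-dimensional profile is by definition $(\mF h)^\#$ (the decreasing rearrangement on $[0,|\Omega|]$ of the one-dimensional profile of $\mF h$), and using \eqref{Lp} from Subsection \ref{subsec:dr} (rearrangement preserves $L^p$-norms and averages), I get
\[
\int_\Omega h^\divideontimes = \int_0^{|\Omega|} (\mF h)^\# = \int_0^{|\Omega|} (\mF h)\text{-profile} = \int_\Omega \mF h,
\]
and similarly with $|\cdot|^t$ inside. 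So it remains to show $\int_\Omega \mF h = 0$ and $\int_\Omega|\mF h|^t = \int_\Omega |h|^t$. The latter is immediate since $|\mF h| = |h|$ pointwise (multiplication by $\pm 1$). For the former, $\int_\Omega \mF h = \cI(\mF h)(1) = |\cI h(1)| = |\int_\Omega h| = 0$ by Step 1 and the hypothesis $\int_\Omega h = 0$. This closes the argument.

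\emph{Anticipated obstacle.} The only genuinely delicate step is Step 1, specifically justifying $\mathrm{sign}^*(g)g' = (|g|)'$ a.e.\ including on the zero set of $g$; once that is in hand the rest is bookkeeping with the radial change of variables and the elementary properties of $\#$ recalled in Subsection \ref{subsec:dr}. I would also take a moment to record that $\mF h \in L^\infty_{rad}(\Omega)$ (clear, since $|\mF h| = |h|$ and $h$ is continuous on $\overline\Omega$) and that $h^\divideontimes$ is a well-defined radial function (the decreasing rearrangement of the bounded one-dimensional profile of $\mF h$ is well-defined and finite), so that all the integrals written above make sense.
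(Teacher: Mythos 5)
Your proof is correct, and Step 2 coincides with the paper's argument (polar coordinates, the rearrangement identities \eqref{Lp}, $|\mF h|=|h|$, and $\int_\Omega \mF h=\cI(\mF h)(1)=|\cI h(1)|=0$), but your Step 1 reaches the identity $\cI(\mF h)=|\cI h|$ by a genuinely different route. The paper first proves the stronger commutation identities $\cI(\chi_{\{\cI h>0\}}h)=\chi_{\{\cI h>0\}}\cI h$ and $\cI(\chi_{\{\cI h\le 0\}}h)=\chi_{\{\cI h\le 0\}}\cI h$ on $[\delta,1]$ (its \eqref{pp}), by decomposing the open set $\{\cI h>0\}$ into countably many open intervals over each of which $\cI h$ has zero increment, and then handling an arbitrary radius $r$ through the last point of $[\delta,r]\cap\{\cI h\le 0\}$; the claimed identity then follows by linearity of $\cI$. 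You instead set $g:=\cI h\in C^1([\delta,1])$, observe $\cI(\mF h)(r)=\int_\delta^r \mathrm{sign}^*(g)\,g'$, and argue that $\mathrm{sign}^*(g)\,g'=(|g|)'$ a.e.; the only delicate point is the level set $\{g=0\}$, where indeed $g'=0$ a.e.\ (any zero of $g$ with $g'\neq 0$ is isolated, so such points form a countable set) and $(|g|)'=0$ a.e.\ (each zero is a global minimum of $|g|$), after which absolute continuity of $|g|$ and $g(\delta)=0$ give $\cI(\mF h)(r)=|g(r)|$ for every $r$. Both arguments are sound: yours is shorter and exploits the $C^1$ regularity of $\cI h$ together with standard a.e.-differentiation facts, while the paper's is more elementary (pure interval and endpoint bookkeeping, with no recourse to a.e.\ differentiability on level sets) and yields the pointwise identities \eqref{pp} as a by-product, although those are not needed outside the proof of the proposition itself.
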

\begin{proof}
The second claim in \eqref{prop:eq} follows by \eqref{Lp}, the definition of $f^\divideontimes$, the fact that $|\mF h|=|h|$ in $\Omega$, and polar coordinates, since
\begin{align*}
 \int_\Omega |h^\divideontimes|^t=N\omega_N\int_\delta^1 |({\mathfrak F} h)^\#(\omega_N \rho^N-\omega_N \delta^N)|^t \rho^{N-1}\ d\rho
 =\int_I |({\mathfrak F} h)^\#|^t
 =\int_\Omega |{\mathfrak F}h|^t=\int_\Omega |h|^t.
\end{align*}
For the last property in \eqref{prop:eq}, we claim that,
\begin{equation}\label{pp}
\cI(\chi_{\{\cI h>0\}}h) = \chi_{\{\cI h>0\}}\cI h\ \ \text{and}\ \ \cI(\chi_{\{\cI h\leq 0\}}h) = \chi_{\{\cI h\leq 0\}}\cI h\qquad \text{ in }\ \ [\delta,1].
\end{equation}
Indeed, by continuity and since $\cI h(\delta)=\cI h(1)=0$, the set $\{\cI h>0\}$ is open in $(\delta,1)$  and therefore 
$\{\cI h>0\} = \cup_{i=1}^\infty I_i$, where $(I_i)_{i\in\N}$ are disjoint open intervals in $(\delta,1)$. Let $I_i=(a,b)$ for some $\delta\leq a<b \leq 1$, then $N\omega_N\int_{I_i} h(s) s^{N-1}\ ds = \cI h(b)-\cI h(a) =0$, since $a,b\in\partial\{\cI h>0\}$.

Let $r>\delta$ arbitrary and let 
\begin{align*}
\zeta:=\max \{\ x :  x\in [\delta,r]\cap\{\cI h\leq 0\}\ \}\in\{\cI h\leq 0\}
\end{align*}
(recall that $\cI h(\delta)=0$ and hence $\zeta$ is well defined).  If $\zeta=r$ then $\chi_{\{\cI h>0\}}(r)=0$ and $\{\cI h>0\}\cap [\delta,r]$ is open in $(\delta,r)$, therefore   
\begin{align*}
 \cI(h \chi_{\{\cI h>0\}})(r)&=N\omega_N\int_\delta^r h(s) \chi_{\{\cI h>0\}}(s)s^{N-1}\ ds \\
& =N\omega_N\sum_{i=1}^\infty\int_{I_i\cap(\delta,r)} h(s)s^{N-1}\ ds=0=\cI h(r) \chi_{\{\cI h>0\}}(r).
\end{align*}
If $\zeta<r$, then $\chi_{\{\cI h>0\}}(r)=1$, $\cI h(\zeta)=0$, and
\begin{align*}
\cI(h \chi_{\{\cI h>0\}})(r)&=N\omega_N\int_\delta^r h(s)\chi_{\{\cI h>0\}}(s)s^{N-1}\ ds\\
&=N\omega_N\int_{\zeta}^r h(s)s^{N-1}\ ds=\cI h(r)-\cI h(\zeta)=\cI h(r)\chi_{\{\cI h>0\}}(r).
\end{align*}
Thus the first equality in \eqref{pp} follows, but then, by the linearity of the integral,
\begin{align*}
\cI(h\,\chi_{\{\cI h\leq 0\}})=\cI(h) - \cI(h\,\chi_{\{\cI h> 0\}})=\cI(h)\chi_{\{\cI h\leq 0\}}, 
\end{align*}
which shows the second equality in \eqref{pp}. The last property in \eqref{prop:eq} follows from the definition of $\mF h$, because
\begin{align*}
 \cI(\mF h) = \cI(\chi_{\{{\cal I}h>0\}}h)-\cI(\chi_{\{{\cal I}h\leq 0\}}h)
 =\cI(h)\chi_{\{{\cal I}h>0\}}-\cI(h)\chi_{\{{\cal I}h\leq 0\}}
 = |\cI h|\geq 0.
\end{align*}
Finally, since 
$\int_\Omega h^\divideontimes = \int_I (\mF h)^\# = \int_\Omega \mF h = \cI (\mF h)(1)=|\,\cI h(1)\,|=|\int_\Omega h|=0$, the first 
equality in \eqref{prop:eq} also holds, and the proof is finished.
\end{proof}

\subsection{Monotonicity results}
In this subsection we show the energy-decreasing property of the $\divideontimes$-transformation. We introduce first a useful notation.  Recall that $\Omega,$ $\delta$ are as in \eqref{Om} and $I:=[0,|\Omega|]$.  For  $h\in L_{rad}^1(\Omega)$ let
\begin{align}
&\tau:[\delta,1]\to I,\qquad \tau(r):=\omega_N (r^N-\delta^N)\nonumber\\
&\widetilde h:I\to \R,\qquad \quad\widetilde h(s):=h \circ \tau^{-1}(s)= h ((\omega_N^{-1}s+\delta^N)^\frac{1}{N})\label{hath}
\end{align}
(recall that we use $h(|x|)=h(x)$).  The function $\widetilde h$ is the one-dimensional equimeasurable version of $h$, since, for $s\in I$ and $r=\tau^{-1}(s)\in [\delta,1]$,
\begin{align}\label{equimeas}
 \int_0^{s} \widetilde h=N\omega_N\int_\delta^{r} \widetilde h(\tau(\rho)) \rho^{N-1}\ d\rho=N\omega_N\int_\delta^{r} h(\rho)\rho^{N-1}\ d\rho
 =\cI h(r).
\end{align}
Similarly, observe that 
\begin{align*}
 |\{h>t\}|=\int_\Omega \chi_{\{h>t\}}=N\omega_N\int_\delta^1 \chi_{\{h>t\}}(r) r^{N-1}\ dr=\int_0^{|\Omega|}\chi_{\{\widetilde h>t\}}=|\{\widetilde h>t\}|\quad \text{ for } t\in\R,
\end{align*}
and therefore,
\begin{align}\label{equal}
 \widetilde h^\#(s)=(h(\tau^{-1}(s)))^\#=h^{\#} (s)\qquad \text{ for }s\in I.
\end{align}
\begin{proof}[Proof of Theorem \ref{thm:mono:intro}]
Let $f$ and $g$ as in the statement.
By Proposition \ref{prop:prop}, we have that $(f^\divideontimes,g^\divideontimes)\in X$ and $\int_\Omega \frac{|f|^\alpha}{\alpha}+ \frac{|g|^\beta}{\beta} = \int_\Omega \frac{|f^\divideontimes|^\alpha}{\alpha}+ \frac{|g^\divideontimes|^\beta}{\beta},$
 so it suffices to show that $\int_\Omega g K f \leq \int_\Omega g^\divideontimes K f^\divideontimes.$  Let $(u,v):=(K g,K f)$ and observe that 
 \begin{align*}
-\frac{(u_r r^{N-1})_r}{r^{N-1}}=g\qquad \text{ and }\qquad -\frac{(v_r r^{N-1})_r}{r^{N-1}}=f\qquad \text{ in }\quad [\delta,1].
 \end{align*}
 Then, 
 \begin{align}\label{der}
u_r(r)=- \frac{\cI g(r)}{N\omega_N}r^{1-N}\qquad \text{ and }\qquad  v_r(r)=- \frac{\cI f(r)}{N\omega_N}r^{1-N}\qquad \text{ for }r\in[\delta,1].
 \end{align}
Thus, by Proposition \ref{prop:prop}, 
\begin{align}
 \int_\Omega g K f
 &=N\omega_N\int_\delta^1 -(u_r r^{N-1})_r\ v
 =N\omega_N\int_\delta^1 u_r v_r\ r^{N-1}\ dr\\
 &=(N\omega_N)^{-1}\int_\delta^1 \cI f(r)\cI g(r)\ r^{1-N}\ dr\leq (N\omega_N)^{-1}\int_\delta^1 |\cI f(r)||\cI g(r)|\ r^{1-N}\ dr\\
 &=(N\omega_N)^{-1}\int_\delta^1 \cI(\mF f)(r)\cI(\mF g)(r)\ r^{1-N}\ dr.\label{de:1}
 \end{align}
By \eqref{equimeas},
 \begin{align}
\int_\delta^1 \cI(\mF f)(r)&\cI(\mF g)(r)\ r^{1-N}\ dr
 =\int_\delta^1\int_0^{\tau(r)} \widetilde{\mF f}(\sigma)\ d\sigma \int_0^{\tau(r)} \widetilde{\mF g}(\sigma)\ d\sigma\ r^{1-N}\ dr\nonumber\\
&=(N\omega_N)^{-1}\int_0^{|\Omega|}\int_0^{{s}} \widetilde{\mF f}({t})\ d{t} \int_0^{{s}} \widetilde{\mF g}(\sigma)\ d\sigma\ (\omega_N^{-1}{s}+\delta^N)^{2(\frac{1}{N}-1)}\ d{s}.\label{de:2}
 \end{align}
 Let $\varphi({s}):= (N\omega_N)^{-1}(\omega_N^{-1}{s}+\delta^N)^{2(\frac{1}{N}-1)}\geq 0$. 
 Since $\widetilde{\mF f}^\#$ is non-increasing  in $I$ and $\int_I \widetilde{\mF f}^\#=\cI (\mF f)(1)= 0$ (by \eqref{equimeas} and because averages are preserved by rearrangements), we have that 
 $\int_0^{{s}} \widetilde{\mF f}^\#\geq 0$ for ${s}\in I$ . Moreover,  by Proposition \ref{prop:prop}, $\int_0^{{s}} \widetilde{\mF g}=\cI(\mF g)(\tau^{-1}(s))\geq 0$ for $s\in I$.  Therefore,
 \begin{align*}
 t\mapsto \int_t^{|\Omega|}  \int_0^{{s}} \widetilde{\mF g}(\sigma)\ d\sigma\,\varphi({s})\ d{s}\qquad \text{ and }\qquad \sigma\mapsto \int_\sigma^{|\Omega|}  \int_0^{{s}} \widetilde{\mF f}^\#(t)\ dt\,\varphi({s})\ d{s}
 \end{align*}
are non-increasing in $I$. Then, by (the Hardy-Littlewood inequality) Theorem \ref{thm:HL} and Fubini's theorem,
 \begin{align}
&\int_I  \int_0^{{s}} \widetilde{\mF f}({t})\ d{t} \int_0^{{s}}\widetilde{\mF g}(\sigma)\ d\sigma\,\varphi({s})\ d{s}
  =\int_I\widetilde{\mF f}({t}) \int_{t}^{|\Omega|}  \int_0^{{s}}\widetilde{\mF g}(\sigma)\ d\sigma\,\varphi({s})\ d{s}\ d{t}\nonumber\\
 &\leq \int_I\widetilde{\mF f}^\#({t}) \int_{t}^{|\Omega|}  \int_0^{{s}}\widetilde{\mF g}(\sigma)\ d\sigma\,\varphi({s})\ d{s}\ d{t}
 = \int_I \widetilde{\mF g}(\sigma)\int_\sigma^{|\Omega|}  \int_0^{{s}}\widetilde{\mF f}^\#({t})\ d{t}\,\varphi({s})\ d{s}\ d\sigma\label{in2}\\
 &\leq \int_I \widetilde{\mF g}^\#(\sigma)\int_\sigma^{|\Omega|}  \int_0^{{s}}\widetilde{\mF f}^\#({t})\ d{t}\,\varphi({s})\ d{s}\ d\sigma
 = \int_I \int_0^{{s}}\widetilde{\mF g}^\#(\sigma)\ d\sigma  \int_0^{{s}}\widetilde{\mF f}^\#({t})\ d{t}\,\varphi({s})\ d{s}.\qquad \label{in}
 \end{align}
However, using \eqref{equimeas}, \eqref{equal} and reasoning as in \eqref{de:2}, we have 
\begin{align*}
&\int_I \int_0^{{s}}\widetilde{\mF g}^\#(\sigma)\ d\sigma  \int_0^{{s}}\widetilde{\mF f}^\#({t})\ d{t}\,\varphi({s})\ d{s}
=\int_I \int_0^{s} (\mF f)^{\#} (\sigma) d \sigma \int_0^{s} (\mF g)^{\#}(\sigma)d \sigma\, \varphi(s)\, ds\\
&=\int_I \int_0^{s} \widetilde{ f^{\divideontimes}} (\sigma)\ d \sigma \int_0^{s} \widetilde{g^{\divideontimes}}(\sigma)\ d \sigma\, \varphi(s)\, ds
=\int_\delta^1 \cI(f^\divideontimes)(r)\cI(g^\divideontimes)(r)\ r^{1-N}\ dr.
\end{align*}
Therefore, arguing as in \eqref{de:1}, we obtain that 
\begin{align}\label{eq:Kstar}
 \int_\Omega g K f\leq (N\omega_{N})^{-1}\int_\delta^1 \cI(f^\divideontimes)(r)&\cI(g^\divideontimes)(r)\ r^{1-N}\ dr = \int_\Omega g^\divideontimes K f^\divideontimes,
\end{align}
and \eqref{goal:mono:intro} follows. 

\medskip

We now show that if $f$ and $g$ are nontrivial and $\phi(f^\divideontimes,g^\divideontimes)=\phi(f,g)$, then $f$ and $g$ are monotone in the radial variable. Observe that, since $f$ is nontrivial, $\widetilde{\mF f}^\#$ is non-increasing, and $\int_I\widetilde{\mF f}^\#=\cI (\mF h)(1)= 0$, then $\int_0^{{s}}\widetilde{\mF f}^\#>0$ for all ${s}\in (0,|\Omega|)$. But then the function ${\sigma}\mapsto \int_{\sigma}^{|\Omega|}  \int_0^{{s}}\widetilde{\mF f}^\#({t})\ d{t}\,\varphi({s})\ d{s}$ is a strictly decreasing continuous function in $I$ and, in virtue of Lemma \ref{eq:lemma}, equality in \eqref{in} may only hold if 
$\widetilde{\mF g}=\widetilde{\mF g}^\#$ a.e. in $I$, which, by \eqref{hath}, implies that $\mF g$ coincides a.e. with a radially monotone function. Then $g$ must also be radially monotone, because $|\mF g|=|g|$ in $\Omega$, $g$ is a nontrivial continuous function, and $\int_\Omega g =0$.  Arguing similarly using \eqref{in2} we conclude that $f$ is monotone in the radial variable as well.  As a consequence, if $(u,v):=(K_{p} g,K_{q} f)$, then $u$ and $v$ are strictly monotone in the radial variable by \eqref{der} and the fact that either $\cI f>0$, $\cI g>0$ or $\cI f<0$, $\cI g<0$ in $(\delta,1)$. This ends the proof.
\end{proof}

\begin{remark}\label{rem:in}
To motivate the definition of $f^\divideontimes$, observe that the flipping $\mF f$ is very natural in virtue of \eqref{de:1}, but further insight on $\mF f$ can be gained by observing that, by \eqref{der}, the (Neumann radial) solution of $-\Delta v = \mF f$ in $\Omega$ satisfies that $v_r(r)=-(N\omega_N)^{-1}r^{1-N}\cI (\mF f)(r) \leq 0$ in $(0,1)$, i.e. $v$ is decreasing in the radial variable.  By \eqref{de:1}, this is indeed a very simple way to guarantee that there is at least one minimizer which is monotone in the radial variable; but the flipping $\mF f$ is too simple to extract any further information.  To show that \emph{all} least energy solutions $(u,v):=(K_{p} g,K_{q} f)$ are \emph{strictly} monotone decreasing, we need to
combine the flipping $\mF$ with the decreasing rearrangement $\#$ in order to apply the powerful properties of rearrangements (Theorem \ref{thm:HL} and Lemma \ref{eq:lemma}).
\end{remark}

\begin{remark}\label{ce:rem}
In general, it is \emph{not} true that $\phi(f^*,g^*)\leq \phi(f,g)$, where $*$ denotes the \emph{Schwarz symmetrization} and $(f,g)$ is as in Theorem \ref{thm:mono:intro}. A first observation is that $f^*$ is defined in $\Omega^*$ (a ball with the same measure as $\Omega$), and this would be a first complication when considering annular domains. But even if $\Omega$ is a unitary ball centered at zero $B$ and we consider the scalar problem (\emph{i.e.}, $p=q$, see Lemma \ref{lemma:p=q}), we exhibit next a continuous radial function $f:B\to\R$ with $\int_{B} f =0$ such that $\phi(f^*,f^*)>\phi(f,f)$.  Let $N=5$, $\varepsilon=\frac{1}{2}$, and $f(x):= -(\omega_N |x|^N+\varepsilon)^{-\frac{1}{2}}+k$, where $k:=2\omega_N^{-1}((\omega_N+\varepsilon)^{\frac{1}{2}}-\varepsilon^{\frac{1}{2}})$ is such that $\int_{B} f=0$. In particular, $\widetilde f(s)=-(s+\varepsilon)^{-\frac{1}{2}}+k$ for $s\in I=[0,\omega_N]=[0,\frac{8\pi^2}{15}]$. Observe that $\widetilde f$ is a strictly increasing function; therefore, by \eqref{equal}, $f^*(x)=f^\#(\omega_N\, |x|^N)=\widetilde f^\#(\omega_N\, |x|^N)=\widetilde f(\omega_N-\omega_N|x|^N)$.  We have that
\begin{align*}
\int_I \left(\int_0^s \widetilde f\right)^2(\omega_N^{-1}{s})^{2(\frac{1}{N}-1)}\ ds
\approx 5.22726 > 2.24448 \approx 
\int_I \left(\int_0^s {f^\#}\right)^2(\omega_N^{-1}{s})^{2(\frac{1}{N}-1)}\ ds.
\end{align*}
These integrals can be computed via hypergeometric functions; for simplicity, here we just give a numerical approximation. Then, since ${f^\#}=\widetilde f^*$ in $I$, we may use the techniques and notation from the proof of Theorem~\ref{thm:mono:intro} and we find that
\begin{align*}
\int_{B} fKf=(N\omega_N)^{-1}\int_I \left(\int_0^s \widetilde f\right)^2\varphi(s)\ ds>(N\omega_N)^{-1}\int_I \left(\int_0^s \widetilde{f^*}\right)^2\varphi(s)\ ds=\int_{B} f^*Kf^*, 
\end{align*}
where $\varphi({s}):= (N\omega_N)^{-1}(\omega_N^{-1}{s})^{2(\frac{1}{N}-1)}$ for $s\in I$. Therefore $\phi(f^*,f^*)>\phi(f,f)$, because $\|f^*\|_\alpha=\|f\|_\alpha$ by \eqref{Lp}.
\end{remark}

\begin{remark}
The Neumann b.c. are used at \eqref{der} (if $\Omega$ is an annulus) and in the integration by parts performed in \eqref{de:1}. The Neumann b.c. is also the reason to consider zero-average functions. 
\end{remark}

\begin{theo}\label{coro:mono}(Sublinear case)
Let $(p,q)$ satisfy \eqref{sub} and $\Omega$ and $\delta$ be as in \eqref{Om}.  The set of minimizers of $\phi$ in $X_{rad}$ is nonempty. If $(f,g)\in X_{rad}$ is such that $\phi(f,g)=\inf_{X_{rad}}\phi$, then $f$ and $g$ are  monotone in the radial variable and, if $(u,v):=(K_{p} g,K_{q} f)$, then $u_rv_r>0$ in $(\delta,1)$; that is, $u$ and $v$ are both strictly monotone increasing or strictly monotone decreasing in the radial variable.
\end{theo}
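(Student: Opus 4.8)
The plan is to obtain existence exactly as in Lemma~\ref{minimum} and then to read off the monotonicity from the equality case of Theorem~\ref{thm:mono:intro}. For existence, observe that $X_{rad}$ is a closed linear subspace of $X$ (radiality survives $L^\alpha$- and $L^\beta$-convergence), hence weakly closed; the coercivity estimate \eqref{coer} holds on $X$ and therefore on $X_{rad}$, so a minimizing sequence for $\phi|_{X_{rad}}$ is bounded, and passing to a weak limit while using the compactness of $K$ (so that $T$ is weakly continuous) together with the weak lower semicontinuity of $\Psi$ produces a minimizer $(f,g)\in X_{rad}$. Testing with a radial $\varphi\in C^\infty_c(\Omega)\backslash\{0\}$ of zero average (available on a ball or an annulus) and repeating \eqref{phi:neg} shows $\inf_{X_{rad}}\phi<0$, and since $\phi(0,g)\ge 0$ and $\phi(f,0)\ge 0$ this forces both components of any minimizer to be nontrivial.

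The next step is to upgrade the regularity of $(f,g)$. A minimizer of $\phi$ over $X_{rad}$ is a critical point of $\phi|_{X_{rad}}$, and since $\phi$ is invariant under the natural $O(N)$-action on $X$ (because $-\Delta$ commutes with rotations and $\Omega$ is radial) with $X_{rad}$ the corresponding fixed-point subspace, the principle of symmetric criticality --- or, equivalently, running the computation of Lemma~\ref{l:ss} with radial test functions --- gives $\phi'(f,g)=0$ in $X$. Proposition~\ref{p:reg} then yields $(u,v):=(K_p g,K_q f)\in[C^{2,\varepsilon}(\overline{\Omega})]^2$ solving \eqref{NHS}, whence $f=|u|^{p-1}u$ and $g=|v|^{q-1}v$ are continuous radial functions of zero average; in particular $f$ and $g$ satisfy the hypotheses of Theorem~\ref{thm:mono:intro}.

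Now $(f^\divideontimes,g^\divideontimes)\in X$ by Theorem~\ref{thm:mono:intro}, and it is radial, hence lies in $X_{rad}$, so
\[
\inf_{X_{rad}}\phi\ \le\ \phi(f^\divideontimes,g^\divideontimes)\ \le\ \phi(f,g)\ =\ \inf_{X_{rad}}\phi,
\]
which forces $\phi(f^\divideontimes,g^\divideontimes)=\phi(f,g)$. As $f,g$ are nontrivial, the equality case of Theorem~\ref{thm:mono:intro} gives that $f$ and $g$ are monotone in the radial variable and that $u$ and $v$ are radial and strictly monotone in the radial variable. To pin down the precise conclusion $u_rv_r>0$ on $(\delta,1)$, I would note that equality in Theorem~\ref{thm:mono:intro} in particular forces equality in the first estimate of its proof (the bound in \eqref{de:1}), i.e. $\cI f(r)\,\cI g(r)\ge 0$ for $r\in(\delta,1)$; moreover neither $\cI f$ nor $\cI g$ vanishes on $(\delta,1)$, since each of $f,g$ is radially monotone, nontrivial, and of zero average, so that $\cI f\,\cI g>0$ on $(\delta,1)$. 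Combining this with the identities \eqref{der} for $u_r$ and $v_r$ yields $u_rv_r=(N\omega_N)^{-2}\,\cI f\,\cI g\,r^{2-2N}>0$ on $(\delta,1)$, as claimed.

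The only genuinely delicate point is the regularity step: Theorem~\ref{thm:mono:intro} requires \emph{continuous} arguments, whereas a minimizer over $X_{rad}$ is a priori only in $L^\alpha\times L^\beta$. Symmetric criticality together with the bootstrap of Proposition~\ref{p:reg} bridges this gap, and as a by-product it also supplies the common monotonicity direction of $u$ and $v$ that underlies the strict sign of $u_rv_r$.
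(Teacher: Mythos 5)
Your proposal is correct and follows essentially the same route as the paper: existence by repeating Lemma~\ref{minimum} in $X_{rad}$, passage to a full critical point of $\phi$ in $X$ (which the paper leaves implicit and you justify via symmetric criticality or by rerunning Lemma~\ref{l:ss} with radial test functions) so that Proposition~\ref{p:reg} applies, and then the equality case of Theorem~\ref{thm:mono:intro} applied to $(f^\divideontimes,g^\divideontimes)\in X_{rad}$. The only deviation is the last step: the paper deduces $u_rv_r>0$ from $f=|u|^{p-1}u$, the compatibility condition \eqref{comp} and \eqref{der}, whereas you extract $\cI f\,\cI g\geq 0$ from equality in \eqref{de:1} and combine it with the strict sign of $\cI f$ and $\cI g$ for monotone nontrivial zero-average functions; both arguments are valid.
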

\begin{proof}
Arguing as in Lemma \ref{minimum} we have that the set of minimizers of $\phi$ in $X_{rad}$ is nonempty and contains only nontrivial functions. By Proposition \ref{p:reg} we have that all such minimizers $(f,g)$ are continuous up to the boundary $\partial \Omega$ and $(u,v):=(K_{p} g,K_{q} f)$ solves \eqref{NHS} pointwise, in particular $(f,g)=(|u|^{p-1}u,|v|^{q-1}v)$. By Theorem \ref{thm:mono:intro}, $u$ and $v$ are strictly monotone.  To show that $u_rv_r>0$ in $(\delta,1)$, assume without loss of generality that $u$ is increasing; then $u_r>0$ in $(\delta,1)$ and, by \eqref{NHS}, \eqref{comp} and \eqref{der}, $v_r=-(N\omega_N)^{-1}\cI f(r) r^{1-N}=-(N\omega_N)^{-1}\cI (|u|^{p-1}u)(r) r^{1-N}>0$ in $(\delta,1)$, thus $v$ is also strictly increasing.  This ends the proof.
\end{proof}

\begin{theo}\label{coro:mono2}(Superlinear case)
Let $(p,q)$ satisfy \eqref{super} and $\Omega$ and $\delta$ be as in \eqref{Om}. The set of minimizers of $\phi$ in $\cN_{rad}$ is nonempty.
If $(f,g)\in {\cal N}_{rad}$ is such that $\phi(f,g)=\inf_{{\cal N}_{rad}}\phi$, then $f$ and $g$ are  monotone in the radial variable
and, if $(u,v):=(K_{p} g,K_{q} f)$, then $u_rv_r>0$ in $(\delta,1)$.
\end{theo}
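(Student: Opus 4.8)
The plan is to mimic the proof of Theorem~\ref{coro:mono} (the sublinear case) but replacing Lemma~\ref{minimum} by Lemma~\ref{lemma:B} as the existence tool. First I would observe that the set of minimizers of $\phi$ over $\mathcal N_{rad}$ is nonempty: one checks that every step in the proof of Lemma~\ref{lemma:B} can be carried out within the closed subspace $X_{rad}\subset X$ of radial functions. Indeed, $X_{rad}$ is a reflexive Banach space, the operator $K$ maps radial functions to radial functions and is compact on $X_{rad}$ by Lemma~\ref{l:reg} together with \eqref{embed}, the fibering map $s\mapsto \phi(s^{\gamma_1}f,s^{\gamma_2}g)$ of Lemma~\ref{lemma:A} stays inside $X_{rad}$, and the Nehari-type constraint and the Lagrange multiplier argument are unaffected by the symmetry restriction. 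Hence there is $(f,g)\in\mathcal N_{rad}$ with $\phi(f,g)=\inf_{\mathcal N_{rad}}\phi$, and by the (restricted) Lagrange multiplier computation $\phi'(f,g)=0$ in $X_{rad}$. By the principle of symmetric criticality (or, more concretely, since a radial critical point of $\phi$ restricted to $X_{rad}$ is automatically a critical point in all of $X$ because $K$ preserves radiality and $\phi$ is invariant under rotations) we get $\phi'(f,g)=0$ in $X$. Then Proposition~\ref{p:reg} gives $(u,v):=(K_p g,K_q f)\in [C^{2,\varepsilon}(\overline\Omega)]^2$, a classical radial solution of \eqref{NHS} with $(f,g)=(|u|^{p-1}u,|v|^{q-1}v)$.

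Next I would apply Theorem~\ref{thm:mono:intro}. The point is that for any $(f,g)\in\mathcal N_{rad}$ one has $(f^\divideontimes,g^\divideontimes)\in X$, and moreover $(f^\divideontimes,g^\divideontimes)$ (after the fibering-map reprojection of Lemma~\ref{lemma:A}) lies on $\mathcal N$, or more directly: by Proposition~\ref{prop:prop} the $L^\alpha$ and $L^\beta$ norms of $f,g$ are preserved, while \eqref{eq:Kstar} gives $\int_\Omega gKf\le\int_\Omega g^\divideontimes Kf^\divideontimes$; since both quantities defining $\mathcal N$ behave this way, and since for a minimizer on $\mathcal N$ we must have $\int_\Omega fKg>0$, Lemma~\ref{lemma:A} lets us reproject $(f^\divideontimes,g^\divideontimes)$ onto $\mathcal N$ with a factor $t\le 1$, giving $\phi(t^{\gamma_1}f^\divideontimes,t^{\gamma_2}g^\divideontimes)\le\phi(f^\divideontimes,g^\divideontimes)\le\phi(f,g)=\inf_{\mathcal N_{rad}}\phi$. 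But $f^\divideontimes,g^\divideontimes$ are radial, so equality holds throughout; in particular $\phi(f^\divideontimes,g^\divideontimes)=\phi(f,g)$. Then the second part of Theorem~\ref{thm:mono:intro} applies: $f$ and $g$ are monotone in the radial variable and, with $(u,v):=(K_p g,K_q f)$, both $u$ and $v$ are radially symmetric and strictly monotone in the radial variable.

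Finally I would deduce $u_rv_r>0$ in $(\delta,1)$ exactly as at the end of the proof of Theorem~\ref{coro:mono}: assume without loss of generality that $u$ is strictly increasing, so $u_r>0$ on $(\delta,1)$, hence $\mathcal I f=\mathcal I(|u|^{p-1}u)$; since $\int_\Omega |u|^{p-1}u=0$ by the compatibility condition \eqref{comp} and $|u|^{p-1}u$ is strictly increasing (being the composition of the increasing map $t\mapsto|t|^{p-1}t$ with the strictly increasing $u$), one gets $\mathcal I f(r)=N\omega_N\int_\delta^r |u|^{p-1}u\,\rho^{N-1}d\rho<0$ on $(\delta,1)$, and so by \eqref{der} $v_r(r)=-(N\omega_N)^{-1}\mathcal I f(r)r^{1-N}>0$ on $(\delta,1)$. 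Hence $v$ is strictly increasing as well and $u_rv_r>0$ there. The symmetric case (both decreasing) is identical.

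The main obstacle I anticipate is the reprojection-onto-$\mathcal N$ step: one must verify that $(f^\divideontimes,g^\divideontimes)$ satisfies $\int_\Omega f^\divideontimes Kg^\divideontimes>0$ so that Lemma~\ref{lemma:A} is applicable (this uses $\int_\Omega fKg>0$ for the minimizer together with \eqref{eq:Kstar}, but one should double-check the inequality is not vacuous, i.e. that $f^\divideontimes,g^\divideontimes$ are nontrivial, which follows since $f,g$ are), and then carefully track that the chain of inequalities forces $\phi(f^\divideontimes,g^\divideontimes)=\phi(f,g)$ rather than just $\le$. Everything else is a direct transcription of the sublinear argument with Lemma~\ref{lemma:B} in place of Lemma~\ref{minimum}.
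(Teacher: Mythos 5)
Your overall strategy coincides with the paper's: existence by running the proof of Lemma~\ref{lemma:B} inside $X_{rad}$ (with the averaging/symmetric-criticality remark so that the minimizer is a critical point of $\phi$ in all of $X$ and Proposition~\ref{p:reg} applies), then forcing $\phi(f^\divideontimes,g^\divideontimes)=\phi(f,g)$ so that the equality case of Theorem~\ref{thm:mono:intro} yields monotonicity, and finally the sign argument from the end of Theorem~\ref{coro:mono} to get $u_rv_r>0$. The paper's own proof is exactly this: reason as in part 3) of Lemma~\ref{lemma:B} to see that $(f^\divideontimes,g^\divideontimes)$ is again a minimizer in $\cN_{rad}$, then conclude as in Theorem~\ref{coro:mono}.

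The one step that is wrong as written is the inequality $\phi(t^{\gamma_1}f^\divideontimes,t^{\gamma_2}g^\divideontimes)\le\phi(f^\divideontimes,g^\divideontimes)$. By Lemma~\ref{lemma:A}, $t$ is the \emph{maximum} of the fibering map $s\mapsto\phi(s^{\gamma_1}f^\divideontimes,s^{\gamma_2}g^\divideontimes)$, so taking $s=1$ gives the reverse inequality; your version holds only when $t=1$, which is precisely what you are trying to establish, so the chain as stated is circular. The repair is the computation of part 3) of Lemma~\ref{lemma:B}, which you cite but do not actually reproduce: since $\|f^\divideontimes\|_\alpha=\|f\|_\alpha$, $\|g^\divideontimes\|_\beta=\|g\|_\beta$ (Proposition~\ref{prop:prop}) and $\int_\Omega f^\divideontimes Kg^\divideontimes\ge\int_\Omega fKg=\gamma_1\|f\|_\alpha^\alpha+\gamma_2\|g\|_\beta^\beta>0$ by \eqref{eq:Kstar}, formula \eqref{eq:projection} gives $t\le 1$, and since $(t^{\gamma_1}f^\divideontimes,t^{\gamma_2}g^\divideontimes)\in\cN_{rad}$,
\[
\inf_{\cN_{rad}}\phi\;\le\;\phi(t^{\gamma_1}f^\divideontimes,t^{\gamma_2}g^\divideontimes)
=t^{\gamma}(1-\gamma)\int_\Omega\frac{|f^\divideontimes|^\alpha}{\alpha}+\frac{|g^\divideontimes|^\beta}{\beta}\,dx
\;\le\;(1-\gamma)\int_\Omega\frac{|f|^\alpha}{\alpha}+\frac{|g|^\beta}{\beta}\,dx
=\phi(f,g)=\inf_{\cN_{rad}}\phi,
\]
where the last equality before the infimum uses \eqref{eq:phi_in_N} for $(f,g)\in\cN$. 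This forces $t=1$, hence $\int_\Omega f^\divideontimes Kg^\divideontimes=\int_\Omega fKg$, so $(f^\divideontimes,g^\divideontimes)\in\cN_{rad}$ is a minimizer and $\phi(f^\divideontimes,g^\divideontimes)=\phi(f,g)$, which is what the equality case of Theorem~\ref{thm:mono:intro} requires. With this correction, the remainder of your argument (including the sign computation $\cI f<0$ on $(\delta,1)$ when $u$ is increasing) goes through exactly as in the paper.
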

\begin{proof}
Arguing precisely as in Lemma \ref{lemma:B}, there exists at least one minimizer $(f,g)$ of $\phi$ in $\cN_{rad}$. Since the $L^s$ norms are preserved by the $\divideontimes$-transformation, and \eqref{eq:Kstar} holds, then reasoning as in part 3) of the proof of Lemma \ref{lemma:B} we have that $(f^\divideontimes,g^\divideontimes)$ is also a minimizer of $\phi$ in $\mathcal{N}_{rad}$.  The rest of the proof follows as in the proof of Theorem \ref{coro:mono}.
\end{proof}

Since the proof of Theorem \ref{coro:mono} is based on one-dimensional techniques, one can also apply them to a least energy solution in the case $N=1$ to obtain monotonicity properties.

\begin{coro}\label{N1}
Let $\Omega=(-1,1)$, $(f,g)$ as in \eqref{lee}, and $(u,v):=(K_{p} g,K_{q} f)$. Then $u_rv_r>0$ in $(-1,1)$, that is, $u$ and $v$ are both strictly monotone increasing or strictly monotone decreasing in the radial variable.
\end{coro}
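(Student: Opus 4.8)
The plan is to reduce the one-dimensional problem on $\Omega=(-1,1)$ to the radial setting already treated in Theorem~\ref{thm:mono:intro} and Theorem~\ref{coro:mono} by exploiting the fact that $\Omega=(-1,1)$ is, in a suitable sense, a ``radial'' domain with center $0$ after an even/odd decomposition. More precisely, given $(f,g)$ as in \eqref{lee} and $(u,v):=(K_p g,K_q f)$, the compatibility condition \eqref{comp} reads $\int_{-1}^1 |u|^{p-1}u=\int_{-1}^1|v|^{q-1}v=0$, i.e. $\int_{-1}^1 f=\int_{-1}^1 g=0$. The first step is to observe that the one-dimensional $\divideontimes$-transformation still makes sense here: define, for a continuous $h$ on $[-1,1]$ with zero average, the even reflection/rearrangement analogue of $h^\divideontimes$ by applying the construction from Section~\ref{sec:transformation} with $N=1$, $\delta=0$, working on the interval $I=[0,2]$ after identifying $(-1,1)$ with its ``radial'' coordinate via $x\mapsto |x|$ — but since in $N=1$ a function on $(-1,1)$ is not determined by its values on $[0,1]$, the correct move is instead to regard $(-1,1)$ directly as the interval $I$ and use $\cI h(x)=\int_{-1}^x h$, $\mF h = (\chi_{\{\cI h>0\}}-\chi_{\{\cI h\le 0\}})h$, and $h^\divideontimes := (\mF h)^\#$ on $[-1,1]$ translated so that it is monotone. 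The key point is that all the computations in the proof of Theorem~\ref{thm:mono:intro} — formulas \eqref{der}, \eqref{de:1}, \eqref{de:2}, the Hardy--Littlewood step \eqref{in2}--\eqref{in}, and the equality analysis via Lemma~\ref{eq:lemma} — used only the one-dimensional structure together with the identity $u_r(r)=-(N\omega_N)^{-1}r^{1-N}\cI g(r)$, whose $N=1$ analogue is simply $u'(x)=-\int_{-1}^x g = -\cI g(x)$ (here $r^{1-N}\equiv 1$ and $N\omega_N=2$ becomes irrelevant after trivial bookkeeping).

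Concretely, I would proceed as follows. First, establish the $N=1$ versions of Proposition~\ref{prop:prop} (zero average and $L^t$-norms are preserved, and $\cI(\mF h)=|\cI h|\ge 0$ on $[-1,1]$) and of Theorem~\ref{thm:mono:intro}: for continuous $f,g$ on $[-1,1]$ with zero average, $(f^\divideontimes,g^\divideontimes)\in X$ and $\phi(f^\divideontimes,g^\divideontimes)\le\phi(f,g)$, with equality forcing $f,g$ monotone and $u,v$ strictly monotone. The proof is verbatim the proof of Theorem~\ref{thm:mono:intro} with $N=1$, $\delta=0$; in fact it is simpler because $\tau(r)=r$ up to an affine change and the weight $\varphi(s)=(N\omega_N)^{-1}(\omega_N^{-1}s+\delta^N)^{2(1/N-1)}$ becomes constant, so the Hardy--Littlewood argument is even cleaner. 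Second, apply this to the minimizer: arguing exactly as in the proof of Theorem~\ref{coro:mono} (sublinear case) or Theorem~\ref{coro:mono2} (superlinear case), $(f^\divideontimes,g^\divideontimes)$ is again a minimizer satisfying \eqref{lee} — here one uses that \eqref{lee} is the \emph{global} minimum over all of $X$ (resp. over $\cN$), so in particular over the one-dimensional rearranged competitors — hence $\phi(f^\divideontimes,g^\divideontimes)=\phi(f,g)$, and the equality case of the $N=1$ analogue of Theorem~\ref{thm:mono:intro} yields that $u=K_p g$ and $v=K_q f$ are strictly monotone on $(-1,1)$.

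Finally, to upgrade ``$u,v$ both strictly monotone'' to ``$u'v'>0$'', I would repeat the short argument at the end of the proof of Theorem~\ref{coro:mono}: by Proposition~\ref{p:reg}, $(u,v)$ is a classical solution with $f=|u|^{p-1}u$, $g=|v|^{q-1}v$; the $N=1$ analogue of \eqref{der} gives $u'(x)=-\cI g(x)$ and $v'(x)=-\cI f(x)$ on $(-1,1)$. Assuming without loss of generality that $u$ is strictly increasing, so $u'>0$ on $(-1,1)$ and $f=|u|^{p-1}u$ is itself strictly increasing with $\int_{-1}^1 f=0$; therefore $\cI f(x)=\int_{-1}^x f<0$ on $(-1,1)$ (it vanishes at both endpoints and $f$ changes sign from $-$ to $+$), whence $v'=-\cI f>0$ on $(-1,1)$, i.e. $v$ is also strictly increasing. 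The symmetric case gives both strictly decreasing. This establishes $u'v'>0$ on $(-1,1)$.

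The main obstacle I anticipate is purely bookkeeping rather than conceptual: one must be careful that in $N=1$ the domain $(-1,1)$ plays the role of the \emph{full} interval $I=[\delta,1]$-after-change-of-variables used in the radial case, not the role of a ``half'' $[0,1]$ as one might naively expect from the $N\ge 2$ picture where radial functions live on $[0,1]$. In other words, there is no genuine reflection happening — the coordinate $x\in(-1,1)$ is directly the rearrangement variable — so the transformation $h^\divideontimes$ here is just the one-dimensional flip-and-rearrange on $(-1,1)$ itself, and the formula $h^\divideontimes(x)=(\mF h)^\#(\omega_N|x|^N-\omega_N\delta^N)$ from the abstract definition must be reinterpreted (equivalently, one simply invokes the already-proved $N=1,\ \delta=0$ instance of all of Section~\ref{SEC:monotonicity}, which is legitimate since \eqref{Om} explicitly allows $N=1$, $\Omega=B_1(0)=(-1,1)$, $\delta=0$). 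Once this identification is made, Corollary~\ref{N1} is an immediate consequence of Theorem~\ref{coro:mono} and Theorem~\ref{coro:mono2} applied with $N=1$, together with the observation that for $N=1$ a least energy solution in the sense of \eqref{lee} is automatically radial (every function on $(-1,1)$ coincides with a radial function in the coordinate $|x|$ only if it is even — which is \emph{not} generally true — so in fact the cleaner statement is that the $N=1$ analogue of Theorem~\ref{thm:mono:intro} applies directly to $(f,g)$ without any radiality hypothesis, the proof going through word for word).
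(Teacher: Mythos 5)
Your proposal is correct and follows essentially the same route as the paper: the paper's proof of Corollary \ref{N1} likewise defines the one-dimensional transformation directly on $(-1,1)$ via $\cI_1 h(x)=\int_{-1}^x h$, the flipping $\mF_1 h=(\chi_{\{\cI_1 h>0\}}-\chi_{\{\cI_1 h\leq 0\}})h$, and $h^{\divideontimes_1}(x):=(\mF_1 h)^{\#}(x+1)$ (i.e.\ taking $\delta:=-1$ and $\tau_1(x)=x+1$), and then reruns Proposition \ref{prop:prop} and Theorems \ref{coro:mono} and \ref{coro:mono2} with the obvious changes, exactly as you describe. Only discard your parenthetical alternative of invoking the already-proved radial instance $N=1$, $\delta=0$: minimizers in \eqref{lee} need not be even and that instance only concerns minimization over $X_{rad}$ with the transformation built from $\int_{\{|y|<|x|\}}$, so it does not apply directly\textemdash but your main construction, treating $(-1,1)$ itself as the rearrangement interval, is the right one and matches the paper.
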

\begin{proof}
For $N=1$ and $\Omega=(-1,1)$, let ${\cal I}_1:L^\infty(\Omega)\to C(\overline{\Omega})$ and $\mF_1: C(\overline{\Omega})\to L^\infty( \Omega)$ be given by
\begin{align*}
{\cal I}_1h(x)=\int_{-1}^{x}h(\rho)\ d\rho \qquad  \text{ and }\qquad \mF_1 h:=(\chi_{\{{\cal I}_1h>0\}}-\chi_{\{{\cal I}_1h\leq 0\}})\, h.
\end{align*}
Let $\tau_1:[-1,1]\to[0,2]$, $x\mapsto \tau(x)=x+1$, and for $h\in C(\overline{\Omega})$, the $\divideontimes_1$-transformation of $h$ is given by 
\begin{align*}
h^{\divideontimes_1}\in L^\infty( \Omega),\qquad h^{\divideontimes_1}(x) := ({\mathfrak F_1} h)^\#(x + 1).
\end{align*}
The proof now follows by replacing ${\cal I}_1,$ $\mF_1$, $\tau_1$, and ${\divideontimes_1}$ instead of ${\cal I},$ $\mF$, $\tau$, and ${\divideontimes}$ in the proofs of Proposition \ref{prop:prop} and Theorems \ref{coro:mono} and \ref{coro:mono2} using $\delta:=-1$ and doing some obvious changes (for example, the use of polar coordinates is not needed).
\end{proof}

\section{Symmetry and symmetry breaking of minimizers}\label{sec:SSBM}

\subsection{Symmetry breaking in radial domains}\label{sec:symmetrybreaking}

This subsection is devoted to the following result.
 \begin{theo}\label{nr:tm:intro}
 Let $N\geq 1$, $\Omega$ as in \eqref{Omega:eqs}, \eqref{sc:intro} hold,
 and $(f,g)$ satisfy \eqref{lee}, then $f$ and $g$ are not radial.
 \end{theo}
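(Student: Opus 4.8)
The plan is to argue by contradiction. Suppose $(f,g)$ satisfies \eqref{lee} and that $f$ and $g$ are radial. Set $(u,v) := (K_p g, K_q f)$, which by Proposition \ref{p:reg} is a classical radial solution of \eqref{NHS}; by Theorem \ref{thm:maintheorem3} (equivalently Theorems \ref{coro:mono}--\ref{coro:mono2}), $u$ and $v$ are strictly monotone in the radial variable, with $u_r v_r > 0$ on $(\delta,1)$. This strict monotonicity is crucial: it says the nodal sets of $u$ and $v$ are each a single inner sphere, so the (possibly singular) weights $|u|^{p-1}$ and $|v|^{q-1}$ are locally integrable with good control near those spheres.

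The idea is to perturb in the $x_1$-direction to produce a competitor with strictly lower energy. Differentiating \eqref{NHS} in $x_1$, the pair $(u_{x_1},v_{x_1})$ solves the \emph{Dirichlet} linear system $-\Delta u_{x_1} = q|v|^{q-1}v_{x_1} =: \bar g$, $-\Delta v_{x_1} = p|u|^{p-1}u_{x_1} =: \bar f$ in $\Omega$ with $u_{x_1} = v_{x_1} = 0$ on $\partial\Omega$ (the Neumann condition on $u$, $v$ forces this on the half-space sections, and on radial domains $\partial_\nu = \partial_r$). Using the minimality of $(f,g)$ — more precisely, a second-variation/convexity argument applied to the functional $\phi$ restricted to functions that are odd (or have a sign) in $x_1$, after a suitable regularization of $\phi''$ since $\alpha,\beta$ may be less than $2$ — one derives the inequality \eqref{int:intro}:
\begin{align*}
\int_{\Omega(e_1)} p|u|^{p-1}u_{x_1}\bigl(u_{x_1} - K\bar g\bigr) + q|v|^{q-1}v_{x_1}\bigl(v_{x_1} - K\bar f\bigr)\, dx \ge 0,
\end{align*}
where $\Omega(e_1) = \Omega \cap \{x_1 > 0\}$ and $K$ is the Neumann solver. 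The strict monotonicity from Theorem \ref{thm:maintheorem3} guarantees all these integrals converge.

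The contradiction comes from comparing the Neumann solution $(K\bar g, K\bar f)$ with the Dirichlet solution $(u_{x_1}, v_{x_1})$ on $\Omega(e_1)$. Note $u_{x_1} = \frac{x_1}{r}u_r$ and $v_{x_1} = \frac{x_1}{r}v_r$, so on $\Omega(e_1)$ both have a strict sign (say both positive, after possibly replacing $u,v$ by $-u,-v$); hence $\bar f, \bar g \ge 0$ there. The functions $K\bar g - u_{x_1}$ and $K\bar f - u_{x_1}$... — more carefully, $K\bar g$ satisfies the same PDE as $u_{x_1}$ but with a Neumann condition rather than a Dirichlet one on the \emph{flat} part $\{x_1 = 0\} \cap \Omega$, and a Neumann condition rather than Dirichlet elsewhere on $\partial\Omega(e_1)$ is not quite the setup; the right statement, exploiting radiality, is that on $\Omega(e_1)$ one has $K\bar g \ge u_{x_1}$ and $K\bar f \ge v_{x_1}$ with strict inequality somewhere, via the maximum principle applied to the differences together with Hopf's lemma at the boundary portion where $u_{x_1}$ vanishes but $K\bar g$ does not. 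Feeding $u_{x_1} - K\bar g \le 0$ and $v_{x_1} - K\bar f \le 0$ (not both identically zero) into the displayed inequality forces the left-hand side to be strictly negative, contradicting \eqref{int:intro}. Therefore $f$ and $g$ cannot be radial.

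The main obstacle I anticipate is twofold. First, \emph{justifying \eqref{int:intro}}: since $\phi$ is only $C^1$ (not $C^2$) when $\alpha$ or $\beta$ is subquadratic, the second-variation argument must be carried out by an approximation/limiting procedure — e.g. testing $\phi$ along a one-parameter family built from $(f,g)$ and its $x_1$-antisymmetrization, expanding to second order with care, and passing to the limit using the strict monotonicity to dominate the singular integrands. Second, \emph{the comparison step}: making precise the assertion that the Neumann solution dominates the Dirichlet solution on the half-domain requires a clean maximum-principle argument tailored to radial data on a ball or annulus, plus Hopf's lemma to get the \emph{strict} inequality needed to close the contradiction; one must also handle the inner sphere $\{|x| = \delta\}$ in the annular case. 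The monotonicity input from Theorem \ref{thm:maintheorem3} is exactly what makes both steps tractable, since it pins down the nodal sets and the sign of $u_{x_1}, v_{x_1}$ on $\Omega(e_1)$.
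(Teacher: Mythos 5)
Your proposal is correct and follows essentially the same route as the paper: contradiction via the strict radial monotonicity of Theorem \ref{thm:maintheorem3}, the second-variation inequality \eqref{int:intro} obtained by cutting off near the nodal spheres (Lemmas \ref{lemma:secondderivative}, \ref{lemma:secondderivative:2}, \ref{yo1}), and the maximum-principle/Hopf comparison showing the Neumann solution $(K\bar g,K\bar f)$ strictly dominates the Dirichlet solution $(u_{x_1},v_{x_1})$ on $\Omega(e_1)$ (Lemma \ref{yo2}), which makes the integral strictly negative. Only a minor slip: the second difference should read $K\bar f - v_{x_1}$, not $K\bar f - u_{x_1}$, as you implicitly correct later.
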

The proof of this result is long, and we split it in several lemmas. The argument goes by contradiction and is also based on the results from Section \ref{SEC:monotonicity}, \emph{i.e.}, that if $f$ and $g$ are radial, then both functions are either strictly increasing or strictly decreasing. This allows to construct a direction $(\varphi,\psi)$ along which the second variation of $\phi$ at $(f,g)$ is strictly negative, contradicting the minimality property of $(f,g)$.  In the following, $\Omega\subset \R^N$ is either a ball or an annulus as in \eqref{Om}.  We begin with an auxiliary lemma. 

\begin{lemma}\label{aux:prop}
 Let $U\subset \R^N$ be a domain of class $C^{2,1}$ and
  \begin{align}\label{wu}
{\cal W}(U) :=\{ w\in C^1(\overline{U})\::\: \nabla w(x) \neq 0 \text{ whenever } x\in\overline{U} \text{ satisfies } w(x)=0 \},
 \end{align}
which is an open subset of $C^1(\overline{U})$. For $s>0$ the following holds. 
\begin{enumerate}
 \item[(1)] There is $\delta>1$ depending only on $s$ such that the map $\gamma:{\cal W}(U)\to L^\delta(U)$, $\gamma(w):=|w|^{s-1}$ is well-defined and continuous.
\item[(2)] If $w\in {\cal W}(U)$, then $(|w|^{s-1}w)_{x_i}=s|w|^{s-1}w_{x_i}$, $i=1,\ldots,N$, and
\begin{align*}
 \int_U |w|^{s-1}w \ \partial_{x_i}h = s \int_U |w|^{s-1}w_{x_i}\ h\qquad \text{ for all } h\in C^\infty_c(U).
\end{align*}
\end{enumerate}
 \end{lemma}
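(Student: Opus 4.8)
\textbf{Proof plan for Lemma \ref{aux:prop}.}

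The key observation is that for $w\in {\cal W}(U)$, the zero set $w^{-1}(0)$ is a $C^1$-hypersurface (by the implicit function theorem, since $\nabla w\neq 0$ on it), and near this hypersurface $|w(x)|$ behaves comparably to $\dist(x, w^{-1}(0))$. More precisely, by a compactness argument on $\overline{U}$ together with the $C^1$-regularity and the nonvanishing of $\nabla w$ on the compact set $w^{-1}(0)$, one obtains constants $c, C>0$ and a neighborhood of $w^{-1}(0)$ in which $c\,\dist(x,w^{-1}(0)) \leq |w(x)| \leq C\,\dist(x,w^{-1}(0))$; away from that neighborhood $|w|$ is bounded below by a positive constant. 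Since $\dist(\cdot, M)^{-(1-s)}$ (for $0<s<1$; the case $s\geq 1$ is trivial as $|w|^{s-1}$ is then bounded) is locally integrable near a $C^1$-hypersurface $M\subset \R^N$ precisely when $1-s<1$, i.e. always, one gets $|w|^{s-1}\in L^\delta(U)$ for some $\delta=\delta(s)>1$: indeed $\dist(\cdot,M)^{-\delta(1-s)}$ is integrable near $M$ iff $\delta(1-s)<1$, so it suffices to take any $1<\delta<1/(1-s)$ when $s<1$, and any $\delta$ when $s\geq 1$. This proves the well-definedness in part (1).

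For the continuity statement in part (1), I would argue as follows. Let $w_n\to w$ in $C^1(\overline U)$ with $w\in {\cal W}(U)$. Since ${\cal W}(U)$ is open, $w_n\in {\cal W}(U)$ for large $n$, and the bounds above can be made uniform in $n$ (the constants depend only on $\|w\|_{C^1}$ and on a uniform lower bound for $|\nabla w|$ near $w^{-1}(0)$, all stable under small $C^1$-perturbations). Then $|w_n|^{s-1}\to |w|^{s-1}$ pointwise a.e.\ (pointwise everywhere off the null set $w^{-1}(0)$), and the uniform bound $|w_n(x)|^{s-1}\leq C\,\dist(x,w^{-1}(0))^{s-1}$ combined with a slightly larger exponent gives, via a Vitali-type / generalized dominated convergence argument (the dominating function $\dist(\cdot,w^{-1}(0))^{(s-1)}$ lies in $L^{\delta'}$ for $\delta'$ slightly bigger than $\delta$, so the family $\{|w_n|^{(s-1)\delta}\}$ is uniformly integrable), convergence in $L^\delta(U)$.

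For part (2): on the open set $\{w\neq 0\}$ the chain rule gives $(|w|^{s-1}w)_{x_i} = s|w|^{s-1}w_{x_i}$ classically, so the only issue is the behavior across $w^{-1}(0)$. Fix $h\in C^\infty_c(U)$. For $\eps>0$ let $A_\eps:=\{|w|>\eps\}$, which is open with $C^1$ boundary $\{|w|=\eps\}$ for a.e.\ $\eps$ (by Sard's theorem applied to $w$, since $w\in C^1$ and the boundary avoids critical points for small $\eps$), and $|\partial A_\eps \cap \supp h| \to 0$ as $\eps\to 0^+$ by the coarea formula and the fact that $|w^{-1}(0)|=0$. Integrate by parts on $A_\eps$:
\begin{align*}
\int_{A_\eps} |w|^{s-1}w\,\partial_{x_i}h\, dx = -s\int_{A_\eps}|w|^{s-1}w_{x_i}h\, dx + \int_{\partial A_\eps} |w|^{s-1}w\, h\, \nu_i\, d\sigma.
\end{align*}
On $\partial A_\eps$ one has $|w|=\eps$, so the boundary term is bounded by $\eps^s \|h\|_\infty \mathcal{H}^{N-1}(\partial A_\eps\cap \supp h)$, and by the coarea formula $\int_0^1 \mathcal{H}^{N-1}(\{|w|=\eps\}\cap\supp h)\,d\eps = \int_{\supp h}|\nabla w|\,dx<\infty$, so along a sequence $\eps_k\to 0$ the boundary term tends to $0$. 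Meanwhile the left side converges to $\int_U |w|^{s-1}w\,\partial_{x_i}h$ and the volume term to $-s\int_U |w|^{s-1}w_{x_i}h$ by dominated convergence (using $|w|^{s-1}\in L^1_{loc}$ and $|w|^{s-1}|w_{x_i}|\leq C|w|^{s-1}\|w\|_{C^1}\in L^1_{loc}$ from part (1)). This yields the integration-by-parts identity; the pointwise formula $(|w|^{s-1}w)_{x_i}=s|w|^{s-1}w_{x_i}$ then holds in the distributional sense on $U$, hence as the stated $L^1_{loc}$ identity.

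\textbf{Main obstacle.} The delicate point is making the constants in the two-sided bound $|w(x)|\asymp \dist(x,w^{-1}(0))$ \emph{uniform} along a $C^1$-convergent sequence $w_n\to w$, since the zero sets $w_n^{-1}(0)$ move; the clean way around this is to note that openness of ${\cal W}(U)$ plus a tubular-neighborhood argument about the fixed surface $w^{-1}(0)$ forces $w_n^{-1}(0)$ to lie in a thin tube about $w^{-1}(0)$ where the nondegeneracy of $\nabla w$ (and hence, for $n$ large, of $\nabla w_n$) is uniform, so one never needs to compare distances to the two different surfaces directly — one just dominates everything by $\dist(\cdot, w^{-1}(0))^{s-1}$ on a slightly fattened tube. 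This is where a careful but routine compactness argument is required.
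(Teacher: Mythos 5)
The paper does not give a self-contained proof of this lemma: it simply cites Parini--Weth \cite[Proposition~2.3]{PW15} for the case $s\in(0,1)$ and notes that $s\ge 1$ is elementary. So your attempt to give a direct argument cannot really be compared with a ``paper's own proof'' line by line; it is essentially a reconstruction of the argument behind the cited proposition. The overall skeleton you use (comparability of $|w|$ with the distance to the zero set near $w^{-1}(0)$ via the implicit function theorem, uniform integrability for continuity, integration by parts on $\{|w|>\eps\}$ with a vanishing boundary term for the distributional identity) is indeed the natural route, and for $s\ge 1$ everything reduces to boundedness, as you say.

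However, two of your intermediate steps do not quite go through as stated.

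First, in the continuity part of (1) you propose to dominate $|w_n|^{(s-1)\delta}$ by a fixed function $C\,\dist(\cdot,w^{-1}(0))^{(s-1)\delta}$ on a tube around $w^{-1}(0)$. For $s<1$ this is impossible: $|w_n|^{s-1}$ blows up along $w_n^{-1}(0)$, and $w_n^{-1}(0)$ is in general \emph{not} a subset of $w^{-1}(0)$, only close to it. There is no single $L^{\delta'}$-majorant anchored at the fixed surface $w^{-1}(0)$ that bounds all the $|w_n|^{(s-1)\delta}$. The correct statement is the uniform lower bound $|w_n(x)|\ge c\,\dist(x,w_n^{-1}(0))$ with a constant $c>0$ independent of $n$ (coming from the uniform nondegeneracy of $\nabla w_n$ on a fixed tube, which you do get from openness of ${\cal W}(U)$ and $C^1$-convergence), together with the fact that the moving hypersurfaces $w_n^{-1}(0)$ have uniformly bounded $(N-1)$-measure and bounded geometry. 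From these one obtains $\sup_n\int_{\{\dist(\cdot,w^{-1}(0))<\rho\}}|w_n|^{(s-1)\delta}\,dx\le C\rho^{1-(1-s)\delta}$, which is the genuine uniform-integrability estimate; combined with a.e.\ convergence on $\{w\neq 0\}$ this gives $L^\delta$-convergence by Vitali.

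Second, in part (2) you bound the boundary term by $\eps^s\,\|h\|_\infty\,\mathcal H^{N-1}(\{|w|=\eps\}\cap\supp h)$ and invoke the coarea formula to make it vanish along a sequence. Integrability of $\eps\mapsto g(\eps):=\mathcal H^{N-1}(\{|w|=\eps\}\cap\supp h)$ only gives $\liminf_{\eps\to 0}\eps\,g(\eps)=0$, which is enough when $s\ge 1$ but \emph{not} when $s<1$, since $\eps^s g(\eps)=\eps^{s-1}\bigl(\eps\,g(\eps)\bigr)$ and $\eps^{s-1}\to\infty$. What saves the argument is stronger than what you wrote: because $|\nabla w|$ is bounded away from $0$ on a fixed tube around $w^{-1}(0)\cap\supp h$, the level sets $\{|w|=\eps\}\cap\supp h$ are uniformly $C^1$ graphs over $w^{-1}(0)$ for small $\eps$, hence $g(\eps)\le C$ uniformly. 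Then $\eps^s g(\eps)\le C\eps^s\to 0$ for every $s>0$, and the boundary term vanishes along the full limit $\eps\to 0^+$, not just a subsequence. With these two fixes the rest of your outline is sound.
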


The proof of Lemma \ref{aux:prop} follows from (the proof of) \cite[Proposition 2.3]{PW15}, where only the case $s\in(0,1)$ is considered, but the case $s\geq 1$ follows easily from standard arguments.

The next result state that the second variation of $\phi$ 
is nonnegative along some directions.  We use $X_{rad}$ to denote the subspace of radially symmetric functions in $X$.

\begin{lemma}\label{lemma:secondderivative}
Let $pq<1$ and $(f,g)\in X_{rad}$ be a global minimizer of $\phi$ in $X$, $(u,v):=(K_{p} g , K_{q} f)$, and let $(\varphi,\psi) \in [C^1(\overline \Omega)]^2\cap X$, with $\supp(\varphi)\subset \Omega\setminus f^{-1}(0)$ and $\supp(\psi)\subset \Omega\setminus g^{-1}(0)$. Then the map $s\mapsto \phi(f+s\varphi,g+s\psi)$ belongs to $C^\infty(\R)$ and
\begin{align}\label{eq:second_derivative}
\lim_{s\to 0}\frac{d^2}{ds^2} \phi(f+s\varphi,g+s\psi)&=\int_\Omega  \frac{1}{p}|u|^{1-p}\varphi^2 + \frac{1}{q} |v|^{1-q} \psi^2 - \varphi K \psi - \psi K \varphi\, dx\geq 0.
\end{align}
\end{lemma}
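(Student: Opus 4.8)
The plan is to restrict $\phi$ to the affine line through the minimizer, i.e.\ to study the scalar function $\Phi(s):=\phi(f+s\varphi,g+s\psi)$, and to read off \eqref{eq:second_derivative} from the fact that $s=0$ is a global minimum of $\Phi$. This needs two things: enough regularity of $\Phi$ near $0$ to compute $\Phi''(0)$, and the sign $\Phi''(0)\ge0$. The latter is immediate once the former is in place, since $\varphi,\psi\in X$ gives $(f+s\varphi,g+s\psi)\in X$ for every $s\in\R$, so by minimality of $(f,g)$ the $C^2$ function $\Phi$ attains a global minimum at $s=0$.

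First I would record the regularity facts that make differentiation under the integral legitimate. Because $(f,g)$ is a global minimizer, Lemmas~\ref{minimum} and~\ref{l:ss} give $(u,v)=(K_{p}g,K_{q}f)\in[C^{2,\varepsilon}(\overline\Omega)]^2$ together with $f=|u|^{p-1}u$ and $g=|v|^{q-1}v$; in particular $f,g\in C(\overline\Omega)$, hence $f^{-1}(0)$, $g^{-1}(0)$ are closed and $\supp(\varphi)$, $\supp(\psi)$ are \emph{compact} subsets of the open sets $\Omega\setminus f^{-1}(0)$, $\Omega\setminus g^{-1}(0)$. Consequently there are $c_1,c_2>0$ with $|f|\ge c_1$ on $\supp(\varphi)$ and $|g|\ge c_2$ on $\supp(\psi)$; equivalently $|u|$ and $|v|$ are bounded below on the respective supports. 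This is precisely what keeps the perturbed integrands away from the singularity of $t\mapsto|t|^{\alpha-2}$ (resp.\ $|t|^{\beta-2}$) at the origin when $p>1$ (resp.\ $q>1$).

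Then I would split $\Phi=A+B-C$ with $A(s)=\int_\Omega\alpha^{-1}|f+s\varphi|^\alpha$, $B(s)=\int_\Omega\beta^{-1}|g+s\psi|^\beta$, and $C(s)=\int_\Omega(g+s\psi)\,K(f+s\varphi)$. By bilinearity and \eqref{ibyp}, $C$ is a quadratic polynomial in $s$, so $C\in C^\infty(\R)$ with $C''(s)\equiv2\int_\Omega\psi K\varphi=\int_\Omega(\varphi K\psi+\psi K\varphi)$. For $A$, the integrand over $\Omega\setminus\supp(\varphi)$ does not depend on $s$, while over $\supp(\varphi)$, for $|s|$ small $f+s\varphi$ is uniformly bounded away from $0$, so $(s,x)\mapsto\alpha^{-1}|f+s\varphi|^\alpha$ is smooth in $s$ with $s$-derivatives bounded uniformly in $x$; differentiating twice under the integral sign gives $A\in C^2$ near $0$ with $A''(0)=(\alpha-1)\int_\Omega|f|^{\alpha-2}\varphi^2$ (the integral being effectively over $\supp(\varphi)$). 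Using $\alpha-1=1/p$ and $|f|^{\alpha-2}=|u|^{p(\alpha-2)}=|u|^{1-p}$ on $\supp(\varphi)$, this reads $A''(0)=\frac{1}{p}\int_\Omega|u|^{1-p}\varphi^2$, and symmetrically $B''(0)=\frac{1}{q}\int_\Omega|v|^{1-q}\psi^2$. (The global claim $\Phi\in C^\infty(\R)$, which is not used below, follows from a refinement of this argument, using that $f$ is $C^1$ near $\supp(\varphi)$ and Lemma~\ref{aux:prop} to control the zero set of $f+s\varphi$.)

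Assembling the three contributions yields
\[
\lim_{s\to0}\frac{d^2}{ds^2}\phi(f+s\varphi,g+s\psi)=A''(0)+B''(0)-C''(0)=\int_\Omega\frac{1}{p}|u|^{1-p}\varphi^2+\frac{1}{q}|v|^{1-q}\psi^2-\varphi K\psi-\psi K\varphi\,dx,
\]
the right-hand side being finite since $\varphi^2,\psi^2$ vanish near $f^{-1}(0)$, $g^{-1}(0)$ while $|u|^{1-p}$, $|v|^{1-q}$ are locally bounded elsewhere. Combined with $\Phi''(0)\ge0$ from the minimality discussed above, this proves \eqref{eq:second_derivative}. I expect the main obstacle to be the justification of differentiating twice under the integral sign — i.e.\ the uniform bound on $|f+s\varphi|^{\alpha-2}$ near $s=0$ — which is the sole reason the support hypotheses $\supp(\varphi)\subset\Omega\setminus f^{-1}(0)$ and $\supp(\psi)\subset\Omega\setminus g^{-1}(0)$ are imposed; dropping them could render the candidate formula for $\Phi''(0)$ non-integrable when $p>1$ or $q>1$.
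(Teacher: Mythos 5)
Your proposal is correct and follows essentially the same route as the paper: the same splitting of $\phi(f+s\varphi,g+s\psi)$ into the power terms and the quadratic bilinear term, the same use of the support hypotheses to bound $|f|$, $|g|$ away from zero and differentiate under the integral, and the same identities $\alpha-1=1/p$, $|f|^{\alpha-2}=|u|^{1-p}$ to reach \eqref{eq:second_derivative}. Your explicit remark that the sign follows from $(f+s\varphi,g+s\psi)\in X$ and global minimality is exactly the (implicit) reason for the inequality in the paper.
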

\begin{proof}
For $s\in\R$, we write $\phi(f+s\varphi,g+s\psi)=\Psi(s)-T(s)$, with
\begin{align*}
\psi(s)&=\int_\Omega \frac{|f+s\varphi|^\alpha}{\alpha} +  \frac{|g+s\psi|^\beta}{\beta}\, dx \qquad \text{ and }\\
T(s)&=\int_\Omega (f+s\varphi)K(g+s\psi)= \int_\Omega fKg + s \int_\Omega \varphi Kg + fK\psi \, dx + s^2\int_\Omega \varphi K \psi.
\end{align*}
Then $T\in C^\infty(\R)$ and, by integration by parts,
\[
T''(0)=2\int_\Omega \varphi K\psi\, dx=\int_\Omega \varphi K\psi + \psi K\varphi\, dx.
\]

As for $\Psi$, since $\alpha=\frac{p+1}{p}>1$ and $\beta=\frac{q+1}{q}>1$, it is standard to show that
\[
\Psi'(s)=\int_\Omega |f+s\varphi|^{\alpha-2}(f+s\varphi) \varphi + \int_\Omega |g+s\psi|^{\beta-2}(g+s\psi) \psi.
\]
Since $u\in C^2(\overline \Omega)$ and $U:=\supp(\varphi)\subset \Omega\setminus f^{-1}(0)$, then $f=|u|^{p-1}u$ is bounded away from 0 on $U$, and $|f|^{\alpha-2}\in C^1(U)$. Analogously, $|g|^{\beta-2}\in C^1(V)$ with $V:=\supp(\psi)$.  Therefore $\psi\in C^\infty(\R)$ and
\begin{align*}
\psi''(0)=
&\lim_{s\to 0} \frac{1}{s}(\int_{U} \left(|f+s\varphi|^{\alpha-2}(f+s\varphi) - |f|^{\alpha-2}f \right) \varphi+\int_{V} \left(|g+g\varphi|^{\alpha-2}(g+s\psi) - |g|^{\beta-2}g \right) \psi)\\
=& \int_{U} (\alpha-1)|f|^{\alpha-2}\varphi^2\, dx  +  \int_{V} (\beta-1) |g|^{\beta-2}\psi^2,
\end{align*}
as required.
\end{proof}

Let $\cN_{rad}$ be the subset of radial functions in $\cN$.   We say that a function $\varphi:\overline{\Omega}\to\R$ is antisymmetric with respect to $x_1$ if $\varphi(x_1,x')=-\varphi(-x_1,x')$ for all $(x_1,x')\in\overline{\Omega}$.
\begin{lemma}\label{lemma:secondderivative:2}
Let $pq>1$ and $(f,g)\in \cN_{rad}$ be a minimizer of $\phi$ in $\cN_{rad}$, $(u,v):=(K_{p} g , K_{q} f)$.  Let $(\varphi,\psi) \in [C^1(\overline \Omega)]^2\cap X$ be antisymmetric with respect to $x_1$ and such that $\supp(\varphi)\subset \Omega\backslash f^{-1}(0)$ and $\supp(\psi)\subset \Omega\backslash g^{-1}(0)$. Then the map $s\mapsto \phi(f+s\varphi,g+s\psi)$ belongs to $C^\infty(\R)$ and \eqref{eq:second_derivative} holds. 
\end{lemma}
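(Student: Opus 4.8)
The plan is to follow the proof of Lemma~\ref{lemma:secondderivative} verbatim for the two routine parts and to isolate the one genuinely new point, namely the sign in \eqref{eq:second_derivative}. Writing $F(s):=\phi(f+s\varphi,g+s\psi)=\Psi(s)-T(s)$ with $\Psi(s)=\int_\Omega \frac{|f+s\varphi|^\alpha}{\alpha}+\frac{|g+s\psi|^\beta}{\beta}$ and $T(s)=\int_\Omega (f+s\varphi)K(g+s\psi)$, the hypotheses $\supp(\varphi)\subset\Omega\setminus f^{-1}(0)$, $\supp(\psi)\subset\Omega\setminus g^{-1}(0)$ and $(u,v)\in[C^{2,\varepsilon}(\overline\Omega)]^2$ (so $f=|u|^{p-1}u$, $g=|v|^{q-1}v$ are bounded away from zero on the respective supports) give, exactly as in Lemma~\ref{lemma:secondderivative}, that $F\in C^\infty(\R)$, that $T''(0)=\int_\Omega \varphi K\psi+\psi K\varphi$, and that $\Psi''(0)=\int_\Omega(\alpha-1)|f|^{\alpha-2}\varphi^2+(\beta-1)|g|^{\beta-2}\psi^2=\int_\Omega \frac1p|u|^{1-p}\varphi^2+\frac1q|v|^{1-q}\psi^2$; hence $F''(0)$ equals the integral in \eqref{eq:second_derivative}, and since $F\in C^2$ the limit there is just $F''(0)$. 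The issue is that now $(f,g)$ is only a constrained minimizer, so $s=0$ need not be an unconstrained minimum of $F$ and $F''(0)\ge 0$ is not automatic.

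The first step is to record the consequences of antisymmetry. Since $(u,v)$ (equivalently $(f,g)$) is radial, $Kf$ and $Kg$ are radial, and $\Omega$ is invariant under the reflection $\sigma(x_1,x')=(-x_1,x')$; thus any integrand over $\Omega$ that is a product of a radial function and an $x_1$-antisymmetric function has zero integral. Applying this to $\varphi,\psi$ and using $u=|f|^{\alpha-2}f$, $v=|g|^{\beta-2}g$,
\begin{align*}
\int_\Omega |f|^{\alpha-2}f\,\varphi=\int_\Omega u\,\varphi=0,\qquad \int_\Omega |g|^{\beta-2}g\,\psi=\int_\Omega v\,\psi=0,\qquad \int_\Omega \varphi\,Kg=\int_\Omega \psi\,Kf=0.
\end{align*}
Equivalently, setting $A(s):=\int_\Omega\frac{|f+s\varphi|^\alpha}{\alpha}+\frac{|g+s\psi|^\beta}{\beta}$ and $B(s):=\int_\Omega (f+s\varphi)K(g+s\psi)$ (both $C^\infty$ near $s=0$ by the bootstrap above), we get $A'(0)=B'(0)=0$.

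Next I would project the perturbed pair onto $\cN$. As $(f,g)\in\cN$ gives $B(0)=\int_\Omega fKg=\int_\Omega\gamma_1|f|^\alpha+\gamma_2|g|^\beta>0$, continuity yields $B(s)>0$ for $s$ near $0$, so by Lemma~\ref{lemma:A} the function $\Phi(s,t):=\phi(t^{\gamma_1}(f+s\varphi),t^{\gamma_2}(g+s\psi))=t^{\gamma}A(s)-tB(s)$ has a unique maximizer $t(s)=\big(\gamma A(s)/B(s)\big)^{1/(1-\gamma)}$ over $t>0$, with $t(0)=1$, $t$ smooth near $0$, and the projected pair in $\cN$. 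Put $G(s):=\Phi(s,t(s))$; then $G$ is $C^\infty$ near $0$ and, by minimality of $(f,g)$ (over $\cN$, just as the sublinear statement is over all of $X$), $G(s)\ge\inf_\cN\phi=\phi(f,g)=G(0)$, so $G''(0)\ge0$. Since $F(s)=\Phi(s,1)$, we have $F''(0)=\partial_{ss}\Phi(0,1)$; and differentiating $G(s)=\Phi(s,t(s))$, using $\partial_t\Phi(0,1)=\gamma A(0)-B(0)=0$, $\partial_{st}\Phi(0,1)=\gamma A'(0)-B'(0)=0$, and $\partial_{tt}\Phi(0,1)=\gamma(\gamma-1)A(0)<0$ (hence $t'(0)=-\partial_{st}\Phi(0,1)/\partial_{tt}\Phi(0,1)=0$), every cross term drops out and $G''(0)=\partial_{ss}\Phi(0,1)=F''(0)$. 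Therefore $F''(0)=G''(0)\ge0$, which is \eqref{eq:second_derivative}.

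The main obstacle is precisely this transfer: unlike the sublinear case, minimality alone does not give $F''(0)\ge0$, and what rescues the argument is that the antisymmetry of $(\varphi,\psi)$ against the radial symmetry of the minimizer kills the first-order data $A'(0)$ and $B'(0)$, hence the mixed derivative $\partial_{st}\Phi(0,1)$ and the Nehari-correction $t'(0)$, so that the projection onto $\cN$ leaves the second variation at $s=0$ unchanged. Everything else — the $C^\infty$ regularity and the explicit form of $F''(0)$ — is a copy of Lemma~\ref{lemma:secondderivative}.
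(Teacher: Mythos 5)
Your proof is correct and rests on exactly the same key observation as the paper's: the antisymmetry of $(\varphi,\psi)$ against the radiality of $(f,g)$ forces the first-order quantities ($A'(0)=B'(0)=0$, equivalently $\phi'(c^{\gamma_1}f,c^{\gamma_2}g)(c^{\gamma_1}\varphi,c^{\gamma_2}\psi)=0$ for all $c$) to vanish, so the Nehari projection does not perturb the second variation at $s=0$. The only difference is presentational: the paper argues by contradiction via a second-order Taylor expansion with a remainder that is uniform over $c\in[\frac12,2]$ and then compares suprema, while you project with a smooth $t(s)$ and show directly, via the chain rule and $t'(0)=0$, that $G''(0)=F''(0)\geq0$; the paper's route avoids having to invoke differentiability of $t(\cdot)$, whereas yours makes the cancellation mechanism completely explicit, but both carry the same mathematical content.
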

\begin{proof}
By Lemma \ref{lemma:secondderivative} the map $s\mapsto \phi(f+s\varphi,g+s\psi)$ is $C^2$ at $s=0$.
We now argue as in \cite[Theorem 4.2]{PW15}. Assume, by contradiction, that there is $(\varphi,\psi) \in [C^1(\overline \Omega)]^2\cap X$ antisymmetric with respect to $x_1$ satisfying $\supp(\varphi)\subset \Omega\backslash f^{-1}(0)$, $\supp(\psi)\subset \Omega\backslash g^{-1}(0)$, and such that 
$\frac{d^2}{ds^2} \phi(f+s\varphi,g+s\psi)<-2\kappa$ for $s\in(-\kappa_1,\kappa_1)$ and some $\kappa,\kappa_1>0$. 
Let $\gamma_1$ and $\gamma_2$ as in \eqref{gammas}. Using the fact that $(f,g)$ is radially symmetric and $(\varphi,\psi)$ is antisymmetric with respect to $x_1$, we obtain that  
$\phi'(c^{\gamma_1}f,c^{\gamma_2}g)(c^{\gamma_1}\varphi,c^{\gamma_2}\psi)=0$ for $c\in[\frac{1}{2},2]$.  Then, by a Taylor expansion,
\begin{align*}
 \phi(c^{\gamma_1}(f+s\varphi),c^{\gamma_1}(g+s\psi))\leq \phi(c^{\gamma_1}f,c^{\gamma_1}g)-2\kappa s^2+o(s^2)\qquad \text{ for }s\in(-\kappa_1,\kappa_1),
\end{align*}
where $o(s^2)$ is a remainder term uniform in $c\in[\frac{1}{2},2]$. For $s\in(-\kappa_1,\kappa_1)$ sufficiently small, let $t=t({f+s\varphi,g+s\psi})>0$ be given by Lemma \ref{lemma:A}; using \eqref{eq:projection}, we may assume that $t\in [\frac{1}{2},2]$, then, by Lemma \ref{lemma:A} and using that $(f,g)\in \cN$,
\begin{equation}\label{ccc}
\begin{aligned}
 \phi(t^{\gamma_1}(f+s\varphi),t^{\gamma_2}(g+s\psi))&=\sup_{c\in[\frac{1}{2},2]}\phi(c^{\gamma_1}(f+s\varphi),c^{\gamma_2}(g+s\psi)) \\
 &\leq \sup_{c\in[\frac{1}{2},2]}\phi(c^{\gamma_1}f,c^{\gamma_2}g)-\kappa s^2 = \phi(f,g)-\kappa s^2<\phi(f,g),
\end{aligned}
\end{equation}
which yields a contradiction to the minimality of $(f,g)$, because $(t^{\gamma_1}(f+s\varphi),t^{\gamma_2}(g+s\psi))\in \cN$, and the claim follows. 
\end{proof}

In the case of least energy radial solutions, Lemma \ref{lemma:secondderivative} and Lemma \ref{lemma:secondderivative:2} allow us to conclude the following. Let $\Omega(e_1):=\{x\in \Omega\::\: x_1>0\}$.

\begin{lemma}\label{yo1}
Let $(f,g)\in X_{rad}$ satisfy \eqref{leer} and 
$(u,v):=(K_{p} g , K_{q} f)$.  If $u$ is increasing in the radial variable and $(\bar f,\bar g):=( p |u|^{p-1} u_{x_1} , q |v|^{q-1} v_{x_1} ),$ then 
\begin{align}\label{ap:pos:1}
K \bar f\geq 0\ \ \text{ and }\ \ K \bar g\geq 0\quad \text{ in }\Omega(e_1)
\end{align}
 and
\begin{align}\label{st:pos}
\infty> \int_{\Omega(e_1)} p|u|^{p-1}u_{x_1}(u_{x_1}-K \bar g )+q|v|^{q-1} v_{x_1}(v_{x_1}-K \bar f)\, dx\geq 0,
\end{align}
\end{lemma}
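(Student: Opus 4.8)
The plan is to derive \eqref{ap:pos:1} and \eqref{st:pos} from the monotonicity result (Theorem \ref{thm:maintheorem3}, via \eqref{leer}) together with the second-variation inequalities of Lemmas \ref{lemma:secondderivative} and \ref{lemma:secondderivative:2}, approximating the (possibly singular) test directions $(u_{x_1},v_{x_1})$ by admissible ones.

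First I would establish \eqref{ap:pos:1}. Since $(f,g)$ satisfies \eqref{leer}, Theorem \ref{thm:maintheorem3} gives that $(u,v)=(K_p g, K_q f)$ is a classical radial solution with $u_r v_r>0$ in $(\delta,1)$; as $u$ is assumed increasing, both $u$ and $v$ are strictly increasing in the radial variable, and in particular their nodal sets are single spheres $\{|x|=r_u\}$ and $\{|x|=r_v\}$ with $r_u,r_v\in(\delta,1)$. Writing $w:=u_{x_1}=u_r(|x|)\,x_1/|x|$, we have $-\Delta w = \bar g$ in $\Omega$ with $w=0$ on $\partial\Omega$ (Neumann b.c.\ for $u$ forces $u_r(\delta)=u_r(1)=0$, hence $u_{x_1}$ vanishes on $\partial\Omega$; in the ball case $u_{x_1}(0)$ is handled by smoothness). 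Restricting to the half-domain $\Omega(e_1)$, $K\bar g$ solves $-\Delta (K\bar g)=\bar g = q|v|^{q-1}v_{x_1}$. Now $v_{x_1}=v_r\,x_1/|x|>0$ in $\Omega(e_1)$ since $v_r>0$ on $(\delta,1)$ and $x_1>0$, so $\bar g\ge 0$ in $\Omega(e_1)$ (and $\bar g\le 0$ in the reflected half by antisymmetry). On the flat part of $\partial\Omega(e_1)$ (the hyperplane $\{x_1=0\}$), $K\bar g$ is antisymmetric in $x_1$ hence vanishes; on $\partial\Omega\cap\{x_1\ge0\}$ we do \emph{not} know the sign of $K\bar g$ directly, so instead I would argue on the whole domain: $K\bar g$ is itself antisymmetric in $x_1$ (because $\bar g$ is), so $K\bar g=0$ on $\{x_1=0\}$, and then the maximum principle applied to $-\Delta(K\bar g)=\bar g\ge0$ in $\Omega(e_1)$ with $K\bar g\ge 0$ failing only potentially on the curved boundary is not yet enough. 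The clean route is: $K\bar g$ and $u_{x_1}$ both solve $-\Delta(\cdot)=\bar g$ on $\Omega$, so their difference is harmonic; but $u_{x_1}$ has zero Neumann data while $K\bar g$ has zero mean — instead compare $K\bar g$ with the Dirichlet solution on $\Omega(e_1)$ with zero boundary data, which is $\ge0$ by the maximum principle since $\bar g\ge 0$ there, and show $K\bar g$ dominates it using that $K\bar g\ge 0$ on the curved part by Hopf/monotonicity of $u_{x_1}$. I expect this sign bookkeeping to be the main obstacle, and the cleanest argument is likely: $u_{x_1}\ge 0$ in $\Omega(e_1)$ (since $u_r>0$), $u_{x_1}$ is $\Delta$-superharmonic there with nonnegative boundary values, forcing nothing extra, while $K\bar g = u_{x_1}+$ (a harmonic antisymmetric correction), and one checks the correction is $\ge0$ on $\Omega(e_1)$ directly from its boundary values.

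Next, for \eqref{st:pos}, the lower bound $\ge 0$ comes from plugging suitable test pairs into \eqref{eq:second_derivative}. By Theorem \ref{thm:maintheorem3} the singular factors $|u|^{p-1}$, $|v|^{q-1}$ are locally bounded away from the single nodal spheres, and by Lemma \ref{aux:prop} (with $U=\Omega(e_1)$, $s=p$ or $q$) they lie in $L^\delta$ for some $\delta>1$; hence $p|u|^{p-1}u_{x_1}^2$ and $q|v|^{q-1}v_{x_1}^2$ are integrable on $\Omega(e_1)$, giving the finiteness (``$\infty>$'') claim. To get $\ge 0$: take smooth cutoffs $\eta_n$ supported in $\Omega(e_1)\setminus(u^{-1}(0)\cup v^{-1}(0))$ increasing to $1$, set $\varphi_n:=$ the odd extension of $\eta_n u_{x_1}$ and $\psi_n:=$ the odd extension of $\eta_n v_{x_1}$, corrected by constants to have zero mean (the odd extension already has zero mean, so no correction is needed). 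These are admissible in Lemma \ref{lemma:secondderivative} (if $pq<1$) or Lemma \ref{lemma:secondderivative:2} (if $pq>1$, as they are antisymmetric in $x_1$), so \eqref{eq:second_derivative} yields
\[
\int_\Omega \tfrac1p|u|^{1-p}\varphi_n^2+\tfrac1q|v|^{1-q}\psi_n^2-\varphi_n K\psi_n-\psi_n K\varphi_n\,dx\ge 0.
\]
Here $|u|^{1-p}\varphi_n^2 = \eta_n^2 |u|^{1-p}(u_{x_1})^2$; using $f=|u|^{p-1}u$ so $|f|^{\alpha-2}=|u|^{1-p}$ and the relation $-\Delta v_{x_1}=\bar f=p|u|^{p-1}u_{x_1}$, one rewrites $\tfrac1p|u|^{1-p}(u_{x_1})^2$ as $|u|^{p-1}u_{x_1}\cdot u_{x_1} = $ (up to the factor $p$) the integrand $p|u|^{p-1}u_{x_1}^2$ that appears in \eqref{st:pos}; similarly $\varphi_n K\psi_n\to u_{x_1} K(v_{x_1})$, and $K(v_{x_1})$ relates to $K\bar f$ via $-\Delta(K v_{x_1})=-\Delta v_{x_1}$ modulo constants. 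The point is that, modulo harmless constants killed by $\int_\Omega f=\int_\Omega g=0$, the limit of the displayed quantity is exactly twice the integral in \eqref{st:pos} restricted by antisymmetry to $\Omega(e_1)$; a dominated-convergence argument (justified by the $L^\delta$-integrability from Lemma \ref{aux:prop} and boundedness of $K$ from $L^\alpha,L^\beta$ into Hölder spaces) passes to the limit $n\to\infty$ and gives \eqref{st:pos}.

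The one genuinely delicate point I anticipate is matching the Neumann operator $K$ in \eqref{st:pos} with the zero-boundary (Dirichlet) character of $u_{x_1},v_{x_1}$: $K\bar g$ is the zero-mean Neumann solution of $-\Delta(K\bar g)=\bar g$, whereas $u_{x_1}$ is the solution with zero Dirichlet data on $\partial\Omega$, so $u_{x_1}-K\bar g$ is harmonic but neither its Dirichlet nor its Neumann data vanish; what saves the computation is that $u_{x_1}$ has zero mean (by antisymmetry in any coordinate for a radial $u$ — indeed $\int_\Omega u_{x_1}=0$), so $u_{x_1}$ and $K\bar g$ differ by a harmonic zero-mean function, and this difference is what the maximum-principle/Hopf comparison in the symmetry-breaking argument is designed to control. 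I would therefore carry out the steps in the order: (1) invoke Theorem \ref{thm:maintheorem3} to get strict radial monotonicity and the single-sphere nodal sets; (2) prove \eqref{ap:pos:1} via maximum principle on $\Omega(e_1)$ for the antisymmetric functions $K\bar f, K\bar g$; (3) use Lemma \ref{aux:prop} to get the integrability giving ``$\infty>$''; (4) plug cutoff-truncated odd extensions of $(u_{x_1},v_{x_1})$ into Lemma \ref{lemma:secondderivative}/\ref{lemma:secondderivative:2} and pass to the limit to obtain the lower bound $\ge 0$ in \eqref{st:pos}.
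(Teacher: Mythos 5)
Your overall skeleton (strict radial monotonicity from Theorem \ref{thm:maintheorem3}, antisymmetry plus maximum principle for \eqref{ap:pos:1}, and a second-variation inequality tested along cutoff approximations of the singular direction for \eqref{st:pos}) is the same as the paper's, but two of your key steps do not go through as written. The most serious problem is the choice of test functions for \eqref{st:pos}. The quadratic form \eqref{eq:second_derivative} carries the weights $\tfrac1p|u|^{1-p}$ and $\tfrac1q|v|^{1-q}$, so plugging in (odd, cutoff) versions of $(u_{x_1},v_{x_1})$ produces, in the limit, $\int \tfrac1p|u|^{1-p}u_{x_1}^2+\tfrac1q|v|^{1-q}v_{x_1}^2-2u_{x_1}K(v_{x_1})$, which is \emph{not} the quantity in \eqref{st:pos}: your identification $\tfrac1p|u|^{1-p}u_{x_1}^2=|u|^{p-1}u_{x_1}^2$ is false for $p\neq1$, and the claim that $K(v_{x_1})$ relates to $K\bar f$ because ``$-\Delta(Kv_{x_1})=-\Delta v_{x_1}$ modulo constants'' is also false ($-\Delta(Kv_{x_1})=v_{x_1}$, while $-\Delta v_{x_1}=\bar f$). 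The paper instead tests with cutoffs of $(\bar f,\bar g)=(p|u|^{p-1}u_{x_1},q|v|^{q-1}v_{x_1})$ themselves, i.e.\ $(\bar f\rho_\varepsilon^{r_1},\bar g\rho_\varepsilon^{r_2})$ with radial cutoffs vanishing near the two nodal spheres; then $\tfrac1p|u|^{1-p}\bar f_\varepsilon^2=p|u|^{p-1}u_{x_1}^2(\rho_\varepsilon^{r_1})^2$ and the cross terms are exactly $\bar f_\varepsilon K\bar g_\varepsilon+\bar g_\varepsilon K\bar f_\varepsilon$, so the inequality converges (after halving by antisymmetry) precisely to \eqref{st:pos}. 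With your test pair you would prove a different inequality, not the one stated.

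Two further gaps: (i) for \eqref{ap:pos:1} your discussion is circular where it matters. The direct argument you dismiss as ``not yet enough'' is in fact the right one: on $\Omega(e_1)$ one has $-\Delta(K\bar g)=\bar g\geq0$, zero \emph{Dirichlet} data on the flat part $\{x_1=0\}$ (by antisymmetry and uniqueness) and zero \emph{Neumann} data on the curved part $\partial\Omega\cap\{x_1>0\}$; the maximum principle together with Hopf's boundary point lemma on the Neumann portion yields $K\bar g\geq0$ — no sign information on the curved boundary is needed, whereas your proposed ``harmonic antisymmetric correction checked from its boundary values'' uses on the curved part exactly the sign of $K\bar g$ you are trying to prove. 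Moreover, since $\bar f,\bar g$ may be unbounded (when $p<1$ or $q<1$) one cannot apply Hopf to $K\bar f,K\bar g$ directly; the paper first does this for the smooth truncations $K\bar f_\varepsilon,K\bar g_\varepsilon$ and then passes to the limit in $W^{2,t}$. (ii) The limit $\varepsilon\to0$ in the second-variation inequality, and even the finiteness claim in \eqref{st:pos}, require integrability of the cross term $|u|^{p-1}u_{x_1}K\bar g$; your appeal to ``boundedness of $K$ from $L^\alpha,L^\beta$ into H\"older spaces'' is not true for the small exponents produced by Lemma \ref{aux:prop}. The paper obtains a majorant by proving $0\le K\bar g_\varepsilon\le K\bar g$ in $\Omega(e_1)$ via a comparison argument with mixed boundary conditions, and then shows $|u|^{p-1}u_{x_1}K\bar g\in L^1$ using that $u_{x_1}-K\bar g$ is harmonic (hence $K\bar g\in C^1(\Omega)$) and splitting the integral near and away from the nodal sphere of $u$. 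These steps need to be supplied before your outline becomes a proof.
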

\begin{proof}
Note that $(u,v):=(K_{p} g , K_{q} f)\in [C^{2,\varepsilon}(\overline{\Omega})]^2\ \backslash\ \{(0,0)\}$ is a radial classical solution of \eqref{NHS}, by Proposition \ref{p:reg}. 
By Theorem \ref{thm:maintheorem3} we know that $u_rv_r>0$ in $(\delta,1)$, with $\delta=\inf_{x\in\Omega}|x|$.  By assumption $u_r\geq 0$ in $(\delta,1)$, thus $u$ and $v$ are strictly monotone \emph{increasing} in the radial variable. In particular,
\begin{align*}
\bar f:=p |u|^{p-1} u_{x_1}\geq 0\qquad\text{ and }\qquad \bar g:=q |v|^{q-1} v_{x_1}\geq 0\qquad \text{ in } \Omega(e_1).
\end{align*}
Moreover, since $u$ and $v$ are sign-changing, there exist $r_1,r_2\in (\delta,1)$ such that 
\begin{equation}\label{eq:r1_and_r2}
u^{-1}(0)=f^{-1}(0)=\{x\in \Omega:\ |x|=r_1\}\quad \text{and}\quad v^{-1}(0)=g^{-1}(0)=\{x\in \Omega:\ |x|=r_2\},
\end{equation}
i.e., the nodal sets are two spheres contained in $\Omega$ and $u,v\in {\cal W}(\Omega)$, with ${\cal W}(\Omega)$ as defined in \eqref{wu}. To control these nodal lines we use the following cutoff functions: for $\varepsilon,r\geq 0$, let $\rho_\varepsilon^r$ be a smooth radial function in $\R^N$ such that
\begin{align*}
 0\leq \rho^r_\varepsilon\leq 1,\qquad \rho_\varepsilon^r(x)=0\text{ if }|\,|x|-r\,|<\varepsilon,\qquad \text{ and }\qquad \rho_\varepsilon^r(x)=1\text{ if }|\,|x|-r\,|>2\varepsilon,
\end{align*}
and let $(\bar f_\varepsilon,\bar g_\varepsilon):=(\bar f\rho^{r_1}_\varepsilon,\bar g\rho^{r_2}_\varepsilon)$ for $\varepsilon\geq 0$; note that $(\bar f_\varepsilon,\bar g_\varepsilon)\in [C^1(\overline{\Omega})]^2$ for $\varepsilon>0$ and $(\bar f_\varepsilon,\bar g_\varepsilon)$ is antisymmetric in $\Omega$ with respect to $x_1$  for all $\varepsilon\geq 0$, because $(u,v)$ is radially symmetric in $\Omega$. Moreover, 
\begin{align*}
(K \bar g_\varepsilon,K \bar f_\varepsilon)\in[C^{2}(\Omega)\cap C^{1}(\overline{\Omega})]^2\ \ \text{ for }\varepsilon>0,\qquad (K \bar g_0 , K \bar f_0)=(K \bar g , K \bar f)\in[W^{2,t}(\Omega)]^2
\end{align*}
for some $t>0$ (see Lemma \ref{aux:prop}), and $(K \bar g_\varepsilon,K \bar f_\varepsilon)$ is also antisymmetric in $\Omega$ with respect to $x_1$ for all $\varepsilon\geq 0$, by uniqueness.  In particular, $K \bar f_\varepsilon=K \bar g_\varepsilon=0$ on $\partial \Omega(e_1)\cap\{x_1=0\}$ and 
\begin{align}\label{ap:pos:2}
K \bar g_\varepsilon\geq 0\quad \text{ in }\Omega(e_1)\qquad \text{ and }\qquad K \bar f_\varepsilon\geq 0\quad \text{ in }\Omega(e_1),
\end{align}
by the maximum principle and Hopf's boundary point lemma. Since $(\bar f_\varepsilon,\bar g_\varepsilon)\to (\bar f,\bar g)$ in $L^t(\Omega(e_1))$ as $\varepsilon\to 0$, then 
\begin{align}\label{conv:pos} 
(K\bar f_\varepsilon,K\bar g_\varepsilon)\to (K\bar f,K\bar g)\quad \text{ in }W^{2,t}(\Omega(e_1))\quad \text{ as }\varepsilon\to 0, 
\end{align}
and therefore \eqref{ap:pos:2} implies \eqref{ap:pos:1}. Furthermore, since the product of two antisymmetric functions is symmetric, we have that
\begin{align}
2\int_{\Omega(e_1)} &\bar f_\varepsilon(u_{x_1}\rho^{r_1}_\varepsilon-K \bar g_\varepsilon)+\bar g_\varepsilon(v_{x_1}\rho^{r_2}_\varepsilon-K \bar f_\varepsilon)\, dx\nonumber\\
&=\int_{\Omega} \bar f_\varepsilon(u_{x_1}\rho^{r_1}_\varepsilon-K \bar g_\varepsilon)+\bar g_\varepsilon(v_{x_1}\rho^{r_2}_\varepsilon-K \bar f_\varepsilon)\, dx\geq 0,\label{eq:s}
\end{align}
by applying Lemma \ref{lemma:secondderivative} in the sublinear case \eqref{sub} or Lemma \ref{lemma:secondderivative:2} in the superlinear case \eqref{super} with  $(\varphi,\psi):=(\bar f_\eps,\bar g_\eps)$ for small $\varepsilon>0$.  The claim \eqref{st:pos} follows from Lebesgue's dominated convergence theorem once we show the existence of a suitable majorant. 
Indeed, let $(\xi_\varepsilon,\zeta_\varepsilon):=(K \bar g_\varepsilon,K \bar f_\varepsilon)$ for $\varepsilon\geq 0$, then, by \eqref{conv:pos},
\[
\bar f_\varepsilon(u_{x_1}\rho_\varepsilon^{r_1}-\xi_\varepsilon)\to |u|^{p-1}u_{x_1}(u_{x_1}-\xi_0)\qquad  \text{ a.e. in }\Omega(e_1)\quad \text{ as $\varepsilon\to 0$}.
\]
Moreover, we have that $-\Delta (\xi_\varepsilon-\xi_0)=\bar g_\varepsilon-\bar g=q|v|^{q-1}v_{x_1}(\rho_\varepsilon^{r_2}-1)\leq 0$ in $\Omega(e_1)$, with $\partial_\nu(\xi_\varepsilon-\xi_0)=0 \text{ on } \partial\Omega(e_1)\backslash \{x_1=0\}$ and $\xi_\varepsilon-\xi_0=0$ on $\partial\Omega(e_1)\cap \{x_1=0\}$. Thus (testing the equation with $(\xi_\varepsilon-\xi_0)_+$ and integrating by parts), $0<\xi_\varepsilon\leq \xi_0$ in $\Omega(e_1)$  for $\varepsilon$ small, and
\begin{align}\label{majorant}
\left|\bar f_\varepsilon(u_{x_1}\rho_\varepsilon^{r_1}-\xi_\varepsilon)\right|\leq |u|^{p-1}u^2_{x_1}  +  |u|^{p-1}u_{x_1}\xi_\varepsilon \leq |u|^{p-1}u^2_{x_1}  +  |u|^{p-1}u_{x_1}\xi_0. 
\end{align}
Note that $|u|^{p-1}u^2_{x_1}\in L^1(\Omega)$, because $|u|^{p-1}\in L^t(\Omega)\subset L^1(\Omega)$ and $u\in C^2(\overline \Omega)$.  It remains to show that
\begin{align}\label{final}
|u|^{p-1}u_{x_1}\xi_0\in L^1(\Omega) 
\end{align}
Since  $-\Delta(u_{x_1}-\xi_0)=0$ in $\Omega$ in the strong sense, interior elliptic regularity (see, \emph{e.g.}, \cite[Theorem 9.19]{GT98} ) implies that $u_{x_1}-\xi_0\in C^\infty(\Omega)$; therefore $\xi_0\in C^1(\Omega)\cap W^{2,t}(\Omega)$, because $u_{x_1}\in C^1(\overline{\Omega})$. This directly implies \eqref{final} for $p\geq 1$.  If $p\in(0,1)$, then let $\gamma>0$ be such that 
$A:=\{x\in \Omega: r_1-\gamma<|x|<r_1+\gamma\}\subset \Omega $.  Then
\begin{align*}
\int_{\Omega(e_1)} |u|^{p-1} |u_{x_1}\xi_0| &=\int_{\Omega(e_1)\setminus A} |u|^{p-1}|u_{x_1}\xi_0| + \int_A |u|^{p-1} |u_{x_1}\xi_0| \\
&\leq (\min_{\Omega(e_1)\setminus A} |u|)^{p-1} \| u_{x_1} \|_{L^\infty(\Omega)} \int_{\Omega(e_1)} |\xi_0|+\| u_{x_1} \xi_0\|_{L^\infty(A)}\int_{\Omega(e_1)}|u|^{p-1}  <\infty,
\end{align*}
and \eqref{final} also follows.  A majorant for the term $\bar g_\varepsilon(v_{x_1}\rho^{r_2}_\varepsilon-K \bar f_\varepsilon)$ in \eqref{eq:s} can be obtained similarly, and this ends the proof.
\end{proof}

The following lemma shows that an antisymmetric Neumann solution dominates the corresponding Dirichlet solution in a half radial domain $\Omega(e_1)$.

\begin{lemma}\label{yo2}
 Let $t>1$ and $h\in L^t(\Omega)\backslash\{0\}$ be an antisymmetric function in $\Omega$ with respect to $x_1$ and let $w^N:=K h\in W^{2,t}(\Omega)$, that is, $w^N$ is the unique strong solution of 
 \begin{align*}
 -\Delta w^N=h\quad \text{ in }\Omega,\qquad\partial_\nu w^N=0\quad \text{ on }\partial \Omega,\qquad \text{ and }\qquad \int_\Omega w^N =0.
 \end{align*}
Moreover, let $w^D\in W^{2,t}(\Omega)\cap C^1(\overline{\Omega})$ be a strong solution of $-\Delta w^D=h$ in $\Omega$ with $w^D=0$ on $\partial \Omega$.  If
$h\geq0$ and $w^N\geq 0$ in $\Omega(e_1)$ then $w^D<w^N$ in $\Omega(e_1)$.
\end{lemma}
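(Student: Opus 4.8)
The plan is to compare $w^N$ and $w^D$ through their difference, which is harmonic on the half–domain $\Omega(e_1)$, obtain the weak inequality from the maximum principle, and then upgrade it to a strict one via Hopf's boundary point lemma. First I would record the symmetries: the difference of two solutions of the Dirichlet problem is harmonic with zero boundary data, hence $w^D$ is unique; likewise, by Lemma~\ref{l:reg}, $w^N=Kh$ is the unique strong solution of the Neumann problem with zero average. Since $\Omega$ (a ball or an annulus) is invariant under $x_1\mapsto -x_1$ and $h$ is antisymmetric with respect to $x_1$, the functions $x\mapsto -w^D(-x_1,x')$ and $x\mapsto -w^N(-x_1,x')$ solve the same problems, so $w^D$ and $w^N$ are antisymmetric with respect to $x_1$; in particular both vanish on $\Gamma_0:=\Omega\cap\{x_1=0\}$, and $\int_\Omega h=0$ so that $w^N$ is well defined. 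Write $\Gamma_1:=\partial\Omega(e_1)\cap\partial\Omega$ and let $\mathring\Gamma_1:=\partial\Omega\cap\{x_1>0\}$ be its relatively open part, which is nonempty because $\Omega$ is a ball or an annulus.

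Next, set $z:=w^N-w^D$. Since $-\Delta w^N=-\Delta w^D=h$ a.e. in $\Omega$, the function $z\in W^{2,t}(\Omega)$ is harmonic in $\Omega$, hence smooth there. On $\partial\Omega(e_1)=\Gamma_0\cup\Gamma_1$ one has $z=0$ on $\Gamma_0$ (both functions being antisymmetric and continuous across $\{x_1=0\}$) and $z=w^N-0=w^N\geq 0$ on $\Gamma_1$ (by hypothesis, in the trace sense). By the weak maximum principle applied to the harmonic function $z$ on $\Omega(e_1)$, I get $z\geq 0$ in $\Omega(e_1)$, i.e. $w^D\leq w^N$ there.

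It remains to rule out equality at an interior point. Suppose $z(x_0)=0$ for some $x_0\in\Omega(e_1)$; since $z\geq 0$ is harmonic, the strong maximum principle forces $z\equiv 0$ in $\Omega(e_1)$, that is $w^D=w^N$ in $\Omega(e_1)$. Then $w^D$ inherits the sign $w^D\geq 0$ in $\Omega(e_1)$, it vanishes on all of $\partial\Omega(e_1)$ (on $\Gamma_1$ by the Dirichlet condition, on $\Gamma_0$ by antisymmetry), and it solves $-\Delta w^D=h\geq 0$ in $\Omega(e_1)$; moreover $h\not\equiv 0$ in $\Omega(e_1)$, since otherwise $h\equiv 0$ in $\Omega$ by antisymmetry, so $w^D\not\equiv 0$ in $\Omega(e_1)$. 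By the strong maximum principle $w^D>0$ in $\Omega(e_1)$, and by Hopf's boundary point lemma the outer normal derivative of $w^D$ is strictly negative at every point of $\mathring\Gamma_1$ (a smooth piece of $\partial\Omega$, so the interior ball condition holds). On the other hand, near $\mathring\Gamma_1$ one has $w^N=w^D\in C^1$, so the classical one–sided normal derivative of $w^N$ on $\mathring\Gamma_1$ equals that of $w^D$; since $w^N\in W^{2,t}(\Omega)$ satisfies $\partial_\nu w^N=0$ on $\partial\Omega$ in the trace sense (which, where $w^N$ is genuinely $C^1$ up to the boundary, coincides with the classical normal derivative), this derivative must vanish on $\mathring\Gamma_1$, contradicting its strict negativity. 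Hence $z>0$ in $\Omega(e_1)$, that is $w^D<w^N$ in $\Omega(e_1)$.

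\textbf{Main obstacle.} The weak comparison and the use of antisymmetry to pin down the behavior on $\{x_1=0\}$ are routine; the delicate point is the last step, where the Neumann (zero flux) condition satisfied by $w^N$ must be transferred through the hypothetical equality $w^N=w^D$ into a boundary statement that contradicts Hopf's lemma for the Dirichlet function. This is precisely where the regularity $w^D\in C^1(\overline\Omega)$ and the smoothness of $\partial\Omega$ enter, and it is the reason the sign hypotheses $h\geq 0$ and $w^N\geq 0$ on $\Omega(e_1)$ (together with $h\not\equiv 0$) are needed rather than just the comparison structure.
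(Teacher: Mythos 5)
Your proposal is correct and follows essentially the same route as the paper: use uniqueness to deduce antisymmetry of $w^N$ and $w^D$, compare the harmonic difference on $\partial\Omega(e_1)$, apply the maximum principle, and rule out the case $w^D\equiv w^N$ on $\Omega(e_1)$ by a Hopf-lemma contradiction with the Neumann condition. The paper merely organizes it more tersely (establishing $w^D\not\equiv w^N$ first and then invoking the strong maximum principle once), whereas you pass through the weak maximum principle and then the interior-zero case; you also make explicit the step $h\not\equiv 0\Rightarrow w^D\not\equiv 0$ that the paper leaves implicit.
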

\begin{proof}
By uniqueness of solutions we have that $w^D\in W^{2,t}(\Omega)\cap C^1(\overline{\Omega})$ and $w^N:=K h\in W^{2,t}(\Omega)$ are antisymmetric functions in $\Omega$ with respect to $x_1$ and therefore, since $w^N\geq 0$ in $\Omega(e_1)$ by assumption,
\begin{align}\label{bcss}
 w^D-w^N=0\ \ \text{ on }\partial \Omega(e_1)\cap\{x_1=0\}\quad \text{ and }\quad w^D-w^N\leq 0\ \ \text{ on }\partial \Omega(e_1)\cap\{x_1>0\}.
\end{align}
Observe that $w^D\not\equiv w^N$ in $\Omega(e_1)$, because otherwise $w^D=w^N\geq 0$ in $\Omega(e_1)$ with $\partial_\nu w^D=w^D=0$ on $\partial \Omega(e_1)\cap\{x_1>0\}$, which contradicts Hopf's boundary point lemma.  Then, since $-\Delta(w^D-w^N)=0$ in $\Omega$, $w^D\not\equiv w^N$ in $\Omega(e_1)$, and \eqref{bcss} holds, the maximum principle yields that $w^D<w^N$ in $\Omega(e_1)$.
\end{proof}

\begin{proof}[Proof of Theorem \ref{nr:tm:intro}]
Let $\Omega\subset \R^N$ as in \eqref{Om}, $(f,g)$ as in the statement, 
$(u,v):=(K_{p} g , K_{q} f)\in[C^{2,\varepsilon}(\overline{B})]^2$,
and $(\bar f,\bar g):=(p |u|^{p-1} u_{x_1},q |v|^{q-1} v_{x_1})$.  We argue by contradiction. Assume without loss of generality that $f$ is radial.  Using the relations $u=|f|^{\frac{1}{p}}f,$ $v=K_{q} f$, and $g=|v|^{q-1}v,$ we obtain that also $u$, $v$, and $g$ must be radially symmetric. By Lemma \ref{aux:prop}, there is some $t>1$ such that $|u|^{p-1},|v|^{q-1}\in L^t(\Omega)$ and $\partial_{x_1}(|v|^{q-1}v)=q|v|^{q-1}v_{x_1}\in L^t(\Omega)$. Therefore, we have that $-\Delta u = |v|^{q-1}v\in W^{1,t}(\Omega)$ and (interior) elliptic regularity (see for instance \cite[Theorem 9.19]{GT98}) yields that $u\in W_{\rm loc}^{3,t}(\Omega)$. Thus, we may interchange derivatives, and $(-\Delta u)_{x_1}=-\Delta (u_{x_1})$ in $\Omega$. Arguing analogously for $-\Delta v$ we conclude that $(u_{x_1},v_{x_1})\in [W^{2,t}(\Omega)\cap C^{1,\varepsilon}(\overline{\Omega})]^2$ is the unique strong solution of the Dirichlet problem
\begin{align}
-\Delta u_{x_1} &= q |v|^{q-1} v_{x_1},\quad -\Delta v_{x_1} = p |u|^{p-1} u_{x_1} \quad  \text{ in }\Omega\qquad\text{with}\quad u_{x_1}=v_{x_1}=0\quad \text{ on }\partial\Omega,\label{D}
\end{align}
where the boundary conditions follow from the fact that $u$ and $v$ are radially symmetric and $\partial_\nu u=\partial_\nu v=0$ on $\partial\Omega$. 
By Theorem \ref{thm:maintheorem3}, we may assume that $u$ and $v$ are strictly increasing in the radial variable (the other case follows similarly). Then $u_{x_1}$ and $v_{x_1}$ are nonnegative in $\Omega(e_1)$ and,  by Lemmas \ref{yo1} and \ref{yo2},
\begin{align*}
0&\leq \int_{\Omega(e_1)} p|u|^{p-1}u_{x_1}(u_{x_1}-K\bar g)+q|v|^{q-1} v_{x_1}(v_{x_1}-K\bar f)\, dx <0,
 \end{align*}
a contradiction.
\end{proof}

\subsection{Foliated Schwarz symmetry}

In this section we show that least energy solutions are foliated Schwarz symmetric whenever the domain $\Omega$ is a ball or an annulus centered at zero in $\R^N$ in dimension $N\geq 2$. For $N=1$, see Corollary \ref{N1}.  We introduce first some notation.  Let $\Sn=\{x\in\mathbb R^N: |x|=1\}$ be the unit sphere and fix $e\in \Sn$. We consider the halfspace $H(e):=\{x\in \mathbb R^N: x\cdot e>0\}$ and the half domain $\Omega(e):=\{x\in \Omega: x\cdot e>0\}.$  

The composition of a function $w:\overline{\Omega}\to\R$ with a reflection with respect to $\partial H(e)$ is denoted by $w_e$, that is,
\begin{align*}
 w_e: \overline{\Omega}\to\R\qquad \text{ is given by }\qquad w_e(x):=w(x-2(x\cdot e)e).
\end{align*}
 
The \emph{polarization} $u^H$ of $u:\overline{\Omega}\to \R$ with respect to a hyperplane $H=H(e)$ is given by
\begin{align*}
 u^{H}:=\begin{cases} 
      \max\{u,u_e\} & \text{ in }\overline{\Omega(e)},\\
      \min\{u,u_e\} & \text{ in }\overline{\Omega}\ \backslash \, \overline{\Omega(e)}.
 \end{cases}
\end{align*}

Following \cite{SW03}, we say that $u\in C(\overline{\Omega})$ is \textit{foliated Schwarz symmetric with respect to some unit vector $p\in \Sn$} if $u$ is axially symmetric with respect to the axis $\mathbb R p$ and nonincreasing in the polar angle $\theta:= \operatorname{arccos}(\frac{x}{|x|}\cdot p)\in [0,\pi].$   We use the following characterization of foliated Schwarz symmetry given in \cite{SW12}. We remark that these kind of characterizations appeared for the first time in \cite{B03}, see also \cite{W10} for a survey on symmetry via reflection methods.
\begin{lemma}[Particular case of Proposition 3.2 in \cite{SW12}]\label{l:char}
There is $p\in\Sn$ such that $u,v\in C(\overline{\Omega})$ are foliated Schwarz symmetric with respect to $p$ if and only if for every $e\in \Sn$ either 
 \begin{align}\label{char}
u\geq u_e, \ v\geq v_e\quad \text{ in }\Omega(e)\qquad \text{ or }\qquad u\leq u_e,\ v\leq v_e\quad \text{ in }\Omega(e).
 \end{align}
\end{lemma}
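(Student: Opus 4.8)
Since the statement is quoted as \cite[Proposition 3.2]{SW12}, the plan is essentially to verify that \eqref{char} is exactly the hypothesis of that proposition applied to the pair $(u,v)$ and to recall the mechanism of its proof; see also \cite{B03,SW03,BartschWethWillem,W10} for this circle of ideas. Throughout I would use that $u\leq u_e$ in $\Omega(e)$ is equivalent to $u\geq u_{-e}$ in $\Omega(-e)$, because the reflection across $\partial H(e)$ coincides with the one across $\partial H(-e)$ and preserves $\Omega$ (a ball or an annulus centered at $0$).

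For the ``only if'' direction: assume $u,v$ are foliated Schwarz symmetric with respect to $p\in\Sn$ and fix $e\in\Sn$ with $e\cdot p\geq 0$. For $x\in\Omega(e)$ and its reflection $x'=x-2(x\cdot e)e$ one has $|x'|=|x|$ and $x\cdot p-x'\cdot p=2(x\cdot e)(e\cdot p)\geq 0$, so the polar angle $\arccos(\tfrac{x'}{|x'|}\cdot p)$ of $x'$ is at least that of $x$; since $u,v$ are axially symmetric about $\R p$ and nonincreasing in the polar angle, this gives $u(x)\geq u(x')=u_e(x)$ and $v(x)\geq v_e(x)$. When $e\cdot p<0$, applying the same argument to $-e$ (which satisfies $-e\cdot p>0$) produces the reversed inequalities in $\Omega(e)$. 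Hence \eqref{char} holds for every $e\in\Sn$.

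For the ``if'' direction (the substantive one, carried out in \cite{SW12}): I would introduce the closed sets $A:=\{e\in\Sn:\ u\geq u_e\text{ in }\Omega(e)\}$ and $B:=\{e\in\Sn:\ v\geq v_e\text{ in }\Omega(e)\}$ — closedness following from the continuity of $u,v$ on $\overline\Omega$ and of $e\mapsto u_e$ — so that, by the equivalence recalled above, \eqref{char} says exactly that $M\cup(-M)=\Sn$ with $M:=A\cap B$. One then invokes the structural properties of polarization invariance proved in \cite{SW12} (cf.\ also \cite{B03,W10}) to conclude that, apart from the degenerate case $M=\Sn$ (which forces $u,v$ radial), the set $M$ is a closed half-sphere $\{e\in\Sn:\ e\cdot p\geq 0\}$ for some $p\in\Sn$; in particular $\{e:\ e\cdot p\geq 0\}\subseteq A\cap B$. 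Finally, invariance of $u$ under all polarizations $w\mapsto w^{H(e)}$ with $e\cdot p\geq 0$ forces $u$ to be axially symmetric about $\R p$ and nonincreasing in the polar angle, i.e.\ foliated Schwarz symmetric with respect to $p$ — and likewise for $v$ with the \emph{same} $p$.

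The main obstacle is entirely in the ``if'' direction: the geometric step that upgrades $M\cup(-M)=\Sn$ to ``$M$ is a closed half-sphere'' and the implication ``invariance under a half-sphere of polarizations $\Rightarrow$ foliated Schwarz symmetry''. Both are precisely what \cite{SW12} supplies (and, in related settings, \cite{B03,BartschWethWillem,W10}), so the only extra work relative to a one-function statement is the bookkeeping with $M=A\cap B$, which guarantees a single symmetry axis $p$ for both components; if \cite[Proposition 3.2]{SW12} is already stated for finite families of functions, even this is unnecessary.
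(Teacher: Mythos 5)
Your proposal is correct and matches the paper's treatment: the paper gives no proof of Lemma \ref{l:char}, citing it directly as a particular case of \cite[Proposition 3.2]{SW12}, which is exactly the reduction you make (the cited proposition covers finite families of functions, so the pair $(u,v)$ is handled and only the observation that \eqref{char} is its hypothesis is needed), and your explicit check of the ``only if'' direction is sound. The one nitpick is that the set $M$ need only \emph{contain} a closed half-sphere rather than equal one (it can be larger in degenerate cases), but since the containment $\{e\cdot p\geq 0\}\subseteq A\cap B$ is all you use, this does not affect the argument.
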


The main result of this section is the following. For similar results under Dirichlet boundary conditions, we refer to \cite[Theorem 1.3]{BMR13} and \cite[Theorem 1.2]{BMRT15}.

\begin{theo}[Sublinear case]\label{th:sym}
 Let $\Omega$ be either a ball or an annulus centered at the origin of $\R^N$, $N\geq 2$. Let $(p,q)$ satisfy \eqref{sub} and $(f,g)\in X$ be a global minimizer of $\phi$ in $X$ and $(u,v):=(K_{p} g,K_{q} f)$. There is $p\in\Sn$ such that $u$ and $v$ are foliated Schwarz symmetric with respect to $p$.
\end{theo}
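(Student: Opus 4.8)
The plan is to show that the minimizer is invariant under polarization with respect to every hyperplane $H=H(e)$ through the origin (since $\Omega$ is radial, every such reflection maps $\Omega$ onto itself), and then invoke the characterization in Lemma \ref{l:char}. The key point is that polarization preserves the relevant norms and does not increase the dual energy $\phi$. First I would record the elementary properties of polarization: for any $s\ge 1$ and any $w\in L^s(\Omega)$ one has $\|w^H\|_s=\|w\|_s$ and $\int_\Omega w^H=\int_\Omega w$ (since $\{w,w_e\}\mapsto\{w^H,(w^H)_e\}$ just permutes values on reflected pairs of points), so $(f^H,g^H)\in X$ whenever $(f,g)\in X$; and $\|\nabla (Kh)^H\|_2=\|\nabla Kh\|_2$ together with the pointwise rearrangement-type inequality that yields, for the bilinear form $T(f,g)=\int_\Omega gKf=\int_\Omega \nabla(Kf)\cdot\nabla(Kg)$, the estimate
\begin{align}\label{pol:key}
\int_\Omega g^H\,K f^H \;\ge\; \int_\Omega g\,Kf.
\end{align}
Granting \eqref{pol:key}, since the $\Psi$-part of $\phi$ (the $\int |f|^\alpha/\alpha+|g|^\beta/\beta$ term) is invariant under polarization, we get $\phi(f^H,g^H)\le \phi(f,g)$ for every $e\in\Sn$.

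The inequality \eqref{pol:key} is the technical heart. I would prove it via the standard polarization inequality for Dirichlet energies: writing $u=Kf$, $v=Kg$ (the Neumann potentials with zero average), one has $\int_\Omega \nabla u^H\cdot\nabla v^H \ge \int_\Omega \nabla u\cdot\nabla v$, which is the bilinear/two-function version of the Pólya–Szegő inequality for polarization (it follows by expanding $\int|\nabla(u^H\pm v^H)|^2=\int|\nabla(u\pm v)|^2$ and subtracting). However, a subtlety: $u^H$ and $v^H$ are \emph{not} the Neumann potentials of $f^H,g^H$; rather, $Kf^H$ is the zero-average Neumann solution with right-hand side $f^H$. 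To bridge this I would use the variational characterization $\int_\Omega h\,Kh=\max\{2\int_\Omega h\varphi-\int_\Omega|\nabla\varphi|^2:\varphi\in H^1(\Omega),\ \int_\Omega\varphi=0\}$ (and its bilinear polarization-compatible analogue), so that plugging the admissible competitor $\varphi=u^H$ against $f^H$ and using $\int f^H u^H=\int f u$ (polarization preserves the pairing of a function with the polarization-compatible test function — more precisely $\int_\Omega f^H u^H\ge \int_\Omega fu$ by Hardy–Littlewood for polarization) together with $\int|\nabla u^H|^2=\int|\nabla u|^2$ gives \eqref{pol:key}. I expect assembling this chain cleanly — keeping track of which identities are equalities and which are inequalities, and of the zero-average constraint — to be the main obstacle.

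With $\phi(f^H,g^H)\le\phi(f,g)=\inf_X\phi$ in hand, $(f^H,g^H)$ is also a global minimizer, hence a critical point (Lemma \ref{minimum}), hence $(K_pg^H,K_qf^H)$ is a classical solution; in particular both $(f,g)$ and $(f^H,g^H)$ are regular. Now I would run the usual dichotomy argument (as in \cite{BartschWethWillem,SW12}): for each fixed $e$, either equality holds in \eqref{pol:key}, which — by analysing the equality case of the polarization inequality, much as Lemma \ref{eq:lemma} is used in the radial setting — forces $u=u^H$ (and $v=v^H$) or $u=(u^H)_e$ (and likewise for $v$) in $\Omega(e)$, i.e. one of the two alternatives in \eqref{char}; or the inequality is strict, but then $(f^H,g^H)$ would be a minimizer with $\phi(f^H,g^H)<\phi(f,g)$, impossible. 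Either way, for every $e\in\Sn$ one of the two inequalities in \eqref{char} holds for the pair $(u,v)$. By Lemma \ref{l:char} there is $p\in\Sn$ with $u$ and $v$ foliated Schwarz symmetric with respect to $p$, which is the assertion. (The equality-case analysis of the polarization inequality, guaranteeing that equality forces $u\in\{u^H,(u^H)_e\}$ rather than some intermediate configuration, is where I would be most careful; it relies on the strict monotonicity/unique continuation that holds here because $u,v$ are classical nontrivial solutions.)
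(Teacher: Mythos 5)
Your overall strategy (polarization invariance plus the characterization in Lemma \ref{l:char}) is the same as the paper's, but the technical heart of your argument --- the inequality $\int_\Omega g^H K f^H \ge \int_\Omega g\,Kf$ --- is not established by what you propose. The identity $\int_\Omega|\nabla(u^H\pm v^H)|^2=\int_\Omega|\nabla(u\pm v)|^2$ is false: polarization is not additive, so $(u\pm v)^H\neq u^H\pm v^H$, and the single-function invariance $\int|\nabla w^H|^2=\int|\nabla w|^2$ cannot be ``expanded and subtracted''. Worse, the asserted bilinear inequality $\int_\Omega\nabla u^H\cdot\nabla v^H\ge\int_\Omega\nabla u\cdot\nabla v$ is not a standard fact and is false in general: on a reflected pair of points where $v$ is swapped but $u$ is not, the local change of the integrand is $-\bigl(\partial u(x)+\partial u(\sigma x)\bigr)\bigl(\partial v(x)+\partial v(\sigma x)\bigr)$ in the direction normal to the hyperplane, which can easily be negative (a one-dimensional example with $u$ increasing and $v(x)=x-c$, $v(-x)=-x+c$, $c$ large, already does it). Your fallback via the variational characterization only controls the \emph{diagonal} form $\int h\,Kh$; the quantity actually needed is the cross term $\int g^H Kf^H$ with $f\neq g$, and the ``bilinear polarization-compatible analogue'' you invoke is precisely the statement to be proved. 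What makes it true here is a maximum-principle fact about the \emph{Neumann} problem in the half-domain $\Omega(e)$ (equivalently, polarization-compatibility of the Neumann Green function), and this is what the paper supplies: setting $\widetilde v:=K_q(f^H)$, the function $v_e+v-\widetilde v-\widetilde v_e$ is harmonic with zero Neumann data, hence constant, and then $w_1=\widetilde v-v+k/2$, $w_2=\widetilde v-v_e+k/2$ solve mixed Dirichlet--Neumann problems in $\Omega(e)$ with $-\Delta w_i=f^H-f$ (resp.\ $f^H-f_e$) $\ge 0$, so $w_1,w_2\ge0$ by the maximum principle and Hopf's lemma; the difference $\int_\Omega gKf-g^HKf^H$ is then rewritten as $\int_{\Omega(e)}(g_e-g^H)w_1+(g-g^H)w_2\le 0$.

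The second half of your sketch also leans on an unproven ingredient: the ``equality-case analysis of the polarization inequality'' forcing $u\in\{u^H,(u^H)_e\}$ (and simultaneously for $v$, which must be handled jointly since \eqref{char} requires the same alternative for both components). The paper avoids this by running the dichotomy the other way: if \eqref{char} fails for some $e$, then $w_1>0$ (strong maximum principle) together with $0\neq g-g^H\le 0$ (and the analogous statement for $w_2$ or $g_e-g^H$) makes the inequality \emph{strict}, i.e.\ $\int_\Omega fKg<\int_\Omega f^HKg^H$, contradicting minimality since the $L^s$-parts of $\phi$ are preserved. So the dichotomy itself is fine, but both the non-strict inequality and its strictness criterion require the comparison argument in the half-domain (or an equivalent Green-function compatibility statement), which is the idea missing from your proposal.
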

\begin{proof}
Let $(f,g)$ and $(u,v)$ as in the statement and fix a hyperplane $H=H(e)$ for some $|e|=1$.  By Proposition \ref{p:reg}, $(u,v)\in [C^{2,\mu}(\Omega)]^2$ for some $\mu\in(0,1)$, $(u,v)$ solves \eqref{NHS} pointwise, and $(f,g)=(-\Delta v,-\Delta u)\in [C^{\mu}(\overline{\Omega})]^2$; thus $(f^H,g^H)\in [L^\infty(\Omega)]^2$ and $(\widetilde u,\widetilde v):=(K_{p} (g^H),K_{q} (f^H))\in [W^{2,N}({\Omega})\cap C^1(\overline{\Omega})]^2$, by Sobolev embeddings.

Let $V:=v_e+v-\widetilde v - \widetilde v_e$, then using the definition of $f^H$ we have that 
\begin{align*}
-\Delta V =f-f^H-(f^H)_e+f_e=0\quad \text{ in }\Omega, \qquad \partial_\nu V=0\quad \text{ on } \partial \Omega. 
\end{align*}
testing this equation with $V$ and integrating by parts we obtain that
 $V=k$ for some $k\in\R$. Then 
\begin{align}\label{reflections}
v_e+v=\widetilde v_e+\widetilde v+k\qquad \text{ in }\Omega
\end{align}
Let 
\begin{align}\label{Gammas}
 \Gamma_1:=\{x\in\partial{\Omega(e)}\::\: x\cdot e=0\},\qquad \Gamma_2:=\{x\in\partial{\Omega(e)}\::\: x\cdot e>0\},
\end{align}
$w_1:=\widetilde v-v+k/2$, and $w_2:=\widetilde v-v_e+k/2$. Since $v=v^e$ and $\widetilde v=\widetilde v_e$ on $\Gamma_1$ we have that 
$w_1=w_2=0$ on $\Gamma_1$, by \eqref{reflections}, and $\partial_\nu w_1=\partial_\nu w_2=0$ on $\Gamma_2$. Furthermore,
\begin{align*}
-\Delta w_1 =f^H - f\geq 0\quad \text{ in }\Omega(e)\qquad \text{ and }\qquad -\Delta w_2 =f^H - f_e\geq 0\quad \text{ in }\Omega(e),
\end{align*}
which implies by the maximum principle and Hopf's Lemma that $w_1 \geq 0$ and $w_2 \geq  0 $ in $\Omega(e)$. Therefore, using that $\widetilde v_e=v_e+v-\widetilde v-k$ (by \eqref{reflections}) and $g^H_e=g_e+g- g^H$ (by definition of $g^H$),
\begin{align}
\int_\Omega g K f- g^H K f^H\ dx&=\int_\Omega gv - g^H \widetilde v
 =\int_{\Omega(e)} gv + g_ev_e - g^H \widetilde v - (g^H)_e \widetilde v_e\, dx \nonumber\\
 &=\int_{\Omega(e)} gv + g_ev_e - g^H \widetilde v - (g_e+g- g^H)(v_e+v-\widetilde v-k)\, dx\nonumber\\
 &=\int_{\Omega(e)} (g_e-g^H)w_1 + (g-g^H)w_2 + \frac{k}{2}(g_e+g)\, dx \leq 0,\label{ineq}
\end{align}
because $\int_\Omega g = 0$.  

To show that $u$ and $v$ are foliated Schwarz symmetric with respect to the same vector $p\in\Sn$ we use Lemma \ref{l:char} and argue by contradiction. Assume that \eqref{char} does not hold. Then, without loss of generality, there are $e\in \Sn$ and the corresponding halfspace $H=H(e)$ such that
$v\neq v^H$ in $B(e)$ and either $v_e\neq v^H$ in $\Omega(e)$ or $u_e\neq u^H$ in $\Omega(e)$. Since $t\mapsto h(t):=|t|^s t$ is a strictly monotone increasing function in $\R$ for $s>-1$, this implies that
\begin{align}\label{nopol:2}
f = h(v)\neq h(v)^H = f^H  \qquad \text{ and either }\qquad f_e\neq f^H\quad \text{ or }\quad g_e\neq g^H\quad\text{ in }\Omega(e).
\end{align}
As a consequence, $w_1>0$ and $0\neq g-g^H\leq 0$ in $B(e)$ and either $w_2>0$ in $B(e)$ or $0\neq g_e-g^H\leq 0$ in $B(e)$. In any case, \eqref{ineq} implies that 
\begin{align}\label{eq:T(fH)}
\int_\Omega f K g < \int_\Omega f^H K g^H.
\end{align}  But then, since $L^s$ norms are preserved under polarizations, we obtain that $\phi(f,g)>\phi(f^H,g^H)$, a contradiction to the minimality of $(f,g)$.  Therefore \eqref{char} holds and the theorem follows from Lemma \ref{l:char}.
\end{proof}

\begin{theo}[Superlinear case]\label{th:sym2}
 Let $\Omega$ be either a ball or an annulus centered at the origin of $\R^N$, $N\geq 2$. Let $(p,q)$ satisfy \eqref{super} and $(f,g)\in X$ be a minimizer of $\phi$ in $\mathcal{N}$ and $(u,v):=(K_{p} g,K_{q} f)$. There is $p\in\Sn$ such that $u$ and $v$ are foliated Schwarz symmetric with respect to $p$.
\end{theo}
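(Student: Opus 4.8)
The plan is to mimic the proof of Theorem~\ref{th:sym} line by line up to the point where global minimality is invoked, and then to replace that last step by the Nehari projection of Lemma~\ref{lemma:A}. First I would fix a unit vector $e$ and the halfspace $H=H(e)$ and form the polarizations $(f^H,g^H)$. Since $\Omega$ is a ball or an annulus centered at the origin it is symmetric with respect to $\partial H$, so polarization preserves $L^\alpha$- and $L^\beta$-norms and (over a symmetric domain) averages; hence $(f^H,g^H)\in X$ with $\|f^H\|_\alpha=\|f\|_\alpha$ and $\|g^H\|_\beta=\|g\|_\beta$, and by Proposition~\ref{p:reg} also $(f^H,g^H)\in[L^\infty(\Omega)]^2$, so that $\phi(f^H,g^H)$ is well defined. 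Then, exactly as in the proof of Theorem~\ref{th:sym} — introducing $(\widetilde u,\widetilde v):=(K_p(g^H),K_q(f^H))$, showing that $V:=v_e+v-\widetilde v-\widetilde v_e$ is constant by testing its Neumann equation with $V$, defining the auxiliary functions $w_1,w_2$, and invoking the maximum principle and Hopf's lemma on $\Omega(e)$ — I would obtain
\[
\int_\Omega f\,Kg\ \le\ \int_\Omega f^H\,Kg^H,
\]
with strict inequality whenever the polarization is nontrivial, i.e.\ whenever~\eqref{nopol:2} holds; this part uses only $p,q>0$ and is verbatim the sublinear argument.

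The genuinely new step is to derive a contradiction with the minimality of $(f,g)$ on $\mathcal{N}$. Since $(f,g)\in\mathcal{N}\setminus\{(0,0)\}$, the Nehari identity gives $\int_\Omega f\,Kg=\gamma_1\|f\|_\alpha^\alpha+\gamma_2\|g\|_\beta^\beta>0$, hence also $\int_\Omega f^H\,Kg^H>0$, and Lemma~\ref{lemma:A} applies to $(f^H,g^H)$: there is a unique $t=t(f^H,g^H)>0$ with $(t^{\gamma_1}f^H,t^{\gamma_2}g^H)\in\mathcal{N}$, and since (using the preservation of norms) the numerator in~\eqref{eq:projection} equals $\int_\Omega f\,Kg$ while the denominator $\int_\Omega f^H\,Kg^H$ is at least that, one reads off $t\le1$. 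Using the identity $\phi(h,k)=(1-\gamma)\int_\Omega\big(\tfrac{|h|^\alpha}{\alpha}+\tfrac{|k|^\beta}{\beta}\big)$ valid on $\mathcal{N}$ (see~\eqref{eq:phi_in_N}) together with $\|f^H\|_\alpha=\|f\|_\alpha$, $\|g^H\|_\beta=\|g\|_\beta$, I would then compute
\[
\phi\big(t^{\gamma_1}f^H,\,t^{\gamma_2}g^H\big)=t^{\gamma}\,\phi(f,g)\ \le\ \phi(f,g)=\inf_{\mathcal{N}}\phi,
\]
where the inequality holds because $\gamma>0$ and $\phi(f,g)>0$ (strict positivity is immediate from~\eqref{eq:phi_in_N} and $(f,g)\neq(0,0)$). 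As $(t^{\gamma_1}f^H,t^{\gamma_2}g^H)\in\mathcal{N}$, both inequalities must be equalities, forcing $t=1$ and hence $\int_\Omega f^H\,Kg^H=\int_\Omega f\,Kg$.

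To finish, I would argue by contradiction using Lemma~\ref{l:char}. If $u$ and $v$ were not foliated Schwarz symmetric with respect to a common vector, there would be a direction $e$ for which neither alternative in~\eqref{char} holds; then, exactly as in the derivation of~\eqref{nopol:2} (using that $r\mapsto|r|^{s}r$ is strictly increasing for $s>-1$, valid for $s=p-1$ and $s=q-1$), the polarization with respect to $H(e)$ would be nontrivial, so the inequality of the first paragraph would be strict, contradicting the equality just obtained. Hence~\eqref{char} holds for every $e\in\Sn$ and Lemma~\ref{l:char} yields the theorem. I do not anticipate a real obstacle: the reflection and antisymmetry bookkeeping is identical to the sublinear case (Theorem~\ref{th:sym}), and the only point needing a moment's care is the strict inequality $\phi(f,g)>0$, which is precisely what makes $t^{\gamma}\phi(f,g)\le\phi(f,g)$ an equality only for $t=1$.
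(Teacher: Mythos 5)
Your proposal is correct and takes essentially the same route as the paper's proof: the polarization inequality inherited verbatim from the proof of Theorem \ref{th:sym}, the Nehari projection of Lemma \ref{lemma:A} applied to $(f^H,g^H)$, and the identity \eqref{eq:phi_in_N} on $\mathcal{N}$. The only difference is organizational\textemdash the paper assumes failure of the symmetry first, obtains $0<t<1$ and directly the strict inequality $\phi(t^{\gamma_1}f^H,t^{\gamma_2}g^H)<\phi(f,g)$, whereas you first force $t=1$ and the equality $\int_\Omega f^H Kg^H=\int_\Omega fKg$ from minimality and then contradict it\textemdash which is the same argument.
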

\begin{proof} Arguing by contradiction as in the proof of Theorem \ref{th:sym}, we obtain \eqref{eq:T(fH)}. Since $(f,g)\in \mathcal{N}$ and $L^s$-norms are preserved under polarizations we have, by \eqref{page13},
\[
\int_\Omega \gamma_1 |f^H|^\alpha + \gamma_2 |g^H|^\beta\, dx<\int_\Omega f^H Kg^H
\]
and $\int_\Omega f^H K g^H >0$, then by Lemma \ref{lemma:A} there exists $0<t<1$ such that $(t^{\gamma_1} f^H,t^{\gamma_2} g^H)\in \mathcal{N}$. This gives a contradiction, because, by \eqref{page13},
\begin{align}
\inf_\mathcal{N} \phi &\leq \phi(t^{\gamma_1} f^H, t^{\gamma_2} g^H)= t^\gamma (1-\gamma) \int_\Omega \frac{|f^H|^\alpha}{\alpha}+\frac{|g^H|^\beta}{\beta}\, dx\\
				& < (1-\gamma) \int_\Omega \frac{|f|^\alpha}{\alpha}+\frac{|g|^\beta}{\beta}\, dx =\phi(f,g).\qedhere
\end{align}
\end{proof}

\section{Further results}\label{FR:sec}

\subsection{Unique continuation principle for minimizers}

In this section, $\Omega$ is again a general smooth bounded domain. We prove that if $(u,v)$ is a solution of \eqref{NHS} associated to a minimizer of $\phi$, then the nodal sets $u^{-1}(0):=\{x\in \Omega:\ u(0)=0\}$ and $v^{-1}(0):=\{x\in \Omega:\ v(x)=0\}$ have zero Lebesgue measure. We do it by extending the results and techniques from \cite[Section 3]{PW15} to the setting of Hamilitonian elliptic systems and to the dual method framework.  Recall that if $(f,g)\in X$ is a critical point of $\phi$ then $(u,v):=(K_pg,K_q f)\in [C^{2,\varepsilon}(\overline \Omega)]^2$ is a classical solution of \eqref{NHS}. Our main result is the following.

\begin{theo}\label{uc:thm}
 Let $(f,g)\in X$ be a critical point of $\phi$ in $X$ and $(u,v):=(K_{p} g , K_{q} f)$,  then $u^{-1}(0)=v^{-1}(0) $ a.e.  Moreover, if $p$ and $q$ satisfy the sublinear condition  \eqref{sub} and $(f,g)\in X$ is a global minimizer of $\phi$ in $X$, then $|u^{-1}(0)|=|v^{-1}(0)|=0$.
\end{theo}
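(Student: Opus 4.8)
The statement splits into two independent claims, and I would treat them in turn.

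\textbf{The a.e.\ identity $u^{-1}(0)=v^{-1}(0)$.} By Proposition~\ref{p:reg} the pair $(u,v)\in[C^{2,\varepsilon}(\overline\Omega)]^2$ is a classical solution of \eqref{NHS}, and since $\Omega$ is bounded, $u,v\in W^{2,1}(\Omega)$. I would use the classical fact that a $W^{1,1}$ function has vanishing gradient a.e.\ on each of its level sets (see \cite[Section~3]{PW15}), applied first to $u$ — giving $\nabla u=0$ a.e.\ on $u^{-1}(0)$ — and then to each $\partial_{x_i}u\in W^{1,1}(\Omega)$ — giving $D^2u=0$ a.e.\ on $\{\nabla u=0\}$. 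Since $u^{-1}(0)\subseteq\{\nabla u=0\}$ up to a null set, this yields $\Delta u=0$ a.e.\ on $u^{-1}(0)$; plugging into $-\Delta u=|v|^{q-1}v$ forces $v=0$ a.e.\ on $u^{-1}(0)$, i.e.\ $u^{-1}(0)\subseteq v^{-1}(0)$ modulo a null set. Swapping $u$ and $v$ gives the reverse inclusion and hence the claim.

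\textbf{Zero measure of the nodal set in the sublinear case.} Assume \eqref{sub} and that $(f,g)$ minimizes $\phi$ on $X$; by Lemma~\ref{minimum} it is a critical point, so $(u,v)$ is a classical solution with $f=|u|^{p-1}u$, $g=|v|^{q-1}v$, and $\phi(f,g)=\inf_X\phi$. Put $E:=u^{-1}(0)$ (relatively closed, hence measurable); by the first step $v=0$ a.e.\ on $E$, so $f=g=0$ a.e.\ on $E$. Arguing by contradiction, suppose $|E|>0$. I would split $E$ into disjoint measurable pieces $E_1,E_2$ of equal positive measure (e.g.\ using the continuity of $r\mapsto|E\cap B_r(x_0)|$ at a density point $x_0$ of $E$) and set $\varphi:=\chi_{E_1}-\chi_{E_2}\in L^\infty(\Omega)$, so that $\int_\Omega\varphi=0$, $\varphi\not\equiv0$, $\supp\varphi\subseteq E$. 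With $\gamma_1:=\beta/(\alpha+\beta)$, $\gamma_2:=\alpha/(\alpha+\beta)$, $\gamma:=\alpha\beta/(\alpha+\beta)$ (so $\gamma_1\alpha=\gamma_2\beta=\gamma$, $\gamma_1+\gamma_2=1$), the pair $(f+s^{\gamma_1}\varphi,\,g+s^{\gamma_2}\varphi)$ lies in $X$ for every $s>0$. Since $f=g=0$ a.e.\ on $E$ while $\varphi$ vanishes a.e.\ off $E$, the power terms separate, $\int_\Omega|f+s^{\gamma_1}\varphi|^\alpha=\int_\Omega|f|^\alpha+s^{\gamma}\int_E|\varphi|^\alpha$ and likewise for $g$; moreover, by \eqref{ibyp} and the fact that $Kg$ (resp.\ $Kf$) differs from $u=K_pg$ (resp.\ $v=K_qf$) only by an additive constant (see \eqref{Ks:def}), together with $\int_\Omega\varphi=0$ and $u=v=0$ a.e.\ on $\supp\varphi$, one gets $\int_\Omega gK\varphi=0$ and $\int_\Omega\varphi Kf=0$. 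Expanding $\int_\Omega(g+s^{\gamma_2}\varphi)K(f+s^{\gamma_1}\varphi)$ and using $\gamma_1+\gamma_2=1$ therefore leaves only $\int_\Omega gKf+s\int_\Omega\varphi K\varphi$, so, exactly as in \eqref{phi:neg},
\[
\phi(f+s^{\gamma_1}\varphi,\,g+s^{\gamma_2}\varphi)=\phi(f,g)+s^{\gamma}\Big(\tfrac1\alpha\int_E|\varphi|^\alpha+\tfrac1\beta\int_E|\varphi|^\beta\Big)-s\int_\Omega\varphi\,K\varphi .
\]
Here $\int_\Omega\varphi K\varphi=\int_\Omega|\nabla K\varphi|^2>0$ because $\varphi\not\equiv0$ has zero average, and $pq<1$ is equivalent to $\frac1{p+1}+\frac1{q+1}>1$, i.e.\ to $\frac1\gamma=\frac1\alpha+\frac1\beta<1$, i.e.\ to $\gamma>1$. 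Hence the right-hand side is strictly below $\phi(f,g)$ for all small $s>0$, contradicting $\phi(f,g)=\inf_X\phi$; thus $|E|=0$, and by the first step $|v^{-1}(0)|=0$ as well.

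\textbf{Where the difficulty lies.} Everything is elementary once the right competitor is chosen; the one point needing care is the \emph{anisotropic} rescaling $(s^{\gamma_1},s^{\gamma_2})$, which makes the two super-quadratic power terms homogeneous of the common degree $\gamma$ while pushing the bilinear coupling to the strictly lower order $s^1$ precisely under $pq<1$, so that the negative coupling term dominates for small $s$. The vanishing of the would-be $O(s)$ contributions from the fixed solution hinges on $u=v=0$ a.e.\ on $E$, which is exactly the output of the first step; this is what allows the dual formulation to bypass the ODE non-oscillation arguments used in the scalar case of \cite{PW15}, since one may simply test $\phi$ against admissible directions in the Banach space $X$.
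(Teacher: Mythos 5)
Your proof is correct, and for the main (sublinear) part it takes a genuinely different and in fact more economical route than the paper. The first step is the same argument the paper gives (it invokes \cite[Lemma 7.7]{GT98} to get $\Delta u=0$ a.e.\ on $u^{-1}(0)$ and then reads off $v=0$ a.e.\ there from the equation). For the second step, the paper does not perturb inside the nodal set: it first proves a decay estimate at any density-one point $x_0$ of $u^{-1}(0)\cap v^{-1}(0)$, namely $|u|^{p+1}+|v|^{q+1}=o(|x-x_0|^{\gamma})$ with $\gamma$ as in \eqref{eq:gamma} (Proposition \ref{p:decay}, via a blow-up/compactness argument in the spirit of \cite{PW15}), and then builds competitors $(f+\varphi_r,g+\psi_r)$ from rescaled smooth bumps supported in small balls around $x_0$ (Proposition \ref{p:pd}); there the cross terms such as $\int|f+\varphi_r|^\alpha-|f|^\alpha-|\varphi_r|^\alpha$ and $\int \psi_r Kf$ do not vanish and must be shown to be $o(r^{\gamma+N})$ using the decay, while the leading term is $\le c\,r^{\gamma+N}<0$. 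Your choice of $\varphi=\chi_{E_1}-\chi_{E_2}$ supported \emph{inside} $E=u^{-1}(0)$, where $f$, $g$, $u$, $v$ all vanish (a.e.), makes every cross term vanish exactly — the power terms separate and $\int \varphi Kf=\int gK\varphi=0$ because $K_qf$ and $K_pg$ differ from $v$ and $u$ by constants and $\int\varphi=0$ — so the anisotropic scaling $(s^{\gamma_1},s^{\gamma_2})$ immediately yields $\phi(f+s^{\gamma_1}\varphi,g+s^{\gamma_2}\varphi)=\phi(f,g)+O(s^{\gamma})-s\int|\nabla K\varphi|^2<\phi(f,g)$ for small $s$, since $\gamma>1$ exactly when $pq<1$. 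This bypasses Propositions \ref{p:decay} and \ref{p:pd} entirely; what your shortcut does not provide is the quantitative vanishing rate at density points, which the paper reuses elsewhere (e.g.\ the infinite-order-zero statement in the superlinear case, Remark \ref{rem:superlinear_UCP}), but for Theorem \ref{uc:thm} itself your argument is complete.
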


To show Theorem \ref{uc:thm} we rely on the following preliminary results.

\begin{lemma}[Lemma 3.1 in \cite{PW15}]\label{PW:lemma}
 Let $\gamma>0$ and $f:(0,\infty)\to [0,\infty)$ be such that $f$ is bounded in  $[\varepsilon,\infty)$ for every $\varepsilon>0$ and $\lim_{r\to 0}r^\gamma f(r) = 0$. Then for every $r>0$ there is $s>0$ such that $f(s)\geq f(r)$ and $f(t)\leq 2^\gamma f(s)$ for all $t\in [\frac{s}{2},2s]$.
\end{lemma}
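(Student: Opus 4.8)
The plan is to construct the point $s$ by an iterative doubling procedure starting from $r$. First I would observe that we may assume $f(r)>0$ (otherwise take $s=r$: then $f(s)=0\ge f(r)$ trivially and $f(t)\le 2^\gamma f(s)$ is the false statement $f(t)\le 0$, so actually we need $f(r)>0$; if $f\equiv 0$ near and after $r$ the statement is vacuous in a trivial way, but the clean case to handle is $f(r)>0$, so I would simply note that if $f(r)=0$ any $s$ with $f$ bounded works after rescaling — more carefully, the statement is only interesting when $f(r)>0$ and I would dispatch the degenerate case separately). Assume then $f(r)>0$. Define $M_0:=f(r)$ and $s_0:=r$. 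If $f(t)\le 2^\gamma f(s_0)$ for all $t\in[s_0/2,2s_0]$ we are done with $s=s_0$. Otherwise there is $s_1\in[s_0/2,2s_0]$ with $f(s_1)>2^\gamma f(s_0)\ge f(s_0)\ge f(r)$; in particular $s_1\ge s_0/2$.

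Next I would iterate: given $s_k$ with $f(s_k)>2^{\gamma k}f(r)$ (and $f(s_k)\ge f(r)$), if the conclusion $f(t)\le 2^\gamma f(s_k)$ holds on $[s_k/2,2s_k]$ we stop and output $s=s_k$; note $f(s_k)\ge f(r)$ holds along the way, giving the first required inequality. Otherwise pick $s_{k+1}\in[s_k/2,2s_k]$ with $f(s_{k+1})>2^\gamma f(s_k)>2^{\gamma(k+1)}f(r)$. The key point is that this process must terminate. Suppose it does not; then we get an infinite sequence $(s_k)$ with $f(s_k)>2^{\gamma k}f(r)\to\infty$. Since $s_{k+1}\ge s_k/2$, we have $s_k\ge r\,2^{-k}$, so $s_k^\gamma f(s_k)> r^\gamma 2^{-\gamma k}\cdot 2^{\gamma k}f(r)=r^\gamma f(r)>0$ for all $k$. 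On the other hand $f(s_k)\to\infty$, so the points $s_k$ cannot stay bounded away from $0$ (because $f$ is bounded on $[\varepsilon,\infty)$ for every $\varepsilon>0$); hence a subsequence $s_{k_j}\to 0$. But then $\lim_{r\to 0}r^\gamma f(r)=0$ forces $s_{k_j}^\gamma f(s_{k_j})\to 0$, contradicting the uniform lower bound $s_k^\gamma f(s_k)>r^\gamma f(r)>0$. Therefore the doubling process stops after finitely many steps, and the terminal $s=s_k$ satisfies both $f(s)\ge f(r)$ and $f(t)\le 2^\gamma f(s)$ for all $t\in[s/2,2s]$.

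The main obstacle — and the only subtle point — is verifying that the iteration terminates; everything else is bookkeeping. The mechanism is the tension between the exponential growth $f(s_k)\gtrsim 2^{\gamma k}$ forced by the failure of the conclusion and the two constraints on $f$: it is bounded on $[\varepsilon,\infty)$, which pushes the $s_k$ toward $0$, and it satisfies $r^\gamma f(r)\to 0$ as $r\to 0$, which caps the product $s_k^\gamma f(s_k)$; the geometric lower bound $s_k\ge r2^{-k}$ coming from $s_{k+1}\ge s_k/2$ is exactly what makes $s_k^\gamma f(s_k)$ bounded below, producing the contradiction. I would present this termination argument as the heart of the proof and keep the rest brief.
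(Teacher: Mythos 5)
Your iterative doubling construction and the termination argument are correct and match the standard proof of this lemma in the cited reference \cite{PW15}; the paper under review itself gives no proof, only the citation. One small point: setting $s=r$ does not actually dispose of the degenerate case $f(r)=0$, since the required bound $f(t)\le 2^\gamma f(s)=0$ on $[s/2,2s]$ need not hold; the clean fix is to observe that either $f\equiv 0$ (then every $s$ works trivially) or there exists $r'$ with $f(r')>0$, and running your iteration from $r'$ yields an $s$ with $f(s)\ge f(r')>0=f(r)$ together with the doubling bound, which settles that case as well.
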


We now characterize the decay of any solution $(u,v)$ of \eqref{NHS} close to a common zero. 
\begin{prop}\label{p:decay}
 Let $(u,v)\in [C^{2}(\overline{\Omega})]^2$ be a solution of \eqref{NHS} with $p,q>0$, $pq<1$. If $x_0\in\Omega$ is a point of density one for the set $u^{-1}(0)\cap v^{-1}(0)$ then $|u(x)|^{p+1}+|v(x)|^{q+1}=o(|x-x_0|^{\gamma})$, where
 \begin{equation}\label{eq:gamma}
 \gamma=\frac{2(p+1)(q+1)}{1-pq}=2\left(1-\frac{1}{\alpha}-\frac{1}{\beta} \right)^{-1}.
 \end{equation}
In other words,
\begin{align*}
\text{ if } \qquad \lim_{r\to 0} \frac{|u^{-1}(0)\cap v^{-1}(0)\cap B_r(x_0)| }{|B_r(x_0)|}=1 \qquad \text{ then }\qquad  \lim_{x\to x_0} \frac{|u(x)|^{p+1}+|v(x)|^{q+1}}{|x-x_0|^{\gamma}}=0.
\end{align*}
\end{prop}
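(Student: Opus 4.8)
The plan is to adapt the decay estimate from \cite[Section 3]{PW15} for the single equation $-\Delta u = |u|^{p-1}u$ to the system setting, working with both components simultaneously and tracking the coupled scaling. First I would set up an elliptic estimate near $x_0$: since $x_0$ is a point of density one for $Z:=u^{-1}(0)\cap v^{-1}(0)$, and $(u,v)$ solves \eqref{NHS}, on any ball $B_{2r}(x_0)\subset\Omega$ I can write $-\Delta u = |v|^{q-1}v$ and $-\Delta v=|u|^{p-1}u$ and use interior $L^s$-estimates together with the fact that $u$ and $v$ (and $\nabla u$, $\nabla v$) vanish on a set whose relative measure in $B_r(x_0)$ tends to $1$. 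Concretely, I would define the quantities
\[
m_u(r):=\sup_{B_r(x_0)}|u|,\qquad m_v(r):=\sup_{B_r(x_0)}|v|,
\]
and use the equation plus boundary/interior regularity on annuli to relate $m_u(r)$ and $m_v(r)$ at scales $r$ and $2r$ to $\|\,|v|^{q-1}v\,\|$ and $\|\,|u|^{p-1}u\,\|$ on slightly larger balls, exploiting that the mean value of $u$ and of $\nabla u$ over $B_r(x_0)$ is $o(1)$-small relative to the sup because of the density-one hypothesis (this is where a Poincaré-type inequality on the "good" part of the ball enters, exactly as in \cite{PW15}).

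The core of the argument is a bootstrap/iteration on dyadic scales. From the two equations one gets, schematically, $m_u(r)\lesssim r^2\, m_v(2r)^q + (\text{lower order})$ and $m_v(r)\lesssim r^2\, m_u(2r)^p+(\text{lower order})$; substituting one into the other yields $m_u(r)\lesssim r^{2(1+q)} m_u(4r)^{pq}$, and since $pq<1$ this is a contraction in the exponent. Iterating this inequality along $r_k=4^{-k}r_0$ and summing the geometric-type series in the exponents $\sum_k (pq)^k$ gives a bound $m_u(r)=O(r^{\sigma})$ for $\sigma$ determined by $2(1+q)\sum_{k\ge0}(pq)^k \cdot(\text{weights})$; carrying this out carefully produces the exponent
\[
\gamma=\frac{2(p+1)(q+1)}{1-pq}=2\Big(1-\tfrac1\alpha-\tfrac1\beta\Big)^{-1},
\]
and symmetrically for $m_v$, which gives $|u(x)|^{p+1}+|v(x)|^{q+1}=O(|x-x_0|^{\gamma})$. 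To upgrade the $O$ to $o$ (the statement claims $o(|x-x_0|^\gamma)$), I would invoke the argument of \cite[Lemma 3.1]{PW15} quoted above as Lemma \ref{PW:lemma}: if the decay were exactly of order $\gamma$ and not $o(r^\gamma)$, one can choose a sequence of radii along which the relevant quantity does not decay faster, then rescale $u$ and $v$ by the natural parabolic-type scaling $u_k(y)=\lambda_k^{-a}u(x_0+\lambda_k y)$, $v_k(y)=\lambda_k^{-b}v(x_0+\lambda_k y)$ with exponents $a,b$ chosen so that the rescaled pair again solves \eqref{NHS}; the density-one hypothesis forces the limit pair to vanish on a full-measure set, hence to be zero by unique continuation for the limiting (harmonic) system, contradicting the non-decay along the chosen sequence.

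The main obstacle I anticipate is making the "lower order" terms genuinely lower order: unlike the scalar case, each Laplacian estimate mixes the two components, so at each dyadic step one must control not just $m_u,m_v$ but also the averages of $u,v,\nabla u,\nabla v$ on $B_r(x_0)$, and show these are $o(m_u(2r))$ resp. $o(m_v(2r))$ using only density one of $Z$ (and not, say, density one of $u^{-1}(0)$ alone). This requires a Poincaré inequality on $B_r(x_0)\setminus Z$ with a constant that does not blow up, which is available precisely because $|B_r(x_0)\setminus Z|/|B_r(x_0)|\to0$; keeping the two coupled iterations synchronized so that the exponent that emerges is exactly $\gamma$ (and not merely some positive power) is the delicate bookkeeping step. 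Everything else — interior elliptic regularity, the dyadic summation, and the rescaling/blow-up for the $o$ improvement — follows the template of \cite{PW15} with the obvious modifications for a two-component system.
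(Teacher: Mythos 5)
Your route is viable, but it is genuinely different from the paper's in its main quantitative step, and one of your justifications is off. The paper does not run a dyadic iteration at all: it sets $f(r):=r^{-\gamma}\sup_{|x|=r}\bigl(|u(x)|^{p+1}+|v(x)|^{q+1}\bigr)$ and proves boundedness of $f$ directly by the doubling-type Lemma \ref{PW:lemma} plus a blow-up on the annulus $B_2\setminus B_{1/2}$: the rescaled pair \eqref{e:1} is uniformly bounded thanks to the doubling property, solves the system \eqref{e:2} with coefficient $A_n=f(s_n)^{\frac{pq-1}{(p+1)(q+1)}}\to 0$ (this is the only place $pq<1$ enters Step 1), and the density-one hypothesis forces any $C^1_{loc}$ limit to vanish a.e., hence identically, contradicting the normalization at a point of the unit sphere. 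The second step (upgrading to $f(r)\to 0$) is exactly your rescaling argument with the natural scaling. By contrast, you first extract the rate $O(r^{\gamma})$ from the coupled recursion $m_u(r)\lesssim r^2 m_v(2r)^q+o(1)\,m_u(2r)$, $m_v(r)\lesssim r^2 m_u(2r)^p+o(1)\,m_v(2r)$; this can be made to work (the $o(1)$ coefficients come simply from the mean value property of the harmonic part and the fact that $u,v$ vanish on a set of relative measure $1-o(1)$ — no Poincar\'e inequality or gradient averages are needed — and the coupled induction closes with asymmetric constants precisely because $pq<1$), and once the $O$-bound is in hand the doubling lemma becomes superfluous for the $o$-upgrade, since uniform bounds for the rescaled pair on fixed balls follow directly. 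So your approach trades the doubling lemma for careful bookkeeping in a two-scale iteration; the paper's approach avoids all bookkeeping at the price of invoking Lemma \ref{PW:lemma}. Note also that your attribution is inaccurate: \cite{PW15} (and this paper) prove the decay via the doubling lemma and blow-up, not via a Poincar\'e-type iteration.

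One step of yours needs repair as stated. You conclude that the blow-up limit vanishes ``by unique continuation for the limiting (harmonic) system.'' With the scaling you chose (so that the rescaled pair again solves \eqref{NHS}), the limit system is the original nonlinear system, not a harmonic one, and appealing to a unique continuation principle for it would be unjustified — indeed, unique continuation for the sublinear system is precisely what is open and what Proposition \ref{p:decay} is designed to circumvent. Fortunately no such principle is needed: the rescaled zero sets have relative measure tending to $1$, so the (continuous) $C^1_{loc}$ limit vanishes on a set of full measure and hence identically, which already contradicts the nondegeneracy at a point of the unit sphere; this is exactly the argument in \eqref{e:6} of the paper. With that correction, and with the coupled-iteration bookkeeping actually carried out, your plan gives a complete proof.
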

\begin{proof}
Without loss of generality we assume that $x_0=0$ and we set $u\equiv v\equiv 0$ in $\R^N\backslash \Omega$. Let 
\begin{align*}
f:(0,\infty)\to [0,\infty)\qquad \text{ be given by }\qquad f(r):=r^{-\gamma}\sup_{|x|=r}(|u(x)|^{p+1}+|v(x)|^{q+1}).
\end{align*}

\noindent \emph{Step 1.} We show first that $f\in L^\infty(0,\infty)$. Indeed, assume by contradiction that there is a sequence $(r_n)_{n\in\N}$ such that $f(r_n)\to \infty$ as $n\to\infty$. Then, by Lemma \ref{PW:lemma}, there is $(s_n)_{n\in\N}$ such that $f(s_n)\geq f(r_n)$ and $f(t)\leq 2^{\gamma}f(s_n)$ for all $t\in[\frac{s_n}{2},2s_n]$ and $n\in\N$.  Then $f(s_n)\to \infty$ as $n\to\infty$ and $s_n\to 0$, by the definition of $f$.  Working if necessary with a subsequence, we may assume that $B_{2s_n}\subset \Omega$ for all $n\in\N$. Set $\Omega_0:= B_2\backslash B_{\frac{1}{2}}$ and let $u_n,v_n:\Omega_0\to \R$ be given by
\begin{align}\label{e:1}
u_n(x)=\frac{u(s_nx)}{f(s_n)^\frac{1}{p+1}s_n^{\frac{\gamma}{p+1}}}\qquad \text{ and }\qquad v_n(x)=\frac{v(s_nx)}{f(s_n)^\frac{1}{q+1}s_n^{\frac{\gamma}{q+1}}}.
\end{align}
By \eqref{NHS}, and since $2-\frac{\gamma}{p+1}+q\frac{\gamma}{q+1}=2-\frac{\gamma}{q+1}+p\frac{\gamma}{p+1}=0$  we have that 
\begin{equation}\label{e:2}
-\Delta u_n = A_n|v_n|^{q-1}v_n\quad \text{ and }\quad -\Delta v_n= A_n|u_n|^{p-1}u_n\qquad \text{  in } \Omega_0,
\end{equation}
where
\begin{equation}\label{e:3}
A_n=f(s_n)^{-\frac{2}{\gamma}}=f(s_n)^\frac{pq-1}{(p+1)(q+1)}\to 0\qquad  \text{ as } n\to \infty.
\end{equation}
Observe next that
\begin{equation}\label{e:4}
|u_n(x)|^{p+1}+|v_n(x)|^{q+1} = |x|^{\gamma}\frac{|u(s_n x)|^{p+1}+|v(s_n x)|^{q+1}}{(|x|s_n)^{\gamma}f(s_n)}\leq 2^{\gamma}\frac{f(s_n |x|)}{f(s_n)} \leq 4^{\gamma}
\end{equation}
for all $x\in \Omega_0$ and $n\in \N.$ 
By \eqref{e:2}, \eqref{e:3}, \eqref{e:4}, and interior elliptic regularity, there are subsequences $u_n\to u^*$, $v_n\to v^*$ in $C^1_{loc}(\Omega_0)$. Furthermore, by definition of $f$ and the regularity of $u,v$, there is $(x_n)_{n\in\N}\subset \mathbb S^{N-1}$ such that $|u_n(x_n)|^{p+1}+|v_n(x_n)|^{q+1}=1$ for all $n\in\N$. Thus, up to a subsequence, 
 $x_n\to x^*\in \mathbb S^{N-1}$ with 
\begin{align}\label{e:5}
|u^*(x^*)|^{p+1}+|v^*(x^*)|^{q+1}=1.
\end{align}

 However, since $0$ is a point of density one for $u^{-1}(0)\cap v^{-1}(0)$, we have that
\begin{align}\label{e:6}
 \frac{|\{x\in \Omega_0 \::\: u_n(x)\neq 0\}|}{|B_2|}\leq \frac{|\{x\in B_{2s_n} \::\: u(x)\neq 0\}|}{|B_{2s_n}|}\to 0\quad \text{ as } n\to\infty,
\end{align}
which implies that $u^*\equiv 0$ in $\Omega_0$. Analogously, we obtain that $v^*\equiv 0$ in $\Omega_0$. This contradicts \eqref{e:5} and therefore $f\in L^\infty(0,\infty)$.  

\smallbreak

\noindent \textit{Step 2.} Now, it suffices to show that $\lim_{r\to 0}f(r)=0$. We argue again by contradiction: assume there is a sequence $r_n\to 0$ as $n\to\infty$ such that $f(r_n)\geq \varepsilon$ for all $n\in\N$ and for some $\varepsilon>0$. Passing if necessary to a subsequence we have that $B_{2r_n}\subset \Omega$.  Since $f$ is bounded (by Step 1), the rescaled functions $\widetilde u_n, \widetilde v_n:\Omega_0\to \R$ given by 
\begin{equation}\label{eq:tilde_un}
\widetilde u_n(x)=\frac{u(r_nx)}{r_n^{\frac{\gamma}{p+1}}} \qquad \text{ and  } \qquad \widetilde v_n(x)=\frac{v(r_nx)}{r_n^{\frac{\gamma}{q+1}}}
\end{equation}
are uniformly bounded. Moreover, 
\begin{equation}\label{eq:system_tilde_un}
-\Delta \widetilde u_n = |\widetilde v_n|^{q-1}\widetilde v_n\qquad \text{ and }\qquad -\Delta \widetilde v_n =|\widetilde u_n|^{p-1}\widetilde u_n\qquad \text{ in }\quad  \Omega_0,
\end{equation}
by \eqref{NHS}, and there is $(\widetilde x_n)_{n\in\N}\subset \mathbb S^{N-1}$ such that $|\widetilde u_n(x_n)|^{p+1}+|\widetilde v_n(x_n)|^{q+1}=f(r_n)\geq \varepsilon$ for all $n\in\N$. But arguing as before, we obtain subsequences $\widetilde u_n \to \widetilde u$, $\widetilde v_n \to \widetilde v$ in $C^1_{loc}(\Omega_0)$, and $\widetilde x_n\to\widetilde x\in \mathbb S^{N-1}$ such that $|\widetilde u(\widetilde x)|+|\widetilde v(\widetilde x)|\geq \varepsilon$. But \eqref{e:3} with $\widetilde u_n$ instead of $u_n$ yields that $\widetilde u\equiv 0$, and we have analogously that $\widetilde v\equiv 0$, a contradiction. Therefore $\lim_{r\to 0}f(r)=0$, and the proof is finished.
\end{proof}

We now use Proposition \ref{p:decay} to construct directions along which the energy $\phi$ decreases.
\begin{prop}\label{p:pd}
 Let $(f,g)\in X$ be a critical point of $\phi$, $(u,v):=(K_{p} g , K_{q} f)$, and assume that \eqref{sub} holds. If $|u^{-1}(0)\cap v^{-1}(0)|>0$, then there is $(\varphi,\psi)\in X$ such that $\phi(f,g)>\phi(f+\varphi,g+\psi)$.
\end{prop}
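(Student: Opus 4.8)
The plan is to use the hypothesis $|u^{-1}(0)\cap v^{-1}(0)|>0$ to build a perturbation $(\varphi,\psi)\in X$ along which $\phi$ strictly decreases, exploiting the same mechanism as in the proof of Lemma \ref{minimum} (cf. \eqref{phi:neg}): a Nehari-type rescaling that makes the coupling term $-\int_\Omega\psi K\varphi$ the dominant, negative contribution, while the remainders coming from the power terms are of strictly higher order. First I would write $Z:=u^{-1}(0)\cap v^{-1}(0)$ and pick, via the Lebesgue density theorem, a point $x_0\in Z$ of density one, fixing $r_0>0$ with $\overline{B_{r_0}(x_0)}\subset\Omega$. Since $(u,v)\in[C^2(\overline\Omega)]^2$ solves \eqref{NHS} with $pq<1$ (recall $(u,v):=(K_pg,K_qf)$ and, by Lemma \ref{l:ss}, $f=|u|^{p-1}u$, $g=|v|^{q-1}v$), Proposition \ref{p:decay} applies at $x_0$ and gives $|u(x)|^{p+1}+|v(x)|^{q+1}=o(|x-x_0|^{\vartheta})$ as $x\to x_0$, with $\vartheta:=2(1-\tfrac1\alpha-\tfrac1\beta)^{-1}$ as in \eqref{eq:gamma}. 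Together with the density property, this yields the two sources of smallness I would exploit: as $r\to0$, $|B_r(x_0)\setminus Z|=o(r^N)$, while $\|u\|_{L^\infty(B_r(x_0))}$ and $\|v\|_{L^\infty(B_r(x_0))}$ are $o(r^{\vartheta/(p+1)})$ and $o(r^{\vartheta/(q+1)})$.

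Next, for $r\in(0,r_0)$ I would fix a nonzero $\Phi\in C^\infty_c(B_1(0))$ with $\int\Phi=0$ and set $\phi_0(x):=\Phi((x-x_0)/r)$, so $\phi_0\in X^\alpha\cap X^\beta$ is supported in $B_r(x_0)$ and $\int_\Omega\phi_0K\phi_0=\|\nabla K\phi_0\|_{L^2(\Omega)}^2>0$. With $\gamma_1,\gamma_2,\gamma$ as in \eqref{gammas} (so $\gamma_1\alpha=\gamma_2\beta=\gamma$ and $\gamma>1$, because $pq<1$), put $\varphi:=s^{\gamma_1}\phi_0$ and $\psi:=s^{\gamma_2}\phi_0$ for $s>0$; then $(\varphi,\psi)\in X$. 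Using $\int_\Omega\varphi=\int_\Omega\psi=0$, the self-adjointness of $K$, and the criticality $\phi'(f,g)=0$ — which makes all first-order terms cancel — a direct computation gives
\[
\phi(f+\varphi,g+\psi)-\phi(f,g)=E_1+E_2-s\,\|\nabla K\phi_0\|_{L^2(\Omega)}^2,\qquad E_1:=\int_\Omega\Big(\tfrac{|f+s^{\gamma_1}\phi_0|^\alpha-|f|^\alpha}{\alpha}-s^{\gamma_1}|f|^{\alpha-2}f\phi_0\Big)dx,
\]
and $E_2$ the analogous $\beta$-remainder, both nonnegative by convexity of $t\mapsto|t|^\alpha/\alpha$ and $t\mapsto|t|^\beta/\beta$. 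So everything reduces to showing the right-hand side is negative for a good choice of $r$ and $s$.

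For that I would estimate $E_1,E_2$. On $Z\cap B_r(x_0)$, where $f=g=0$ a.e., the integrands reduce to $\tfrac{s^\gamma}{\alpha}|\phi_0|^\alpha$ and $\tfrac{s^\gamma}{\beta}|\phi_0|^\beta$, contributing $O(s^\gamma)$. On $B_r(x_0)\setminus Z$ I would split according to whether $|f|$ lies below or above $2|\varphi|$ and use the elementary bound $\big|\tfrac{|a+b|^t-|a|^t}{t}-|a|^{t-2}ab\big|\le C_t\min\{|b|^t,|a|^{t-2}b^2\}$ for $t>1$; combining this with the identities $|f|^{\alpha-1}=|u|$, $|f|^{\alpha-2}=|u|^{1-p}$ (and their $g$-analogues) and the decay/density estimates above, one controls the remaining part of $E_1+E_2$ by a sum of terms in $s^\gamma$, $\|u\|_{L^\infty(B_r)}$, $\|v\|_{L^\infty(B_r)}$ and $|B_r\setminus Z|$. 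Then, coupling $s=s(r)\to0$ to $r$ at the rate dictated by the identity $\vartheta-2=\tfrac2{\gamma-1}$ — exactly the balancing point — I would check that $-s\|\nabla K\phi_0\|_{L^2(\Omega)}^2$ dominates and the whole expression is $<0$ once $r$ is small; the resulting $(\varphi,\psi)=(s^{\gamma_1}\phi_0,s^{\gamma_2}\phi_0)\in X$ is the direction sought.

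The hard part will be this last balancing. The contribution over $Z$ is of order $s^\gamma$ with $\gamma>1$, hence favorable for exactly the reason that $\phi(t^{\gamma_1}\varphi,t^{\gamma_2}\varphi)<0$ in \eqref{phi:neg}; but the contribution over $B_r(x_0)\setminus Z$ is controlled only through the decay exponent $\vartheta$, and its power of $s$ can fall below $1$ when $\alpha<2$ or $\beta<2$ (i.e. when $p>1$ or $q>1$, which is compatible with $pq<1$). Making the negative coupling term beat both contributions simultaneously forces a joint limit $s,r\to0$ with a carefully tuned ratio, and the fact that the competing exponents line up rests on the relation $\vartheta-2=\tfrac2{\gamma-1}$, a manifestation of the subcritical structure. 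Once the scaling is fixed, the convexity inequalities and the passage from pointwise decay to integral bounds are routine.
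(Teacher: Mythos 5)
Your construction is, after a reparametrization, exactly the one the paper uses: setting $s=\kappa\, r^{\vartheta-2}$ with a small \emph{fixed} constant $\kappa$, your pair $(s^{\gamma_1}\phi_0,s^{\gamma_2}\phi_0)$ has amplitudes $\kappa^{\gamma_1}r^{\vartheta/\alpha}$ and $\kappa^{\gamma_2}r^{\vartheta/\beta}$ on a zero-average bump of scale $r$ centered at a density-one point, i.e.\ it coincides with the paper's $(\varphi_r,\psi_r)$ with $t=\kappa$, and your identity $\vartheta-2=\tfrac{2}{\gamma-1}$ is the same balancing $\gamma(\tfrac1\alpha+\tfrac1\beta)+2=\gamma$ used in \eqref{e:phi:r}. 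The only real difference is organizational: you cancel the first-order terms via $\phi'(f,g)=0$ and keep the convex remainders $E_1,E_2\ge 0$, whereas the paper expands $\phi(f+\varphi_r,g+\psi_r)-\phi(f,g)=\phi(\varphi_r,\psi_r)+o(r^{\vartheta+N})$ directly and absorbs all mixed terms using the decay of $f,g,u,v$ from Proposition \ref{p:decay}; your reduction is legitimate and slightly cleaner. Note also that the correct coupling is the proportional one, $s=\kappa r^{\vartheta-2}$ with $\kappa$ fixed and small (this is precisely the paper's parameter $t$ chosen in \eqref{e:c:def}), not $s/r^{\vartheta-2}\to 0$ at an uncontrolled rate, which can spoil the second-order term below when $\alpha>\beta$.

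Two concrete points are missing before your final ``check that $-s\|\nabla K\phi_0\|_{L^2(\Omega)}^2$ dominates'' can actually be carried out. First, you need a quantitative lower bound in $r$, namely $\int_\Omega\phi_0\,K\phi_0=\|\nabla K\phi_0\|_{L^2(\Omega)}^2\ge c\,r^{N+2}$: mere positivity is useless since this quantity degenerates as $r\to0$, and because $K$ is the Neumann inverse on all of $\Omega$ it does not scale exactly; this is the content of \eqref{e:f:c}--\eqref{e:l:e} (a H\"older/duality argument testing $-\Delta w=\phi_0$ against $\phi_0$), and the ``$2$'' in your exponent relation $\vartheta-2$ is exactly this bound, so it cannot be taken for granted. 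Second, the elementary inequality you invoke, $\bigl|\tfrac{|a+b|^t-|a|^t}{t}-|a|^{t-2}ab\bigr|\le C_t\min\{|b|^t,|a|^{t-2}b^2\}$, is false for $t>2$ (take $|a|\gg|b|$: the left-hand side is of order $|a|^{t-2}b^2\gg|b|^t$), and $t>2$ is unavoidable here because $pq<1$ forces $\min\{p,q\}<1$, hence $\max\{\alpha,\beta\}>2$. This is repairable within your own scheme: on $\{|f|\ge 2|\varphi|\}$ use the branch $|a|^{t-2}b^2$ and on $\{|f|\le 2|\varphi|\}$ the branch $|b|^t$ (equivalently the sum bound); the resulting contributions, of the type $s^{\gamma}|B_r\setminus Z|$ and $s^{2\gamma_1}\int_{B_r}|u|^{1-p}\phi_0^2$ (and the analogous $\beta$-terms), are still $o(s\,r^{N+2})$ thanks to the little-$o$ decay in Proposition \ref{p:decay} together with the density-one property, so the argument closes --- but as stated the inequality is wrong and should not appear in that form.
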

\begin{proof}
If $|u^{-1}(0)\cap v^{-1}(0)|>0$ then, by Lebesgue's density theorem (see, for example, \cite[page 45]{EG92}), there exists a point of density one $x_0$ of $u^{-1}(0)\cap v^{-1}(0)$.  Without loss of generality we assume that $x_0=0$.  Since $(u,v):=(K_{p} g , K_{q} f)$ is a classical solution of \eqref{NHS} by Proposition \ref{p:reg}, we obtain by Proposition \ref{p:decay} that
\begin{align}
 |g|^{\beta}&=|\Delta u |^\beta = |v|^{q+1} =o(|x|^{\gamma})\qquad \text{ and }\qquad |f|^\alpha=|\Delta v|^\alpha = |u|^{p+1} =o(|x|^{\gamma}) \label{e:decay:2},
\end{align}
as $|x|\to 0$, where $\gamma$ is as in \eqref{eq:gamma}.  Let $\zeta\in C_c^\infty(\R^N)\backslash \{0\}$ such that $\operatorname{supp}\zeta\subset \Omega$, $\int_{\Omega}\zeta =0$, and fix $t>0$ such that
\begin{align}
c:=t^{\frac{\alpha\beta}{\alpha+\beta}} \int_{\Omega}\frac{|\zeta|^{\alpha}}{\alpha}+\frac{|\zeta|^{\beta}}{\beta}\ dx -t \frac{\|\zeta\|_{L^2(\Omega)}^4}{\|\nabla \zeta\|^2_{L^2(\Omega)}}<0\label{e:c:def}
\end{align}
(such number $t$ exists, since $\alpha\beta/(\alpha+\beta)>1$, which is equivalent to $pq<1$).

For $r>0$ small denote $\Omega_r:=r\Omega\subset \Omega$ and let $\varphi_r, \psi_r:\Omega\to \R$ be given by 
\begin{align*}
\varphi_r(x):= r^{\frac{\gamma}{\alpha}}t^{\frac{\beta}{\alpha+\beta}}\zeta(\,\frac{x}{r}\,),\quad \psi_r(x):= r^{\frac{\gamma}{\beta}}t^{\frac{\alpha}{\alpha+\beta}}\zeta(\,\frac{x}{r}\,)
\end{align*}
Notice that $\varphi_r\equiv \psi_r\equiv 0$ in $\R^N\backslash \Omega_r$ and note that $w:=K (\zeta(\frac{x}{r}))\in C^{2}(\overline{\Omega})\cap C^1(\overline{\Omega})$ solves classically $-\Delta w = \zeta(\frac{x}{r})$ in $\Omega$ and $\partial_{\nu} w =0$ on $\partial\Omega$, therefore
\begin{align}\label{e:f:c}
 \int_{\Omega}\zeta(\frac{x}{r}) K (\zeta(\frac{x}{r}))=\int_\Omega (-\Delta w)w=\int_\Omega |\nabla w|^2=\|\nabla w\|_{L^{2}(\Omega)}^2.
\end{align}
On the other hand, multiplying $-\Delta w = \zeta(\frac{x}{r})$ in $\Omega$ by $\zeta(\frac{x}{r})$ and using H\"{o}lder's inequality,
\begin{align*}
r^{N}\|\zeta\|_{L^2(\Omega)}^2&=\int_{\Omega_r} |\zeta(\frac{x}{r})|^2=\int_{\Omega} (-\Delta w) \zeta(\frac{x}{r})=\int_{\Omega} \nabla w \nabla(\zeta(\frac{x}{r}))
=r^{-1}\int_{\Omega_r} \nabla w \nabla\zeta(\frac{x}{r})\\
&\leq r^{-1}\|\nabla w\|_{L^2(\Omega)} (\int_{\Omega_r} |\nabla\zeta(\frac{x}{r})|^2)^{\frac{1}{2}}
= r^{-1}\|\nabla w\|_{L^2(\Omega)} r^\frac{N}{2}\|\nabla \zeta\|_{L^2(\Omega)},
\end{align*}
that is, $\|\nabla w\|_{L^2(\Omega)}\geq r^{1+\frac{N}{2}}\frac{\|\zeta\|_{L^2(\Omega)}^2}{\|\nabla \zeta\|_{L^2(\Omega)}},$ and therefore, by \eqref{e:f:c},
\begin{align}\label{e:l:e}
 \int_{\Omega}\zeta(\frac{x}{r}) K (\zeta(\frac{x}{r}))=\|\nabla w\|^2_{L^2(\Omega)}
 \geq r^{2+N}\frac{\|\zeta\|_{L^2(\Omega)}^4}{\|\nabla \zeta\|^2_{L^2(\Omega)}}.
\end{align}
Then, by \eqref{e:c:def},  \eqref{e:l:e}, and observing that $\gamma$ satisfies $\gamma(1/\alpha+1/\beta)+2=\gamma$ (cf. \eqref{eq:gamma}) we obtain
\begin{align}
 &\phi(\varphi_r,\psi_r)=\int_{\Omega_r}\frac{|\varphi_r|^\alpha}{\alpha}+\frac{|\psi_r|^\beta}{\beta}-\psi_r K \varphi_r \, dx\nonumber\\
 &=\int_{\Omega_r}r^{\gamma}t^{\frac{\alpha\beta}{\alpha+\beta}}\frac{|\zeta(\frac{x}{r})|^\alpha}{\alpha}+r^{\gamma}t^{\frac{\alpha\beta}{\alpha+\beta}}\frac{|\zeta(\frac{x}{r})|^\beta}{\beta}
 -r^{\gamma\left(\frac{1}{\alpha}+\frac{1}{\beta}\right)}t\zeta(\frac{x}{r}) K (\zeta(\frac{x}{r})) \, dx\nonumber\\
 &\leq r^{\gamma+N}\int_{\Omega}t^{\frac{\alpha\beta}{\alpha+\beta}}\frac{|\zeta(y)|^\alpha}{\alpha}+t^{\frac{\alpha\beta}{\alpha+\beta}}\frac{|\zeta(y)|^\beta}{\beta}\ dy
 -r^{\gamma\left(\frac{1}{\alpha}+\frac{1}{\beta}\right)}
 r^{2+N}\ t\frac{\|\zeta\|_{L^2(\Omega)}^4}{\|\nabla \zeta\|^2_{L^2(\Omega)}} \nonumber\\
 &= r^{\gamma+N}c<0\label{e:phi:r}.
\end{align}

We claim that $E:=\phi(f+\varphi_r,g+\psi_r)-\phi(f,g)<0$. Indeed, by \eqref{e:decay:2},
\begin{align}
\int_{{\Omega_r}}|f&+\varphi_r|^\alpha-|f|^\alpha-|\varphi_r|^\alpha \, dx
\leq \int_{{\Omega_r}}|o(r^{\frac{\gamma}{\alpha}})+r^{\frac{\gamma}{\alpha}}t^{\frac{\beta}{\alpha+\beta}}\zeta(\frac{x}{r})|^\alpha-r^{\gamma}|t^{\frac{\beta}{\alpha+\beta}}\zeta(\frac{x}{r})|^\alpha+o(r^{\gamma}) \, dx \nonumber\\
&\qquad \qquad \leq 
r^{\gamma}\int_{{\Omega_r}}|o(1)
+t^{\frac{\beta}{\alpha+\beta}}\zeta(\frac{x}{r})|^\alpha-|t^{\frac{\beta}{\alpha+\beta}}\zeta(\frac{x}{r})|^\alpha
+o(1)\ dx\nonumber\\
&\leq 
r^{\gamma+N}\int_{{\Omega}}|o(1)
+t^{\frac{\beta}{\alpha+\beta}}\zeta(y)|^\alpha-|t^{\frac{\beta}{\alpha+\beta}}\zeta(y)|^\alpha
+o(1)\ dy=o(r^{\gamma+N})\label{e:est:1}
\end{align}
and analogously, $\int_{{\Omega_r}}|g+\psi_r|^\beta-|g|^\beta-|\psi_r|^\beta\ dx=o(r^{\gamma+N})$ as $r\to 0$.  Furthermore, by Proposition~\ref{p:decay}, 
\begin{align}
 \int_{{\Omega_r}}|\psi_r K f|
 &= \int_{{\Omega_r}}|\psi_r v|
 \leq  \int_{{\Omega_r}}r^{\frac{q \gamma}{q+1}}|t^{\frac{\alpha}{\alpha+\beta}}\zeta(\frac{x}{r})| o(r^{\frac{\gamma}{q+1}}) \nonumber\\
 &=  o(r^{\gamma+N}) \int_{{\Omega}}|t^{\frac{\alpha}{\alpha+\beta}}\zeta|  = o(r^{\gamma+N})\label{e:est:2}
\end{align}
 as $r\to 0$, and analogously, $\int_{{\Omega_r}}|\varphi_r K g| = o(r^{\gamma+N})$ as $r\to 0$.  Therefore, by \eqref{ibyp}, \eqref{e:phi:r}, \eqref{e:est:1}, \eqref{e:est:2}, and the fact that $\varphi_r\equiv 0$ in $\R^N\backslash \Omega_r$,
\begin{align*}
 E&=\int_\Omega \frac{|f+\varphi_r|^\alpha-|f|^\alpha}{\alpha}+\frac{|g+\psi_r|^\beta-|g|^\beta}{\beta}-(g+\psi_r)K(f+\varphi_r)+gK f \, dx\\
 &=\phi(\varphi_r,\psi_r)+\int_{{\Omega_r}}\frac{|f+\varphi_r|^\alpha-|f|^\alpha-|\varphi_r|^\alpha}{\alpha}+\frac{|g+\psi_r|^\beta-|g|^\beta-|\psi_r|^\beta}{\beta}
 -\psi_r K f - \varphi_r K g \, dx \\
 &=\phi(\varphi_r,\psi_r)+o(r^{\gamma+N})
 =r^{\gamma+N}c+o(r^{\gamma+N})<0 
  \end{align*}
for $r>0$ sufficiently small, and the proof is finished.
\end{proof}

We are now ready to conclude the proof of the main result of this section.
\begin{proof}[Proof of Theorem \ref{uc:thm}]
 Let $(f,g)\in X$ be a critical point of $\phi$ in $X$ and $(u,v):=(K_{p} g , K_{q} f)$. By \cite[Lemma 7.7]{GT98}, Proposition \ref{p:reg}, and \eqref{NHS}, we have that $|v|^q=|\Delta u|=0$ a.e. in $u^{-1}(0)$ and $|u|^p=|\Delta v|=0$ a.e. in $v^{-1}(0)$, therefore $u^{-1}(0)=v^{-1}(0)$ a.e.    Now, assume that \eqref{sub} holds and $(f,g)\in X$ is a global minimizer of $\phi$ in $X$. Then, by Proposition \ref{p:pd}, we have that $|u^{-1}(0)\cap v^{-1}(0)|=0$ and thus $|u^{-1}(0)|=|v^{-1}(0)|=0$, since $u^{-1}(0)=v^{-1}(0)$ a.e.
\end{proof}

\begin{remark}(Superlinear case) \label{rem:superlinear_UCP}Let $(u,v)\in [C^{2}(\overline{\Omega})]^2$ be a solution of \eqref{NHS} with $p,q>0$, $pq>1$. If $|u^{-1}(0)\cap v^{-1}(0)|>0$, then $(u,v)$ has a \emph{zero of infinite order}; indeed, if $x_0\in\Omega$ is a point of density one for the set $u^{-1}(0)\cap v^{-1}(0)$ and $\gamma>0$, then $|u(x)|^{p+1}+|v(x)|^{q+1}=o(|x-x_0|^{\gamma})$ as $x\to x_0$.  A proof can be obtained by repeating almost word by word the proof of Proposition \ref{p:decay} with the following changes: in Step 1, the functions $u_n,v_n$ defined in \eqref{e:1} satisfy system \eqref{e:2} with $\widetilde A_n:= s_n^2 (\sup_{|x|=s_n} |u(x)|^{p+1}+|v(x)|^{q+1})^\frac{pq-1}{(p+1)(q+1)}$ instead of $A_n$ as in \eqref{e:3}, observe that $\widetilde A_n\to 0$ as $n\to \infty$; moreover, in Step 2, the functions $\widetilde u_n,\widetilde v_n$ defined in \eqref{eq:tilde_un} solve (instead of \eqref{eq:system_tilde_un}) the system $-\Delta \widetilde u_n=B_n |\widetilde v_n|^{q-1} \widetilde v_n, -\Delta \widetilde v_n=B_n |\widetilde u_n|^{p-1}\widetilde u_n$, with $B_n=r_n^{2+\gamma\frac{pq-1}{(p+1)(q+1)}}\to 0$ as $n\to\infty$.
\end{remark}

\subsection{Simplicity of the zeros for radial solution}

 \begin{proof}[Proof of Theorem \ref{aux:lemma:intro}]
  Let
  \begin{align}\label{t:def}
  t(r)=r^{2-N}\quad \text{ if }N\geq 3\qquad \text{ or }\qquad t(r)=1- \log (r)\quad \text{ if }N=2,   
  \end{align}
  $b:=\lim_{s\to \delta}t(s)\in(1,\infty]$ ($b=\infty$ if $\Omega=B_1$), $J:=[1,b)$, and $\varphi,\psi:J\to\R$ satisfy $\varphi (t(|x|))=u(x)$ and $\psi (t(|x|))=v(x)$.  Then $(\varphi,\psi)$ is a bounded solution of 
  \begin{align}\label{t:eqs}
   -\varphi''=\zeta\ |\psi|^{q-1}\psi,\quad -\psi''=\zeta\ |\varphi|^{p-1}\varphi \quad \text{ in } J\quad \text{ with }\quad  \varphi'=\psi'=0\quad \text{ on }\partial J,
  \end{align}
 where $\zeta(t):=(N-2)^{-2} t^{-2\frac{N-1}{N-2}}\geq 0$ for $N\geq 3$ and $\zeta(t)=e^{-2(t-1)}\geq 0$ for $N=2$. 
 
 \medbreak
 
 To prove \eqref{Weps} it suffices to show that 
 \begin{align}\label{claim:l}
  |\varphi|+|\varphi'|>0\quad \text{ and }\quad |\psi|+|\psi'|>0\qquad \text{ in }J.
 \end{align} 
We argue by contradiction: assume there is $x\in J$ such that 
\begin{align}\label{cont:hip}
\psi(x)=\psi'(x)=0 
\end{align}
Then, by \eqref{t:eqs}, 
\begin{align}\label{cont:l}
 \text{there is }x_0\in(x,b)\text{ such that }\varphi(x_0)=0.
\end{align}
Indeed, if $|\varphi|>0$ in $(x,b)$ then $|\psi''|>0$ in $(x,b)$ and, in virtue of \eqref{cont:hip}, $\psi$ must be unbounded if $b=\infty$ or $|\psi'(b)|>0$ if $b<\infty$, which is impossible by assumption, and therefore \eqref{cont:l} follows. 

Now, it suffices to rule out the next three cases: 
\begin{align*}
a)\quad \varphi(x)=0,\qquad 
b)\quad\varphi(x)\varphi'(x)\geq 0,\qquad c)\quad \varphi(x)\varphi'(x)< 0. 
\end{align*}
   \smallbreak
   \noindent \emph{Case a)} 
Assume that $\varphi(x)=0$. Take the first integral $\Phi:J\to\R$ given by 
\[
\Phi:=\varphi'\psi'+\zeta\ (\frac{|\varphi|^{p+1}}{p+1}+\frac{|\psi|^{q+1}}{q+1}),
\]
which satisfies
 \begin{align}\label{d}
  \Phi'=\zeta'\ (\frac{|\varphi|^{p+1}}{p+1}+\frac{|\psi|^{q+1}}{q+1})\leq 0\qquad \text{ in } J,
  \end{align}
by \eqref{t:eqs} and the fact that $\zeta$ is decreasing. Moreover, since $\varphi(x)=0$, \eqref{cont:hip} implies that $\Phi(x)=0$ and therefore
 \begin{align}\label{ent}
 \varphi'\psi'\leq -\zeta(\frac{|\varphi|^{p+1}}{p+1}+\frac{|\psi|^{q+1}}{q+1})\leq 0 \quad \text{ in } \ [x,b),
  \end{align}
  by \eqref{d}. By \eqref{cont:l}, $\varphi(x_0)=0$ for some $x_0>x$.  We claim that $\varphi\equiv 0$ in $[x,x_0]$. By \eqref{t:eqs}, this implies that also $\psi\equiv 0$ in $[x,x_0]$, which contradicts the assumption that $\varphi^{-1}(0)\cap \psi^{-1}(0)$ has empty interior, ruling out case a).
  
  To prove the claim, suppose that $\varphi\not\equiv 0$, and take  $y^*\in (x,x_0)$ such that $|\varphi(y^*)|\neq 0$. Without loss of generality, assume that $\varphi(y^*)>0$ (the other case is analogous). Then \eqref{ent} implies $ \varphi'(y^*)\psi'(y^*)<0$ and (by replacing the point $y^*$ if necessary) we may assume that $\varphi'(y^*)>0$. But then, using \eqref{ent} and a simple continuity argument, we have $\varphi,\varphi'>0$ in $[y^*,b)$, in contradiction with  \eqref{cont:l}.
  \smallbreak
  
\noindent \emph{Case b)}  Let $\varphi(x)>0$ and $\varphi'(x)\geq 0$ (the case $\varphi(x)<0$ and $\varphi'(x)\leq 0$ is similar). 
By adjusting the point $x_0$ from \eqref{cont:l} if necessary, we may assume that $\varphi>0$ in $(x,x_0)$, and by system \eqref{t:eqs} also $-\psi''>0$ in $(x,x_0)$.  By \eqref{cont:hip}, this implies that $\psi<0$ in $(x,x_0)$ and therefore $-\varphi''<0$ in $(x,x_0)$, \emph{i.e.}, $\varphi$ is convex in $(x,x_0)$, which would contradict $\varphi(x_0)=0$.
  \smallbreak
  
 \noindent \emph{Case c)} Finally, we rule out $\varphi(x)\varphi'(x)<0$.  Suppose that $\varphi(x)>0$ and $\varphi'(x)<0$ (the case $\varphi(x)<0$ and $\varphi'(x)>0$ is similar). Observe that, by  \eqref{t:eqs} we deduce $\psi''(x)<0$, which, combined with \eqref{cont:hip}, yields $\psi<0$ in some interval $(w_0,x)\subseteq (1,x)$. We claim that actually $\psi<0$ in $(1,x)$, which yields a contradiction since this would imply $\varphi''>0$ in $(1,x)$ and thus $\varphi'(1)<0$, contradicting the boundary conditions in \eqref{t:eqs} . 
 
 To prove the claim, assume $w_0>1$ and $\psi(w_0)=0$.  Then, by the mean value theorem, there is $w_1\in(w_0,x)$ such that $\psi'(w_1)=0$.  Since $\psi'(x)=0$ we may use again the mean value theorem to obtain $w_2\in(w_1,x)$ such that $\psi''(w_2)=0$. But then $\varphi(w_2)=0$ by \eqref{t:eqs}, which is impossible since $\varphi(x)<0$, $\varphi'(x)>0$, and $-\varphi''=\zeta |\psi|^{q-1}\psi>0$ in $(w_0,x)$. 
  
  \smallbreak
  
  Since we have reached a contradiction in all cases, we conclude that \eqref{cont:hip} cannot happen. The case $\varphi(x)=\varphi'(x)=0$ is completely analogous to \eqref{cont:hip} and therefore the claim \eqref{claim:l} follows.

 \end{proof}
 
\textbf{Acknowledgments.} 
A. Salda\~{n}a is supported by the Alexander von Humboldt Foundation, Germany.  H. Tavares is partially supported by ERC Advanced Grant 2013 n. 339958 ``Complex Patterns for Strongly Interacting Dynamical Systems - COMPAT''. Both authors were partially supported by FCT/Portugal through UID/MAT/04459/2013.  The graphs in Figure 1 and the numerical approximations in Remark \ref{ce:rem} were done using Mathematica 11.1, Wolfram Research Inc. The authors would like to thank Tobias Weth for fruitful discussions and the anonymous referee for the careful reading of the paper and helpful comments and suggestions.


\end{document}